\documentclass[reqno]{amsart}
\usepackage{latexsym,amsfonts,amssymb,epsfig}
\usepackage{hyperref} 
\usepackage{color}

\textwidth16.5truecm
\hoffset-1.8cm
\voffset-1.6cm    
\textheight23.5truecm 

\DeclareMathOperator{\ch}{char}

\DeclareMathOperator{\ad}{ad}
\DeclareMathOperator{\Der}{Der}

\DeclareMathOperator{\Lie}{Lie}
\DeclareMathOperator{\Alg}{Alg}

\DeclareMathOperator{\End}{End}

\DeclareMathOperator{\cwt}{\mathsf{wt}}    
\DeclareMathOperator{\wt}{wt}    
\DeclareMathOperator{\Gr}{Gr}     
\DeclareMathOperator{\gen}{gen}     
\DeclareMathOperator{\var}{var}     


\DeclareMathOperator{\depl}{depl} 
\renewcommand {\limsup}{\operatorname* {\overline{lim}}}
\renewcommand {\liminf}{\operatorname* {\underline{lim}}}

\DeclareMathOperator{\GKdim}{GKdim}
\DeclareMathOperator{\LGKdim}{\underline{GKdim}}
\DeclareMathOperator{\Dim}{Dim}
\DeclareMathOperator{\LDim}{\underline{Dim}}
\DeclareMathOperator{\Sdim}{Sdim}
\DeclareMathOperator{\LSdim}{\underline{Sdim}}

\DeclareMathOperator{\Ldim}{Ldim}
\DeclareMathOperator{\LLdim}{\underline{Ldim}}

\newcommand\dd{\partial}

\renewcommand{\a}{\alpha}
\renewcommand{\b}{\beta}
\newcommand{\g}{\gamma}

\newcommand{\Z}{\mathbb Z}            
\newcommand{\R}{\mathbb R}            
\newcommand{\N}{\mathbb N}            
\newcommand{\C}{\mathbb C}            
\newcommand\NO{\mathbb N_0}           

\newcommand{\LL}{\mathbf L}        
\newcommand{\QQ}{\mathbf Q}        
\newcommand{\RR}{\mathbf R}        
\newcommand{\TT}{\mathbf T}        

\renewcommand{\AA}{\mathbf A}      
\newcommand{\uu}{\mathbf u}         
\newcommand{\WW}{\mathbf W}         
\newcommand{\Sym}{\mathrm {Sym}}      
\newcommand{\NN}{\mathcal N}      
\newcommand{\AAA}{\mathcal A}      

\newtheorem{Theorem}{Theorem}[section]
\newtheorem{Corollary}[Theorem]{Corollary}
\newtheorem{Lemma}[Theorem]{Lemma}
\theoremstyle{remark}
\newtheorem{Remark}{Remark}
\theoremstyle{Example}
\newtheorem{Example}{Example}
\theoremstyle{Conjecture}

\begin{document}
\title{Nil restricted Lie algebras of oscillating intermediate growth}
\author{Victor Petrogradsky}
\address{Department of Mathematics, University of Brasilia, 70910-900 Brasilia DF, Brazil}
\email{petrogradsky@rambler.ru}
\thanks{The author was partially supported by grants CNPq~309542/2016-2, DPI-UnB~04/2019  
\\ \vspace{-0.3cm}
}
\subjclass[2000]{
16P90, 
16N40, 
16S32, 
17B50, 
17B65, 
17B66, 
17B70, 
17A70} 

\keywords{restricted Lie algebras, $p$-groups, growth, self-similar algebras, nil-algebras, graded algebras,
Lie superalgebra, Lie algebras of differential operators, Kurosh problem}

\begin{abstract}
The Grigorchuk and Gupta-Sidki groups are natural examples of self-similar finitely generated periodic groups.
The author constructed their analogue in case of restricted Lie algebras of characteristic 2~\cite{Pe06},
Shestakov and Zelmanov extended this construction to an arbitrary positive characteristic~\cite{ShZe08}.
It is known that the famous construction of Golod yields finitely generated associative nil-algebras of exponential growth.
Recent extensions of that approach allowed to construct finitely generated associative nil-algebras
of polynomial and intermediate growth~\cite{LenSmo07,BellYoung11,LenSmoYoung12,Smo14}.
Another motivation of the paper is a construction of  groups of oscillating growth by Kassabov and Pak~\cite{KasPak13}.
\par
For any prime $p$ we construct a family of 3-generated restricted Lie algebras of intermediate oscillating growth.
We call them {\it Phoenix algebras} because,  for infinitely many periods of time, the algebra is "almost dying"
by having a {\it quasi-linear} growth, namely the lower Gelfand-Kirillov dimension is one, more precisely,
the growth is  of type $n \big(\ln^{(q)} \!n\big )^{\kappa}$.
On the other hand, for infinitely many $n$ the growth has a rather fast intermediate behaviour of type
$\exp( n/ (\ln n)^{\lambda})$, for such periods the algebra is "resuscitating".
Moreover, the growth function is oscillating between these two types of behaviour.
These restricted Lie algebras have a nil $p$-mapping.
\end{abstract}
\maketitle

\section{Introduction}
Different versions of Burnside Problem ask what one can say about finitely generated periodic groups under additional assumptions.
For associative algebras, Kurosh type problems ask similar questions
about properties of finitely generated nil (more, generally algebraic) algebras.
In case of finitely generated Lie algebras, the periodicity is replaced by the condition that the adjoint mapping is nil.
In particular, for Lie $p$-algebras one assumes that the $p$-mapping is nil.
One of recent important directions in these areas is to study the growth of finitely generated periodic groups and nil algebras.
The conditions that describe growth functions of all finitely generated associative algebras and monoids are
recently given in~\cite{BellZel19}.
The present paper is devoted to the construction of finitely generated nil restricted Lie algebras
with extreme growth properties, described in our main Theorem~\ref{T_main}.

\subsection{Burnside Problem and Golod-Shafarevich algebras and groups}
The General Burnside Problem asks whether a finitely generated periodic group is finite.
The first negative answer was given by Golod and Shafarevich,
they proved that there exist finitely generated infinite $p$-groups for each prime $p$~\cite{Golod64}.
As an important instrument, they first construct finitely generated
infinite dimensional associative nil-algebras~\cite{Golod64}.
Using this construction, there are also examples of infinite dimensional 3-generated Lie algebras $L$
such that $(\ad x)^{n(x,y)}(y)=0$, for all $x,y\in L$, the field being arbitrary~\cite{Golod69}.
Similarly, one easily obtains infinite dimensional finitely generated restricted Lie algebras $L$ with  a nil $p$-mapping.
This gives a negative answer to the question of Jacobson whether
a finitely generated restricted Lie algebra $L$ is finite dimensional provided that
each element $x\in L$ is algebraic, i.e. satisfies some $p$-polynomial $f_{p,x}(x)=0$
(\cite[Ch.~5, ex.~17]{JacLie}). 

It is known that the construction of Golod yields associative nil-algebras of exponential growth.
Using specially chosen relations, Lenagan and Smoktunowicz constructed associative nil-algebras of polynomial growth~\cite{LenSmo07},
there are more constructions including associative nil-algebras of intermediate growth~\cite{BellYoung11,LenSmoYoung12,Smo14}.
On further developments concerning Golod-Shafarevich algebras and groups see~\cite{Voden09,Ershov12}.
Embeddings of associative algebras of countable dimension of
locally subexponential growth into finitely generated algebras of subexponential growth are obtained in~\cite{AlAlJainZel18}.

A close by spirit but different construction was motivated by respective group-theoretic results.
A restricted Lie algebra $G$ is called {\it large} if there is a subalgebra  $H\subset G$ of finite codimension
such that $H$ admits a surjective homomorphism on a nonabelian free restricted Lie algebra.
Let $K$ be a perfect at most countable field of positive characteristic.
Then there exist infinite-dimensional finitely generated nil restricted Lie algebras over $K$ that
are residually finite dimensional and direct limits of large restricted Lie algebras~\cite{BaOl07}.

\subsection{Grigorchuk and Gupta-Sidki groups and their growth}
The construction of Golod is rather undirect, Grigorchuk gave a direct and elegant construction of
an infinite 2-group generated by three elements of order 2~\cite{Grigorchuk80}.
Originally, this group was defined as a group of transformations of the interval $[0,1]$ from which
rational points of the form $\{k/2^n\mid  0\le k\le 2^n,\ n\ge 0\}$ are removed.
For each prime $p\ge 3$, Gupta and Sidki gave a direct construction of an infinite $p$-group
on two generators, each of order $p$~\cite{GuptaSidki83}.
This group was constructed as a subgroup of an automorphism group of an infinite regular tree of degree $p$.

The Grigorchuk and Gupta-Sidki groups are counterexamples to the General Burnside Problem.
Moreover, they gave answers to important problems in group theory.
So, the Grigorchuk group and its further generalizations
are first examples of groups of intermediate growth~\cite{Grigorchuk84}, thus answering
in negative to a conjecture of Milnor that groups of intermediate growth do not exist.
The construction of Gupta-Sidki also yields groups of subexponential growth~\cite{FabGup85}.
The Grigorchuk and Gupta-Sidki groups are {\it self-similar}.
Now self-similar, and so called {\it branch groups}, form a well-established area in group theory~\cite{Grigorchuk00horizons,Nekr05}.
There are also constructions of self-similar associative  algebras~\cite{Bartholdi06,Sidki09,PeSh13ass}.

Recently, the intermediate growth of the Grigorchuk group $G$ was determined
by Erschler and Zheng~\cite{ErshlerZheng20}:
$$\gamma_G(n)= \exp\big(n^{\a_0+o(1)}\big),\qquad n\to\infty;\qquad  \a_0=\log_{\lambda_0} 2\approx 0,7674; $$
where $\lambda_0$ is the positive root of the polynomial $x^3-x^2-2x-4=0$ 
(the lower bound was proved by L.~Bartholdi earlier~\cite{Bartholdi98}).
The Grigorchuk group has the lowest known intermediate growth.
Moreover , Grigorchuk conjectured that there are no groups with growth between the polynomial growth and $\exp(\sqrt n)$,
see more details in review~\cite{Gri14}.

\subsection{Groups of oscillating growth}
The present research is also motivated by the following result.
Our goal is to establish its analogue in case of restricted Lie algebras.
\begin{Theorem}[Kassabov, Pak~\cite{KasPak13}]\label{TKassabovPak}
Let $f_1(n), f_2(n), g_1(n), g_2(n):\N\to\N$ be monotone increasing subexponential functions:
$$ \gamma_G(n)\le g_2(n)< g_1(n)< f_2(n)< f_1(n), $$
where $\gamma_G(n)$ is the growth function of the Grigorchuk group $G$,
and they also satisfy some more technical assumptions.
Then there exists a finitely generated group $\Gamma$ which growth function $\gamma_\Gamma(n)$
takes values in the intervals $[g_2(n),g_1(n)]$ and $[f_2(n),f_1(n)]$, $n\in\N$,
infinitely many times.
\end{Theorem}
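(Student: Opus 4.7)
The plan is to realize $\Gamma$ as a direct limit of a tower of finitely generated groups $\Gamma_1 \twoheadleftarrow \Gamma_2 \twoheadleftarrow \cdots$ (or, equivalently, as a subgroup of the automorphism group of a spherically homogeneous rooted tree) in which the local structure alternates between two kinds of ``floors''. On the ``slow'' floors the tree has branching chosen so that the action is essentially that of (a finite quotient of) the Grigorchuk group $G$, and on the ``fast'' floors the branching is much larger, producing copies of large symmetric or alternating groups in the local structure. The generating set is a fixed finite set of rooted-tree automorphisms, stable under the self-similar recursion, so that $\Gamma$ is $3$-generated (or $d$-generated for some fixed $d$) regardless of how many floors have been added.

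More concretely, choose increasing integers $N_1 < M_1 < N_2 < M_2 < \cdots$ and build $\Gamma$ level by level so that up to radius $N_k$ in the Cayley graph the group has already ``collapsed'' to a finite quotient isomorphic to one produced by the Grigorchuk recursion; at that scale the growth function matches $\gamma_G$ up to a bounded factor, which by the hypothesis $\gamma_G(n)\le g_2(n)<g_1(n)$ forces $\gamma_\Gamma(N_k)\in[g_2(N_k),g_1(N_k)]$. Between scales $N_k$ and $M_k$ one then inserts a ``fast'' block: a permutational wreath product with a large finite group (or a suitable Cayley graph expander) whose size is tuned so that the growth of $\Gamma$ in that window rises to land inside $[f_2(M_k),f_1(M_k)]$. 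The technical monotonicity/gap assumptions on the four functions (the ones alluded to but not spelled out in the statement) are exactly what is needed to interpolate smoothly between the two regimes: each fast block must be small enough that the growth still stays below $f_1$, and each slow block long enough for the growth to decelerate back into $[g_2,g_1]$ before the next fast block starts.

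Two lower-bound ingredients then complete the argument. For the slow windows the lower bound $\gamma_\Gamma(n)\ge g_2(n)$ is inherited from the fact that $\Gamma$ surjects onto (or contains) a copy of $G$, whose growth is at least $\gamma_G(n)\ge g_2(n)$ by hypothesis. For the fast windows the lower bound $\gamma_\Gamma(n)\ge f_2(n)$ comes from an explicit word-counting argument inside the wreath-product block: distinct reduced words in the fast-block generators give distinct elements up to the prescribed scale, and their number is at least $f_2(n)$ by the choice of block size. The upper bounds $g_1$ and $f_1$ come from the opposite estimate, bounding the number of elements representable by words of length $n$ in terms of the tree depth reached by such words.

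The principal obstacle is precisely the simultaneous control of the upper and lower bounds during the transition regions between a slow floor and the next fast floor: one must show that $\gamma_\Gamma$ does not overshoot $f_1$ while climbing, nor undershoot $g_2$ while descending. This is a quantitative matching problem that forces the ``technical assumptions'' of the statement, typically of the form that consecutive pairs $(g_1,f_2)$ and $(f_1,g_2(\text{next scale}))$ are sufficiently separated relative to the subexponentiality rate. Once this matching is in place, the oscillation between $[g_2,g_1]$ and $[f_2,f_1]$ occurs at the infinite sequence of scales $N_k,M_k$ by construction, and the resulting $\Gamma$ is finitely generated as a direct limit of finitely generated groups on the same generating set.
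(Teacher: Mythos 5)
First, a point of reference: the paper does not prove Theorem~\ref{TKassabovPak} at all — it is quoted from Kassabov and Pak~\cite{KasPak13} purely as motivation — so there is no in-paper argument to measure your proposal against; it can only be judged on its own. Your sketch does have the right general flavor of the Kassabov--Pak construction (a finitely generated group acting on a rooted tree whose local structure alternates between Grigorchuk-type floors and floors made from large finite groups, with transition scales $N_k<M_k$ chosen recursively), but as a proof it has a concrete error and leaves the genuinely hard quantitative content unaddressed.

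The error is in your lower bound for the slow windows. You write that $\gamma_\Gamma(n)\ge g_2(n)$ is ``inherited'' because $\Gamma$ contains or surjects onto a copy of $G$ ``whose growth is at least $\gamma_G(n)\ge g_2(n)$ by hypothesis''. The hypothesis is the opposite inequality, $\gamma_G(n)\le g_2(n)$; containing $G$ only gives $\gamma_\Gamma\ge\gamma_G$, which says nothing about $g_2$. In fact the growth must enter the band $[g_2(n),g_1(n)]$ \emph{from above}: at the end of a fast block one has $\gamma_\Gamma>g_1$, during the ensuing slow block $\gamma_\Gamma$ increases more slowly than $g_1$ does, and one needs a monotonicity/intermediate-value argument — together with the unstated ``technical assumptions'', which are precisely gap conditions between $g_1$ and $g_2$ at neighbouring scales — to guarantee that at the first $n$ with $\gamma_\Gamma(n)\le g_1(n)$ one still has $\gamma_\Gamma(n)\ge g_2(n)$, rather than the growth skipping over the band between consecutive integers. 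Beyond this, the two central quantitative claims are only asserted: that after several fast blocks have been inserted the balls of $\Gamma$ at the later slow scales are still comparable to those of a Grigorchuk-type group (so that $\gamma_\Gamma\le g_1$ there), and that during a fast block the growth climbs to at least $f_2$ without overshooting $f_1$. Controlling these two effects simultaneously — proving deceleration after a fast block and bounding the inflation caused by the extra generators — is where essentially all of the work in \cite{KasPak13} lies, and your outline offers no mechanism for it beyond the choice of block lengths.
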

The upper bounds $f_1,f_2$ can be very close to the exponential function~\cite{KasPak13}.
Remark that nothing is said about the periodicity of $\Gamma$.

The result above on the growth of the Grigorchuk group has a stronger version.
Namely, for any $\a,\b$ such that $\a_0\le \a\le\b\le 1$, where $\a_0$ as above,
there exists a sequence $\omega\in \{0,1,2\}^\infty$ and the respective Grigorchuk group $G_\omega$ (suggested in~\cite{Grigorchuk84})
has an oscillating intermediate growth as follows~\cite{ErshlerZheng20}:
$$
\liminf_{n\to\infty} \frac {\ln\ln \gamma_{G_\omega}(n)}{\ln n}=\a,\qquad
\limsup_{n\to\infty} \frac {\ln\ln \gamma_{G_\omega}(n)}{\ln n}=\b.
$$
We introduce clover restricted Lie algebras studied in details in~\cite{Pe20clover}, they are analogues of groups $G_\omega$
in terms of their properties and because of their importance for our construction of Lie algebras of oscillating growth.

\subsection{Self-similar nil restricted Lie algebras, Fibonacci Lie algebra}
Unlike associative algebras, for restricted Lie algebras,
natural analogues of the Grigorchuk and Gupta-Sidki groups are known.
Namely, over a field of characteristic 2,
the author constructed an infinite dimensional restricted Lie algebra $\LL$
generated by two elements, called a {\it Fibonacci restricted Lie algebra}~\cite{Pe06}.
Let $\ch K=p=2$ and $R=K[t_i| i\ge 0 ]/(t_i^p| i\ge 0)$ a truncated polynomial ring.
Put $\dd_i=\frac {\dd}{\partial t_i}$, $i\ge 0$.
Define the following two derivations of $R$:
\begin{align*}
v_1 & =\dd_1+t_0(\dd_2+t_1(\dd_3+t_2(\dd_4+t_3(\dd_5+t_4(\dd_6+\cdots )))));\\
v_2 & =\qquad\quad\;\,
\dd_2+t_1(\dd_3+t_2(\dd_4+t_3(\dd_5+t_4(\dd_6+\cdots )))).
\end{align*}
These two derivations generate
a restricted Lie algebra $\LL=\Lie_p(v_1,v_2)\subset\Der R$ and an associative algebra $\AA=\Alg(v_1,v_2)\subset \End R$.
The Fibonacci restricted Lie algebra $\LL$ has a slow polynomial growth
with Gelfand-Kirillov dimension $\GKdim \LL=\log_{(\sqrt 5+1)/2} 2\approx 1.44$~\cite{Pe06}.
Further properties of the Fibonacci restricted Lie algebra 
 are studied in~\cite{PeSh09,PeSh13fib}.

Probably, the most interesting property of $\LL$ is that it has a nil $p$-mapping~\cite{Pe06},
which is an analog of the periodicity of the Grigorchuk and Gupta-Sidki groups.
We do not know whether the associative hull $\AA$ is a nil-algebra.
We have a weaker statement.
The algebras $\LL$, $\AA$, and the augmentation ideal
of the restricted enveloping algebra $\uu=\omega u(\LL)$ are direct sums of two locally nilpotent subalgebras~\cite{PeSh09}.
In case of arbitrary prime characteristic,
Shestakov and Zelmanov suggested an example of a finitely generated restricted Lie algebra with a nil $p$-mapping~\cite{ShZe08}.
An example of a $p$-generated  nil restricted Lie algebra $L$, characteristic $p$ being arbitrary, was studied in~\cite{PeShZe10}.
These infinite dimensional restricted Lie algebras yield
different decompositions into a direct sum of two locally nilpotent subalgebras~\cite{PeShZe10}.

Observe that only the original example has a clear monomial basis~\cite{Pe06,PeSh09}.
In other examples, elements of a Lie algebra are  linear combinations of monomials,
to work with such linear combinations is sometimes an essential technical difficulty, see e.g.~\cite{ShZe08,PeShZe10}.
A family of nil restricted Lie algebras of slow growth having good monomial bases
is constructed in~\cite{Pe17},
these algebras are close relatives of a two-generated Lie superalgebra of~\cite{Pe16}.

\subsection{Lie algebras in characteristic zero}
Since the Grigorchuk group is of finite width,
a right analogue of it should be a Lie algebra of finite width having $\ad$-nil elements, in the next result
the components are of bounded dimension and consist of $\ad$-nil elements.
Informally speaking, there are no "natural analogues" of the Grigorchuk and Gupta-Sidki groups
in the world of Lie algebras of characteristic zero,
strictly in terms of the following result.
\begin{Theorem}[{Martinez and Zelmanov~\cite{MaZe99}}]
\label{TMarZel}
Let $L=\oplus_{\a\in\Gamma}L_\alpha$ be a finitely generated Lie algebra over a field $K$,
$\ch K=0$, graded by an abelian group $\Gamma$. Assume that
\begin{enumerate}
\item
there exists $d>0$ such that $\dim_K L_\alpha \le d $, $\alpha\in\Gamma$,
\item
every homogeneous element $a\in L_\a$, $\a\in\Gamma$, is ad-nilpotent.
\end{enumerate}
Then $L$ is finite dimensional.
\end{Theorem}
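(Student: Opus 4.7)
The plan is to reduce the hypotheses to a bounded Engel condition on $L$ and then invoke Kostrikin's theorem on the Engel problem in characteristic zero.

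Fix a finite homogeneous generating set $x_1,\dots,x_k$ of $L$, with $x_i\in L_{\alpha_i}$, and let $n_i$ be minimal with $(\ad x_i)^{n_i}=0$; set $N_0=\max_i n_i$. Observe first that $L_0$ is a subalgebra with $\dim L_0\le d$, every element of which is ad-nilpotent, hence in particular nilpotent on the finite-dimensional space $L_0$ itself; by Engel's theorem $L_0$ is nilpotent. So the zero component plays a well-behaved role, acting on each $L_\alpha$ by nilpotent operators in a finite-dimensional representation, which by Lie's and Engel's theorems can be simultaneously triangularized.

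The principal step is to upgrade the pointwise ad-nilpotence of homogeneous elements to a uniform bound $(\ad y)^N=0$ for every $y\in L$. Let $A\subseteq\End(L)$ be the $\Gamma$-graded associative subalgebra generated by $\ad x_1,\dots,\ad x_k$; every $\ad y$ with $y\in L$ lies in $A$, and each generator is nilpotent. The key leverage is that $L=\bigoplus_{\alpha}L_\alpha$ is a graded $A$-module with components of dimension at most $d$, so for homogeneous $y\in L_\alpha$ the operator $(\ad y)^k$ is a map of rank at most $d$ between the spaces $L_\beta$ and $L_{\beta+k\alpha}$. I would combine this bounded-rank consequence of the grading with a Shirshov-height or PI-type argument on the graded algebra $A$, using that its generators are nilpotent of bounded degree, to produce a uniform nilpotence exponent $N$ depending only on $d$, $k$, and $N_0$. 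In characteristic zero a standard linearization then promotes a uniform bound from homogeneous elements to arbitrary ones.

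Once $(\ad y)^N=0$ holds for every $y\in L$, Kostrikin's theorem yields that $L$ is locally nilpotent; being finitely generated, $L$ is then nilpotent of some class $c$. But any finitely generated nilpotent Lie algebra is automatically finite-dimensional, since each term $L^{(i)}/L^{(i+1)}$ of the lower central series is spanned by left-normed brackets of length $i$ in the $k$ generators, yielding $\dim L\le\sum_{i=1}^{c}k^i<\infty$, which is the desired conclusion.

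The main obstacle is clearly the bootstrap to a uniform Engel bound. The boundedness of graded components is essential here: without it, pointwise ad-nilpotence does not imply a uniform Engel identity, as witnessed by the Golod--Shafarevich Lie constructions discussed earlier in the paper. I would expect making this step rigorous to require a delicate interplay between the finite-dimensional representation theory of $L_0$ on the pieces $L_\alpha$ and combinatorial height arguments for the graded associative hull $A$, possibly supplemented by Kostrikin's sandwich-element machinery to extract enough ad-nilpotent elements of controlled index.
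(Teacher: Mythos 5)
This theorem is quoted in the paper from Martinez--Zelmanov \cite{MaZe99}; the paper contains no proof of it, so your proposal can only be judged on its own merits, and as it stands it has a genuine gap rather than being a proof. The entire difficulty of the theorem is concentrated in the step you defer: passing from the hypothesis that \emph{each} homogeneous element is ad-nilpotent (with no bound whatsoever on the indices --- only your chosen generators come with a bound $N_0$) to a \emph{uniform} Engel exponent. You propose to get this from the bounded-rank observation plus ``a Shirshov-height or PI-type argument'' on the graded associative hull $A$, but no such routine argument exists: the componentwise rank bound $\dim L_\alpha\le d$ does not give $A$ a polynomial identity or a bounded height in any standard way, and indeed extracting usable consequences from exactly these two hypotheses is what occupies the bulk of \cite{MaZe99}, where Zelmanov's machinery of sandwich subalgebras and divided polynomial identities is brought to bear. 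Writing ``I would expect making this step rigorous to require a delicate interplay\ldots'' concedes that the key lemma is missing.

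Two further steps are also shakier than you indicate. First, even granting $(\ad y)^N=0$ for all \emph{homogeneous} $y$, the ``standard linearization'' in characteristic zero only yields the multilinearized identity for tuples of arguments lying in a single component $L_\alpha$ (where the original identity is known); it does not automatically give $(\ad y)^N=0$, or any bounded Engel condition, for sums of homogeneous elements of different degrees --- statements of the form ``bounded Engel on homogeneous elements of a graded Lie ring implies (local) nilpotence'' are themselves nontrivial theorems, not formal consequences of linearization. Second, the reference to Kostrikin is misplaced for $\ch K=0$: Kostrikin's theorem treats the $n$-Engel condition in characteristic $p>n$; in characteristic zero the relevant result is Zelmanov's theorem that $n$-Engel Lie algebras are (globally) nilpotent. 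The final observation --- that a finitely generated nilpotent Lie algebra is finite dimensional --- is correct, but it is the only easy part of the route you sketch.
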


\subsection{Fractal nil graded Lie superalgebras}\label{SSsuper}
In the class of {\it Lie  superalgebras} of an {\it arbitrary characteristic},
the author suggested analogues of the Grigorchuk and Gupta-Sidki groups~\cite{Pe16}.
Namely, two Lie superalgebras $\RR$, $\QQ$ are constructed (Example~\ref{E1} and  Example~\ref{Example_Q}).
These two examples have clear monomial bases.
They are of slow polynomial growth:
$\GKdim \RR=\log_34\approx 1.26$ and $\GKdim \QQ=\log_38\approx 1.89$.
Also, $\ad a$ is nilpotent,
$a$ being an even or odd element with respect to the $\Z_2$-gradings of these Lie superalgebras.
We get an analogue of the periodicity of the Grigorchuk and Gupta-Sidki groups.
The Lie superalgebra $\RR$ is $\Z^2$-graded, while $\QQ$ has a natural fine $\Z^3$-grading with at most one-dimensional components.
In particular, $\QQ$ is a nil finely graded Lie superalgebra, which shows
that an extension of Theorem~\ref{TMarZel}\ 
for the class of Lie {\it super}algebras of characteristic zero is not valid.
Also, $\QQ$ has a $\Z^2$-grading which yields a continuum of decompositions
into sums of two locally nilpotent subalgebras $\QQ=\QQ_+\oplus\QQ_-$.
Both Lie superalgebras are {\it self-similar},
they also contain infinitely many copies of itself, we call them {\it fractal} due to the last property.

We construct a more "handy" 2-generated fractal Lie superalgebra $\mathbf{R}$
(the same notation as above but this is a different algebra) over an arbitrary field~\cite{PeOtto}.
This Lie superalgebra $\RR$ is $\Z^2$-graded by multidegree in the generators and the
$\Z^2$-components are at most one-dimensional.
As an analogue of periodicity,
we establish that homogeneous elements of the $\Z_2$-grading $\mathbf{R}=\mathbf{R}_{\bar 0}\oplus\mathbf{R}_{\bar 1}$ are $\ad$-nilpotent.
In case of $\mathbb{N}$-graded algebras, a close analogue to being simple is being just infinite.
Unlike previous examples of Lie superalgebras~\cite{Pe16}, we are able to prove that $\mathbf{R}$ is just infinite.
This example is close to the smallest possible one, because $\mathbf{R}$ has a linear growth
with the growth function $\gamma_\mathbf{R}(m)\approx 3m$, as $m\to\infty$.
Moreover, its degree $\mathbb{N}$-grading is of finite width 4 ($\ch K\ne 2$).
In case $\ch K=2$, we obtain a Lie algebra of width 2 that is not thin.

We also construct a  just infinite fractal 3-generated Lie superalgebra ${\mathbf Q}$ over an arbitrary field,
which gives rise to an associative hull $\mathbf A$, a Poisson superalgebra ${\mathbf P}$, and
two Jordan superalgebras ${\mathbf J}$ and ${\mathbf K}$, the latter can be also considered as
analogues of the Grigorchuk and Gupta-Sidki groups in respective classes of algebras~\cite{PeSh18FracPJ}.

\section{Basic notions: restricted Lie algebras, Growth}\label{Sdef}

As a rule, $K$ is an arbitrary field of positive characteristic $p$,
$\langle S\rangle_K$ denotes a linear span of a subset $S$ in a $K$-vector space.
Let $L$ be a Lie algebra, then $U(L)$ denotes the universal enveloping algebra.
Long commutators are {\it right-normed}: $[x,y,z]:=[x,[y,z]]$.
We use a standard notation $\ad x(y)=[x,y]$, where $x,y\in L$.
Also, we use notation $[x^k,y]:=(\ad x)^k (y)$, where $k\ge 1$, $x,y\in L$; in case $k=p^l$, we have also $[x^{p^l},y]=[x^{[p^l]},y]$,
in terms of the $p$-mapping (see below).

\subsection{Restricted Lie algebras}
Let $L$ be a Lie algebra over a field $K$ of characteristic $p>0$.
Then $L$ is called a
\textit{restricted Lie algebra} (or \textit{Lie $p$-algebra}),
if it is additionally supplied with a unary operation
 $x\mapsto x^{[p]}$, $x\in L$, that satisfies the following
 axioms~\cite{JacLie,Strade1,StrFar,BMPZ}:
\begin{itemize}
\item $(\lambda x)^{[p]}=\lambda^px^{[p]}$, for $\lambda\in K$, $x\in L$;
\item $\ad(x^{[p]})=(\ad x)^p$, $x\in L$;
\item $(x+y)^{[p]}=x^{[p]}+y^{[p]}+\sum_{i=1}^{p-1}s_i(x,y)$, for all $x,y\in L$,
where $i s_i(x,y)$~is the coefficient of $t^{i-1}$ in the polynomial
$\operatorname{ad}(tx+y)^{p-1}(x)\in L[t]$.
\end{itemize}
This notion is motivated by the following construction.
Let $A$ be an associative algebra over a field ~$K$.
The vector space $A$ is supplied with a new product $[x,y]=xy-yx$, $x,y\in A$,
one obtains a Lie algebra denoted by $A^{(-)}$.
In case $\operatorname{char}K=p>0$,
the mapping  $x\mapsto x^p$, $x\in A^{(-)}$, satisfies three axioms above.

Suppose that $L$~ is a restricted Lie algebra.
Let $J$~be an ideal of the universal enveloping algebra~$U(L)$ generated by $\{x^{[p]}-x^p\mid x\in L\}$.
Then $u(L)=U(L)/J$ is called a \textit{restricted enveloping algebra}.
In this algebra, the formal operation $x^{[p]}$ coincides with the $p$th power~$x^p$ for any $x\in L$.
One has an analogue of Poincare-Birkhoff-Witt's theorem
yielding a basis of the restricted enveloping algebra~\cite[p.~213]{JacLie}.
We shall use the following version of the formula above:
\begin{equation}\label{power_P}
(x+y)^{[p]}=x^{[p]}+y^{[p]}+(\ad x)^{p-1}(y)+
\sum_{i=1}^{p-2}s_i(x,y),\qquad x,y\in L,
\end{equation}
where $s_i(x,y)$ consists of commutators containing $i$ letters $x$ and $p-i$ letters $y$.

\subsection{Growth}
Let $A$  be an associative (or Lie) algebra  generated by a finite set $X$.
Denote  by $A^{(X,n)}$ the subspace of $A$ spanned by all  monomials  in $X$ of length not  exceeding  $n$, $n\ge 0$.
If $A$ is a restricted Lie algebra, we define
$A^{(X,n)}=\langle\, [x_{i_1},\dots,x_{i_s}]^{p^k}\mid x_{i_j}\in X,\, sp^k\le n\rangle_K$~\cite{Pape01}.
One obtains a {\em growth function}:
$$
\gamma_A(n)=\gamma_A(X,n):=\dim_KA^{(X,n)},\quad n\ge 0.
$$
Clearly, the growth function depends on the choice of the generating set $X$.
Let $f,g:\N\to\R^+$ be increasing functions.
Write $f(n)\preccurlyeq g(n)$ if and only if there exist positive constants $N,C$ such that $f(n)\le g(Cn)$ for all $n\ge N$.
Introduce equivalence $f(n)\sim g(n)$ if and only if  $f(n)\preccurlyeq g(n)$ and $g(n)\preccurlyeq f(n)$.
Different generating sets of an algebra yield equivalent growth functions~\cite{KraLen}.

It is well known that the exponential growth is the highest possible growth for finitely generated Lie and associative algebras.
A growth function $\gamma_A(n)$ is compared with polynomial functions $n^k$, $k\in\R^+$,
by computing the {\em upper and lower Gelfand-Kirillov dimensions}~\cite{KraLen}:
\begin{align*}
\GKdim A&:=\limsup_{n\to\infty} \frac{\ln\gamma_A(n)}{\ln n}=\inf\{\a>0\mid \gamma_A(n)\preccurlyeq n^\a\} ;\\
\LGKdim A&:=\liminf_{n\to\infty}\,  \frac{\ln\gamma_A(n)}{\ln n}=\sup\{\a>0\mid \gamma_A(n)\succcurlyeq n^\a\}.
\end{align*}
Denote $\ln^{(q)}(x):=\underbrace{\ln(\cdots\ln}_{q\text{ times}}(x)\cdots)$ and
$\exp^{(q)}(x):=\underbrace{\exp(\cdots\exp}_{q\text{ times}}(x)\cdots)$ for all $q\in\N$.
In this paper, we study algebras of {\it quasi-linear} growth, growth functions of these algebras $A$ behave as
$m\exp \big((\ln m)^{\beta}\big)$, $\beta\in(0,1)$, and even slower, like
$m (\ln^{(q)} m)^{\beta}$, where $q\in \N$, $\beta\in \R^+$.
Clearly, $\GKdim A=\LGKdim A=1$.
In order to specify parameters $q,\beta$ define numbers:
\begin{align*}
\Ldim^0 A=& \inf\{\beta\in(0,1) \mid \gamma_A(n) \preccurlyeq m \exp \big((\ln m)^{\beta}\big)\};\\
\LLdim^0 A=& \sup\{\beta\in(0,1) \mid \gamma_A(n) \succcurlyeq m \exp \big((\ln m)^{\beta}\big)\};\\
\Ldim^q A=& \inf\{\beta\in\R^+ \mid \gamma_A(n) \preccurlyeq m (\ln^{(q)} m)^{\beta}\},\qquad q\in\N;\\
\LLdim^q A=& \sup\{\beta\in\R^+ \mid \gamma_A(n) \succcurlyeq m (\ln^{(q)} m)^{\beta}\},\qquad q\in\N.
\end{align*}
One checks that these numbers are invariants not depending on a generating set. Remark that notations are different from~\cite{Pe17}.

Assume that generators $X=\{x_1,\dots,x_k\}$ are assigned positive weights $\wt(x_i)=\lambda_i$, $i=1,\dots,k$.
Define a {\it weight growth function}:
$$
\tilde \gamma_A(n)=\dim_K\langle x_{i_1}\cdots x_{i_m}\mid \wt(x_{i_1})+\cdots+\wt(x_{i_m})\le n,\
          x_{i_j}\in X\rangle_K,\quad n\ge 0.
$$
Set $C_1=\min\{\lambda_i\mid i=1,\dots,k \}$, $C_2=\max\{\lambda_i\mid i=1,\dots,k \}$,
then $\tilde\gamma_A(C_1 n) \le \gamma_A(n)\le \tilde\gamma_A(C_2 n)$ for $n\ge 1$.
Thus, we obtain an equivalent growth function $\tilde \gamma_A(n)\sim\gamma_A(n)$.
Therefore, we can use the weight growth function $\tilde\gamma_A(n)$ in order to
compute the Gelfand-Kirillov dimensions and $\Ldim^\lambda A$, $\LLdim^\lambda A$ as well.

Suppose that $L$ is a Lie algebra and $X\subset L$.
By $\Lie(X)$ denote the subalgebra of $L$ generated by $X$.
In case $L$ is a restricted Lie algebra $\Lie_p(X)$ denotes the restricted subalgebra of $L$ generated by $X$.
Similarly, assume that $X$ is a subset in  an associative algebra $A$.
Write $\Alg(X)\subset A$ to denote an associative subalgebra (without unit) generated by~$X$.

\subsection{Scale for intermediate growth of (Lie) algebras}\label{SSinterm}
The growth of an integer sequence $\{a_n| n\ge 0\}$ is {\em subexponential} if one of the following
equivalent conditions holds~\cite{Ufn}:
\begin{enumerate}
   \item
    for any $r>1$ there exists $N$ such that $|a_n|\le r^n$, $n\ge N$;
   \item
    $\limsup_{n\to\infty} \sqrt[n]{|a_n|}=1$;
   \item
    For a series $f(z)=\sum _{n=0}^\infty a_n z^n$, $z\in\C$, we have $R_{convergence}=1$.
\end{enumerate}
M.~Smith proved that a subexponential growth of a Lie algebra implies a subexponential growth of its
universal enveloping algebra~\cite{Smith76}.
Borho and Kraft suggested {\em superdimensions} in order to
measure subexponential growths like $\exp(n^\beta)$, $0< \beta< 1$~\cite{BorKra}:
$$ \Sdim A= \limsup_{n\to\infty}  \frac{\ln\ln \gamma_A(n)}{\ln n},\qquad
\LSdim A= \liminf_{n\to\infty}  \frac{\ln\ln \gamma_A(n)}{\ln n}.$$
Kirillov and Kontsevich showed that a Lie algebra generated by two vector fields
in general position on the line has an intermediate growth of such kind~\cite{KirKon83}.
Lie algebras can have growth faster than any function $\exp(n^\beta)$, $\beta< 1$, but still subexponential.
For example, Lichtman proved that finitely generated solvable Lie algebras have subexponential growth~\cite{Licht84}.

In order to study such growths the author introduced the following scale of functions.
Let $\ln^{(1)}n=\ln n$, $\ln^{(q+1)}n=\ln(\ln^{(q)}n)$, $q\in\N$.
For small values, $\ln^{(q)}n$ may be not defined, negative, or less than 1, in such cases we redefine it to be 1.
Consider a series of functions $\Phi^q_\a (n)$, $q=1,2,3,\dots$ with a real parameter $\a\in\R^+$:
\begin{equation*}
\begin{split}
 \Phi^1_\a(n)&=\a,\\
 \Phi^2_\a(n)&=n^\a,\\
 \Phi^3_\a(n)&=\exp(n^{\a/(\a+1)}),\\
 \Phi^q_\a(n)&=\exp\bigg(\frac n {(\ln^{(q-3)}n)^{1/\a}} \bigg);
        \qquad q=4,5,6,\dots.
\end{split}
\end{equation*}
Now, upper and lower {\em $q$--dimensions}, where $q\in\N$, of a finitely generated algebra $A$ are defined as~\cite{Pe96}:
\begin{align*}
\Dim^q A  &= \inf \{ \a\in\R^+ \mid \exists N:\,\gamma_A(n)\le \Phi^q_\a(n),\ n\ge N\},\\
\LDim^q A &= \sup \{ \a\in\R^+ \mid \exists N:\,\gamma_A(n)\ge \Phi^q_\a(n),\ n\ge N\}.
\end{align*}

\begin{Lemma}[{\cite{Pe96}}]
$q$-dimensions of an algebra $A$ have the following properties:
\begin{enumerate}
 \item
   Functions $\Phi^q_\a(n)$ are of subexponential growth;
 \item
   $\Dim^q A=\a$ means that $\gamma_A(n)$ behaves like $\Phi^q_\a (n)$;
       \item
         $\Dim^1A=\dim_K A$ (vector space dimension);
       \item
         $\Dim^2A=\GKdim A$ (Gelfand-Kirillov dimension);
       \item
         $\Dim^3A$ coincides with $\Sdim A$ up to normalization (i.e. superdimension);
 \item
   Let $\Dim^q A=\a$, $0<\a<\infty$, $q\ge 2$; then
   $\Dim^{q-1} A=\infty$ and $\Dim^{q+1} A=0$.
\end{enumerate}
\end{Lemma}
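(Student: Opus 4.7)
The plan is to verify the six items in order; each reduces to an elementary asymptotic estimate for the scale $\Phi^q_\a(n)$.

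For (i), compute $\ln\Phi^q_\a(n)/n$ directly: it equals $\a/n$, $\a\ln n/n$, $n^{-1/(\a+1)}$, and $(\ln^{(q-3)}n)^{-1/\a}$ for $q=1,2,3,\ge 4$ respectively. All four limits are $0$, so $\sqrt[n]{\Phi^q_\a(n)}\to 1$ and $\Phi^q_\a$ is subexponential in the sense recalled in Section~\ref{SSinterm}.

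Items (ii)--(v) then follow by unfolding the definitions. For (ii), the $\inf$ definition gives: for every $\epsilon>0$, $\gamma_A(n)\le\Phi^q_{\a+\epsilon}(n)$ eventually while $\gamma_A(n)\le\Phi^q_{\a-\epsilon}(n)$ fails eventually, which is the sense in which $\gamma_A(n)$ ``behaves like'' $\Phi^q_\a(n)$. For (iii), $\Phi^1_\a(n)=\a$ is the constant $\a$ and $\gamma_A(n)$ is non-decreasing, so the infimum of eventual constant dominators equals $\sup_n\gamma_A(n)=\dim_K A$. For (iv), the assertions ``$\gamma_A(n)\le Cn^\a$ for some $C$'' and ``$\gamma_A(n)\le n^{\a+\epsilon}$ eventually, for every $\epsilon>0$'' define the same set of admissible exponents, since any constant $C$ is absorbed into a factor $n^\epsilon$; hence the infima coincide and $\Dim^2 A=\GKdim A$. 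For (v), from $\ln\ln\Phi^3_\a(n)=\tfrac{\a}{\a+1}\ln n$ one reads off the monotone bijection $\Sdim A=\Dim^3 A/(\Dim^3 A+1)$ between $[0,\infty]$ and $[0,1]$, which is the intended normalization.

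Item (vi) is the main content. The key estimate is that adjacent levels of the scale are separated by every positive exponent: for all $\a,\b>0$ and all $q\ge 2$,
\[
\Phi^{q-1}_\b(n)=o\bigl(\Phi^q_\a(n)\bigr)\qquad\text{and}\qquad\Phi^q_\b(n)=o\bigl(\Phi^{q+1}_\a(n)\bigr),\quad n\to\infty.
\]
Granting this, assume $\Dim^q A=\a\in(0,\infty)$. Taking $\b=2\a$, the $\inf$ definition gives $\gamma_A(n)\le\Phi^q_{2\a}(n)$ eventually, and the second comparison yields $\Phi^q_{2\a}(n)\le\Phi^{q+1}_\gamma(n)$ eventually for every $\gamma>0$; hence $\Dim^{q+1}A\le\gamma$ for all $\gamma>0$, i.e.\ $\Dim^{q+1}A=0$. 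Dually, $\Dim^q A>\a/2$ forces $\gamma_A(n)>\Phi^q_{\a/2}(n)$ infinitely often, and the first comparison then yields $\gamma_A(n)>\Phi^{q-1}_\gamma(n)$ infinitely often for every $\gamma>0$, so no $\gamma$ dominates eventually and $\Dim^{q-1}A=+\infty$.

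The hard part is the two boundary scale comparisons where the analytic form of $\Phi^q_\a$ changes. At $q=3$ the step $\Phi^3_\b(n)=o(\Phi^4_\a(n))$ reduces after one logarithm to $(\ln n)^{1/\a}=o(n^{1/(\b+1)})$; for $q\ge 4$ both comparisons reduce to $(\ln^{(q-3)}n)^{1/\a}=o((\ln^{(q-4)}n)^{1/\b})$, which holds because each further iterated logarithm grows arbitrarily more slowly than the previous. Once these scale comparisons are in place, the rest is routine bookkeeping.
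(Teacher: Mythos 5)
Your proposal is correct. Note, however, that the paper does not prove this lemma at all: it is quoted from the author's earlier work \cite{Pe96}, so there is no in-paper argument to compare against; your elementary verification is exactly the kind of routine unfolding that the citation replaces. Two cosmetic points: in item (i) the quantity for $q=1$ is $\ln\Phi^1_\a(n)/n=(\ln\a)/n$, not $\a/n$ (the limit is still $0$, so nothing changes); and in item (vi), to pass from $\Dim^q A=\a$ to ``$\gamma_A(n)\le\Phi^q_{2\a}(n)$ eventually'' you should either invoke the monotonicity of $\Phi^q_\b(n)$ in the parameter $\b$ (true for large $n$ in all cases $q\ge 2$, since $\b\mapsto\b/(\b+1)$ increases and $\b\mapsto(\ln^{(q-3)}n)^{1/\b}$ decreases once $\ln^{(q-3)}n\ge 1$), or simply pick an admissible $\b'\in(\a,2\a)$ from the definition of the infimum and apply your scale comparison with that $\b'$; since your comparison $\Phi^q_{\b}=o(\Phi^{q+1}_\gamma)$ holds for every $\b>0$, the argument goes through verbatim. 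With these trivial touch-ups the proof is complete, and the key content --- the separation of adjacent levels of the scale, including the boundary transitions at $q=3,4$ --- is handled correctly.
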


Importance of $q$-dimensions is demonstrated by the following results.

\begin{Theorem}[{\cite{Pe96}}]\label{TgrowthU}
Let $L$ be a finitely  generated Lie algebra belonging to a level $q$, namely:
$$\Dim^q L=\a>0,\quad  q\in\N, \quad \text {(plus one technical condition for $q=2$)}.$$
Then its universal enveloping algebra $U(L)$ has growth of the next level $q+1$ with the same parameter $\a$:
$$\Dim^{q+1}U(L)=\a.$$
\end{Theorem}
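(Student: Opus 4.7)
The plan is to convert growth information about $L$ into information about the Hilbert series of $U(L)$ via the Poincar\'e--Birkhoff--Witt theorem, and then apply a saddle-point asymptotic analysis to recover the growth of $U(L)$.

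Fix a finite generating set $X$ of $L$ and work with the associated filtration $\{L^{(X,n)}\}_{n\ge 0}$. Choose a basis of $L$ consisting of homogeneous elements with respect to this filtration, assigning weight $n$ to a basis element lying in $L^{(X,n)}\setminus L^{(X,n-1)}$. Let $b_n$ denote the number of such basis elements, so the weight growth function is $\tilde\gamma_L(m)=\sum_{n\le m}b_n\sim\gamma_L(m)$ and carries the level-$q$ asymptotics prescribed by $\Dim^q L=\alpha$. By the PBW theorem, $U(L)$ has a basis of ordered monomials in this basis of $L$, and additivity of the weight yields the Hilbert series identity
$$
H_{U(L)}(z)=\prod_{n\ge 1}\frac{1}{(1-z^n)^{b_n}},
$$
so that $\tilde\gamma_{U(L)}(m)$ equals the coefficient of $z^m$ in $H_{U(L)}(z)/(1-z)$.

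Next I would analyze the behaviour of $H_{U(L)}$ near the singularity $z=1$. Writing $z=e^{-t}$ and expanding $\ln(1/(1-e^{-nt}))=\sum_{k\ge 1}e^{-nkt}/k$, Abel summation turns
$$
\ln H_{U(L)}(e^{-t})=\sum_{n\ge 1}b_n\sum_{k\ge 1}\frac{e^{-nkt}}{k}
$$
into an integral against $B(m):=\tilde\gamma_L(m)$. Inserting the level-$q$ asymptotics of $B$ yields an expansion $\ln H_{U(L)}(e^{-t})\sim F_q(1/t)$ as $t\to 0^+$, where $F_q$ is a function determined by $\Phi^q_\alpha$ and carries the parameter $\alpha$ faithfully (for $q=2$, for instance, $F_2(r)$ is comparable to $r^\alpha$ up to a logarithmic factor).

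The final step is a classical saddle-point extraction: balance $mt$ against $F_q(1/t)$ to locate the saddle $t=t^*(m)$, giving
$$
\ln\tilde\gamma_{U(L)}(m)\sim m\, t^*(m)+F_q(1/t^*(m)).
$$
Solving the saddle-point equation and simplifying reproduces the function $\Phi^{q+1}_\alpha(m)$ with the same parameter $\alpha$, so $\Dim^{q+1}U(L)=\alpha$ follows by matching the upper and lower bounds supplied by the hypothesis. The technical condition at $q=2$ rules out pathological oscillation of $b_n$ and ensures that the saddle is well defined. I expect the main obstacle to be exactly this last step: pushing the asymptotic analysis through with matched constants so that the \emph{same} $\alpha$ is reproduced on both sides, giving $\Dim^{q+1}U(L)=\LDim^{q+1}U(L)=\alpha$ rather than values differing by a bounded factor.
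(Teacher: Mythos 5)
Your route is, in essence, the proof of the cited result: the paper itself does not reprove Theorem~\ref{TgrowthU} but quotes it from \cite{Pe96}, where the argument is exactly the one you outline --- a weighted PBW basis giving the Euler-type product $\prod_{n\ge1}(1-z^n)^{-b_n}$ for the Hilbert series of $U(L)$, analysis of $\ln H_{U(L)}(e^{-t})$ as $t\to0^+$, and a saddle-point extraction that relates $\gamma_{U(L)}(n)$ to $\gamma_L(m(n))$ for an explicit $m(n)<n$ (the paper alludes to precisely this structure of the estimates when it uses the theorem for its enveloping-algebra corollary), with the technical condition at $q=2$ playing the regularity role you guessed.

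One caution on your final step: the hypothesis $\Dim^q L=\a$ is only a $\limsup$-type condition (eventual upper bound by $\Phi^q_{\a+\epsilon}$, lower bound by $\Phi^q_{\a-\epsilon}$ along a subsequence), so the saddle-point analysis must be run as two separate one-sided estimates rather than a genuine two-sided asymptotic ``$\sim$''; in particular your parenthetical conclusion $\Dim^{q+1}U(L)=\LDim^{q+1}U(L)=\a$ overreaches --- the theorem asserts only $\Dim^{q+1}U(L)=\a$, and equality of the lower dimension cannot follow from the stated hypothesis, as the oscillating-growth algebras constructed in this very paper (whose enveloping algebras oscillate between levels) demonstrate.
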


Recall that {\em solvable} Lie algebras of length $q$ are defined
by the identity $S_q(X_1,\dots,X_{2^q})\equiv 0$;
where the last polynomial is defined recursively $S_1(X_1,X_2)=[X_1,X_2]$, and
$$S_{q+1}(X_1,\dots,X_{2^{q+1}})=[S_q(X_1,\dots,X_{2^q}),S_q(X_{2^q+1},\dots,X_{2^{q+1}})],\quad q\ge 1.$$

Our $q$-dimensions allow to specify the subexponential growth of solvable Lie algebras.
\begin{Theorem}[{\cite{Pe96}}]\label{TAq}
Let  $L=F(\AAA^q,k)$ be the free solvable Lie algebra of length $q$, $q\in\N$,
with $k$ generators, $k\geq2$. Then
$$\Dim^{q}L=\LDim^{q}L=k.$$
\end{Theorem}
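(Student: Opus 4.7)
I propose to prove the theorem by induction on $q$, using Theorem~\ref{TgrowthU} as the engine for passing from level $q$ to level $q+1$. Let $F_k$ denote the absolutely free Lie algebra on $k$ generators, so that $F(\AAA^q,k)=F_k/F_k^{(q)}$, where $F_k^{(q)}$ is the $q$-th term of the derived series. The base case $q=1$ is immediate: $F(\AAA^1,k)=F_k/[F_k,F_k]\cong K^k$ is abelian, so $\Dim^1 F(\AAA^1,k)=\LDim^1 F(\AAA^1,k)=k$ by the identification of $\Dim^1$ with the vector-space dimension.

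For the inductive step, set $L=F(\AAA^{q+1},k)$ and $M=L/L^{(q)}\cong F(\AAA^q,k)$. The abelian ideal $L^{(q)}=F_k^{(q)}/F_k^{(q+1)}$ carries a natural $M$-module structure via the adjoint action, which extends to a $U(M)$-module structure. The main algebraic input will be a generalization of Shmelkin's theorem on free metabelian Lie algebras, namely that $L^{(q)}$ is a free $U(M)$-module of finite rank $r=r(k,q)$, generated in bounded weight. Granted this, the weight growth functions satisfy $\gamma_{L^{(q)}}(n)\sim\gamma_{U(M)}(n)$, since a free module of finite rank and bounded-weight generators has the same growth as the acting algebra up to the equivalence $\sim$ of Section~\ref{Sdef}.

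Now the induction hypothesis yields $\Dim^{q}M=\LDim^{q}M=k$, and Theorem~\ref{TgrowthU} gives $\Dim^{q+1}U(M)=k$; the matching lower bound $\LDim^{q+1}U(M)=k$ follows from the same argument in~\cite{Pe96} applied symmetrically to lower dimensions. Since $L=M\oplus L^{(q)}$ at the level of vector spaces and the growth of $L^{(q)}$ dominates that of the (strictly slower growing) quotient $M$, combining the previous displays yields $\Dim^{q+1}L=\LDim^{q+1}L=k$, completing the induction.

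The main obstacle is the freeness of $L^{(q)}$ as a $U(M)$-module. For $q=1$ this is Shmelkin's classical theorem; for general $q$ it follows from realizing $L$ inside an iterated wreath-product construction for free solvable Lie algebras and tracking the action on the last layer of the derived series. A secondary technical point is verifying, when $q=2$, the extra hypothesis attached to Theorem~\ref{TgrowthU} for $M=F(\AAA^2,k)$; this reduces to a mild regularity statement on the polynomial growth of the free metabelian Lie algebra, which is again a direct consequence of Shmelkin's structure theorem.
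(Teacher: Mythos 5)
First, a point of reference: the paper does not prove Theorem~\ref{TAq} at all — it is quoted from~\cite{Pe96} — so your argument has to stand on its own; and its overall scheme (induct on $q$, set $M=L/L^{(q)}\cong F(\AAA^{q},k)$, control the abelian layer $L^{(q)}$ through $U(M)$, and climb one level via Theorem~\ref{TgrowthU}) is indeed the natural one and close in spirit to the source. However, your ``main algebraic input'' is false as stated: $L^{(q)}$ is \emph{not} a free $U(M)$-module of finite rank in general, and in particular it is not ``Shmelkin's classical theorem'' for $q=1$. For $q=1$ and $k\ge 3$, the Lie-algebra Magnus/Shmelkin embedding identifies the derived subalgebra of the free metabelian algebra $F(\AAA^2,k)$ with the kernel of the Koszul map $U^{k}\to U_{+}$, $U=K[x_1,\dots,x_k]$, i.e.\ with the second syzygy of the trivial module, which is free only when $k=2$. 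Concretely, for $k=3$ the degree-$n$ component of $F(\AAA^2,3)'$ has dimension $n^{2}-1$ ($n\ge2$), and $(1-t)^{3}\sum_{n\ge2}(n^{2}-1)t^{n}=3t^{2}-t^{3}$ has a negative coefficient, whereas a graded free $U$-module would give a numerator with nonnegative coefficients. Shmelkin's theorem provides an \emph{embedding} of $N/[N,N]$ (here $N=F_k^{(q)}$) into the free module $U(M)^{k}$, not freeness.

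The strategy is nonetheless repairable, because you only need growth equivalence, not freeness: the embedding $L^{(q)}=N/[N,N]\hookrightarrow U(M)^{k}$ gives $\gamma_{L^{(q)}}(n)\le k\,\gamma_{U(M)}(n)$; and since $U(M)$ is a domain, any nonzero homogeneous $v\in L^{(q)}$ of degree $d$ has zero annihilator inside the free module, so $U(M)v$ is a free cyclic submodule and $\gamma_{L^{(q)}}(n+d)\ge\gamma_{U(M)}(n)$. Together with $\dim L_n=\dim M_n+\dim(L^{(q)})_n$ this yields $\gamma_L\sim\gamma_{U(M)}$, and one should check (it is true, but should be said) that $\Dim^{q}$ and $\LDim^{q}$ are invariant under the equivalence $\sim$, since a linear rescaling of the argument is absorbed by an arbitrarily small change of $\a$. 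Two further soft spots remain: Theorem~\ref{TgrowthU} as quoted controls only the \emph{upper} dimension of $U(M)$, so the matching statement $\LDim^{q+1}U(M)=k$ is a separate lower estimate from~\cite{Pe96} that you must cite or prove rather than declare symmetric; and the technical hypothesis at $q=2$ has to be verified from the actual asymptotics of the free metabelian algebra ($\gamma(n)\asymp n^{k}$), not from the (false) freeness claim. With those corrections the induction goes through; as written, the step producing $\gamma_{L^{(q)}}\sim\gamma_{U(M)}$ rests on a wrong lemma.
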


In particular, these two theorems imply that $\Dim^{q+1}U(F(\AAA^q,k))=k$.
More generally, a Lie algebra $L$ is called {\em polynilpotent} with a tuple $(s_q,\dots,s_2,s_1)$ iff there exists  a chain of ideals~\cite{Ba}
$$0=L_{q+1}\subset L_q\subset\dots\subset L_2\subset L_1=L,$$
where  $L_i/L_{i+1}$ is nilpotent of class $s_i$, $i=1,\dots,q$.
By $\NN_{s_q}\!\cdots \NN_{s_2}\NN_{s_1}$ we denote the class of all such algebras, the tuple being fixed.
If $s_q=\cdots=s_1=1$ then we obtain the variety ${\AAA}^q$ of solvable Lie algebras of length~$q$.
On the other hand, any polynilpotent Lie algebra is solvable.

{\em Free polynilpotent} Lie algebras yield interesting examples of solvable Lie algebras.
The growth in case $\NN_{s_q}\!\cdots \NN_{s_2}\NN_{s_1}$ is similar to that for $\AAA^q$ (with other constants).
In particular, we have the following generalization of Theorem~\ref{TAq}.

\begin{Theorem}[{\cite{Pe96}}]
Let  $L=F(\NN_{s_q}\!\cdots \NN_{s_2}\NN_{s_1},k)$ be the free polynilpotent Lie algebra of rank $k\geq2$. Then
$$\Dim^{q}L=\LDim^{q}L= s_2\dim F(\NN_{s_1},k).$$
\end{Theorem}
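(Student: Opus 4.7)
My plan is to argue by induction on the polynilpotency depth $q$, using Shmelkin's description of the bottom verbal ideal combined with Theorem~\ref{TgrowthU} to propagate growth information from one level to the next.

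For the base case $q=2$, let $M\trianglelefteq L$ be the $\NN_{s_1}$-verbal ideal, so that $L/M\cong F(\NN_{s_1},k)$ is finite dimensional of dimension $d:=\dim F(\NN_{s_1},k)$ and $M\in\NN_{s_2}$. By Shmelkin's theorem for free polynilpotent Lie algebras, the abelianization $M/[M,M]$ is a free module of finite rank over the universal enveloping algebra $U(L/M)$. Since $L/M$ is a finite-dimensional nilpotent Lie algebra of dimension $d$, the PBW theorem gives $\GKdim U(L/M)=d$, whence the weighted growth function of $M/[M,M]$ behaves like $n^{d}$. Because $M$ is nilpotent of class $s_2$, its graded components are spanned by iterated brackets of length at most $s_2$ among lifts of a basis of $M/[M,M]$, and a standard convolution estimate produces $\gamma_M(n)\sim n^{s_2 d}$, with freeness ensuring matching lower bounds. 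Thus $\Dim^2 L=\LDim^2 L=\GKdim M=s_2 d$.

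For the inductive step $q\geq 3$, assume the statement for $q-1$ and set $N:=L_q$, the innermost ideal of the polynilpotency chain; then $N\in\NN_{s_q}$ and $L/N\cong F(\NN_{s_{q-1}}\cdots\NN_{s_1},k)$. The inductive hypothesis gives $\Dim^{q-1}(L/N)=\LDim^{q-1}(L/N)=s_2 d$, and Theorem~\ref{TgrowthU} promotes this to $\Dim^{q}U(L/N)=\LDim^{q}U(L/N)=s_2 d$. Applying Shmelkin again, $N/[N,N]$ is a free $U(L/N)$-module of finite rank, so $\Dim^{q}(N/[N,N])=s_2 d$. The key observation, valid at levels $q\geq 3$, is that the scale function $\Phi^{q}_\a(n)$ is insensitive to multiplication by polynomial factors, so a convolution of boundedly many copies of $\gamma_{N/[N,N]}$ produces no change in $\Dim^{q}$; hence the nilpotency of class $s_q$ of $N$ over $N/[N,N]$ is absorbed, giving $\Dim^{q}N=\LDim^{q}N=s_2 d$. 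Because $\Dim^{q}(L/N)=0$ by the scale-shift property of $q$-dimensions (part (vi) of the preceding lemma applied to $L/N$, which sits at level $q-1$), the growth of $L$ is controlled by that of $N$, and we conclude $\Dim^{q}L=\LDim^{q}L=s_2 d$.

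The main technical obstacle is the base case: verifying rigorously that Shmelkin's theorem delivers freeness of $M/[M,M]$ over $U(L/M)$ of the claimed finite rank, and then executing the weighted convolution estimate through the $s_2$-fold iterated brackets to extract $\gamma_M(n)\sim n^{s_2 d}$ with matching upper and lower constants. A secondary subtlety is checking the technical condition for $q=2$ in Theorem~\ref{TgrowthU} at each inductive step; this should follow from the explicit PBW filtration available on free polynilpotent algebras. Once these are in place, propagation through higher levels is routine, since the scale $\Phi^{q}_\a$ for $q\geq 3$ is robust under the polynomial-factor perturbations introduced by finitely iterated commutators.
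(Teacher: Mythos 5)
Your overall skeleton — induct on the length $q$, control the abelianized verbal ideal as a graded module over the enveloping algebra of the quotient, climb one level of the scale via Theorem~\ref{TgrowthU}, pick up the factor $s_2$ at the bottom level, and absorb the bounded nilpotency class by convolution-insensitivity of $\Phi^q_\a$ for $q\ge 3$ — is essentially the mechanism behind the actual proof (the paper only cites \cite{Pe96}; the computation is carried out there and in \cite{Pe99int} via Schreier-type generating-function identities). However, the module-theoretic statement you lean on is false, and it is the load-bearing step. The relation module is \emph{not} a free $U(L/M)$-module of finite rank. What is true (the Lie analogue of the Magnus/Shmel'kin embedding) is an exact sequence of graded $U(L/M)$-modules $0\to M/[M,M]\to U(L/M)\otimes X\to \omega U(L/M)\to 0$, with $X$ the $k$-dimensional generating space and $\omega U$ the augmentation ideal. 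Already for $q=2$, $s_1=s_2=1$, $k\ge 3$ this sequence forces the Hilbert series of $F'/F''$ to be $\big(kt-1+(1-t)^k\big)\cdot$(Hilbert series of $U$), and $kt-1+(1-t)^k=\binom k2 t^2-\binom k3 t^3+\cdots$ has negative coefficients, so the module is a proper non-free submodule of a free module. Worse, in your inductive step ($q\ge 3$) the module $N/[N,N]$ is not even finitely generated over $U(L/N)$: if finitely many $w_1,\dots,w_m\in W$ generated it, then, comparing homogeneous components in the graded free algebra $F$, the ideal they generate would equal the verbal ideal $W$, i.e.\ the free polynilpotent algebra $L/N$ of length $q-1\ge 2$ would be finitely presented — and free metabelian (more generally free polynilpotent) Lie algebras of rank $\ge 2$ are not. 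So "free of finite rank" cannot deliver your upper bound, and "freeness" is also not what gives the lower bound.

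The gap is repairable without changing your architecture. Use the exact sequence above (equivalently, Petrogradsky's Schreier-type formula for the free generating function of the ideal $W$, which is a free Lie algebra by Shirshov–Witt): since all growths involved are subexponential, the coefficients $k u_{n-1}-u_n\sim(k-1)u_{n-1}$ of $(kt-1)\cdot$(Hilbert series of $U(L/N)$) show that the growth of $N/[N,N]$ is equivalent to that of $U(L/N)$ from both sides, with no freeness or finite generation needed. For the matching lower bound at the bottom level, the freeness that matters is that of the ideal $V$ itself as a Lie algebra, which makes $M=V/\gamma_{s_2+1}(V)$ relatively free, so its top lower-central layer is a genuine free-Lie-algebra layer on a space of Gelfand–Kirillov dimension $d$ and contributes $n^{s_2 d}$; your appeal to freeness of $M/[M,M]$ over $U(L/M)$ does not give this. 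Finally, Theorem~\ref{TgrowthU} as quoted transfers only the upper dimension $\Dim$; your induction also uses the $\LDim$ counterpart, which holds in \cite{Pe96} but must be invoked explicitly.
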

A more precise asymptotic for free finitely generated solvable (more generally, polynilpotent)
Lie algebras was specified by the author in~\cite{Pe99int}.
A further generalization for solvable Lie {\it super}algebras see in~\cite{KlePe05}.

Let us mention one more result on intermediate growth of Lie algebras.
Let $W_ n$ be the Witt algebra and $\var(W_n)$ the variety defined by all identical relations of this algebra.
Formulae from~\cite{Molev86} imply the following result in terms of $q$-dimensions.
\begin{Theorem}[{\cite{Molev86}}]
Let $L$ be the free algebra of rank $k$ of the variety $\var(W_n)$, where $k\ge n+1$.
Then $\Dim^3 L = \LDim^ 3 L = n$.
\end{Theorem}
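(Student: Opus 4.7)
\emph{Proof proposal.} The plan is to deduce the statement directly from the asymptotic formulas for the dimensions of the multihomogeneous components of $L = F(\var(W_n), k)$ established in \cite{Molev86}, and then to translate that asymptotic into the $q$-dimension scale defined by the functions $\Phi^3_\alpha$.

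First, I would set up the appropriate framework. Since $W_n$ is the Lie algebra of all derivations of the polynomial ring in $n$ variables, every element of $L$ is determined by its polynomial action on $k$-tuples of generic vector fields, which gives an embedding of $L$ into a space of polynomial maps $W_n^k \to W_n$ (modulo identities). The multihomogeneous component of total degree $m$ is then analysed via the representation theory of $S_m$ acting on multilinear Lie polynomials, with the relevant cocharacters supported on Young diagrams having at most $n+1$ rows (which is a reflection of $\dim W_n$-depth combinatorics, not of $\dim W_n$ itself). The hypothesis $k \ge n+1$ ensures that none of these cocharacters is killed by restriction to $k$ variables, so the full multilinear content of $\var(W_n)$ is detected inside $L$.

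Next, I would extract from Molev's formulas the two-sided asymptotic
\begin{equation*}
\ln \gamma_L(m) \sim c_n \cdot m^{n/(n+1)}, \qquad m \to \infty,
\end{equation*}
for a positive constant $c_n$. The exponent $n/(n+1)$ is the decisive invariant and arises from the combinatorics of partitions with at most $n+1$ parts: the dominant contribution to the dimension count comes from partitions whose number of parts scales like $m^{1/(n+1)}$, and a standard saddle-point (or Hardy--Ramanujan-type) estimate yields the stated exponent. Comparing with the model function $\Phi^3_\alpha(m) = \exp\bigl(m^{\alpha/(\alpha+1)}\bigr)$, the equation $\alpha/(\alpha+1) = n/(n+1)$ forces $\alpha = n$, so we obtain $\Dim^3 L \le n$ from the upper bound and $\LDim^3 L \ge n$ from the lower bound simultaneously, giving the desired equalities.

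The main obstacle, were one to reprove everything without quoting \cite{Molev86}, is the matching of the upper and lower bounds with exactly the same exponent $n/(n+1)$. The upper bound is the easier half and follows from the embedding into polynomial maps together with a crude monomial count. The sharp lower bound, on the other hand, requires exhibiting sufficiently many linearly independent Lie polynomials whose evaluations on generic vector fields remain independent; this rests on a non-vanishing of $S_m$-character values for partitions with few rows, and it is precisely here that the restriction $k \ge n+1$ is used. Once those two bounds are in place, the translation into the $\Dim^3 / \LDim^3$ language is a one-line computation with the function $\Phi^3_\alpha$.
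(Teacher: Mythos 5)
The paper gives no independent proof of this statement — it simply records that Molev's formulae, rewritten in the author's $q$-dimension scale, yield $\Dim^3 L=\LDim^3 L=n$ — and your core argument is exactly that same reduction: a two-sided asymptotic $\ln\gamma_L(m)\asymp m^{n/(n+1)}$ imported from \cite{Molev86}, followed by the (correct) observation that in $\Phi^3_\alpha(m)=\exp\bigl(m^{\alpha/(\alpha+1)}\bigr)$ the equation $\alpha/(\alpha+1)=n/(n+1)$ forces $\alpha=n$, so both the upper and lower $3$-dimensions equal $n$. One caution: your heuristic asides are not load-bearing but are also not reliable — Young diagrams ``with at most $n+1$ rows'' cannot simultaneously have a number of parts growing like $m^{1/(n+1)}$, and in the standard partition-type derivation of the $\exp\bigl(m^{n/(n+1)}\bigr)$ asymptotic it is the typical part size, not the number of parts, that scales like $m^{1/(n+1)}$ (the number of parts scales like $m^{n/(n+1)}$), so if you intended to reprove Molev's bounds rather than quote them, that part of the sketch would need to be reworked.
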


\section{Main result: Phoenix Lie algebras, their construction and properties}

\subsection{Drosophila Lie algebras}
Fix an integer $k\ge 3$.
Consider a set of letters $\Theta_0=\{1,\ldots,k\}$. We construct words in the alphabet $\Theta_0$ recursively:
$$
\Theta_{n+1} \subset \{ab\mid a,b\in \Theta_n,\ a\ne b\},\quad n\ge 0;\qquad
\Theta := \mathop{\cup}\limits_{n=0}^\infty \Theta_n.
$$
The inclusion sign above means that at each step we select words, while the remaining words are discarded.
We always additionally assume that $|\Theta_n|\ge 3$ for $n\ge 0$.
We call $\Theta_n$ {\it (fruit) flies} or {\it drosophilas} of {\it generation} $n$, $n\ge 0$, and $\Theta$ a {\it specie} of flies,
the selection process is interpreted as an action of an {\it experimenter}.
Suppose that at each step above the inclusion is substituted by the equality (i.e. we take all words),
then we get a specie of {\it wild flies} denoted as $\bar\Theta$.

Fix a prime $p>0$ and a specie $\Theta$.
Let $R=R(\Theta):=K[t_a| a\in \Theta]/(t_a^p| a\in \Theta)$ be the truncated polynomial ring, and $\{\partial_a| a\in\Theta\}$
respective partial derivations.
To obtain a better result (namely, to achieve a quasi-liner growth as a lower bound)
we shall need a formal divided power series ring $R(\Theta,\bar S)$ which also depends on a tuple $\bar S$, see definitions in the next section.
Define recursively derivations of $R(\Theta)$ indexed by the specie of flies $\Theta$:
\begin{equation}\label{pivot00}
v_a=\dd_a+t_a^{p-1}\!\!\!\! \sum_{\substack{ b\in \Theta_n\\[1pt] ab\in \Theta_{n+1}}}\!\!\!\!  t_b^{p-1} v_{ab},\qquad a\in \Theta_n, \ n\ge 0.
\end{equation}
We refer to $\{v_a|  a\in \Theta\}\subset \Der R$ as {\it virtual pivot elements}.
We define a {\em drosophila algebra} as a restricted Lie algebra generated
by the pivot elements of the zero generation $\LL(\Theta):=\Lie_p(v_1,\ldots,v_k)\subset \Der R$.

\subsection{Phoenix Lie algebras}
Let us formulate the main result of the paper.
Namely, we construct a family of nil 3-generated restricted Lie algebras of intermediate oscillating growth.
We call them {\it Phoenix algebras} because their growth is "quasi-linear" for infinitely many periods of time
(the algebra is "in hibernation", or "almost dying"), interchanged with periods of a rather fast intermediate growth.

\begin{Theorem}\label{T_main}
Let $\ch K=p> 0$, denote $\lambda:=\log_2(p{-}\frac 12)$.
Fix parameters  $q\in\N$ and $\kappa\in\R^+$. 
There exist a specie $\Theta$, a tuple $\bar S$, and
3-generated restricted Lie algebra $\LL=\LL(\Theta,\bar S)$ with the following properties.
\begin{enumerate}
\item For any $\epsilon>0$, $\delta>0$ there exist infinitely many integers $n$  satisfying (moments of "fast" growth):
$$
\exp\Big( \frac{n} {(\ln n)^{\lambda+\epsilon}} \Big)
\le \gamma_{\LL}(n)\le
\exp\Big(\delta\frac{n} {(\ln n)^{\lambda}} \Big).
$$
\item For any $\epsilon>0$
there exist infinitely many integers $n$ satisfying (moments of "quasi-linear" growth):
$$ n \big(\ln^{(q)} \!n\big )^{\kappa-\epsilon} \le\gamma_\LL(n) \le n \big(\ln^{(q)} \!n\big )^{\kappa+\epsilon}.$$
\item $\LL$ has an intermediate growth.
Moreover, its growth function is in a "wide sector" formed by two functions of types i) and ii).
More precisely, for any $\epsilon>0$, $\delta>0$ there exists $n_0$ such that:
$$
n \big(\ln^{(q)} \!n\big )^{\kappa-\epsilon}
\le \gamma_{\LL}(n)\le
\exp\Big(\delta\frac{n} {(\ln n)^{\lambda}} \Big), \qquad n\ge n_0.
$$
\item The $p$-mapping of $\LL$ is nil.
\end{enumerate}
\end{Theorem}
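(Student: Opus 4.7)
The plan is to analyse $\LL(\Theta,\bar S)$ through a natural weight grading and to reduce the computation of $\gamma_\LL(n)$ to a counting problem in the action ring $R(\Theta,\bar S)$. First I would assign exponential weights $\wt(v_a)=c^{|a|}$ to the virtual pivot elements, where $|a|$ denotes the generation of the word $a\in \Theta$ and $c=c(p)$ is tuned so that both the bracket coming from~\eqref{pivot00} and the $p$-operation respect the grading. By the invariance of $q$-dimensions and of the oscillation intervals under passage to the weight growth function, it suffices to bound $\tilde\gamma_\LL(n)$ using this grading. I would then establish a PBW-type monomial basis for $\LL$ inside $\Der R(\Theta,\bar S)$: the self-similarity of~\eqref{pivot00} gives a tight correspondence between weight-$\le n$ monomials in the pivots and admissible differential operators along paths of the specie $\Theta$, and the number of such operators is in turn controlled by the divided-power orders $S_a$ and by the generation profile $(|\Theta_n|)_{n\ge 0}$. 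This reduces the analytic problem to a combinatorial one about two numerical parameters that the experimenter chooses.

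With that reduction, the experimenter's construction is the crux. I would build $\Theta$ and $\bar S$ by alternating two kinds of blocks in the generation index $n$: explosive blocks in which $\Theta_{n+1}$ includes all admissible products $ab$, so $|\Theta_n|$ grows doubly exponentially and the counting estimate delivers the upper and lower bounds of (i) with the exponent $\lambda=\log_2(p-\tfrac12)$; and lean blocks in which $|\Theta_{n+1}|=3$, so the algebra thins to a tower with only three active branches and the growth drops to quasi-linear. The tuple $\bar S$ is enlarged only inside lean blocks, and choosing divided-power orders growing like a $(q-1)$-fold iterated exponential produces exactly the factor $(\ln^{(q)}\!n)^{\kappa\pm\epsilon}$ of (ii). Arranging the switching times between blocks to grow hierarchically (each block much longer than the sum of all previous ones) makes both regimes visible at infinitely many, well-separated scales, and the wide sector in (iii) follows because no transition between block types can push the growth outside that sector: the upper bound is inherited from the wild specie $\bar\Theta$, while the lower bound is preserved by the presence of the thin tower that is never removed.

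Property (iv) is handled by the same mechanism as in the Fibonacci and Shestakov--Zelmanov examples: induction on the virtual depth of $v_a$, using~\eqref{power_P} and the form of $v_a^{[p]}$ dictated by~\eqref{pivot00}, shows every pivot is $p$-nilpotent; because $\LL$ is generated by the three pivots $v_1,v_2,v_3$ of generation zero and the self-similar action recursively embeds copies of $\LL$ into itself as stabilisers of levels in $R(\Theta,\bar S)$, the verification extends to an arbitrary homogeneous element by a descent on weight, and then to arbitrary elements by~\eqref{power_P} once again.

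The hard part will be the sharp matching of constants in (i) and (ii) and the boundary bookkeeping between successive blocks. A transition at which a long explosive block abuts a long lean block can easily create an intermediate range of $n$ where the growth lies above $\exp(\delta n/(\ln n)^\lambda)$ or below $n(\ln^{(q)}\!n)^{\kappa-\epsilon}$, violating the one-sided bounds in (iii). Keeping these transitions under control forces the switching times, the divided-power orders inside each lean block, and the generation profile inside each explosive block to be chosen in a coordinated, iterative way, with each new block's parameters computed from the tail of counts produced by the previous blocks; verifying that coordination, and showing that the resulting three-generator algebra actually realises the recursive pivots $v_a$ at every depth, is likely to be the most technical portion of the proof.
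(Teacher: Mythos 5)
Your overall architecture (alternating ``explosive'' wild blocks with three-fly blocks, enlarging the divided-power orders like iterated exponentials inside the thin blocks, hierarchical switching times, and worrying about the transitions for part (iii)) does match the paper's plan, but two of your key steps have genuine gaps. First, your reduction of the growth estimates to a ``PBW-type monomial basis'' for $\LL$ inside $\Der R(\Theta,\bar S)$ is exactly what is \emph{not} available here: for wild (and hybrid) species the iteration of the commutator expansion of the pivots does not terminate, and the paper explicitly notes that no good monomial basis is known for these algebras. The actual growth bounds in the wild regime require different ideas on each side: the lower bound comes from exhibiting linearly independent multilinear brackets $F_n(v_{a_{\pi(1)}},\dots,v_{a_{\pi(2^n)}})$ indexed by coset representatives of a Sylow $2$-subgroup of $\Sym(2^n)$ (after projecting onto a suitable subspecie), giving $(2^n)!/2^{n(n-1)/2}$ elements of one weight, and Stirling then produces $\exp\big(Cn/(\ln n)^\lambda\big)$ with $\lambda=\log_2(p-\tfrac12)$; the upper bound comes from cutting the self-similar presentation of the generators at a carefully chosen generation $m$ depending on $n$ and counting the resulting products of tails, not from any basis. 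Without these (or substitutes), the sharp exponent $\lambda$ in (i) and the upper half of the sector in (iii) are unsupported. One also needs a ``cutting off senior generations'' comparison lemma relating $\gamma_\LL$ to the growth of the algebra generated by the pivots of generation $M$, since those pivots need not lie in $\LL$; your proposal implicitly assumes the recursive pivots are realised at every depth, which is false in general (only \emph{actual} pivot elements, congruent to the virtual ones modulo controlled error terms, are available).

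Second, your argument for nillity (iv) would not go through. Nillity of the pivot elements, or of all weight-homogeneous elements, does not imply nillity of arbitrary elements, because the $p$-mapping is not additive: $(x+y)^{[p]}$ involves the correction terms of~\eqref{power_P}, and a sum of $p$-nilpotent elements in a graded restricted Lie algebra can easily fail to be $p$-nilpotent. The paper's proof of this point is one of its central technical results: it introduces a \emph{depletion} function on pure Lie monomials, shows that depletion grows by at most $p-1$ under each $p$-th power while the weight is multiplied by $p$, and plays these two quantities against each other (using the uniformity of the tuple and the elementary bound $p^s+p^r-1\le\tfrac34 p^{s+r}$) to force any pure derivation occurring in $w^{p^N}$ to live in a bounded generation, which is then incompatible with its weight being at least $p^N$; a separate lemma about ``heads with positive-weight tails'' finishes the nilpotency. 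Your ``descent on weight plus~\eqref{power_P}'' sketch skips precisely this mechanism, and I do not see how to repair it without introducing an invariant of depletion type.
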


We can reformulate the oscillating nature of the growth function above in terms of $q$-dimensions introduced be the author
(see subsection~\ref{SSinterm}).
\begin{Corollary}
Let $\LL$ be as above, then its growth function is oscillating through levels 2,3,4:
\begin{enumerate}
\item $\Dim^4\LL=1/\lambda=\log_{p-1/2} (2)$;
\item $\LDim^2\LL=1$ (i.e. $\LGKdim \LL=1$);
\item more precisely, the lower bound is described as: $\LLdim^q \LL=\kappa$.
\end{enumerate}
\end{Corollary}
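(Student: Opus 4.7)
The corollary is a dictionary translation: it restates the three-layer growth bounds of Theorem~\ref{T_main} in the language of the $q$-dimensions recalled in Subsection~\ref{SSinterm}. My plan is to handle the three items separately, pairing in each case one upper and one lower bound from Theorem~\ref{T_main} with the relevant scale function and reading off the unique parameter that fits. I anticipate no substantive obstacle: each step reduces to a one-line comparison of exponential or power-logarithmic asymptotics. The only point requiring care is quantifier bookkeeping, namely matching the theorem's "infinitely often" estimates in (i), (ii) with the appropriate side of the $\inf$/$\sup$ defining $\Dim^q$, $\LDim^q$, $\LLdim^q$, and the "for all $n \ge n_0$" wide-sector estimates in (iii) with the complementary side of those extrema.

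For (i), I compare $\gamma_\LL(n)$ with $\Phi^4_\a(n) = \exp\bigl(n/(\ln n)^{1/\a}\bigr)$. The uniform upper bound of Theorem~\ref{T_main}(iii), $\gamma_\LL(n) \le \exp\bigl(\delta n/(\ln n)^\lambda\bigr)$ for $n \ge n_0(\delta)$, applied with any $\delta \le 1$, yields $\gamma_\LL(n) \le \Phi^4_{1/\lambda}(n)$ eventually, so $\Dim^4 \LL \le 1/\lambda$. Conversely, for any $\a < 1/\lambda$ I pick $\epsilon > 0$ with $\lambda + \epsilon < 1/\a$ and invoke the infinitely-often lower bound of Theorem~\ref{T_main}(i): infinitely many $n$ satisfy $\gamma_\LL(n) \ge \exp\bigl(n/(\ln n)^{\lambda+\epsilon}\bigr) > \Phi^4_\a(n)$, hence such $\a$ is not admissible in the infimum set and $\Dim^4 \LL \ge 1/\lambda$ after letting $\a \nearrow 1/\lambda$.

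Parts (ii) and (iii) follow from the same template applied now to the scale $n(\ln^{(q)} n)^\beta$. For (ii), the uniform lower bound of Theorem~\ref{T_main}(iii) dominates $\Phi^2_1(n) = n$ eventually, giving $\LDim^2 \LL \ge 1$; conversely, for any $\a > 1$ the infinitely-often upper bound $\gamma_\LL(n) \le n(\ln^{(q)} n)^{\kappa+\epsilon}$ from Theorem~\ref{T_main}(ii) is eventually below $n^\a$, forcing $\LDim^2 \LL \le 1$. For (iii), the uniform lower bound in Theorem~\ref{T_main}(iii) yields $\gamma_\LL(n) \succcurlyeq n(\ln^{(q)} n)^{\kappa-\epsilon}$ for every $\epsilon > 0$, whence $\LLdim^q \LL \ge \kappa$; and for $\beta > \kappa$ and $\epsilon < \beta - \kappa$ I evaluate the upper bound of (ii) at the rescaled argument $m = Cn$ and check that $(\ln^{(q)} n)^{\beta - \kappa - \epsilon} > C$ holds along an infinite set of $n$, giving $\gamma_\LL(Cn) < n(\ln^{(q)} n)^\beta$ there; this violates $n(\ln^{(q)} n)^\beta \preccurlyeq \gamma_\LL(n)$ for every fixed $C$ and yields $\LLdim^q \LL \le \kappa$.
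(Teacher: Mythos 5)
Your proposal is correct and coincides with what the paper intends: the Corollary is stated without proof precisely because it is the direct translation of Theorem~\ref{T_main} into the $q$-dimension and $\LLdim^q$ scales, which is exactly the quantifier bookkeeping you carry out (uniform bounds of item~iii give the admissible side of each $\inf$/$\sup$, the infinitely-often bounds of items~i, ii exclude the other side). The only cosmetic point is in your last step for $\LLdim^q\LL\le\kappa$: the special integers from Theorem~\ref{T_main}(ii) need not be of the form $Cn$, but monotonicity of $\gamma_\LL$ and the slow variation of $\ln^{(q)}$ repair this immediately, so no real gap remains.
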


\subsection{Remarks}
\begin{Remark}
This result is motivated by examples of groups of oscillating growth by Kassabov and Pak~\cite{KasPak13}.
But the nature of our construction is completely different.
Unlike that examples our upper function is significantly less than the exponent (see also comments below).
The virtue of our approach is that we obtain nil-algebras, unlike that examples of groups that do not treat periodicity.

Unlike Theorem~\ref{TKassabovPak}, we do not proceed in terms of gaps between pairs of functions $g_2(n),g_1(n)$ and $f_2(n),f_1(n)$,
the latter being in some range, in order not to add more technical difficulties.
Since by interchanging wild and clover segments we can reach the lower and the upper functions of Theorem,
for arbitrary two inner increasing functions $g_1(n)<f_2(n)$ which are between two functions of i) and ii) of Theorem,
one can construct Lie algebras $\LL$ with oscillating growth satisfying
$\gamma_\LL(n_i)< g_1(n_i)$ and $\gamma_\LL(m_j)> f_2(m_j)$  for some integers $n_i$, $m_j$, $i\in\N$.
A serious technical work is needed to specify the gaps with the outer functions $g_2(n),f_1(n)$ to guarantee that the growth function visits
infinitely many times the respective segments $[g_2(n),g_1(n)]$ and $[f_2(n),f_1(n)]$, $n\in\N$.
\end{Remark}

\begin{Remark}
Let us explain appearance of the small constant $\delta$ in Theorem.
Below, we construct hybrid species of wild and triplex flies (actually we take a particular case of clover species).
Wild species have exponential growth (Lemma~\ref{Lasymp_wild}). Hybrids lose exponential growth, but
can reach any(!) subexponential growth as an upper bound (Lemma~\ref{Lflieshybrids}).
Lie algebras corresponding to wild species have the growth between two functions $\exp(C_{*}n/(\ln n)^\lambda)$
with positive constants $C_3$, $C_4$ (Theorem~\ref{Twild_growth}).
Now, we observe a similar phenomenon, the growth of hybrid Lie algebras drops,
we lose the "gate" with nonzero constants $C_3,C_4$, but the constant $\lambda$ remains in terms
of the wording of item i) with $\epsilon,\delta>0$.
\end{Remark}

\begin{Remark}
Without divided powers we would have only the lower polynomial bound $\LGKdim \LL=\log_{2p-1}p^3\in (1,3)$.
(This is the Gelfand-Kirillov dimension of the clover restricted Lie algebra with the trivial
constant tuple $S_i=R_i=1$, $i\ge 0$, see~\cite[Theorem 6.3]{Pe20clover}).
We need divided powers in order to get the lower Gelfand-Kirillov dimension to be one.
We used this approach to construct 2-generated nil restricted Lie algebras of quasi-linear growth in~\cite{Pe17}.
But now  we need its 3-generated version, see Theorem~\ref{Tparam} and computations in~\cite{Pe20clover}.
Different parameters yield 3-generated clover Lie algebras with even slower growth in Theorem~\ref{Tparam2},
which actually supplies the lower bounds in main Theorem~\ref{T_main}.
(one can also rewrite the lower bounds using functions of Theorem~\ref{Tparam}, which are bigger and less interesting).
\end{Remark}
\begin{Remark}
There is one more motivation to use divided powers.
By Bergman's theorem, the Gelfand-Kirillov dimension of an associative algebra cannot belong to the interval $(1,2)$~\cite{KraLen}.
Similarly, Martinez and Zelmanov proved that there are no finitely generated Jordan algebras with Gelfand-Kirillov
dimension strictly between 1 and 2~\cite{MaZe96}.
The author showed that a similar gap does not exist for Lie algebras, the Gelfand-Kirillov
dimension of a finitely generated Lie algebra can be an arbitrary number $\{0\}\cup [1,+\infty)$~\cite{Pe97}.
The same fact is also established for Jordan superalgebras~\cite{PeSh18Jslow}.
Now we have a stronger result, the gap $(1,2)$ can be filled with {\it nil} Lie $p$-algebras.
Namely, using so called constant tuples, we get self-similar nil restricted Lie algebras and their
Gelfand-Kirillov dimensions are dense on $[1,3]$~\cite{Pe20clover}.
\end{Remark}

\subsection{Further generalisations}
Below we just outline some more results that can be proved.

\begin{Corollary}
Let $\LL$ be as above and $U$ either universal or restricted enveloping algebra of $\LL$.
Then $\LL$ can be constructed such that
the growth of $U$  is oscillating through levels 3,4,5:
\begin{enumerate}
\item $\Dim^5 U=1/\lambda=\log_{p-1/2} (2)$;
\item $\LDim^3 U=1$.
\end{enumerate}
\end{Corollary}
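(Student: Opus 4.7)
The proof is a direct consequence of Theorem~\ref{T_main} and its main Corollary combined with the enveloping algebra growth transfer result Theorem~\ref{TgrowthU}. The main Corollary already identifies the relevant levels of $\LL$: namely $\Dim^4\LL=1/\lambda$ (from item i)) and $\LDim^2\LL=1$ (from item ii), since $\gamma_\LL(n)\succcurlyeq n(\ln^{(q)}n)^{\kappa-\epsilon}$ forces the lower Gelfand-Kirillov dimension to be exactly $1$). The strategy is to lift both statements through the universal enveloping functor, shifting the level by one in each case.

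For part~i) I apply Theorem~\ref{TgrowthU} with $q=4$: the equality $\Dim^4\LL=1/\lambda$ directly yields $\Dim^5 U(\LL)=1/\lambda$. The restricted enveloping algebra $u(\LL)$ is a homomorphic image of $U(\LL)$ with the same finite generating set, so $\gamma_{u(\LL)}(n)\le\gamma_{U(\LL)}(n)$ preserves the upper bound, giving $\Dim^5 u(\LL)\le 1/\lambda$. For the matching lower bound on $u(\LL)$, I invoke the restricted PBW theorem: any linear basis $(e_j)_{j\ge 1}$ of $\LL$ produces an ordered monomial basis of $u(\LL)$ with exponents in $\{0,\dots,p-1\}$, whose weighted count is a partition-type generating function with parts distributed according to $\gamma_\LL$. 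Meinardus-type asymptotics then transfer the level upward by one, exactly as in Theorem~\ref{TgrowthU}.

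For part~ii) I apply the lower-dimension analog of Theorem~\ref{TgrowthU} (which follows from the same weighted partition estimates in \cite{Pe96}): from $\LDim^2\LL=1$ I obtain $\LDim^3 U(\LL)=1$. The lower bound is also inherited by $u(\LL)$ through the restricted PBW basis, since even with a nil $p$-mapping each weight-homogeneous basis element of $\LL$ contributes a factor of $p$ (not $1$) to the monomial count in $u(\LL)$. Concretely, a linear-with-log lower bound on $\gamma_\LL$ produces, via the weighted partition asymptotic, a growth of order $\exp\!\bigl(c\sqrt{n\cdot(\text{log factor})}\bigr)$ for $u(\LL)$, which translates to $\LDim^3 u(\LL)=1$.

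The main obstacle is verifying the technical condition at $q=2$ built into Theorem~\ref{TgrowthU}, which roughly requires that the lower growth of $\LL$ exceeds mere constant-times-linear by a slowly growing factor. This is precisely guaranteed by item~ii) of Theorem~\ref{T_main} with $\kappa>0$, $q\in\N$. Once this is in place, the oscillating behaviour transfers automatically: the "fast" and "quasi-linear" infinite sequences of integers $n$ from items~i) and ii) of Theorem~\ref{T_main} induce, through the partition function, corresponding infinite sequences where $\gamma_{U(\LL)}$ (or $\gamma_{u(\LL)}$) oscillates between the shifted levels $5$ and $3$ as claimed.
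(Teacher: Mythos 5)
Your plan for the upper bounds and for passing from $U(\LL)$ to $u(\LL)$ is reasonable, but the core of your argument has a genuine gap, concentrated in part ii) and in the phrase ``the oscillating behaviour transfers automatically.'' The corollary deliberately says ``$\LL$ \emph{can be constructed} such that\dots'': the paper does not apply Theorem~\ref{TgrowthU} as a black box to the Phoenix algebra of Theorem~\ref{T_main}. It goes into the \emph{proof} of Theorem~\ref{TgrowthU}, observes that the estimates for $\gamma_{U(\LL)}(n)$ are expressed through the values $\gamma_{\LL}(m)$ at $m=m(n)<n$ (in effect through the whole profile of $\gamma_\LL$ on $[1,n]$), notes that these estimates are equally valid for the restricted enveloping algebra, and then \emph{re-chooses the oscillating intervals to be long enough} so that the required estimates for $U$ hold at suitable moments. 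Your invoked ``lower-dimension analog of Theorem~\ref{TgrowthU},'' namely $\LDim^2\LL=1\Rightarrow\LDim^3 U(\LL)=1$, is not a valid general implication. The half $\LDim^3 U\ge 1$ does follow from eventual lower bounds on $\gamma_\LL$, but the half $\LDim^3 U\le 1$ requires infinitely many $n$ at which $\gamma_U(n)$ is \emph{small} (below $\exp(n^{\alpha/(\alpha+1)})$ for each $\alpha>1$), and by PBW the component of $U$ in degree $n$ is assembled from Lie elements of all degrees $m\le n$. A quasi-linear upper estimate for $\gamma_\LL$ at isolated scales, or on clover stretches that are short relative to the preceding wild segments, does not force such dips: the large stock of Lie elements produced during an earlier wild segment keeps contributing monomials to $U$ at every later degree. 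Only when the clover segment is long enough does the contribution of those finitely many ``old'' elements (a polynomial factor of bounded degree in $n$) become negligible against $\exp(n^{1/2+\epsilon})$, which is precisely the tuning the paper performs and your proposal omits.

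A milder version of the same objection applies to part i): Theorem~\ref{TgrowthU} is stated for an algebra belonging to a level $q$, which the Phoenix algebra is not in any two-sided sense (its lower level-4 dimension is $0$), so quoting it verbatim from $\Dim^4\LL=1/\lambda$ is not justified; one must instead use the uniform upper bound of item iii) of Theorem~\ref{T_main} for the eventual level-5 upper bound on $\gamma_U$, and extract the infinitely-often level-5 lower bound from the partition estimates applied at the moments of fast growth (which does work, since a single large value $\gamma_\LL(m)$ yields, via products of distinct basis elements of degree at most $m$, a large value of $\gamma_U$ at a related scale $n(m)$). Your treatment of $u(\LL)$ via the restricted PBW theorem matches the paper's remark that the estimates remain valid for the restricted enveloping algebra, so that part is fine; the missing ingredient is the re-construction of $\LL$ with sufficiently long oscillating intervals, without which neither the stated equality in ii) nor the claimed automatic transfer can be justified.
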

\begin{proof}
Follows from the proof of Theorem~\ref{TgrowthU} in~\cite{Pe96},
because the estimates on the growth of the universal enveloping algebra $\gamma_{U(L)}(n)$
are made vie values of the growth function of the Lie algebra  $\gamma_L(m)$,
at $m=m(n)<n$, where $m(n)$ is a described function.
Actually, that estimates are valid for the restricted enveloping algebra as well.
Thus, we can choose the oscillating intervals in the construction of $\LL$ long enough
to guarantee such estimates.
\end{proof}
So, the growth function of $U$ above oscillates between the Ramunjan function and a function of level 5:
$$
\exp(\sqrt n),\qquad\quad
\exp\bigg( \frac{n} {(\ln\ln n)^{\lambda}} \bigg).
$$
This oscillation is closer to the oscillation of groups (and their group rings) of Theorem~\ref{TKassabovPak}.
So, analogies between groups and Lie algebras are sometimes better observed for properties of
their enveloping algebras and group rings.

\begin{Remark}
Using our construction, consider the associative algebra $\AA=\Alg(\Theta_0)=\Alg(v_1,v_2,v_3)\subset \End R$.
To evaluate its growth we need some more computations repeating and extending that of~\cite{Pe17}.
It seems that, as above, the growth function of $\AA$ is oscillating between two functions,
the first being of Gelfand-Kirillov dimension 2:
$$
n^2 \big(\ln^{(q)} \!n\big )^{\kappa},\qquad\quad
\exp\bigg( \frac{n} {(\ln n)^{\lambda}} \bigg).
$$
\end{Remark}

\begin{Remark}
In case of an arbitrary characteristic, we can construct {\it Phoenix Lie  super}algebras.
Now, let $R=\Lambda(t_a| a\in\Theta)$ be the Grassmann algebra, and $\{\partial_a| a\in\Theta\}$
respective partial superderivatives, see~\cite{Kac77,Pe16,PeOtto}.
We replace powers of variables in~\eqref{pivot00} by respective Grassmann variables
and consider the Lie superalgebra $\LL=\LL(\Theta)$ generated by the pivot elements of the zero generation.
Using computations of this paper, we can construct a specie of flies $\Theta$ and obtain an example of
a 3-generated (restricted) Lie superalgebra $\LL$ over an arbitrary field with the following properties:
\begin{itemize}
\item $\Dim^4\LL=\a_2$, where $\a_2=\log_{3/2} 2\approx 1,71$ (a fast subexponential upper bound);
\item $\LDim^2\LL=\a_1$, where  $\a_1=\log_3 8\approx 1,89$ (a polynomial lower bound); 
\item so, $\LL$ has an intermediate growth between levels 2 and 4, namely oscillating between functions:
$$
n^{\a_1},\qquad\quad
\exp\bigg( \frac{n} {(\ln n)^{1/\a_2}} \bigg).
$$
\item $\LL$ is nil in the following sense: $\ad a$ is nil for any $\Z_2$-homogeneous element $a\in\LL$
       (here we need to repeat computations of~\cite{Pe16,PeOtto,PeSh18FracPJ}).
\end{itemize}
\end{Remark}

\subsection{Path to Phoenix Lie algebras}
Let us briefly describe the structure of the paper.
\begin{enumerate}
\item
We define and study species of flies in details. Those without selection are {\it wild},
and those having three flies in each generation are {\it triplex}.
We are interested in {\it hybrid} species, which alternate between wild and triplex behaviour.
We define recursively virtual pivot elements and actual pivot elements
as derivations of divided power algebras, describe relations, introduce weight functions,
and prove $\Z^3$-gradings (Sections 4--6).
\item
We define a {\it drosophila} Lie algebra $\LL(\Theta,\bar S)$, related to a specie of flies $\Theta$
and a tuple $\bar S$  (in general case we are using divided power algebras).
We study general properties  of drosophila Lie algebras.
We prove that any restricted Lie algebra
$\LL(\Theta,\bar S)$ has a nil $p$-mapping, the tuple $\bar S$ being uniform (Theorem~\ref{Tnillity}).
This is a one of central results of the paper.
Our approach is a further development of ideas of~\cite{Pe06,ShZe08,Bartholdi15,Pe17}.
\item
Particular cases of triplex (restricted) Lie (super)algebras were studies in pervious papers.
Now, we study a special case of triplex algebras, called {\it clover} algebras.
A clear monomial basis for the clover Lie algebras
is described in a separate paper~\cite[Theorem 4.7]{Pe20clover}. 
In case of constant tuples we get self-similar nil restricted Lie algebras and their
Gelfand-Kirillov dimensions are dense on $[1,3]$~\cite{Pe20clover}.
Using specially chosen tuple $\bar S$, we construct clover Lie algebras of "quasi-linear" growth
(Theorem~\ref{Tparam}, Theorem~\ref{Tparam2}, proved in~\cite{Pe20clover}).
The clover restricted Lie algebras are some analogues of the Grigorchuk groups $G_\omega$~\cite{Grigorchuk84}.
\item
We study drosophila Lie algebras for wild species, tuple $\bar S$ being trivial, i.e. we are using truncated polynomials.
We determine an intermediate growth of these algebras (Theorem~\ref{Twild_growth}).
The difficulty is that there are no good bases for that algebras.
\item
Finally, we consider species that are hybrids of wild and clover ones.
In Section~\ref{SPhoenix}  we finish the proof of our main result (Theorem~\ref{T_main}).
We can construct hybrid Lie algebras so that they contain 3-generated clover Lie subalgebras studied in details in~\cite{Pe20clover},
we use that computations to simplify some estimates.
\end{enumerate}
\section{Fruit flies and Lie algebras of derivations of divided power algebras}

\subsection{Fruit flies}
Fix an integer $k\ge 3$  (we also consider a "degenerate case" $k=2$ below).
Consider a set of letters $\Theta_0=\{1,\ldots,k\}$.
We shall construct words over the alphabet $\Theta_0$, the words will be called
{\it drosophilas}, we also call them shortly {\it flies}.
We construct them recursively.
We refer to $\Theta_0$ as the fruit flies of {\it generation zero}.
Let the flies  $\Theta_n$ of generation $n$, where $n\ge 0$, are constructed.
Now, each pair of distinct flies $a,b\in \Theta_n$ produces two flies
$ab,ba\in \Theta_{n+1}$ of the next generation $n+1$.
In a general setting, we assume that
an {\it experimenter} executes a {\it selection} of the new generation $\Theta_{n+1}$ among all pairs above, while
the non-selected flies of generation $n+1$ are eliminated.
Formally, this process is described as:
\begin{align*}
\Theta_0 & =\{1,\ldots,k\};\\
\Theta_{n+1} &\subset \{ab\mid a,b\in \Theta_n, a\ne b\},\quad n\ge 0;\\
\Theta &= \mathop{\cup}\limits_{n=0}^\infty \Theta_n.
\end{align*}
A {\it length} of a word (i.e. a fly)  $a\in \Theta_n$ (in the initial alphabet $\Theta_0$) is $|a|=2^n$, $n\ge 0$.
Also, denote a {\it generation} 
of a fly $a\in \Theta_n$ as $\gen a=\log_2 |a|=n$.
Denote $\Theta_{0\ldots n}:=\cup_{j=0}^n\Theta_j$.

We call $\Theta$ a {\it specie} of flies.
Observe that a specie $\Theta$ is a set of {\it binary words} (i.e. each word is of length $2^n$, $n\ge 0$)
in an alphabet $\Theta_0$ such that for each $a\in \Theta\setminus \Theta_0$ its halves
(i.e. the halves $b,c$ of the expansion $a=bc$, $|b|=|c|$) belong to $\Theta$.
Different species of flies will be marked by additional symbols $\Theta^\a$, $\Theta^\b$, $\Theta'$,  \ldots, where
$\a,\b, {}'$ denote different selections,
we write also $\Theta^\a=\cup_{n=0}^\infty \Theta_n^\a$.
If the experimenter is sleeping and makes no selection we
get a specie of {\it wild flies} standardly denoted as  $\bar \Theta = \mathop{\cup}\limits_{n=0}^\infty \bar \Theta_n$.
Observe that a specie of wild flies is uniquely determined by the zero generation.
A {\it subspecie $\Theta^\a$} of a specie  $\Theta^\beta$ is its subset such that for any $a=bc\in\Theta^\a$, where $|b|=|c|$,
the halves $b,c$ belong to $\Theta^\a$ (below $b,c$ are referred to as parents of $a$).
Thus, a subspecie is a subset of a specie closed with respect to parenthood.

\subsection{Genealogical relations  $>$, $\succ$, $\sqsupset$, $\vdash$ on flies}
Let $a,b\in \Theta_n$ and $ab\in \Theta_{n+1}$.
We say that $a$ is a {\it father} (or the first parent) and $b$ a {\it mother} of their {\it child} $ab$,
also we say that $a,b$ are {\it parents} of $ab$.
Observe that separate flies have no gender, because $a$ is a father of $ab$, at the same time $a$ is a mother of $ba$.
Thus, a gender of a fly appears for an ancestor with respect to a fixed descendant only.
By $c> d$ we denote that $c$ is an {\it ancestor} of $d$ in some generations (equivalently, $d$ is a {\it descendant} of $c$).
We extend this relation by setting $c\ge c$. 
Observe that $\ge$ is a partial order on $\Theta$. Recall that our words are binary.
Let us describe $>$ in terms of words. Now, $c>d$, where $c,d\in\Theta$, is equivalent to the fact  that
we take one of halves of $d$, again one of its halves, etc. {\ldots} and get $c$.
In this case, we say that $c$ is a proper {\it binary subword} of $d$.

Let $c,d\in \Theta$, denote by $c\succ d$ that $c$ is a  proper {\it paternal ancestor} (i.e. a grand-\dots-grandfather) of $d$.
Equivalently, $c$ is a proper binary prefix of the word $d$. 
Put formally $c\succeq c$. Then, $\succeq$ is a partial order on~$\Theta$.

Similarly, let $c,d\in \Theta$, denote by $c\sqsupset d$ that $c$ is
a  proper {\it maternal ancestor} (a grand-\dots-grandmother) of $d$.
Equivalently, $c$ is a proper binary suffix of the word $d$. 
Set formally $c\sqsupseteq c$. Then, $\sqsupseteq $ is also a partial order on~$\Theta$.

We introduce one more convenient relation on $\Theta$ which is not a partial order.
Let $c,d\in \Theta$, then by $c\vdash d$ we denote that $c$
is one of two parents (i.e. one of two halves) of some $d'\in \Theta$ such that $d'\succeq d$
(i.e. we take a paternal ancestor $d'$ of $d$ and, finally, $c$ is a father or mother of $d'$).
In this case we say that $c$ is a {\it paternal-by-one ancestor} of $d$.
Observe that a fly $d$ has exactly two  such ancestors $c$ in each fixed senior generation $\Theta_m$, where $0\le m<\gen d$.

\subsection{Growth of drosophila species, hybrid species}
Assume that there is an integer $n$ such that $|\Theta_n|=1$ then $\Theta_{n+1}=\emptyset$.
Let also $\Theta_n=\{a,b\mid a\ne b\}$, then $\Theta_{n+1}\subset\{ab,ba\}$
and either all further generations have two flies or the population goes extinct.
So, we assume that the experimenter leaves at least three flies in all generations in order
to have a non-trivial specie of flies. Thus, {\bf we always assume that $|\Theta_n|\ge 3$ for all $n\ge 0$}.

\begin{Lemma}\label{Lasymp_wild}
Let $\bar \Theta$ be a specie of wild flies (i.e. without selection), and $|\bar\Theta_0|=k\ge 3$. Then
\begin{enumerate}
\item
there exists a limit $\theta_k:=\lim\limits_{n\to\infty}\sqrt[2^{n}]{\strut|\bar \Theta_n|}$, and   $2,33< \theta_k<k$.
\item $(\theta_k) ^{2^n}<|\bar\Theta_n|$ for all $n\ge 0$.
\item for any $\epsilon>0$ there exists $n_0$ such that
$|\bar\Theta_n| <(\theta_k+\epsilon) ^{2^n}\!\!,$ $n\ge n_0$.
\item $|\bar \Theta_{0\ldots n}|=(\theta_k+o(1)) ^{2^n}\!\!,$ $n\to\infty $.
\end{enumerate}
\end{Lemma}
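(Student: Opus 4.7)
\emph{Plan.} Put $x_n := |\bar\Theta_n|$. In the wild case every ordered pair of distinct flies is produced, so
\[ x_{n+1} = x_n(x_n-1) = x_n^{\,2}\bigl(1 - 1/x_n\bigr),\qquad x_0 = k \ge 3. \]
Taking logarithms and dividing by $2^{n+1}$ gives
\[ \frac{\ln x_{n+1}}{2^{n+1}} = \frac{\ln x_n}{2^n} + \frac{\ln(1-1/x_n)}{2^{n+1}}. \]
Setting $y_n := \ln x_n/2^n$, the second term is strictly negative (since $x_n \ge 3$), so $y_n$ is strictly decreasing; since $y_n > 0$, the limit $L := \lim_{n\to\infty} y_n$ exists, and we define $\theta_k := e^L$. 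This gives the existence claim in (i).

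Items (ii) and (iii) are then immediate from monotonicity. Strict decrease to $L$ gives $y_n > L$ for every $n$, i.e. $x_n > \theta_k^{2^n}$, which is (ii). For (iii), given $\epsilon > 0$, set $\epsilon' := \ln(1 + \epsilon/\theta_k) > 0$ and choose $n_0$ with $y_n < L + \epsilon'$ for $n \ge n_0$; exponentiating yields $x_n < (\theta_k + \epsilon)^{2^n}$.

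For the bounds $2.33 < \theta_k < k$ in (i), the upper bound is free: $y_1 = \tfrac12\ln(k(k-1)) < \ln k = y_0$, and since $y_n$ strictly decreases we get $L < \ln k$. For the lower bound, observe that $x \mapsto x(x-1)$ is monotone on $[1,\infty)$, so $x_n$ is nondecreasing in the initial value $k$, hence $\theta_k \ge \theta_3$, and it suffices to prove $\theta_3 > 2.33$. Iterating from $x_0 = 3$ one obtains $x_3 = 870$, whence $y_3 = (\ln 870)/8 \approx 0.8475$. The tail
\[ y_n - L \;=\; \sum_{m \ge n} \frac{-\ln(1 - 1/x_m)}{2^{m+1}} \]
is bounded by $\sum_{m\ge n} 2/(x_m 2^{m+1})$, which for $n = 3$ is already of order $10^{-4}$ because the $x_m$ grow doubly exponentially; so $L \ge y_3 - 10^{-4} > \ln 2.33$.

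For (iv), write $|\bar\Theta_{0\ldots n}| = \sum_{j=0}^n x_j$. Since $x_{j+1}/x_j = x_j - 1 \to \infty$ doubly exponentially, the sum is dominated by its last term: $x_n \le \sum_{j=0}^n x_j \le 2 x_n$ for $n$ sufficiently large, so by (ii), (iii), and the sandwich principle, $|\bar\Theta_{0\ldots n}|^{1/2^n} \to \theta_k$. The only non-routine step in this whole argument is the explicit numerical verification $\theta_3 > 2.33$; this is essentially the main obstacle, and it reduces to iterating the recursion a few times and bounding the tail of a rapidly convergent series.
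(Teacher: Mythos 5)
Your proof is correct and follows essentially the same route as the paper: the recurrence $|\bar\Theta_{n+1}|=|\bar\Theta_n|(|\bar\Theta_n|-1)$ makes $|\bar\Theta_n|^{1/2^n}$ (equivalently your $y_n=\ln x_n/2^n$) strictly decreasing, hence the limit $\theta_k$ exists with $\theta_k<|\bar\Theta_n|^{1/2^n}<k$, and the bound $\theta_k>2.33$ is reduced to the case $k=3$ and a numerical check starting from $x_3=870$. The paper only records $870^{1/8}>2.33045$ and hides the rest behind an ellipsis, so your added details are exactly what make that step rigorous: monotonicity of $x\mapsto x(x-1)$ to reduce to $k=3$, the tail estimate $y_n-L\le\sum_{m\ge n}2/(x_m2^{m+1})$ to show the decreasing sequence cannot drop below $\ln 2.33$, and the domination $\sum_{j\le n}x_j\le 2x_n$ for claim (iv). One numerical correction is needed, and it matters because the margin is genuinely tight: $y_3=\ln 870/8\approx 0.84606$ (not $0.8475$), $\ln 2.33\approx 0.84587$, and your tail bound evaluates to about $1.44\cdot 10^{-4}$ (not $10^{-4}$, since the first term alone is $2/(870\cdot 16)$); the chain still closes, because $0.84606-0.000144\approx 0.84592>0.84587$, so $\theta_3>2.33$ does hold, but the figures as stated should be fixed.
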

\begin{proof}
By construction of the specie of wild flies,
we get a recurrence relation $|\bar \Theta_{n+1}|=|\bar \Theta_n|(|\bar \Theta_n|-1)$, $n\ge 0$, which
implies that the sequence $f(n)=\sqrt[2^{n}]{\strut |\bar \Theta_{n}|}$, $n\ge 0$, is strictly decreasing.
Hence, there exists a limit $\theta_k:=\lim\limits_{n\to\infty} f(n)$ and $\theta_k<f(n)< f(0)=k$ for $n> 0$.
In case $|\bar\Theta_0|=3$ we get $|\bar\Theta_1|=6$, $|\bar\Theta_2|=30$, $|\bar\Theta_3|=30\cdot 29> 2,33045^{2^3}$, \dots.
Thus, we get the first claim.

Claim ii) is valid because $f(n)$ is strictly decreasing.
Two remaining claims follow from i).
\end{proof}

Define a groupoid (i.e. magma) $G(\Theta)$  related to a specie $\Theta$.
Only in the case $a,b\in\Theta_n$ and $ab\in\Theta_{n+1}$
we define a nonzero product $a*b:=ab$.
The remaining products are trivial, so we add formally a zero element $G(\Theta):=\Theta\cup \{0\}$.
Observe that the product is non-associative.
The growth function $\gamma_{G(\Theta)}(m)$ counts all words in $G(\Theta)$ of length at most $m$, $m\ge 0$.
The next result shows that  $\bar \Theta$ is an essential  part of the whole language in $|\bar \Theta_0|=k$ letters.

\begin{Corollary}
Let $G=G(\bar \Theta)$ be the groupoid of wild flies, where $|\bar\Theta_0|=k$. Its growth is exponential:
$$\liminf_{n\to\infty}\sqrt[n]{\gamma_G(n)}=\sqrt{\theta_k},\qquad  \limsup_{n\to\infty}\sqrt[n]{\gamma_G(n)}=\theta_k. $$
\end{Corollary}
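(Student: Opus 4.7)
The plan is to identify the elements of $G$ counted by $\gamma_G(m)$ with flies, and then read off the two oscillating asymptotics directly from Lemma~\ref{Lasymp_wild}. Observe that the groupoid product is defined only between two flies of the same generation, and in the wild case (no selection) the product of two \emph{distinct} flies of generation $n$ is always the concatenation in $\bar\Theta_{n+1}$. Consequently, every nonzero element of $G$ reached by a bracketed product of at most $m$ generators from $\bar\Theta_0$ must be a fly $a\in\bar\Theta_n$ for some $n$, obtained from its $2^n$ letters by the unique balanced bracketing forced by the generation structure; and each such fly is reached if and only if $2^n\le m$. Setting $N:=\lfloor\log_2 m\rfloor$, this gives the exact count
$$\gamma_G(m)=1+|\bar\Theta_{0\ldots N}|,\qquad m\ge 2,$$
where the summand $1$ accounts for the zero element (realised by any forbidden product such as $1*1$).

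Next I would invoke Lemma~\ref{Lasymp_wild}. Part (ii) gives $|\bar\Theta_{0\ldots N}|\ge|\bar\Theta_N|>\theta_k^{2^N}$ for every $N\ge 0$, while (iv) yields $|\bar\Theta_{0\ldots N}|=(\theta_k+o(1))^{2^N}$ as $N\to\infty$. Taking the $m$-th root produces
$$\theta_k^{2^N/m}\le\sqrt[m]{\gamma_G(m)}\le(\theta_k+o(1))^{2^N/m},$$
and since $1/2<2^N/m\le 1$ for all $m\ge 1$, the quantity $\sqrt[m]{\gamma_G(m)}$ is confined, up to a factor tending to $1$, within the interval $(\sqrt{\theta_k},\theta_k]$.

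Finally, I would pin down both halves of the statement along explicit subsequences. Taking $m_N=2^N$ gives $2^N/m_N=1$, hence $\sqrt[m_N]{\gamma_G(m_N)}\to\theta_k$, yielding $\limsup_m\sqrt[m]{\gamma_G(m)}=\theta_k$ (the matching upper bound was already obtained in the trapping above). Taking $m_N=2^{N+1}-1$ gives $2^N/m_N\to 1/2$, hence $\sqrt[m_N]{\gamma_G(m_N)}\to\sqrt{\theta_k}$, yielding $\liminf_m\sqrt[m]{\gamma_G(m)}=\sqrt{\theta_k}$. No real obstacle is foreseen; the only point that requires some attention is that $N=N(m)$ is piecewise constant in $m$, which is precisely the source of the oscillation between $\sqrt{\theta_k}$ and $\theta_k$ on a logarithmic scale.
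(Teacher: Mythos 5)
Your argument is correct and follows essentially the same route as the paper: identify $\gamma_G(m)=1+|\bar\Theta_{0\ldots N}|$ with $N=[\log_2 m]$, apply Lemma~\ref{Lasymp_wild} to trap $\sqrt[m]{\gamma_G(m)}$ between $\theta_k^{2^N/m}$ and $(\theta_k+o(1))^{2^N/m}$, and evaluate along $m=2^N$ and $m=2^{N+1}-1$ (the paper's $n=2^m$ and $n=2^m-1$) to show both bounds are attained. No gaps; your explicit treatment of the zero element and of the exponent $2^N/m\in(1/2,1]$ is just a slightly more detailed write-up of the same proof.
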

\begin{proof}
All elements $\Theta_m$ have length $2^m$ in the generators, $m\ge 0$.
Fix an integer $n$, set $m=[\log_2 n]$. The set of all words in the generators of length at most $n$ is $\bar \Theta_{0\ldots m}\cup\{0\}$.
By Lemma, using $n/2< 2^m\le n$ we get
\begin{align*}
&\gamma_G(n)= 1+|\bar \Theta_{0\ldots m}|=(\theta_k+o(1)) ^{2^{m}},\quad m\to\infty,\\
&(\theta_k+o(1))^{n/2}\le  \gamma_G(n)\le (\theta_k+o(1))^n,\quad n\to\infty.
\end{align*}
On the other hand, by setting $n=2^m$ and $n=2^{m}-1$ we check that the bounds above are exact.
\end{proof}

A specie of flies is {\em triplex} if all generations have 3 flies.
Consider a strictly increasing sequence of integers $M_i$, $i\ge 0$, where $M_0=0$.
Assume that $\mathop{\cup}_{j=M_{n-1}}^{M_n-1}\Theta_j$ is wild for odd $n$ and triplex for even $n$, for $n\ge 1$.
So, each wild or triplex segment starts with $M_j$.
Then $\Theta$ is a {\it hybrid} of wild and triplex species.
\begin{Lemma}\label{Lflieshybrids}
Fix $|\Theta_0|=3$ and any subexponential sequence $\{a_n|n\in \N\}$.
There exists a hybrid specie of flies $\Theta$ with an oscillating growth as follows.
\begin{enumerate}
\item groupoid $G=G(\Theta)$ has periods of slow growth:
$ \liminf_{n\to\infty} {\gamma_{G}(n)}/{\log_2 n}=3.$
\item
there exists an increasing sequence $\{n_j\in\N| j\ge 1\}$, such that $\gamma_G(n_j)\ge a_{n_j}$ for $j\in\N$
(periods of rather fast growth).
\item $G(\Theta)$ has subexponential growth.
\end{enumerate}
\end{Lemma}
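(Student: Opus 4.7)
The plan is to build the cut-off sequence $0=M_0<M_1<M_2<\cdots$ inductively, alternating wild segments (odd $n$) and triplex segments (even $n$) as in the excerpt. Start with $\Theta_0=\{1,2,3\}$ and write $A_N:=|\Theta_{0\ldots N}|$, so that $\gamma_G(m)=1+A_{\lfloor\log_2 m\rfloor}$. The standing assumption $|\Theta_j|\ge 3$ gives $A_N\ge 3(N+1)$, hence $\gamma_G(m)/\log_2 m\ge 3$ for every $m\ge 4$; this universal lower bound already matches the $\liminf$ asserted in (i).

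Now suppose $M_0,\ldots,M_{2i-2}$ have been chosen. The $i$-th wild segment starts from a generation with exactly three flies (namely $\Theta_0$ when $i=1$, or the last generation of the preceding triplex segment otherwise), so Lemma~\ref{Lasymp_wild}(ii) yields
\begin{equation*}
 |\Theta_{M_{2i-1}-1}| > \theta^{2^{M_{2i-1}-M_{2i-2}-1}},\qquad \theta:=\theta_3>2.33.
\end{equation*}
Set $\epsilon_i:=(\log_2\theta)/2^{M_{2i-2}+1}$, a positive constant determined by the data already chosen. By the subexponentiality of $\{a_n\}$ there is an $N_0(\epsilon_i)$ with $a_n\le 2^{\epsilon_i n}$ for $n\ge N_0(\epsilon_i)$. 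Pick $M_{2i-1}$ large enough that $n_i:=2^{M_{2i-1}-1}\ge N_0(\epsilon_i)$; then
\begin{equation*}
 \log_2\gamma_G(n_i)>(\log_2\theta)\cdot 2^{M_{2i-1}-M_{2i-2}-1}=2\epsilon_i n_i>\epsilon_i n_i\ge\log_2 a_{n_i},
\end{equation*}
which secures clause (ii) at stage $i$ with the chosen $n_i$.

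With $M_{2i-1}$ now fixed, set $C_i:=A_{M_{2i-1}-1}$. In the subsequent triplex segment $A$ grows by exactly $3$ per generation, so for $m=2^{M_{2i}-1}$,
\begin{equation*}
 \frac{\gamma_G(m)}{\log_2 m}=\frac{1+C_i+3(M_{2i}-M_{2i-1})}{M_{2i}-1}\xrightarrow[M_{2i}\to\infty]{}3.
\end{equation*}
Choose $M_{2i}$ large enough, for instance $M_{2i}\ge i\cdot C_i$, so that this ratio lies below $3+1/i$. Combined with the universal lower bound, this forces $\liminf_m\gamma_G(m)/\log_2 m=3$ and settles (i).

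For (iii) I verify $\log_2\gamma_G(n)/n\to 0$ by a direct case analysis. Inside a triplex segment, $\gamma_G(n)$ grows linearly in $\log_2 n$, so the ratio is $o(1)$. Inside wild segment $i$, write $N=\lfloor\log_2 n\rfloor$ with $M_{2i-2}\le N\le M_{2i-1}-1$; bounding $A_N\le A_{M_{2i-2}-1}+(N-M_{2i-2}+1)|\Theta_N|$ and applying Lemma~\ref{Lasymp_wild}(iii) gives $\log_2\gamma_G(n)/n\le 2\log_2(\theta+\eta)/2^{M_{2i-2}}+o(1)$ for any $\eta>0$, the contribution of $A_{M_{2i-2}-1}$ being negligible because $M_{2i-2}$ was picked much larger than $\log_2 A_{M_{2i-2}-1}$. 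Since $M_{2i-2}\to\infty$ as $i\to\infty$, we conclude $\limsup_n\log_2\gamma_G(n)/n=0$, i.e.\ subexponential growth. The main technical difficulty is the dovetailing of (i) and (ii): the wild counts are doubly exponential in the wild-segment length and must overshoot an \emph{arbitrary} subexponential sequence, while the subsequent triplex segment must still drag the growth ratio arbitrarily close to $3$. The balance is delicate but works because $\epsilon_i$ depends only on data fixed before $M_{2i-1}$, after which $M_{2i-1}$ and then $M_{2i}$ may be taken as large as required in each successive step.
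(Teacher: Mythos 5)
Your proposal is correct and follows essentially the same route as the paper's proof: build the segments $M_j$ inductively, use Lemma~\ref{Lasymp_wild} for the doubly exponential wild counts, compare a fixed exponential base determined by data chosen before the wild segment against the subexponential sequence to get (ii), run triplex segments long enough to pull the ratio down to $3$ for (i), and let $M_j\to\infty$ for (iii). The only differences are cosmetic (you phrase the comparison in (ii) with $\epsilon_i$ and $\log_2$ rather than $n$-th roots, and your triplex case in (iii) tacitly uses that the large constant inherited from the preceding wild segment is already controlled at the transition point, which your wild-segment estimate supplies).
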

\begin{proof}
Assume that generations till $\Theta_{M_{j-1}-1}$ are constructed. Denote $C:=\gamma_{G}(2^{M_{j-1}{-}1})$. Let $j$ be even.
Construct new triplex generations $|\Theta_{M_{j-1}}|=\cdots=|\Theta_m|=3 $. Length of words of $\Theta_m$ is $n:=2^m$.
Then $\gamma_G(n)=C{+}3(m{-}M_{j-1}{+}1)=C{+}3(\log_2 n{-}M_{j-1}{+}1)$.
For each $j$ even, we chose $M_j:=m+1$ sufficiently large to guarantee claim i).
(Similarly one proves the total lower bound using that all generations have at least three flies).

Let $j$ be odd. By construction above, the previous triplex segment yields
$C=\gamma_{G}(2^{M_{j-1}{-}1})=(3+o(1))(M_{j-1}-1)$, (as $M_{j-1}\to\infty$).
Now, we construct the next wild generations $\Theta_{M_{j-1}},\ldots,\Theta_m$, starting with $|\Theta_{M_{j-1}}|=3$,
we assume that their total size is greater than $C$.
Set $n:=2^m$. Lemma~\ref{Lasymp_wild} gives bounds on the new wild generations, for any $\epsilon>0$ we get
\begin{align*}
(\theta_3)^{2^{m-M_{j-1}}} &\le    \gamma_G(n)\le C + (\theta_3+\epsilon)^{2^{m-M_{j-1}}}\le 2(\theta_3+\epsilon)^{2^{m-M_{j-1}}}, \\
(\theta_3)^{2^{-M_{j-1}}} &\le    \sqrt[n]{\gamma_G(n)} \le 2^{-n}(\theta_3+\epsilon)^{2^{-M_{j-1}}}.
\end{align*}
The lower bound above is a fixed number greater than 1, by choosing
$n=2^m$ sufficiently large,  we make that number be greater than $\sqrt[n]{a_{n}}$, next we set $M_j:=m+1$.
The subsequence $n_j':=2^{M_j-1}$ for odd $j$ yields claim~ii).
Since $M_j\to \infty$, the upper bound yields claim iii).
\end{proof}

\subsection{Divided power series ring}
Let $\ch K=p\ge 2$.
Temporarily, let $\Theta$  be an arbitrary non-empty set (below $\Theta$ will be a fixed specie of flies).
Fix also a tuple of integers $\bar S=\{S_a\in \N \mid a\in \Theta \}$, indexed by $\Theta$.
We attach a divided power variable~$t_a$ for each $a\in \Theta$.
We consider a divided power series ring $R=R(\Theta,\bar S)$ which $K$-basis consists of formal symbols
$$\bigg\{ \prod_{a\in \Theta}t_a^{(i_a)} \ \bigg|\ 0\le i_a< p^{S_a}, \ a\in \Theta\bigg\}, $$
where only finitely many formal powers $i_a$ are non-zero.
Define a product of these elements as
$$\bigg(\prod_{a\in \Theta}t_a^{(i_a)}\bigg )\cdot \bigg (\prod_{a\in \Theta}t_a^{(j_a)}\bigg)=
 \prod_{a\in \Theta}\binom{i_a+j_a}{i_a} t_a^{(i_a+j_a)}. $$
Let $n=\sum_{k\ge 0} n_kp^k$, $m=\sum_{k\ge 0} m_kp^k$, $0\le n_k,m_k< p$,
be the $p$-adic expansions of integers $n,m$. One has the Lucas rule~\cite[p.61]{Strade1}:
\begin{equation}\label{congrunce}
\binom mn \equiv \prod_{k\ge 0}\binom {m_k}{n_k}\mod p.
\end{equation}
By this rule, $\binom{i_a+j_a}{i_a}=0$ in case $i_a+j_a\ge p^{S_a}$ for some $a\in \Theta$.
Thus, the product is well defined.
Then $R=R(\Theta,\bar S)$ is an associative commutative ring with unit.
Consider a set of the following (infinite) tuples indexed by $\Theta$:
\begin{equation}\label{tuple}
\Lambda:=\big\{\a=\big(\xi_a\mid \xi_a\in \{0,\dots,p^{S_a}-1\},\ a\in \Theta, \text{ finitely many $\xi_a$ are nonzero}\big)\big\}.
\end{equation}
Put $\mathbf t^\a:=\prod_{a\in\Theta} t_a^{(\xi_a)} \in R $, $\a\in\Lambda$.
Then $\{\mathbf t^\a\mid \a\in \Lambda\}$ is a basis of the algebra  $R(\Theta,\bar S)$.

Often, we shall use a {\it trivial tuple} $\bar S$, namely $S_a=1$ for all $a\in \Theta$.
In this case $R$ is just a ring of truncated polynomials
$R\cong K[T_a|a\in\Theta]/(T_a^p|a\in\Theta)$.
In a general case, $R=R(\Theta,\bar S)$, is also isomorphic to a ring of truncated polynomials~\cite{Strade1}.

\subsection{Lie algebra of special derivations}\label{SS_Liepure}
Fix $a\in \Theta$. Define an action $\partial_{a}$ on the whole of $R$ acting on
the respective divided variables only:
$\partial_{a}(t_a^{(i_a)}):=t_a^{(i_a-1)}$, $i_a\in\{0,\dots,p^{S_a}-1\}$,
where $t_a^{(0)}=1$ and $t_a^{(l)}=0$ for $l<0$.
We obtain derivations $\partial_{a}\in \Der R$, $a\in \Theta$.
Their $p^m$-powers are also derivations:
$\partial_{a}^{p^m}(t_a^{(i_a)})=t_a^{(i_a-p^m)}$, where $i_a\in\{0,\dots,p^{S_a}-1\}$, $m\ge 0.$
Clearly, $\partial_{a}^{p^{m}}=0$ for $m\ge S_a$. Consider a space of all formal sums
\begin{equation}\label{algebra_W}
\WW=\WW(\Theta,\bar S)=\bigg\{\sum_{\a\in \Lambda} {\mathbf t}^{\a}\bigg(\quad
                           \sideset{}{_{}^{\mathrm{fin}}}\sum_{b\in \Theta, l\ge 0} 
                           \lambda_{\a, b, l}\,\partial_{b}^{p^{l}}
                           \bigg)
                    \ \bigg|\ \lambda_{\a, b, l}\in K, 0\le l<S_b
                    \bigg\}.
\end{equation}
The notation $\sum^{\mathrm{fin}}$ denotes (here and below) that
a sum is taken over finitely many elements specified in the sum.
It is essential that the sum at each ${\mathbf t}^{\a}$, $\a\in\Lambda$,  is finite (the {\it finiteness condition}).
Lie product of elements of $\WW$ is well defined and $\WW$ acts on $R$ by derivations, we
obtain a restricted Lie algebra of {\em special derivations},
this term was suggested by Razmyslov and Radford~\cite{Rad86,Razmyslov}, see further applications in~\cite{PeRaSh}.
Terms  ${\mathbf t}^{\a}\partial_{b}^{p^{l}}$  will be called {\it pure Lie monomials}, and $\partial_{b}^{p^l}$ {\it pure derivations}.

\subsection{Ancestral derivations}
Now, assume that $\Theta$ is a specie of flies.
Recall that the ring $R=R(\Theta,\bar S)$ depends on the initial set $\Theta_0$,
the selection process that determines the whole specie $\Theta$, and the tuple $\bar S$.
Since in almost all cases $\Theta$, $\bar S$ are fixed, we often omit them.

Consider a tuple $\a\in \Lambda$~\eqref{tuple},
it has finitely many nonzero entrees, we write
 $\a=(\xi_{a_1},\ldots, \xi_{a_m}|  a_i\in \Theta)$.
Take $b\in\Theta$. We denote $\a> b$ iff $a_i>b$ for all $i=1,\ldots,m$.
Recall that in our terminology this is equivalent to the fact that $a_1,\ldots, a_m$ are ancestors of $b$
(equivalently, $a_1,\ldots, a_m$ are binary subwords of $b$).

Consider derivations~\eqref{algebra_W} where
all pure Lie monomials ${\mathbf t}^{\a}\partial_{b}^{p^{l}}$ satisfy $\a>b$,
i.e. the divided variables in ${\mathbf t}^{\a}$ correspond to ancestors of $b$ (the {\it ancestral condition}).
We call them {\it ancestral derivations}:
\begin{equation}\label{ancestral}
\WW^{\mathrm a}=\WW^{\mathrm a}(\Theta,\bar S):=\Bigg\{\sum_{\a\in \Lambda} {\mathbf t}^{\a}\Bigg(\quad
                         \sideset{}{_{}^{\mathrm{fin}}}\sum_{\substack{b\in \Theta\\ \a>b,\ l\ge 0 }} 
                           \lambda_{\a, b, l}\,\partial_{b}^{p^{l}}
                           \Bigg)
                    \ \Bigg|\ \lambda_{\a, b, l}\in K , 0\le l<S_b
                    \Bigg\}\subset \WW.
\end{equation}
\begin{Lemma}\label{Lancestral}
The set of ancestral derivations
$\WW^{\mathrm a}$ is a restricted Lie subalgebra of the restricted Lie algebra of special derivations $\WW$.
\end{Lemma}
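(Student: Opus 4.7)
My plan is to verify closure of $\WW^{\mathrm a}$ under the Lie bracket and under the $p$-mapping. Since the ancestral condition $\alpha > b$ is linear in the basis $\{\mathbf{t}^\alpha \partial_b^{p^l}\}$ of pure Lie monomials, the arguments reduce to direct computations on such monomials, extended by linearity and Jacobson's formula~\eqref{power_P}.

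For the bracket of ancestral pure monomials $x = \mathbf{t}^\alpha \partial_b^{p^l}$ and $y = \mathbf{t}^\beta \partial_c^{p^m}$, since iterated partial derivatives commute,
\[
[x,y] = \mathbf{t}^\alpha \partial_b^{p^l}(\mathbf{t}^\beta)\,\partial_c^{p^m} - \mathbf{t}^\beta \partial_c^{p^m}(\mathbf{t}^\alpha)\,\partial_b^{p^l}.
\]
The first summand is nonzero only when $b \in \mathrm{supp}(\beta)$, which by $\beta > c$ forces $b > c$; transitivity of $>$ together with $\alpha > b$ then gives $\alpha > c$, and the variables of $\mathbf{t}^\beta$ remaining after lowering the $t_b$-power are still $>c$. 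Hence the coefficient of $\partial_c^{p^m}$ is supported on strict ancestors of $c$, giving a linear combination of ancestral pure monomials; the second summand is symmetric. This computation also reveals that the subspace $I_c \subset \WW^{\mathrm a}$ of ancestral derivations whose derivational part involves only $\partial_c^{p^\cdot}$ satisfies $[I_c, I_c] = 0$, since neither coefficient support can contain $c$.

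For the $p$-mapping on an ancestral pure monomial $x = \mathbf{t}^\alpha \partial_b^{p^l}$, the condition $\alpha > b$ ensures $b \notin \mathrm{supp}(\alpha)$, so $\partial_b^{p^l}$ annihilates $\mathbf{t}^\alpha$ and commutes with multiplication by it. A short induction then yields $x^{[p]} = x^p = (\mathbf{t}^\alpha)^p \partial_b^{p^{l+1}}$, which is zero if $l+1 \ge S_b$ and otherwise an ancestral pure monomial since $\mathrm{supp}((\mathbf{t}^\alpha)^p) \subset \mathrm{supp}(\alpha)$. A general $D \in \WW^{\mathrm a}$ decomposes as $D = \sum_{c \in \Theta} D_c$ with $D_c \in I_c$. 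The abelianness of $I_c$ lets the $p$-mapping distribute termwise over the (possibly infinite) expansion of $D_c$ via the Freshman's dream in characteristic~$p$, giving $D_c^{[p]} \in I_c$; iterated application of Jacobson's formula~\eqref{power_P} then expresses $D^{[p]}$ as $\sum_c D_c^{[p]}$ plus Lie polynomials in the $D_c$'s, all of which stay in $\WW^{\mathrm a}$ by bracket closure.

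The main technical obstacle is control of the infinite sums permitted by the finiteness condition on $\WW$: this is overcome by the abelian block decomposition into $I_c$'s (which reduces the $p$-mapping within each $I_c$ to a pointwise operation in the basis) and by the observation that at each pure monomial of the output only finitely many input monomials contribute, due to the strict support bookkeeping of the ancestry relation.
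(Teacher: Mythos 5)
Your proposal is correct and follows essentially the same route as the paper: closure under the bracket is verified on pure ancestral monomials via the standard commutator formula together with transitivity of the ancestor relation, and closure under the $p$-mapping is reduced via Jacobson's formula~\eqref{power_P} to $p$-th powers of pure ancestral monomials (which are zero or $\partial_b^{p^{l+1}}$, hence still ancestral) plus $p$-fold brackets already covered by bracket closure. Your additional bookkeeping with the abelian blocks $I_c$ and the remark that each output monomial receives only finitely many contributions is a refinement of the same argument (the paper applies~\eqref{power_P} directly to the expansion into pure Lie monomials), not a different method.
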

\begin{proof}
Consider a product
\begin{equation*}
\Big[{\mathbf t}^{\a}\partial_{b}^{p^{l}},  {\mathbf t}^{\b}\partial_{c}^{p^{l'}} \Big]
={\mathbf t}^{\a} \partial_{b}^{p^{l}}\!\!\big({\mathbf t}^{\b}\big)\partial_{c}^{p^{l'}}
-{\mathbf t}^{\b}\partial_{c}^{p^{l'}}\!\!\big({\mathbf t}^{\a}\big)\partial_{b}^{p^{l}},
\qquad \a,\b\in\Lambda, \ a,b\in\Theta,\ l,l'\ge 0.
\end{equation*}
By assumption, $\a=(\xi_{a_1},\ldots,\xi_{a_n}| a_i{\in}\Theta)$ where $a_i>b$, $i=1,\ldots,n$,
and $\beta=(\xi_{b_1},\ldots \xi_{b_m}| b_j{\in} \Theta)$ and $b_j>c$, $j=1,\dots,m$.
Let the first term above be non-zero. Then $b=b_{j_{0}}$ for some $j_0$ and we get $a_i>b=b_{j_{0}}> c$ for all $i=1,\ldots,n$
and the first term is as required.
Thus, $\WW^{\mathrm a}$ is closed with respect to the Lie product.

By~\eqref{power_P},
a $p$-power is a sum of $p$-powers of pure Lie monomials and their $p$-fold commutators, the latter are  treated above.
Consider a pure Lie monomial $v=t_{a_1}^{(\xi_1)}\cdots t_{a_n}^{(\xi_n)}\dd_b^{p^l}$, where $a_i>b$, $i=1,\ldots,n$.
If $\xi_i>0$ for some $i$, then $v^p=0$, otherwise $v^p=\dd_b^{p^{l+1}}$.
\end{proof}

\subsection{Ancestral pro-augmentation derivations}
Basically, we follow approach~\cite{ShZe08}, applying ideas of~\cite{Pe17}.
Consider a tuple $\a=(\xi_{a_1},\ldots,\xi_{a_n}| a_i\in\Theta)\in\Lambda$~\eqref{tuple}.
Denote $|\a|=\xi_{a_1}+\cdots+\xi_{a_n}$.
Let $\xi_a\in \{0,\ldots, p^{S_a}-1\}$, $a\in \Theta$, consider the $p$-adic expansion
$\xi_a=\sum _{i\ge 0} \eta_i p^i$, $\eta_i{\in}\{0,\ldots,p{-}1\}$.
Define $p$-{\it adic norms} $|\xi_a|_p:=\sum_{i\ge 0}\eta_i $ and
$|\a|_p:=|\xi_{a_1}|_p+\cdots+|\xi_{a_n}|_p$, where $\a\in\Lambda$ as above.
Define subspaces in terms of the $p$-adic norm:
\begin{align*}
R_m:&=\langle{\mathbf t}^\a \ | \ \a\in\Lambda,\ |\a|_p= m \rangle_K\subset R,\qquad m\ge 0;\\
R^m:&=\langle{\mathbf t}^\a \ | \ \a\in\Lambda,\ |\a|_p\ge m \rangle_K\subset R,\qquad m\ge 0.
\end{align*}
\begin{Lemma} \label{L_p_norm}
Let the action of $\WW=\WW(\Theta,\bar S)$ on $R=R(\Theta,\bar S)$ and notations be as above.
\begin{enumerate}
\item Let $\a,\b\in\Lambda$, we get
${\mathbf t}^\a\cdot {\mathbf t}^\b=\lambda {\mathbf t}^{\a+\b}$, $\lambda\in K$. Let $\lambda\ne 0$,
then $|\a+\b|_p=|\a|_p+|\b|_p$.
\item $R=\mathop{\oplus}\limits_{m=0}^\infty R_m$ is a $\Z$-grading.
\item $R=R^0\supset R^1\supset R^2\supset\cdots\ $ is a decreasing filtration.
\item
Consider $\a,\b\in\Lambda$, $a\in\Theta$, $\a>a$, $0\le m<S_a$.
We have  $({\mathbf t}^\a \partial_a^{p^m})({\mathbf t}^\b)={\mathbf t}^{\b'}$. Assume that the result of the action is non-zero.
Then $|\b'|_p\ge |\a|_p+|\b|_p-1$.
\item $\WW (R^m)\subset R^{m-1}$, $m\ge 1$.
\end{enumerate}
\end{Lemma}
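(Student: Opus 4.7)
The plan is to derive all five parts from Lucas's congruence~\eqref{congrunce} plus one elementary inequality about base-$p$ digit sums; no genuine algebra is required beyond that.

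First I would prove (i). The divided-power product factors coordinatewise as
\[
{\mathbf t}^\a\cdot{\mathbf t}^\b=\prod_{a\in\Theta}\binom{\xi_a+\eta_a}{\xi_a}\,t_a^{(\xi_a+\eta_a)},
\]
where $\xi_a,\eta_a$ are the components of $\a,\b$. By~\eqref{congrunce}, the coefficient $\lambda$ is nonzero if and only if, for every $a\in\Theta$ and every $k$, the base-$p$ digits satisfy $(\xi_a)_k+(\eta_a)_k<p$; that is, the addition $\xi_a+\eta_a$ is carry-free in base $p$. Carry-free addition is precisely equivalent to the digit-sum identity $|\xi_a+\eta_a|_p=|\xi_a|_p+|\eta_a|_p$. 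Summing over $a$ yields $|\a+\b|_p=|\a|_p+|\b|_p$.

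Claims (ii) and (iii) are then immediate bookkeeping: (i) says multiplication respects the $p$-adic weight, so $R=\bigoplus_{m\ge 0}R_m$ is a $\Z$-grading, and then $R^m=\bigoplus_{k\ge m}R_k$ gives a decreasing filtration by definition.

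For (iv), I would separate the action into derivation then multiplication. The derivation $\partial_a^{p^l}$ either annihilates ${\mathbf t}^\b$ or produces ${\mathbf t}^{\b''}$, where $\b''$ differs from $\b$ only in the $a$-component, which drops from $\eta_a$ to $\eta_a-p^l$. The key elementary inequality I would isolate is: for every integer $n\ge p^l$,
\[
|n-p^l|_p\ \ge\ |n|_p-1.
\]
Indeed, if the $l$-th base-$p$ digit of $n$ is positive, subtracting $p^l$ decreases it by one and the digit sum drops by exactly $1$; if that digit is zero, a borrow from the least position $k_0>l$ with nonzero digit is required, producing a net digit-sum change of $(p-1)(k_0-l)-1\ge p-2\ge 0$. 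Hence $|\b''|_p\ge|\b|_p-1$, and then by (i) the product ${\mathbf t}^\a\cdot{\mathbf t}^{\b''}$ (when nonzero) has $p$-adic norm exactly $|\a|_p+|\b''|_p\ge|\a|_p+|\b|_p-1$, giving (iv). The ancestral condition $\a>a$ plays no role in the norm estimate; it is present only because the statement is formulated in the ancestral setting.

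Finally (v) is immediate from (iv) by linearity: an arbitrary element of $\WW$ is, on each fixed basis vector ${\mathbf t}^\b$, a finite sum of pure Lie monomials ${\mathbf t}^\a \partial_b^{p^l}$ (by the finiteness condition), and each such monomial sends $R^m$ into $R^{m-1}$ by the same two-step argument as in (iv). The main obstacle, if one can call it that, is purely the base-$p$ digit-sum inequality above; everything else is formal.
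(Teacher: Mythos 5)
Your proposal is correct and follows essentially the same route as the paper: part (i) via Lucas's congruence and the carry-free/digit-sum equivalence (the paper spells out the minimal-carry argument you quote as standard), part (iv) via exactly the same two-case digit analysis for subtracting $p^l$ (borrow giving a net change $(p-1)(j-m)-1\ge 0$, and the no-higher-digit case forcing the action to vanish), with (ii), (iii), (v) formal consequences. Your observation that the ancestral condition $\a>a$ is irrelevant to the norm estimate is consistent with the paper, whose proof of (iv) likewise never uses it.
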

\begin{proof}
Fix $b\in\Theta$ and consider respective powers $t_b^{(\xi)}$, $t_b^{(\xi')}$ inside ${\mathbf t}^\a$, ${\mathbf t}^\b$.
Consider $p$-adic expansions
$\xi=\sum_{j= 0}^{S_b-1} \delta_j p^j$,
$\xi'=\sum_{j= 0}^{S_b-1} \delta_j' p^j$, and
$\xi+\xi'=\sum_{j\ge 0} \gamma_j p^j$, where $\delta_j,\delta'_j,\gamma_j\in\{0,\dots,p-1\}$.
We use congruence~\eqref{congrunce}
$$t_b^{(\xi)}\cdot t_b^{(\xi')}
=\binom {\xi+\xi'}{\xi}t_b^{(\xi+\xi')}
=\bigg(\prod_{j\ge 0} \binom {\gamma_j}{\delta_j}\bigg)t_b^{(\xi+\xi')}.
$$
Assume that $\delta_j+\delta_j'\ge p$ for some $j\ge 0$ and $j$ is minimal with this property.
Then $\gamma_j=\delta_j+\delta_j'-p<\delta_j$, so $\binom{\gamma_j}{\delta_j}=0$
and the product above is equal to zero, a contradiction.
Then $\delta_j+\delta_j'< p$ for all $j\ge 0$ and $\gamma_j= \delta_j+\delta_j'$, $j\ge 0$. We get
$ |\xi+\xi'|_p=\sum_{j\ge 0} \gamma_j=\sum_{j\ge 0} \delta_j+\sum_{j\ge 0}\delta_j'=|\xi|_p+|\xi'|_p .$
Applying equality for all $b\in\Theta$, the first claim follows.
Now ii), iii) are trivial.

Let us prove iv).
Let $t_a^{(\xi)}$ be the respective factor in ${\mathbf t}^\b$.
We have $\partial_{a}^{p^m}t_a^{(\xi)}=t_a^{(\xi')}$, where $\xi'=\xi-p^m$.
Consider the $p$-adic expansion $\xi=\sum_{i=0}^{S_a-1} \delta_i p^i$, $0\le \delta_i<p$.

We have two cases.
a) $\delta_m>0$. We get
$\xi'=\xi-p^m=(\delta_m{-}1)p^m +\!\!\!\sum\limits_{i=0, i\ne m}^{S_a-1} \!\!\!\delta_i p^i$, and
$ |\xi'|_p=\sum\limits_{i=0}^{S_a-1}\delta_i-1=|\xi|_p-1$.

b) $\delta_m=0$.
Assume that $\delta_i=0$ for all $i\ge m$.
Then $\xi=\sum_{i=0}^{m-1}\delta_i p^i\le \sum_{i=0}^{m-1}(p-1)p^i= p^m-1<p^m$
and $t_a^{(\xi')}=t_a^{(\xi-p^m)}=0$,
a contradiction that the action is non-trivial.
Hence, we have more nonzero coefficients in the $p$-adic expansion.
Let $\delta_{j}$, where $j>m$, be the first nonzero coefficient after $\delta_m$.
Coefficients of the $p$-adic expansion of $\xi'=\xi-p^m$ are:
\begin{align*}
&(\delta_0,\dots,\delta_{m-1},(p{-}1),\ldots, (p{-}1),\delta_{j}{-}1,\delta_{j+1},\dots,\delta_{S_a-1}),\\
& |\xi'|_p=\sum_{i=0}^{S_a-1}\delta_i+(p-1)(j-m)-1
  =|\xi|_p+(p-1)(j-m)-1
  \ge |\xi|_p.
\end{align*}
It remains to combine both cases and apply the first claim. Claim v) follows from iv).
\end{proof}
Following~\cite{ShZe08}, define subspaces
\begin{align}\nonumber
R^m \WW^{\mathrm a}:&=\{r_1w_1+\cdots +r_nw_n\ |\ r_i\in R^m, w_i\in \WW^{\mathrm a},n\ge 1\}\\
&=\{{\mathbf t}^{\a_1}w_1+\cdots + {\mathbf t}^{\a_r}w_r\ |\ \a_i\in \Lambda,\  |\a_i|_p\ge m,\ w_i\in \WW^{\mathrm a},\ r\ge 1\}
\subset \WW,\quad m\ge 0.
\label{t_W}
\end{align}
The ancestral condition is not guaranteed, so these sets do not belong to $\WW^{\mathrm a}$.
Since $\mathop{\cap}\limits_{m\ge 0} R^m \WW^{\mathrm a}=\{0\}$, the subspaces $\{R^m \WW^{\mathrm a}\cap \WW^{\mathrm a}\mid m\ge 0\}$
define a topology on $\WW^{\mathrm a}$.
We call it the {\it augmentation topology}.

Consider a subset consisting of {\it finite} linear combinations of ancestral pure Lie monomials:
$$\WW^{\mathrm a}_{\mathrm{fin}}:=
\big\langle {\mathbf t}^{\a} \partial_{b}^{p^{l}} \ \big|\ \a\in \Lambda,\ b\in \Theta,\ \a>b,\ 0\le l< S_b  \big\rangle_K\subset\WW^{\mathrm a}.
$$
Clearly, this is a restricted Lie subalgebra.
\begin{Lemma}\label{L_mathcal_W}
Let $\mathcal W(\Theta,\bar S):=\overline{\WW^{\mathrm a}_{\mathrm{fin}}}$ be
the completion of $\WW^{\mathrm a}_{\mathrm{fin}}$ in the augmentation topology.
\begin{enumerate}
\item We call this set by {\em ancestral pro-augmentation} derivations.  Then
\begin{equation*}
\mathcal W(\Theta,\bar S)=\overline{\WW^{\mathrm a}_{\mathrm{fin}}}
= \bigcap_{m\ge 0} \big(\WW^{\mathrm a}_{\mathrm {fin}}+ (R^m \WW^{\mathrm a}\cap \WW^{\mathrm a})\big)\subset \WW^{\mathrm a}.
\end{equation*}
\item $\mathcal W$ is a restricted Lie subalgebra of $\WW^{\mathrm a}$.
\end{enumerate}
\end{Lemma}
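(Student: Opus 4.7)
The plan is to split the statement into three parts: (a) the equality $\overline{\WW^{\mathrm a}_{\mathrm{fin}}} = \bigcap_{m\ge 0} \big(\WW^{\mathrm a}_{\mathrm{fin}} + (R^m\WW^{\mathrm a}\cap\WW^{\mathrm a})\big)$; (b) the containment $\mathcal W \subset \WW^{\mathrm a}$, which justifies calling it a ``completion''; and (c) stability of $\mathcal W$ under bracket and the $p$-mapping. First I would observe, via Lemma~\ref{L_p_norm}(i)--(ii), that any element of $R^m\WW^{\mathrm a}$, expanded in the basis $\{{\mathbf t}^{\a}\dd_b^{p^l}\}$, has its $\a$-support concentrated on $|\a|_p\ge m$. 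Consequently $\{R^m\WW^{\mathrm a}\cap\WW^{\mathrm a}\}_{m\ge 0}$ is a descending chain of subspaces of $\WW^{\mathrm a}$ with trivial intersection, turning $\WW^{\mathrm a}$ into a Hausdorff linear topological space, and (a) then becomes the standard description of the closure of a subgroup in a topological abelian group defined by a neighborhood basis of subgroups.

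For (b) the point is that $\WW^{\mathrm a}$ is already complete, so the abstract completion is realized inside it. Given a Cauchy sequence $\{w_n\}\subset\WW^{\mathrm a}_{\mathrm{fin}}$, for each $m$ one picks $N_m$ with $w_n-w_{N_m}\in R^m\WW^{\mathrm a}$ for $n\ge N_m$; the support property above forces the coefficient of every ${\mathbf t}^{\a}\dd_b^{p^l}$ with $|\a|_p<m$ to stabilize. The formal limit inherits the ancestral condition from each $w_n$. The subtler finiteness condition at each fixed $\a$, say $|\a|_p=m_0$, follows because all its coefficients are already determined by $w_{N_{m_0+1}}\in\WW^{\mathrm a}_{\mathrm{fin}}$, which has finite total support; so only finitely many $\dd_b^{p^l}$ appear with nonzero coefficient at ${\mathbf t}^{\a}$ in the limit.

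For (c) I would extend the bracket and the $p$-mapping by continuity from $\WW^{\mathrm a}_{\mathrm{fin}}$, where both operations are already defined and stable by Lemma~\ref{Lancestral}. A direct Leibniz computation yields
$$[rv,r'v'] = r\,v(r')\,v' - r'\,v'(r)\,v + rr'\,[v,v'],$$
and combining Lemma~\ref{L_p_norm}(i) and (v) reads off $[R^m\WW^{\mathrm a},R^{m'}\WW^{\mathrm a}]\subset R^{m+m'-1}\WW^{\mathrm a}$, hence separate continuity of the bracket. For the $p$-mapping, formula~\eqref{power_P} applied iteratively to $w=w'+(w-w')$ with $w-w'\in R^m\WW^{\mathrm a}$, $m\ge 1$, shows that the pure $p$-powers of $(w-w')$ vanish (by the computation at the end of the proof of Lemma~\ref{Lancestral}, since the required $\xi_i>0$ condition is forced once $|\a|_p\ge 1$), while the remaining contributions are iterated commutators containing at least one factor from $R^m\WW^{\mathrm a}$, so by the bracket bound they lie in $R^{m-c_p}\WW^{\mathrm a}$ for a constant depending only on $p$. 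The main obstacle will be to track these shifts in $p$-norm carefully enough through the recursive expansion of the $p$-power formula so that the $p$-map is genuinely continuous, not merely on a filtration-shifted version of the topology.
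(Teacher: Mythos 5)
Your parts (a), (b) and the bracket step are essentially the paper's own argument: claim i) is dismissed there as following from the definition (your verification that the limit keeps the ancestral and finiteness conditions is a harmless elaboration), and closure under the bracket is obtained from the same filtration estimate, in the form $[\WW^{\mathrm a},R^m\WW^{\mathrm a}]\subset R^{m-1}\WW^{\mathrm a}$ (claim v) of Lemma~\ref{L_p_norm}), which gives $\big[\WW^{\mathrm a}_{\mathrm {fin}}+(R^m\WW^{\mathrm a}\cap\WW^{\mathrm a}),\ \WW^{\mathrm a}_{\mathrm {fin}}+(R^n\WW^{\mathrm a}\cap\WW^{\mathrm a})\big]\subset \WW^{\mathrm a}_{\mathrm {fin}}+(R^{\min\{m,n\}-1}\WW^{\mathrm a}\cap\WW^{\mathrm a})$, exactly your separate-continuity statement.

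Two points in your treatment of the $p$-mapping need repair. First, it is not true that the $p$-th power of $w-w'$ "vanishes": what vanishes are the $p$-th powers of the \emph{individual} pure Lie monomials of $w-w'$ (nontrivial divided-power coefficient, as at the end of the proof of Lemma~\ref{Lancestral}); the Jacobson expansion of the power of the sum also produces $p$-fold commutators formed entirely from monomials of $w-w'$, and these do not disappear — they merely land in $R^{m-p+1}\WW^{\mathrm a}$ by your bracket bound (alternatively, use $(ru)^{[p]}=r^pu^{[p]}+\big((ru)^{p-1}(r)\big)u$ together with $r^p=0$ for $r\in R^m$, $m\ge 1$). So the conclusion survives, but the case must be treated; moreover, applying \eqref{power_P} through the infinite pure-monomial expansion should be justified (the standard remark, implicit also in the paper, is that all identities can be checked on each basis monomial of $R$, on which only finitely many terms act). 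Second, your declared "main obstacle" is not an obstacle: closure of $\mathcal W$ under the $p$-map does not require continuity of the $p$-map, only the single inclusion $\big(\WW^{\mathrm a}_{\mathrm {fin}}+(R^m\WW^{\mathrm a}\cap\WW^{\mathrm a})\big)^{[p]}\subset \WW^{\mathrm a}_{\mathrm {fin}}+(R^{m-p+1}\WW^{\mathrm a}\cap\WW^{\mathrm a})$, and since the loss $p-1$ is bounded independently of $m$, intersecting over all $m$ still yields $\mathcal W$. This is precisely how the paper concludes: it writes $a=\sum c_{b,l}\partial_b^{p^l}$ with $c_{b,l}\in Ru_1+\cdots+Ru_n$, $u_i\in R^m$, for all $b$ of generation at least $N$, kills the pure $p$-th powers of those terms, and shows the coefficients of the surviving $p$-fold commutators lie in the ideal generated by the finitely many elements $\partial_{b_{j_1}}^{p^{l_{j_1}}}\cdots\partial_{b_{j_s}}^{p^{l_{j_s}}}u_i\in R^{m-p+1}$ with $s\le p-1$, whence $a^p\in\WW^{\mathrm a}_{\mathrm {fin}}+(R^{m-p+1}\WW^{\mathrm a}\cap\WW^{\mathrm a})$. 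With these two adjustments your outline matches the paper's proof.
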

\begin{proof}
i) follows from the definition.
By~\eqref{t_W} and v) of Lemma~\ref{L_p_norm},  $[\WW^a, R^m\WW^a]\subset R^{m-1}\WW^a$, $m\ge 0$.
So,
$$\Big[\WW^{\mathrm a}_{\mathrm {fin}}+ (R^m \WW^{\mathrm a}\cap \WW^{\mathrm a}),
\WW^{\mathrm a}_{\mathrm {fin}}+ (R^n \WW^{\mathrm a}\cap \WW^{\mathrm a})\Big]
\subset \WW^{\mathrm a}_{\mathrm {fin}}+ (R^{\min\{m,n\}-1} \WW^{\mathrm a}\cap \WW^{\mathrm a}).$$
Hence, $\mathcal W$ is closed with respect to the Lie bracket.

Let us check that $\mathcal W$ is closed with respect to the $p$-mapping.
Let $a\in \WW^{\mathrm a}_{\mathrm {fin}}+ (R^m \WW^{\mathrm a}\cap \WW^{\mathrm a})$, $m\ge 1$.
Then $a=\!\!\!\!\sum\limits_{b\in\Theta, 0\le l<S_b} \!\!\!c_{b,l} \partial_b^{p^l}$, $c_{b,l}\in R$,
there exist $u_1,\ldots,u_n\in R^m$ and $N\in\N$, such that
$c_{b,l}\in Ru_1+\cdots +R u_n$ for terms with $\gen b\ge N$.
So, pure derivations of generation at least $N$ contain a non-trivial factor $c_{b,l}$, hence, their $p$th powers are trivial.
Using~\eqref{power_P},
modulo terms with derivations of generation at most $N$,  $a^p$ is a sum of $p$-fold commutators
$$\Big[c_{b_1l_1}\partial_{b_1}^{p^{l_1}},c_{b_2l_2}\partial_{b_2}^{p^{l_2}},\ldots, c_{b_pl_p}\partial_{b_p}^{p^{l_p}}\Big]
=w\partial_{b_{i_q}}^{p^{l_{i_q}}}, \quad 0\ne w\in R, $$
where $b_{i_q}$ is the minimal among $\{b_1,\ldots, b_p\}$ with respect to $<$ by the ancestral property.
Modulo finitely many terms, assume that $\gen b_{i_q}\ge N$.
Then $c_{b_{i_q},l_{i_q}}\in Ru_1+\cdots +R u_n$, hence,
$w$ above belongs to the ideal of $R$ generated by finitely many elements
$\partial_{b_{j_1}}^{p^{l_{j_1}}}\!\!\!\cdots \partial_{b_{j_s}}^{p^{l_{j_s}}} u_i$,
where $1\le i\le n$, $0\le s\le p-1$ and $b_{j_*}$ are ancestors of $b_{i_q}$.
By claim v) of Lemma~\ref{L_p_norm}, these generators belong to $R^{m-p+1}$.
Hence, $a^p\in \WW^{\mathrm a}_{\mathrm {fin}}+ (R^{m-p+1} \WW^{\mathrm a}\cap \WW^{\mathrm a})$.
\end{proof}

Define subspaces determined  by pure Lie monomials corresponding to flies $b\in\Theta$ of generation at least $n$:
\begin{equation*}
\mathcal W_n:=\mathcal W\bigcap
               \Bigg\{\sum_{\substack{\a\in \Lambda,\ b\in \Theta,\ \a>b,\ 0\le l<S_b  \\[2pt] \gen b\ge n} }
                           \lambda_{\a, b,l}\,{\mathbf t}^{\a} \dd_b^{p^l}
                    \ \Bigg|\ \lambda_{\a, b,l}\in K
               \Bigg\},\qquad n\ge 0.
\end{equation*}
\begin{Lemma}\label{LcalW}
$\mathcal W_n$, $n\ge 0$, are restricted ideals of $\mathcal W$.
\end{Lemma}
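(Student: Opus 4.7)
The plan is to verify that $\mathcal W_n$ is closed under the Lie bracket with arbitrary elements of $\mathcal W$ and under the $p$-mapping. Lemma~\ref{L_mathcal_W} already guarantees that both operations keep us inside $\mathcal W$, so the only substantive work is to track the generation condition $\gen b\ge n$ on the derivation factor of each pure Lie monomial; the filtration property $\mathcal W=\mathcal W_0\supseteq \mathcal W_1\supseteq\cdots$ and that each $\mathcal W_n$ is a $K$-subspace are immediate from the definition.

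\textbf{Ideal property.} It suffices to check $[\mathcal W_n,\mathcal W]\subset\mathcal W_n$ on pure ancestral Lie monomials. Take $v={\mathbf t}^{\alpha}\partial_b^{p^l}$ with $\alpha>b$ and $\gen b\ge n$, and $w={\mathbf t}^{\beta}\partial_c^{p^{l'}}$ with $\beta>c$. Then
\begin{equation*}
[v,w]
= {\mathbf t}^{\alpha}\,\partial_b^{p^l}({\mathbf t}^{\beta})\,\partial_c^{p^{l'}}
 -{\mathbf t}^{\beta}\,\partial_c^{p^{l'}}({\mathbf t}^{\alpha})\,\partial_b^{p^l}.
\end{equation*}
The second term, if non-zero, keeps the derivation $\partial_b^{p^l}$ with $\gen b\ge n$, hence lies in $\mathcal W_n$. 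For the first term to be non-zero, $t_b$ must occur in ${\mathbf t}^{\beta}$; the ancestral condition $\beta>c$ then forces $b>c$, so $\gen c>\gen b\ge n$, and that term lies in $\mathcal W_n$ as well. Extending termwise to combinations satisfying the finiteness condition, and using Lemma~\ref{L_mathcal_W} to ensure the bracket belongs to $\mathcal W$, we conclude $[\mathcal W_n,\mathcal W]\subseteq\mathcal W_n$.

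\textbf{Stability under the $p$-mapping.} For a pure ancestral Lie monomial $v={\mathbf t}^{\alpha}\partial_b^{p^l}$ with $\gen b\ge n$, the final computation in the proof of Lemma~\ref{Lancestral} gives $v^p=0$ if $\alpha\ne 0$ and $v^p=\partial_b^{p^{l+1}}$ otherwise; both possibilities lie in $\mathcal W_n$. For general $x\in\mathcal W_n$, write $x=\sum_i x_i$ as a (possibly infinite) sum of such pure monomials satisfying the finiteness condition, and apply~\eqref{power_P} iteratively:
\begin{equation*}
x^{[p]}=\sum_i x_i^{[p]}+(\text{$p$-fold iterated commutators of the }x_i),
\end{equation*}
where every $x_i^{[p]}\in\mathcal W_n$ by the pure-monomial case and every iterated commutator lies in $\mathcal W_n$ by the ideal property just established. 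Convergence is managed exactly as in the proof of Lemma~\ref{L_mathcal_W}: after splitting off finitely many summands, the remaining tail belongs to $R^m\WW^{\mathrm a}\cap\WW^{\mathrm a}$ for arbitrarily large $m$, and pure monomials with non-trivial ${\mathbf t}^{\alpha}$ factors have vanishing $p$-th power, so modulo a tail in $R^{m-p+1}\WW^{\mathrm a}\cap\mathcal W_n$ the computation of $x^{[p]}$ is finite and stays in $\mathcal W_n$.

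\textbf{Main obstacle.} The only real friction is bookkeeping rather than genuine novelty: one must carefully transfer the finiteness condition and the pro-augmentation convergence from $\mathcal W$ to $\mathcal W_n$ when computing $p$-th powers of infinite sums. Since Lemma~\ref{L_mathcal_W} already controls both issues inside $\mathcal W$, the task here reduces to checking that the combinatorial generation constraint $\gen b\ge n$ propagates through the bracket (via the ancestral order relation $>$) and through~\eqref{power_P} (where $p$-powers of individual pure monomials either vanish or preserve $\gen b$), both of which are straightforward once the reduction is in place.
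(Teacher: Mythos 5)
Your argument is correct and is essentially the paper's own: the paper proves Lemma~\ref{LcalW} by appealing to the product and $p$-mapping computations of Lemma~\ref{Lancestral}, and your write-up simply makes explicit how the generation constraint $\gen b\ge n$ propagates through the bracket (the non-trivial term forces $b>c$, hence $\gen c>\gen b\ge n$) and through~\eqref{power_P}, with convergence handled as in Lemma~\ref{L_mathcal_W}. No gaps.
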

\begin{proof}
Follows from arguments on the product and $p$-mapping of Lemma~\ref{Lancestral}.
\end{proof}

\begin{Lemma}\label{Lder_subspecies0}
Let $\Theta'$ be a subspecie of a specie of flies $\Theta$ and fix a tuple
$\bar S=(S_a\in \N \mid a\in \Theta )$.
Define a restriction $\bar S|_{\Theta'}=(S_a\mid a\in\Theta')$. Then there exist natural epimorphisms
$$\phi: \WW^{\mathrm a}(\Theta,\bar S) \twoheadrightarrow \WW^{\mathrm a}(\Theta',\bar S|_{\Theta'}), \qquad
\phi: {\mathcal W}(\Theta,\bar S) \twoheadrightarrow {\mathcal W}(\Theta',\bar S|_{\Theta'}).$$
\end{Lemma}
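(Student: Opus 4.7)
My plan is to define $\phi$ directly on pure Lie monomials by
$\phi(\mathbf{t}^\a \partial_b^{p^l}) := \mathbf{t}^\a \partial_b^{p^l}$ if $b \in \Theta'$
and $\phi(\mathbf{t}^\a \partial_b^{p^l}) := 0$ if $b \notin \Theta'$,
then extend by linearity. The key observation making the first clause well-defined is that a subspecie $\Theta'$, being closed under parenthood, is in fact closed under arbitrary ancestors; so the ancestral condition $\a > b$ together with $b \in \Theta'$ forces every variable appearing in $\mathbf{t}^\a$ to lie in $\Theta'$, and the right-hand side is genuinely a pure Lie monomial of $\WW^{\mathrm a}(\Theta',\bar S|_{\Theta'})$.

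To see that $\phi$ respects the Lie bracket, the only nontrivial case is a commutator $[\mathbf{t}^\a\partial_b^{p^l}, \mathbf{t}^\b \partial_c^{p^{l'}}]$ with $b\in\Theta'$ and $c\notin\Theta'$ (the opposite situation being symmetric). Expanding as in the proof of Lemma~\ref{Lancestral}, the first summand carries direction $\partial_c$ with $c\notin\Theta'$ and is killed by $\phi$, while the second summand contains the factor $\partial_c^{p^{l'}}(\mathbf{t}^\a)$, which vanishes identically: the variables in $\mathbf{t}^\a$ are ancestors of $b\in\Theta'$, hence in $\Theta'$, so $c$ is not among them. Thus $\phi([X,Y]) = 0 = [\phi(X),\phi(Y)]$. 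The uniform cases where $b,c$ are both in $\Theta'$ or both outside $\Theta'$ are straightforward, since then every pure monomial appearing in $[X,Y]$ keeps its direction on the same side of $\Theta'$.

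For the $p$-mapping, decompose $D = D' + D''$ in $\WW^{\mathrm a}(\Theta,\bar S)$ where $D'$ collects all pure monomials with $b\in\Theta'$ and $D''$ the remainder. Formula~\eqref{power_P} gives $D^{[p]} = (D')^{[p]} + (D'')^{[p]} + C$ with $C$ a sum of iterated $p$-fold brackets mixing $D'$ and $D''$. By repeated application of the mixed-case bracket analysis above (tracking on which side of $\Theta'$ the direction lies through each nested commutator), every pure monomial appearing in $C$ or in $(D'')^{[p]}$ inherits a direction outside $\Theta'$ and so is annihilated by $\phi$. Since $\WW^{\mathrm a}(\Theta',\bar S|_{\Theta'})$ sits inside $\WW^{\mathrm a}(\Theta,\bar S)$ compatibly with both the bracket and the $p$-map, we conclude $\phi(D^{[p]}) = (D')^{[p]} = \phi(D)^{[p]}$.

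Surjectivity onto $\WW^{\mathrm a}(\Theta',\bar S|_{\Theta'})$ is immediate, since every pure Lie monomial of the target is equally a pure Lie monomial of the source and is fixed by $\phi$. For the completions, $\phi$ preserves the $p$-adic norm $|\a|_p$ of any surviving monomial, so it carries $R^m\WW^{\mathrm a}(\Theta,\bar S)\cap\WW^{\mathrm a}(\Theta,\bar S)$ into the analogous subspace for $\Theta'$; hence $\phi$ is continuous for the augmentation topology and passes to an epimorphism $\mathcal W(\Theta,\bar S)\twoheadrightarrow \mathcal W(\Theta',\bar S|_{\Theta'})$ via the description in Lemma~\ref{L_mathcal_W}, with surjectivity obtained by lifting any Cauchy sequence in $\WW^{\mathrm a}_{\mathrm{fin}}(\Theta',\bar S|_{\Theta'})$ to the tautological Cauchy sequence of the same pure Lie monomials sitting inside $\WW^{\mathrm a}_{\mathrm{fin}}(\Theta,\bar S)$. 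I expect the main point of care to be the bookkeeping of directions through iterated commutators in the $p$-power formula, since this is where one could a priori introduce unwanted cross terms.
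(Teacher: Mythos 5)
Your proof is correct, and the map you construct is literally the same map as the paper's: the paper also describes $\phi$ as keeping the terms whose direction $\partial_b$ has $b\in\Theta'$ and discarding the rest, and your key observation — that a subspecie, being closed under parenthood, is closed under ancestors, so the ancestral condition forces the coefficients of surviving monomials into $R(\Theta',\bar S|_{\Theta'})$ — is exactly the paper's. Where you differ is in how the homomorphism property is obtained. The paper views $\phi$ as the restriction of special derivations to the subring $R(\Theta',\bar S|_{\Theta'})\hookrightarrow R(\Theta,\bar S)$: ancestor-closure shows this subring is invariant under $\WW^{\mathrm a}(\Theta,\bar S)$, and then compatibility with the Lie bracket and the $p$-mapping is automatic, since both are operator-theoretic (commutator and $p$-th power of the restricted operators). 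You instead verify the two axioms by hand through the mixed-bracket computation and Jacobson's formula~\eqref{power_P}; in effect you prove that the span of pure monomials with direction outside $\Theta'$ is closed under bracketing with everything and under $p$-th powers, i.e.\ it is an ideal complementary to the embedded copy of $\WW^{\mathrm a}(\Theta',\bar S|_{\Theta'})$, and $\phi$ is the projection along it. Both routes work; the restriction viewpoint is shorter and makes the passage to completions more transparent, while your computation has the merit of exhibiting the kernel explicitly. One small point of care in your completion step: membership in $R^m\WW^{\mathrm a}\cap\WW^{\mathrm a}$ is defined via finite sums $r_1w_1+\cdots+r_nw_n$ with $r_i\in R^m$, not monomial-by-monomial, so the phrase ``$\phi$ preserves the $p$-adic norm of any surviving monomial'' should be routed through the explicit description of elements of $\mathcal W$ given in the proof of Lemma~\ref{L_mathcal_W} (which is how the paper argues); with that rewording your continuity argument and the tautological lifting for surjectivity onto ${\mathcal W}(\Theta',\bar S|_{\Theta'})$ go through.
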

\begin{proof}
We have a natural embedding of divided power series rings:
$R(\Theta',\bar S|_{\Theta'}) \hookrightarrow R(\Theta, \bar S)$.
Let us apply an ancestral derivation $v\in \WW^{\mathrm a}(\Theta)$ to $t_b^{(*)}$ where $b\in \Theta'$.
By definition of ancestral derivations~\eqref{ancestral},
we get a factor ${\mathbf t}^\a$, where non-zero entrees of $\a$  correspond to ancestors of $b\in\Theta'$.
Since a subspecie is closed with respect to ancestors, ${\mathbf t}^\a\in R(\Theta',\bar S|_{\Theta'})$.
We conclude that the restriction of $v$ onto $R(\Theta',\bar S|_{\Theta'})$ is well-defined and belongs to $\WW^{\mathrm a}(\Theta')$.
The action of $\phi$ on elements~\eqref{ancestral} is the following: terms with $b\in\Theta'$ remain while
terms with $b\in\Theta\setminus\Theta'$ are discarded.
The surjectivity is trivial.

Let $a\in \WW^{\mathrm a}_{\mathrm {fin}}(\Theta)+ (R^m(\Theta) \WW^{\mathrm a}(\Theta)\cap \WW^{\mathrm a}(\Theta))$, $m\ge 1$.
Using form of such elements described in proof of Lemma~\ref{L_mathcal_W}, we see that
$\phi(a)\in \WW^{\mathrm a}_{\mathrm {fin}}(\Theta')+ (R^m(\Theta') \WW^{\mathrm a}(\Theta')\cap \WW^{\mathrm a}(\Theta'))$.
By Lemma~\ref{L_mathcal_W}, the assertion on the image of  ${\mathcal W}(\Theta,\bar S)$ follows.
\end{proof}

Our  restricted Lie algebras will be constructed as subalgebras in $\mathcal W\subset \Der R$.

\section{Virtual Pivot elements and Drosophila Lie algebras}

In this section we introduce virtual pivot elements and a general object of our study:
drosophila restricted Lie algebras $\LL=\LL(\Theta,\bar S)=\Lie_p(v_1,\dots,v_k)$.
\subsection{Virtual pivot elements: recurrence definition}
Fix a specie of fruit flies $\Theta$, a tuple $\bar S$, and the respective ring $R=R(\Theta,\bar S)$.
Define recursively its derivations indexed by fruit flies:
\begin{equation}\label{pivot}
v_a=\dd_a+t_a^{(p^{S_a}-1)}\!\!\!\! \sum_{\substack{ b\in \Theta_n\\ ab\in \Theta_{n+1}}}\!\!\!\!  t_b^{(p^{S_b}-1)} v_{ab},\qquad a\in \Theta_n, \ n\ge 0.
\end{equation}
Observe that the specification $b\in \Theta_n$ in the sum above can be omitted.
We refer to $\{v_a| a\in \Theta\}\subset \Der R$ as {\it virtual pivot elements}.

\subsection{Virtual pivot elements: Infinite sums}
Using~\eqref{pivot}, we present them as infinite sums:
\begin{equation}\label{expansion}
v_a=\dd_a+
t_a^{(p^{S_a}-1)}\!\!\!\!\! \sum_{\substack{ b\in \Theta_n\\ ab\in \Theta_{n+1}}} \!\!\!t_b^{(p^{S_b}-1)}\bigg( \dd_{ab}+ t_{ab}^{(p^{S_{ab}}-1)}
\!\!\!\!\!\!\sum_{\substack{cd\in \Theta_{n+1}\\ abcd\in \Theta_{n+2}}}\!\!\!\! t_{cd}^{(p^{S_{cd}}-1)}\bigg(\dd_{abcd}+\cdots \bigg)\bigg),\qquad a\in \Theta_n.
\end{equation}
The specifications $b\in \Theta_n$, $cd\in \Theta_{n+1}$ in the sums above can be omitted.
We make a convention that $v_a=0$ for all $a\notin \Theta$.
We describe expansion~\eqref{expansion} formally as follows.

\begin{Lemma}\label{L_pivot_exp}
Let $a\in \Theta_n$, $n\ge 0$.
Then the respective virtual pivot element is written as an infinite sum:
  \begin{equation}\label{construction}
    v_a= \sum_{\substack{d\preceq a}}\Bigg(\prod_{\substack{c\vdash d\\[2pt] |a|\le |c|<|d|}} t_c^{(p^{S_c}-1)}\Bigg) \dd_d,\qquad a\in\Theta,
  \end{equation}
where $c,d\in\Theta$.
Let us explain the sum above.
\begin{enumerate}
\item
The sum starts with $d=a$ yielding the term $\dd_a$; next it is taken over all descendants $d$ of $a$ such that
$a$ is a proper paternal ancestor of $d$.
\item
Describe the factor at $\dd_d$, where $d\in \Theta_m$, $m> n$.
For each generation $l=n,n+1,\ldots,m-1$,
it contains exactly two divided variables, having their maximal powers,
the variables corresponding to $c\in \Theta_l$ such that $c \vdash d$
(i.e. $c$'s are the both generation-$\Theta_l$-paternal-by-one ancestors of $d$).
(In other words, for all $l=n,n+1,\ldots,m-1$ we take two initial subsequent subwords of $d$ of length $2^l$,
they are the flies yielding pairs of the respective divided variables).
\item The factor at $\dd_d$ has $2(\gen d-\gen a)$ different divided variables, where $d\preceq a$.
\end{enumerate}
\end{Lemma}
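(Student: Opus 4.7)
The plan is to verify the expansion formula \eqref{construction} by a direct induction on the depth of unfolding of the recurrence \eqref{pivot}, matching both sides termwise and taking a limit in the augmentation topology of $\mathcal W(\Theta,\bar S)$ from Lemma~\ref{L_mathcal_W}.

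For the base term $d=a$, the conditions $c\vdash d$ and $|a|\le|c|<|d|=|a|$ are jointly vacuous, so the product is empty and the summand equals $\dd_a$, matching the first term on the right of \eqref{pivot}. For the inductive step, I assume the formula is known for each $v_{ab}$ with $ab\in\Theta_{n+1}$, so that
\[
t_a^{(p^{S_a}-1)} t_b^{(p^{S_b}-1)} v_{ab} = \sum_{d\preceq ab} \Bigg(t_a^{(p^{S_a}-1)} t_b^{(p^{S_b}-1)}\!\!\!\prod_{\substack{c\vdash d\\ 2|a|\le|c|<|d|}}\!\!\! t_c^{(p^{S_c}-1)}\Bigg)\dd_d.
\]
The key combinatorial observation is that every $d$ with $a\succ d$ has a unique generation-$(n{+}1)$ paternal ancestor $ab$, for a uniquely determined $b\in\Theta_n$, and the two generation-$n$ paternal-by-one ancestors of $d$ are exactly the two parents $a,b$ of this $ab$. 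Hence the prefactor $t_a^{(p^{S_a}-1)} t_b^{(p^{S_b}-1)}$ supplies precisely the two missing $|c|=2^n$ factors in the product of \eqref{construction}, and summing over admissible $b$ reproduces the $d\succ a$ part of \eqref{construction}.

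To promote this to an identity in $\mathcal W(\Theta,\bar S)$ rather than a purely formal manipulation, I would argue by truncation: the $N$-fold unfolding of \eqref{pivot} produces the partial sum of \eqref{construction} over $\gen d-\gen a<N$, plus a tail supported on terms of the form ${\mathbf t}^{\a}v_{d'}$ with $\gen d'=\gen a+N$ and with $\a$ carrying at least $2N$ nonzero entries (two highest-power divided variables per intermediate generation). In particular $|\a|_p\ge 2N$, so the tail lies in $R^{2N}\WW^{\mathrm a}\cap\WW^{\mathrm a}$ and goes to zero in the augmentation topology as $N\to\infty$. The assertions (i), (ii), (iii) of the lemma then follow by rereading the same combinatorics: each $d\succeq a$ picks up two divided variables per intermediate generation $l=n,\ldots,\gen d-1$, giving the stated total of $2(\gen d-\gen a)$ factors.

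The principal delicacy is purely combinatorial: one must verify that the two parents of the unique generation-$(n{+}1)$ paternal ancestor of $d$ are exactly the two generation-$n$ elements $c$ with $c\vdash d$. This is immediate from the definition of $\vdash$, since a generation-$n$ paternal-by-one ancestor of $d$ is by construction one of the two halves of some paternal ancestor of $d$, and the only paternal ancestor of $d$ of length $2^{n+1}$ is $ab$. Once this bookkeeping is recorded, the induction is mechanical and the convergence is routine.
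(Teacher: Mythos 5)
Your proof is correct and follows essentially the same route as the paper, which simply iterates the recurrence \eqref{pivot} into the infinite expansion \eqref{expansion} and reads off formula \eqref{construction}; your bookkeeping of the two paternal-by-one ancestors per intermediate generation is exactly the combinatorics implicit there. The only difference is that you make the passage to the infinite sum rigorous via truncation and the augmentation topology (tails in $R^{2N}\WW^{\mathrm a}$), a level of detail the paper omits but which is fully consistent with its framework.
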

\begin{proof} Follows from expansion~\eqref{expansion}.
\end{proof}

\begin{Corollary}
Let $a,d\in \Theta$. Then
\begin{enumerate}
\item
\begin{equation*}
v_a (t_d^{(1)})=
\begin{cases}
0, & a\not\succeq d;\\
1, & a=d;\\
\prod\limits_{\substack{c\vdash d\\[2pt] |a|\le |c|<|d|}} t_c^{(p^{S_c}-1)},\qquad &  a\succ d.
\end{cases}
\end{equation*}
\item In particular, if $t_c^{(*)}$, $c\in\Theta$, appears in the expression above then $c> d$ and $|a|\le |c|$
(i.e. $c$ is a proper ancestor of $d$ of generation at least that of $a$).
In other words, $c$ is a proper binary subword of $d$ of length at least that of $a$.
\end{enumerate}
\end{Corollary}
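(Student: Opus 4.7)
The plan is to read the answer directly off the explicit infinite-sum expansion~\eqref{construction} of $v_a$ provided by Lemma~\ref{L_pivot_exp}. Every summand in that expansion has the form (coefficient in $R$)$\cdot\dd_{d'}$ with $d'\preceq a$, and since $\dd_{d'}(t_d^{(1)})=\delta_{d',d}\cdot t_d^{(0)}=\delta_{d',d}$, only the summand with $d'=d$ can contribute to $v_a(t_d^{(1)})$.

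This reduces claim i) to a three-way case split on the relative position of $a$ and $d$ in the paternal order $\succeq$. If $a\not\succeq d$, then $d$ is simply not among the indices $d'$ appearing in~\eqref{construction}, so the value is $0$. If $a=d$, the summand at $d'=a$ is the bare $\dd_a$: its coefficient is an empty product (the index set $\{c\vdash a\mid |a|\le|c|<|a|\}$ is empty), and the convention that the empty product equals $1$ gives $v_a(t_a^{(1)})=1$. Finally, if $a\succ d$, the coefficient of $\dd_d$ in~\eqref{construction} is $\prod_{c\vdash d,\ |a|\le|c|<|d|}t_c^{(p^{S_c}-1)}$, and after $\dd_d$ strips the $t_d^{(1)}$ factor down to $1$, that coefficient is precisely what $v_a(t_d^{(1)})$ equals.

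For claim ii), I would unpack the definition of $c\vdash d$ recorded earlier: $c$ is one of the two parents of some paternal ancestor of $d$, so $c$ is itself an ancestor of $d$ in the $>$-order, i.e.\ $c$ is a proper binary subword of $d$. Combined with the range condition $|a|\le|c|$ imposed directly inside the product in~\eqref{construction}, each variable $t_c^{(*)}$ that survives in the third case of i) corresponds to a proper ancestor of $d$ of generation at least $\gen a$, as claimed.

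There is essentially no obstacle here; the corollary is a direct reading of~\eqref{construction}. The only points worth stating cleanly are the conventions $t_d^{(0)}=1$ and ``empty product $=1$'': the first yields the formula in the case $a\succ d$, and the second settles the diagonal case $a=d$.
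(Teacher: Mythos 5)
Your proof is correct and follows exactly the route the paper intends: the Corollary is stated without proof as an immediate consequence of Lemma~\ref{L_pivot_exp}, and your reading of the expansion~\eqref{construction} (only the summand with $\dd_d$ survives, with the empty-product convention handling the case $a=d$) is precisely that argument.
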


\subsection{Drosophila Lie algebras}
As a rule, we consider that $\ch K=p\ge 2$, a specie of flies $\Theta$,
where $\Theta_0=\{1,\ldots,k\}$, $k\ge 3$, and a tuple $\bar S=(S_a| a\in\Theta)$ are fixed.
In the present paper we study properties of the restricted Lie algebra $\LL(\Theta,\bar S):=\Lie_p(v_1,\dots,v_k)$.
One can also consider its associative hull $\AA(\Theta,\bar S)=\Alg(v_1,\dots,v_k)$,
both algebras are generated by finitely many virtual pivot elements corresponding to the flies of the zero generation $\Theta_0$.
So, we also write $\LL=\Lie_p(\Theta_0)$. Often we omit the tuple $\bar S$.
We call $\LL(\Theta)$ a {\it drosophila Lie algebra} because it is constructed in terms of a specie of drosophilas $\Theta$.

\begin{Remark}
We warn that in a general situation we cannot guarantee that
all virtual pivot elements $\{v_a| a\in \Theta\}$ belong to $\LL=\Lie_p(\Theta_0)$,
this fact justifies the term {\it virtual pivot elements}.
(Nevertheless, this is the case for clover Lie algebras considered below).
\end{Remark}

Due to this observation, we consider also bigger
infinitely generated algebras $\Lie_p(v_a|a\in \Theta)\subset \Der R$ and $\Alg_p(v_a|a\in \Theta)\subset \End R$, which are generated by
{\it all} virtual pivot elements.

\begin{Lemma}\label{inL_W}
The restricted Lie algebras $\LL=\Lie_p(\Theta_0)$ and $\Lie_p(v_a|a\in \Theta)$
are contained in the Lie algebra  $\mathcal W$ of ancestral pro-augmentation derivations.
\end{Lemma}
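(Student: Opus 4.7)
The plan is to show that each virtual pivot element $v_a$ belongs to $\mathcal{W}$; then, since $\mathcal{W}$ is a restricted Lie subalgebra of $\WW^{\mathrm a}$ by Lemma~\ref{L_mathcal_W}, both $\LL=\Lie_p(\Theta_0)$ and the larger algebra $\Lie_p(v_a\mid a\in\Theta)$ will automatically lie in $\mathcal{W}$. So the whole content is proving $v_a\in\mathcal{W}$ for every $a\in\Theta$.

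First I would use the explicit infinite sum~\eqref{construction} from Lemma~\ref{L_pivot_exp} to verify that $v_a\in\WW^{\mathrm a}$. In each summand $\mathbf{t}^\alpha\dd_d$ with $d\succeq a$, the divided variables appearing in $\mathbf{t}^\alpha$ correspond to flies $c$ satisfying $c\vdash d$, and by definition of $\vdash$ such $c$ is an ancestor of $d$; in particular $c>d$. Hence the tuple $\alpha$ indexing $\mathbf{t}^\alpha$ satisfies $\alpha>d$, which is precisely the ancestral condition~\eqref{ancestral}.

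Next, to place $v_a$ inside the closure $\mathcal{W}=\overline{\WW^{\mathrm a}_{\mathrm{fin}}}$, I would produce a Cauchy sequence of finite approximations. For $N\ge \gen a$ let
\begin{equation*}
v_a^{(N)} := \sum_{\substack{d\succeq a\\ \gen d\le N}}\Bigl(\prod_{\substack{c\vdash d\\ |a|\le|c|<|d|}} t_c^{(p^{S_c}-1)}\Bigr)\dd_d\;\in\;\WW^{\mathrm a}_{\mathrm{fin}}.
\end{equation*}
Each summand contributing to the remainder $v_a-v_a^{(N)}$ corresponds to some $d$ with $\gen d=m>N$ and carries a coefficient $\mathbf t^\alpha$ built from exactly $2(m-\gen a)$ divided variables, each of the form $t_c^{(p^{S_c}-1)}$. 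Since $p^{S_c}-1=(p-1)(1+p+\cdots+p^{S_c-1})$, its $p$-adic norm is $(p-1)S_c\ge p-1$, so by Lemma~\ref{L_p_norm}(i) the resulting $\mathbf t^\alpha$ satisfies
\begin{equation*}
|\alpha|_p \;\ge\; 2(m-\gen a)(p-1)\;\ge\;2(N+1-\gen a).
\end{equation*}
Therefore $v_a - v_a^{(N)}\in R^{2(N+1-\gen a)}\WW^{\mathrm a}\cap \WW^{\mathrm a}$, and the intersection description of $\mathcal{W}$ in Lemma~\ref{L_mathcal_W}(i) immediately yields $v_a\in\mathcal{W}$.

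Finally, I would invoke Lemma~\ref{L_mathcal_W}(ii): since $\mathcal{W}$ is a restricted Lie subalgebra of $\WW^{\mathrm a}$ containing every $v_a$, it contains the restricted Lie algebra generated by any collection of virtual pivot elements, giving both inclusions simultaneously. The only non-routine point is the $p$-adic estimate on the tail, but it is immediate once one observes that every generation beyond $\gen a$ contributes two full-power divided factors, producing the linear-in-$m$ growth of $|\alpha|_p$ needed for convergence in the augmentation topology.
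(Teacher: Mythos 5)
Your argument is correct and is essentially the paper's own proof, which consists of exactly the two steps you propose: observe from the expansion that $v_a\in\mathcal W$ for all $a\in\Theta$, then apply Lemma~\ref{L_mathcal_W}(ii); you have simply written out the observation in detail (ancestral condition plus convergence in the augmentation topology). Two small touch-ups: the relation should read $a\succeq d$, not $d\succeq a$ (in the paper's convention $\succ$ points from paternal ancestor to descendant), and since $R^m\WW^{\mathrm a}$ is defined via \emph{finite} sums, the tail $v_a-v_a^{(N)}$ should be exhibited as the finite sum $\sum_{e\in\Theta_{N+1},\,a\succ e}\bigl(\prod_{c\vdash e,\,|a|\le|c|<|e|}t_c^{(p^{S_c}-1)}\bigr)v_e$ with $v_e\in\WW^{\mathrm a}$ (the regrouping your recursion already provides), rather than argued summand-by-summand over the infinitely many pure Lie monomials.
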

\begin{proof}
Using~\eqref{expansion}, observe that $v_a\in \mathcal W$, for all $a\in\Theta$, and apply Lemma~\ref{L_mathcal_W}, claim ii).
\end{proof}

\begin{Lemma}\label{Lder_subspecies}
Let $\Theta'$ be a subspecie of a specie $\Theta$ and $\bar S=(S_a\in \N | a\in \Theta )$  a tuple.
Set $\bar S|_{\Theta'}=(S_a| a\in\Theta')$.
There exist natural epimorphisms of Lie algebras:
$$\phi: \LL(\Theta,\bar S) \twoheadrightarrow \LL(\Theta', \bar S|_{\Theta'}), \qquad
\phi: \Lie_p(v_a|a\in \Theta) \twoheadrightarrow \Lie_p(v_a|a\in \Theta').$$
\end{Lemma}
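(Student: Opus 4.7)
The plan is to obtain both epimorphisms by restricting the already-available epimorphism
$\phi\colon\mathcal W(\Theta,\bar S)\twoheadrightarrow\mathcal W(\Theta',\bar S|_{\Theta'})$
supplied by Lemma~\ref{Lder_subspecies0}. By Lemma~\ref{inL_W} both $\LL(\Theta,\bar S)$ and $\Lie_p(v_a\mid a\in\Theta)$ are contained in $\mathcal W(\Theta,\bar S)$, so the restriction of $\phi$ to either of them is automatically a homomorphism of restricted Lie algebras. It therefore suffices to compute $\phi(v_a)$ for every $a\in\Theta$ and check that the images are precisely the target algebras.

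The heart of the argument is the identity $\phi(v_a)=v'_a$ for $a\in\Theta'$ and $\phi(v_a)=0$ for $a\in\Theta\setminus\Theta'$, where $v'_a$ denotes the virtual pivot defined by the analogue of~\eqref{pivot} inside $\Theta'$. Recall from the proof of Lemma~\ref{Lder_subspecies0} that $\phi$ acts on a special derivation simply by discarding every summand $\partial_d$ with $d\notin\Theta'$. Combined with the explicit expansion~\eqref{construction} from Lemma~\ref{L_pivot_exp}, this means that for $a\in\Theta'$ the surviving indices are exactly those $d\in\Theta'$ with $d\succeq a$; iterated application of the subspecie-closure axiom (halves of a word in $\Theta'$ lie in $\Theta'$) ensures that every paternal-by-one ancestor $c\vdash d$ appearing in the coefficient at $\partial_d$ also lies in $\Theta'$, so each surviving summand makes sense in $R(\Theta',\bar S|_{\Theta'})$ and matches term-by-term the expansion of $v'_a$ produced by~\eqref{construction} inside $\Theta'$. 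For $a\in\Theta\setminus\Theta'$, every $d$ with $d\succeq a$ has $a$ as a paternal ancestor; if such $d$ lay in $\Theta'$ then subspecie-closure would force $a\in\Theta'$, a contradiction. Hence every summand in the expansion of $v_a$ is killed and $\phi(v_a)=0$.

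With this computation in hand, surjectivity of both restrictions is immediate, because $\phi$ carries generating sets onto generating sets: $\{v_a\mid a\in\Theta_0\}$ is sent to $\{v'_a\mid a\in\Theta'_0\}\cup\{0\}$, whose restricted Lie closure is precisely $\LL(\Theta',\bar S|_{\Theta'})$, and the same reasoning with $\Theta_0$ replaced by $\Theta$ handles $\Lie_p(v_a\mid a\in\Theta)\twoheadrightarrow\Lie_p(v_a\mid a\in\Theta')$. The only mildly delicate point in the whole argument is the bookkeeping needed to match the two versions of formula~\eqref{construction}, and this reduces to the closure of $\Theta'$ under paternal ancestors, which follows by iterating the subspecie axiom; no further technical obstacle is anticipated.
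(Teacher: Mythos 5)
Your proof is correct and takes essentially the same route as the paper: restrict the epimorphism $\phi$ of Lemma~\ref{Lder_subspecies0} to the subalgebras (via Lemma~\ref{inL_W}) and read off its effect on the expansions~\eqref{construction}, so that $\phi(v_a)$ is the corresponding pivot element of $\Theta'$ for $a\in\Theta'$ and $0$ otherwise, whence generators map onto generators; you in fact supply more detail (the vanishing for $a\notin\Theta'$ and the term-by-term match via closure under binary subwords) than the paper's terse argument. Only a notational slip: in the paper's conventions the descendants $d$ of $a$ appearing in~\eqref{construction} satisfy $d\preceq a$, not $d\succeq a$.
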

\begin{proof}
Consider the restriction map $\phi$ of Lemma~\ref{Lder_subspecies0}.
Its action on the pivot elements~\eqref{construction} is the following:
we leave exactly summands with $d\in\Theta'$. Thus, $\phi$ maps the virtual pivot elements with
respect to $\Theta$ onto the respective virtual pivot elements with respect to $\Theta'$.
\end{proof}


\subsection{Powers of virtual pivot elements}
\begin{Lemma}\label{Lrelations}
Consider two flies of the same generation $a,b\in \Theta_n$, $a\ne b$, $n\ge 0$. Then
\begin{enumerate}
\item
\begin{equation}\label{vap}
v_a^{p^m}=
\begin{cases}
\dd_a^{p^m}+ t_a^{(p^{S_a}-p^m)}\sum\limits_{ac\in \Theta_{n+1}}t_c^{(p^{S_c} -1)}v_{ac}, &\quad  0\le m< S_a;\\
\hfill \sum\limits_{ac\in \Theta_{n+1}}t_c^{(p^{S_c} -1)}v_{ac}, &\hfill m=S_a.
\end{cases}
\end{equation}
\item
\begin{equation}\label{v_power}
\left[ v_b^{p^{S_b}-1},v_a^{p^{S_a}}\right]
=v_{ab}+
t_b^{(1)}\!\!\!\!
\sum_{ac,bd\in \Theta_{n+1}}
t_c^{(p^{S_c}-1)}t_d^{(p^{S_d}-1)} [v_{bd},v_{ac}].
\end{equation}
\end{enumerate}
\end{Lemma}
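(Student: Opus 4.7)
The plan rests on a shared observation: the auxiliary derivation $u_a:=\sum_{ac\in\Theta_{n+1}}t_c^{(p^{S_c}-1)}v_{ac}$ involves neither the variable $t_a$ nor the derivation $\dd_a$, so as an operator on $R$ it commutes both with $\dd_a$ and with left multiplication by every $t_a^{(j)}$; symmetrically for $u_b$. This ``centrality'' is what allows the brutal-looking formulas (i) and (ii) to be derived cleanly. Note also that $v_a=\dd_a+t_a^{(p^{S_a}-1)}u_a$.

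For (i), I would induct on $m$, the base $m=0$ being the recursive definition. For the inductive step set $X:=\dd_a^{p^m}$ and $Y:=t_a^{(p^{S_a}-p^m)}u_a$; by the centrality of $u_a$, the iterated commutators satisfy $(\ad X)^j(Y)=t_a^{(p^{S_a}-(j+1)p^m)}u_a$, and, crucially, $Y^{[p]}=0$ since $(t_a^{(p^{S_a}-p^m)})^p=0$ in $R$ because the divided-power index $p(p^{S_a}-p^m)$ exceeds the truncation bound whenever $m<S_a$. Formula~\eqref{power_P} then collapses to $(X+Y)^{[p]}=\dd_a^{p^{m+1}}+t_a^{(p^{S_a}-p^{m+1})}u_a$. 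At $m=S_a$ the first summand vanishes on the truncation and $t_a^{(0)}=1$, leaving $v_a^{p^{S_a}}=u_a$. An equivalent shortcut is to verify (i) by acting on a generic $r=\sum_j t_a^{(j)}s_j$ with $s_j$ free of $t_a$: one checks that $v_a$ realises a cyclic shift $s_j\mapsto s_{j+1}$ with a ``wrap'' $s_0\mapsto u_a(s_0)$ placed at position $p^{S_a}-1$, so $p^m$-fold iteration together with Lucas's congruence for the binomials $\binom{p^{S_a}-p^m+j}{j}$ produces the claimed formula.

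For (ii), substitute $v_a^{p^{S_a}}=u_a$ using (i), reducing the task to computing $[v_b^{p^{S_b}-1},u_a]$ as an operator on $R$. Iterating the shift description of $v_b$ for $p^{S_b}-1$ steps gives the explicit formula $v_b^{p^{S_b}-1}(t_b^{(j)}s)=t_b^{(j+1)}u_b(s)$ for $0\le j\le p^{S_b}-2$ and $v_b^{p^{S_b}-1}(t_b^{(p^{S_b}-1)}s)=s$. Split $u_a=t_b^{(p^{S_b}-1)}v_{ab}+\tilde u_a$, where $\tilde u_a$ collects the summands of $u_a$ not involving $t_b$; evaluate both $v_b^{p^{S_b}-1}u_a(r)$ and $u_a v_b^{p^{S_b}-1}(r)$ on a generic $r=\sum_j t_b^{(j)}s_j$, and subtract. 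The derivation identities $u_a(t_b^{(j)})=0$ and the annihilations $t_b^{(k)}\cdot t_b^{(\ell)}=0$ for $k+\ell\ge p^{S_b}$ kill most terms; what remains is recognized as $v_{ab}$ plus a shifted copy of $[u_b,u_a]$, where the bilinear expansion
\[ [u_b,u_a]=\sum_{ac,bd\in\Theta_{n+1}} t_c^{(p^{S_c}-1)}t_d^{(p^{S_d}-1)}[v_{bd},v_{ac}] \]
is immediate from $v_{bd}(t_c^{(*)})=0=v_{ac}(t_d^{(*)})$ (the letters $c,d$ lie outside the descendant trees of $bd,ac$).

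The principal obstacle is the bookkeeping of divided-power multiplications: which of the products $t_b^{(k)} t_b^{(\ell)}$ arising in intermediate steps vanish outright versus acquire a nontrivial Lucas binomial coefficient in characteristic $p$, and how the ``wrap'' terms in the shift picture interact with the $t_b^{(p^{S_b}-1)}v_{ab}$ summand of $u_a$. Once these vanishings are tracked carefully, the apparently heavy commutator telescopes to the compact right-hand side.
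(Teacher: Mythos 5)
Your part (i) is essentially the paper's argument and is fine: induction on $m$ via~\eqref{power_P}, with all terms containing the tail $t_a^{(p^{S_a}-p^m)}u_a$ more than once vanishing (the $t_a$-indices then reach $p^{S_a}$), and $(\ad \partial_a^{p^m})^{p-1}$ lowering the divided power to $p^{S_a}-p^{m+1}$; the case $m=S_a$ is as you say.

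Part (ii), however, proves the wrong identity because of the meaning of the bracket. By the convention fixed in Section~\ref{Sdef} of the paper, $[x^k,y]:=(\ad x)^k(y)$, so the left-hand side of~\eqref{v_power} is the iterated adjoint $(\ad v_b)^{p^{S_b}-1}\big(v_a^{p^{S_a}}\big)$ --- and this reading is forced by the later use of the formula (the actual pivot elements $\bar v_{ab}$ must lie in $\Lie_p(v_1,\dots,v_k)$, which the associative commutator of the power $v_b^{p^{S_b}-1}$ would not give). You instead compute the associative commutator $v_b^{E}u_a-u_a v_b^{E}$ with $E=p^{S_b}-1$ (``evaluate $v_b^{p^{S_b}-1}u_a(r)$ and $u_a v_b^{p^{S_b}-1}(r)$ and subtract''). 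Since $E$ is not a power of $p$ (except $p=2$, $S_b=1$), $\ad(v_b^{E})\neq(\ad v_b)^{E}$, and the two objects genuinely differ here: for $p=3$ and the trivial tuple, applying the right-hand side of~\eqref{v_power} (equivalently $(\ad v_b)^2(v_a^3)$) to $t_b^{(1)}t_{ab}^{(1)}$ gives $t_b^{(1)}$, while $v_b^2v_a^3-v_a^3v_b^2$ annihilates this element. Indeed, carrying your subtraction through on $r=\sum_j t_b^{(j)}s_j$ yields $v_{ab}(s_0)-t_b^{(E)}v_{ab}(s_E)+\sum_{j\le E-1}t_b^{(j+1)}[u_b,\tilde u_a](s_j)$, which is not the action of $v_{ab}+t_b^{(1)}[u_b,u_a]$ (note $t_b^{(1)}t_b^{(j)}=(j+1)t_b^{(j+1)}$, and the $v_{ab}$-part of the claimed right side is $\sum_j t_b^{(j)}v_{ab}(s_j)$); the two agree only in the degenerate case $p=2$, $S_b=1$. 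So your ``recognition'' step would fail for general $p$ and $S_b$. The paper's route avoids this: it unfolds the notation as $(\ad v_b)^{p^{S_b}-2}[v_b,v_a^{p^{S_a}}]$, computes the inner bracket from (i) and~\eqref{pivot}, and then observes that the iterated $\ad v_b$ acts like $\partial_b^{p^{S_b}-2}$ on the $t_b$-divided powers, the $t_b^{(p^{S_b}-1)}u_b$ part of $v_b$ contributing nothing because of the $t_b$-truncation. Your shift description of $v_b$ is correct and could be recycled, but only if you apply it to compute the full iterated adjoint $\sum_i(-1)^i\binom{E}{i}v_b^{E-i}u_a v_b^{i}$, not the single commutator.
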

\begin{proof}
We prove i) by induction on $m$. The base of induction $m=0$ is trivial by~\eqref{pivot}.
Assume that the claim is valid for $0\le m< S_a$.
The summation in~\eqref{power_P} is trivial because the second term cannot be used more than once:
\begin{align*}
v_a^{p^{m+1}} &={(v_a^{p^{m}})}^p=\Big(\partial_{a}^{p^m} + v_a^{(p^{S_a}-{p^m})}\!\!\!
\sum\limits_{ac\in \Theta_{n+1}}t_c^{(p^{S_c} -1)}v_{ac}\Big)^p\\
&={(\partial_{a}^{p^{m}})}^p+\big(\ad \partial_{a}^{p^m} \big)^{p-1}
          \Big(v_a^{(p^{S_a}-{p^m})}\!\!\! \sum\limits_{ab\in \Theta_{n+1}}t_c^{(p^{S_c} -1)}v_{ac} \Big)\\
&=\partial_{a}^{p^{m+1}}+
          v_a^{(p^{S_a}-{p^{m+1}})}\!\!\! \sum\limits_{ac\in \Theta_{n+1}}t_c^{(p^{S_c} -1)}v_{ac},\qquad\quad 0\le m< S_a.
\end{align*}
Using that $\partial_{a}^{p^{S_a}}=0$, claim i) is proved.
Consider the last claim.
Using $v_b=\dd_b+t_b^{(p^{S_b}-1)}\sum\limits_{bd\in \Theta_{n+1} }  t_d^{(p^{S_d}-1)} v_{bd} $
and~\eqref{vap} for $v_a^{p^{S_a}}$, we get:
\begin{align*}
[ v_b^{p^{S_b}-1},v_a^{p^{S_a}}]
&=(\ad v_b)^{p^{S_b}-2}[v_b,v_a^{p^{S_a}}]
=(\ad v_b)^{p^{S_b}-2} \Big[v_b,
t_b^{(p^{S_b} -1)}v_{ab} +\!\!\!\sum\limits_{\substack{ac\in \Theta_{n+1}\\ c\ne b}}t_c^{(p^{S_c} -1)}v_{ac}\Big]\\
&=\dd_b^{p^{S_b}-2}
\Big (t_b^{(p^{S_b} -2)}v_{ab} +
t_b^{(p^{S_b}-1)}\!\!\!\!\!
\sum_{ac,bd\in \Theta_{n+1}}
t_c^{(p^{S_c}-1)}t_d^{(p^{S_d}-1)} [v_{bd},v_{ac}]\Big)=\cdots  \\
&=v_{ab} +
t_b^{(1)}\!\!\!\!
\sum_{ac,bd\in \Theta_{n+1}}
t_c^{(p^{S_c}-1)}t_d^{(p^{S_d}-1)} [v_{bd},v_{ac}].\qedhere
\end{align*}
\end{proof}

\begin{Corollary}\label{Cternary}
Assume that $|\Theta_n|=|\Theta_{n+1}|=3$, $n\ge 0$.
Denote $\Theta_n=\{a,b,f\}$. Suppose that $ab\in \Theta_{n+1}$ and $\Theta_{n+1}\ne \{ab,ba, af\}$. Then
\begin{equation*}
\left[ v_b^{p^{S_b}-1},v_a^{p^{S_a}}\right] =v_{ab}.
\end{equation*}
\end{Corollary}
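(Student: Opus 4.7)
The plan is to start from formula~\eqref{v_power} of Lemma~\ref{Lrelations} and eliminate every summand in the residual sum
$$t_b^{(1)}\!\!\!\sum_{ac,bd\in\Theta_{n+1}}\!\!\! t_c^{(p^{S_c}-1)}t_d^{(p^{S_d}-1)}[v_{bd},v_{ac}].$$
Since $\Theta_n=\{a,b,f\}$ and the letters of any fly in $\Theta_{n+1}$ are distinct, the index $c$ in $ac\in\Theta_{n+1}$ satisfies $c\in\{b,f\}$ and the index $d$ in $bd\in\Theta_{n+1}$ satisfies $d\in\{a,f\}$. Thus only four formal pairs $(c,d)$ are possible, and I would dispatch them using two elementary divided-power identities (to be applied via the Lucas congruence~\eqref{congrunce}).

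First I would rule out $c=b$: the factor $t_b^{(1)}\cdot t_b^{(p^{S_b}-1)}=\binom{p^{S_b}}{1}t_b^{(p^{S_b})}$ vanishes because $p^{S_b}$ is out of range (equivalently $\binom{p^{S_b}}{1}\equiv 0\pmod p$). This kills the pairs $(b,a)$ and $(b,f)$. Next I would rule out $c=d$: the factor $t_c^{(p^{S_c}-1)}\cdot t_c^{(p^{S_c}-1)}=\binom{2p^{S_c}-2}{p^{S_c}-1}t_c^{(2p^{S_c}-2)}$ vanishes because $2p^{S_c}-2\ge p^{S_c}$. In our situation, the relevant case is $c=d=f$, which kills the pair $(f,f)$.

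This leaves only the single possible surviving term $(c,d)=(f,a)$, which occurs precisely when both $af\in\Theta_{n+1}$ (so that $ac=af$ is available) and $ba\in\Theta_{n+1}$ (so that $bd=ba$ is available). Combined with $ab\in\Theta_{n+1}$ and the hypothesis $|\Theta_{n+1}|=3$, this forces $\Theta_{n+1}=\{ab,ba,af\}$, which is excluded by assumption. Hence the entire sum is zero and $[v_b^{p^{S_b}-1},v_a^{p^{S_a}}]=v_{ab}$.

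There is essentially no obstacle — the proof is a short case analysis on top of~\eqref{v_power}, and the only subtlety is remembering that the divided-power multiplication kills both $t_b^{(1)}t_b^{(p^{S_b}-1)}$ and $t_c^{(p^{S_c}-1)}t_c^{(p^{S_c}-1)}$ (the role of the hypothesis $\Theta_{n+1}\ne\{ab,ba,af\}$ is exactly to knock out the unique surviving mixed pair $(c,d)=(f,a)$).
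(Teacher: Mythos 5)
Your proposal is correct and follows essentially the same route as the paper: the paper's proof likewise observes that any residual term of~\eqref{v_power} forces $b,c,d$ to be pairwise distinct (which is exactly your divided-power vanishing of $t_b^{(1)}t_b^{(p^{S_b}-1)}$ and $t_c^{(p^{S_c}-1)}t_c^{(p^{S_c}-1)}$), so that $|\Theta_n|=3$ leaves only $(c,d)=(f,a)$, which together with $ab\in\Theta_{n+1}$ would force $\Theta_{n+1}=\{ab,ba,af\}$, contradicting the hypothesis. Your version merely spells out, via the Lucas congruence, the vanishing that the paper states as "due to respective variables in the product".
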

\begin{proof}
By setting, $a\ne b$.
Suppose that we have an additional term in~\eqref{v_power}.
Then $bd,ac\in \Theta_{n+1}$ for some letters $c,d\in\Theta_n$,  thus $b\ne d$, $a\ne c$.
Due to respective variables in the product, $\{b,c,d\}$ are distinct letters.
But $|\Theta_n|=3$.
The only possibility is that $d=a$, $c=f$. Thus,
$\Theta_{n+1}=\{ab, ba, af\}$, a contradiction.
\end{proof}

\subsection{Commutators of two virtual pivot elements of the same generation}
\begin{Lemma}\label{Lcomm_pivot}
Fix $a,b\in\Theta_n$, $n\ge 0$. Then (the first two terms below we call a {\em head}):
\begin{equation*}
[v_b,v_a]=t_a^{(p^{S_a}-1)}t_b^{(p^{S_b}-2)} v_{ab}-t_a^{(p^{S_a}-2)}t_b^{(p^{S_b}-1)} v_{ba}
+t_a^{(p^{S_a}-1)}t_b^{(p^{S_b}-1)}
\!\!\!\!\!\!\!\!\!\!\!\!\sum_{\substack{ 
                         ac,bd\in \Theta_{n+1}\\ a,b,c,d \text { distinct} }}\!\!\!\!\!\!\!\!\!\!
t_c^{(p^{S_c}-1)}t_d^{(p^{S_d}-1)} [v_{bd},v_{ac}]\bigg..
\end{equation*}
\end{Lemma}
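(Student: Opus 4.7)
The plan is to substitute the recursive definition \eqref{pivot} for each of $v_a, v_b$, split each as a sum of a partial derivative and a ``tail,'' and expand the bracket by bilinearity. Concretely, write $v_a = \dd_a + T_a \sigma_a$ and $v_b = \dd_b + T_b \sigma_b$, where $T_a := t_a^{(p^{S_a}-1)}$ and $\sigma_a := \sum_{ac\in\Theta_{n+1}} t_c^{(p^{S_c}-1)} v_{ac}$, with the symmetric expressions for $b$. Then $[v_b, v_a]$ decomposes into four summands
$[\dd_b, \dd_a] + [\dd_b, T_a\sigma_a] + [T_b\sigma_b, \dd_a] + [T_b\sigma_b, T_a\sigma_a]$;
the first vanishes trivially, the middle two should produce the two head terms, and the last should yield the cubic sum on the right-hand side.

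For the mixed brackets, apply the derivation Leibniz rule $[XD,YE]=XY[D,E]+X\,D(Y)\,E-Y\,E(X)\,D$. The key observation, read off Lemma~\ref{L_pivot_exp}, is that every virtual pivot element $v_{ac}$ of generation $n+1$ involves only divided variables $t_f$ and partials $\dd_e$ indexed by flies of generation $\ge n+1$; hence $[\dd_b, v_{ac}]=0$ and similarly $v_{ac}(t_f^{(*)})=0$ for any $f$ of generation $\le n$. Combined with $\dd_b(T_a)=0$ (since $a\ne b$), this reduces $[\dd_b, T_a\sigma_a]$ to $T_a[\dd_b,\sigma_a]$, where only the summand with $c=b$ survives because $\dd_b(t_c^{(p^{S_c}-1)})=\delta_{c,b}\,t_b^{(p^{S_b}-2)}$. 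Thus $[\dd_b, T_a\sigma_a]=t_a^{(p^{S_a}-1)}t_b^{(p^{S_b}-2)}v_{ab}$ (zero if $ab\notin\Theta_{n+1}$ by the convention $v_{ab}=0$), and the symmetric computation gives $[T_b\sigma_b,\dd_a]=-t_a^{(p^{S_a}-2)}t_b^{(p^{S_b}-1)}v_{ba}$: these are the two head terms.

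For the cubic bracket, the same generation argument forces $\sigma_a(T_b)=\sigma_b(T_a)=0$ and $v_{bd}(t_c^{(p^{S_c}-1)})=v_{ac}(t_d^{(p^{S_d}-1)})=0$ (the derivations are of generation $n+1$, the targeted variables of generation $n$). Hence
\[
[T_b\sigma_b, T_a\sigma_a]=T_aT_b[\sigma_b,\sigma_a]=t_a^{(p^{S_a}-1)}t_b^{(p^{S_b}-1)}\!\!\sum_{ac,bd\in\Theta_{n+1}}\!\!t_c^{(p^{S_c}-1)}t_d^{(p^{S_d}-1)}[v_{bd},v_{ac}].
\]
It remains to justify the restriction to distinct $a,b,c,d$: the sum constraints already give $c\ne a$ and $d\ne b$, while any residual case ($c=b$, $d=a$, or $c=d$) produces a repeated factor $t_*^{(p^{S_*}-1)}\cdot t_*^{(p^{S_*}-1)}$, which is zero since $2(p^{S_*}-1)\ge p^{S_*}$ exceeds the truncation bound of the divided-power algebra (Lucas's congruence, used in part (i) of Lemma~\ref{L_p_norm}). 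The computation is direct, and the main obstacle is purely bookkeeping: one must apply the Leibniz rule cleanly at each step and separately track the two vanishing mechanisms, namely (a) a derivation of generation $\ge n+1$ applied to a variable of generation $\le n$, and (b) divided-power truncation from repeated top-power factors.
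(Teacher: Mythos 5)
Your proposal is correct and follows essentially the same route as the paper: substitute the recursion~\eqref{pivot} for $v_a,v_b$, expand the bracket, note that the generation-$(n+1)$ pivot elements involve only variables and partials of generation $\ge n+1$ (so the cross terms reduce to the two head terms and the product of tails), and kill coincidences among $a,b,c,d$ via the divided-power truncation. The paper's proof is just a compressed version of this same computation, so no gap remains to address.
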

\begin{proof}
By~\eqref{pivot}, we have
\begin{align*}
v_a&=\dd_a+t_a^{(p^{S_a}-1)}\bigg(t_b^{(p^{S_b}-1)}v_{ab} +\sum_{\substack{\Theta_n\ni c\ne b\\  ac\in \Theta_{n+1} }}  t_c^{(p^{S_c}-1)} v_{ac}\bigg);\\
v_b&=\dd_b+t_b^{(p^{S_b}-1)}\bigg(t_a^{(p^{S_a}-1)}v_{ba} +\sum_{\substack{\Theta_n\ni d\ne a\\ bd\in \Theta_{n+1} }}  t_d^{(p^{S_d}-1)} v_{bd}\bigg).
\end{align*}
Their product yields the result.
We observe that $a,b,c,d\in \Theta_n$ above are pairwise distinct because otherwise the product is trivial due to divided power variables.
\end{proof}

\begin{Corollary}\label{Ccomm_pivot}
Fix $a,b\in\Theta_n$, $n\ge 0$. Then $[v_b,v_a]\in \mathcal W_{n+1}$.
\end{Corollary}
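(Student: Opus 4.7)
The plan is to read the result directly off the head–tail expansion of $[v_b,v_a]$ furnished by Lemma \ref{Lcomm_pivot}, combined with the pivot-expansion formula \eqref{construction}. By Lemma \ref{Lcomm_pivot}, the commutator $[v_b,v_a]$ is an (infinite) $K$-linear combination of terms of one of the following three shapes:
\[
\mathbf{t}^{\alpha}\, v_{ab},\qquad \mathbf{t}^{\alpha}\, v_{ba},\qquad \mathbf{t}^{\alpha}\,[v_{bd},v_{ac}],
\]
where in each case the virtual pivot elements are indexed by flies of $\Theta_{n+1}$ (namely $ab,ba,ac,bd$).

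First, I would expand each $v_x$ with $x\in\Theta_{n+1}$ via \eqref{construction}: the resulting pure Lie monomials are of the form $\mathbf{t}^{\gamma}\,\partial_d$ with $d\succeq x$, so in particular $\gen d\ge \gen x=n+1$. Multiplication by the coefficient monomial $\mathbf{t}^{\alpha}$ only alters the prefactor and leaves the derivation $\partial_d$ untouched, so the contribution of the two head terms $\mathbf{t}^{\alpha}v_{ab}$ and $\mathbf{t}^{\alpha}v_{ba}$ sits in the span of pure monomials $\mathbf{t}^{\beta}\,\partial_d$ with $\gen d\ge n+1$. For the tail terms $\mathbf{t}^{\alpha}[v_{bd},v_{ac}]$ (with $ac,bd\in \Theta_{n+1}$), I would plug in \eqref{construction} for both factors and expand the bracket; by the Leibniz rule each surviving pure monomial has derivation part $\partial_e$ with $e$ a descendant of either $ac$ or $bd$, hence $\gen e\ge n+1$ (in fact $\ge n+2$, but the weaker bound is all that is needed).

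To finish, we need to know that $[v_b,v_a]$ actually lies in $\mathcal W$, not merely in some larger set of formal derivations: this is immediate from Lemma \ref{inL_W} together with Lemma \ref{L_mathcal_W}, since $\mathcal W$ is a restricted Lie subalgebra containing every $v_a$. Combined with the generation bound above, we obtain
\[
[v_b,v_a]\in\mathcal W\,\cap\,\bigl\{\text{formal sums } \textstyle\sum \lambda_{\alpha,d,l}\,\mathbf{t}^{\alpha}\,\partial_d^{p^l}\ \text{with}\ \gen d\ge n+1\bigr\}=\mathcal W_{n+1}.
\]

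No serious obstacle is anticipated; the proof is essentially bookkeeping on top of the two preceding lemmas. The only care-point is that $[v_b,v_a]$ is a priori an infinite sum (a limit in the augmentation topology of $\mathcal W^{\mathrm a}$), so one must make sure the "$\gen d\ge n+1$" property is preserved when passing to the completion — but since $\mathcal W_{n+1}$ is defined termwise as the intersection of $\mathcal W$ with the span of pure monomials whose derivation index has generation $\ge n+1$, this preservation is automatic and requires no additional convergence argument.
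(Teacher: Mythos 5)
Your proposal is correct and follows essentially the same route as the paper: the paper's (one-line) proof likewise expands the pivot elements $v_{ab}$, $v_{ba}$, $v_{ac}$, $v_{bd}$ appearing in Lemma~\ref{Lcomm_pivot} into pure Lie monomials whose derivations have generation at least $n+1$, which gives membership in $\mathcal W_{n+1}$. Your extra remarks (membership in $\mathcal W$ via Lemmas~\ref{inL_W} and~\ref{L_mathcal_W}, and that the termwise definition of $\mathcal W_{n+1}$ makes the infinite-sum issue harmless) are just the bookkeeping the paper leaves implicit.
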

\begin{proof}
We express the pivot elements $v_{ab}$, $v_{ba}$, $v_{bd}$, $v_{ac}$ above
via pure Lie derivations of generation at least $n+1$ and obtain the claim.
\end{proof}

\begin{Lemma}\label{Lab_ab}
Fix $a,b\in\Theta_n$, $n\ge 0$. Then
\begin{equation}\label{v_power_W}
\left[ v_b^{p^{S_b}-1},v_a^{p^{S_a}}\right]
\equiv v_{ab}\pmod{  t_b^{(1)}{\mathcal W}_{n+2}}.
\end{equation}
\end{Lemma}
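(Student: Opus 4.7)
\textbf{Proof plan for Lemma~\ref{Lab_ab}.} The starting point is the explicit formula of Lemma~\ref{Lrelations}(ii):
\begin{equation*}
\left[ v_b^{p^{S_b}-1},v_a^{p^{S_a}}\right]
= v_{ab}+ t_b^{(1)} W,\qquad
W:=\!\!\!\!\!\sum_{ac,bd\in \Theta_{n+1}}\!\!\!\!\!
t_c^{(p^{S_c}-1)}t_d^{(p^{S_d}-1)} [v_{bd},v_{ac}].
\end{equation*}
Thus the whole task reduces to showing that $W\in \mathcal W_{n+2}$, after which the congruence statement follows by definition of the quotient modulo $t_b^{(1)}\mathcal W_{n+2}$.

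To establish $W\in\mathcal W_{n+2}$, I would verify separately the two defining conditions of $\mathcal W_{n+2}$: (a) every pure Lie monomial $\mathbf t^{\alpha}\partial_e^{p^l}$ appearing in $W$ has $\gen e\ge n+2$, and (b) the ancestral condition $\alpha>e$ holds. For (a), since $ac,bd\in\Theta_{n+1}$, Corollary~\ref{Ccomm_pivot} applied to the pair $(ac,bd)$ gives $[v_{bd},v_{ac}]\in\mathcal W_{(n+1)+1}=\mathcal W_{n+2}$; the prefactors $t_c^{(p^{S_c}-1)}t_d^{(p^{S_d}-1)}$ do not change derivation indices, so $\gen e\ge n+2$ is preserved.

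For (b) I would argue directly that each $c$ and $d$ appearing in the prefactor is an ancestor of every derivation index $e$ that shows up in $[v_{bd},v_{ac}]$. Expanding $[v_{bd},v_{ac}]$ via Lemma~\ref{Lcomm_pivot} at generation $n+1$, the head produces $v_{(ac)(bd)}$ and $v_{(bd)(ac)}$, and the tail consists of products $[v_{(bd)e'},v_{(ac)f'}]$ with $(ac)f',(bd)e'\in\Theta_{n+2}$. In the head terms, expansion~\eqref{construction} yields derivations $\partial_e$ with $(ac)(bd)\preceq e$ or $(bd)(ac)\preceq e$; in either case $ac$ and $bd$ are ancestors of $e$, hence $a,b,c,d$ are binary subwords of $e$, i.e.\ $c>e$ and $d>e$. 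For the tail terms the same property propagates through any further commutator expansion, because $ac$ is a paternal ancestor of $(ac)f'$ and $bd$ is a paternal ancestor of $(bd)e'$. Combined with the ancestral condition that $[v_{bd},v_{ac}]$ satisfies as an element of $\WW^{\mathrm a}$ (Lemma~\ref{Lancestral}), multiplication by $t_c^{(p^{S_c}-1)}t_d^{(p^{S_d}-1)}$ merely adjoins variables $c,d$ which we have just shown to be ancestors of $e$; so the ancestral condition is preserved.

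The one subtlety I expect to have to handle with a little care is to make the ``ancestors of every $e$'' step really uniform across the head/tail dichotomy of Lemma~\ref{Lcomm_pivot}; a clean alternative, which I would use as a cross-check, is the observation that the left-hand side $[v_b^{p^{S_b}-1},v_a^{p^{S_a}}]$ lies in $\mathcal W$ by Lemma~\ref{L_mathcal_W} and $v_{ab}\in\mathcal W$ by Lemma~\ref{inL_W}, so $t_b^{(1)}W\in\mathcal W$, and since $t_b$ does not occur inside $W$ (only variables indexed by generation $\ge n+1$ do) one can cancel the $t_b^{(1)}$ factor and deduce $W\in\mathcal W^{\mathrm a}$, which together with (a) gives $W\in\mathcal W_{n+2}$, completing the proof.
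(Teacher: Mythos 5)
Your proposal is correct and takes essentially the same route as the paper: the paper's entire proof is to start from formula~\eqref{v_power} of Lemma~\ref{Lrelations} and apply Corollary~\ref{Ccomm_pivot} to the tail, and your steps (a)--(b) simply make explicit the generation and ancestral bookkeeping that the paper leaves implicit. One small caveat concerning your optional cross-check only: the parenthetical claim that $W$ contains only variables of generation $\ge n+1$ is not accurate, since the prefactors $t_c^{(p^{S_c}-1)}t_d^{(p^{S_d}-1)}$ involve generation-$n$ flies and the terms with $c=b$ formally contain $t_b$ (they vanish only after multiplication by $t_b^{(1)}$); your main argument does not rely on this remark, so the proof stands.
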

\begin{proof}
We apply Corollary~\ref{Ccomm_pivot} to the last term in~\eqref{v_power}.
\end{proof}
\begin{Remark}
In many cases of algebras studied before we found clear monomial bases
or have at least approximate description~\cite{Pe17,Pe16,PeOtto,PeSh18FracPJ,PeSh13fib}.
To find a basis in case of an arbitrary drosophila Lie algebra looks problematic because
an iteration of expansion of Lemma~\ref{Lcomm_pivot} may not stop.
Nevertheless,  in an important particular case of clover Lie algebras we get a clear monomials basis, see below.
\end{Remark}

\section{Actual pivot elements, weights, and gradings}

\subsection{Actual pivot elements}
Recall that in a general case, we cannot guarantee that virtual pivot elements
$\{v_a|a\in\Theta\}$ belong to $\LL(\Theta)=\Lie_p(v_1,\dots,v_k)$.
Next, we define {\it actual pivot elements}  recursively:
\begin{equation*}
\bar v_c:=
\begin{cases}
v_c, & c\in \Theta_0;\\
\big [(\bar v_b)^{p^{S_b}-1},(\bar v_a)^{p^{S_a}}\big],\qquad & c=ab\in\Theta_n,\ a,b\in \Theta_{n-1}, \ n\ge 1.
 \end{cases}
\end{equation*}
\begin{Lemma}\label{Lactual_pivot}
The actual pivot elements $\{\bar v_c\mid c\in\Theta\}$ have the following properties.
\begin{enumerate}
\item $\bar v_c\in\LL(\Theta)=\Lie_p(v_1,\ldots,v_k)$ for all $c\in\Theta$;
\item
$$
\bar v_c\equiv v_c \pmod{ \sum_{d \sqsupset c} t_d^{(1)} {\mathcal W}_{\gen d +2}},\qquad c\in \Theta.
$$
where $d \sqsupset c$ is a sum over proper female ancestors $d$ of $c$.
\item
$\{\bar v_c\mid c\in\Theta\}$ are linearly independent.
\end{enumerate}
\end{Lemma}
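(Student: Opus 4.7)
All three claims are proved by induction on the generation $\gen c$. Claim (i) is immediate: the base case $c\in\Theta_0$ gives $\bar v_c=v_c\in\{v_1,\dots,v_k\}\subset \LL(\Theta)$, and for $c=ab$ with $a,b\in\Theta_{n-1}$ the element $\bar v_c=[\bar v_b^{p^{S_b}-1},\bar v_a^{p^{S_a}}]$ is built from $\bar v_a,\bar v_b\in\LL$ by restricted Lie operations.

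For claim (ii), set $I_x:=\sum_{d\sqsupset x} t_d^{(1)}\mathcal W_{\gen d+2}$. The base case is vacuous. For the inductive step $c=ab$ with $a,b\in\Theta_n$, by the inductive hypothesis write $\bar v_a=v_a+E_a$ and $\bar v_b=v_b+E_b$ with $E_a\in I_a$ and $E_b\in I_b$. Observe $I_b\subseteq I_c$, since a proper binary suffix of $b$ is a proper binary suffix of $c=ab$, while Lemma~\ref{Lab_ab} already gives
\[
[v_b^{p^{S_b}-1},v_a^{p^{S_a}}]\equiv v_c \pmod{t_b^{(1)}\mathcal W_{n+2}}\subseteq I_c.
\]
It remains to control the difference $\bar v_c-[v_b^{p^{S_b}-1},v_a^{p^{S_a}}]$, which I would expand by substituting the perturbations into the iterated bracket and using the restricted $p$-power formula~\eqref{power_P}. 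Three principles drive the bookkeeping: the spaces $\mathcal W_m$ are restricted ideals by Lemma~\ref{LcalW}, so any error carrying an $E_b$ remains in $I_b\subseteq I_c$; no $p$-power of $v_a$ or $v_b$ contains a pure derivation $\dd_d$ with $d\sqsupset a$ or $d\sqsupset b$ (because such $d$ has smaller generation than $a,b$ and is not a paternal descendant), so iterated $\ad v_a, \ad v_b$ preserve the maternal factors $t_d^{(1)}$ in the errors; and the divided-power vanishing $(t_d^{(1)})^p=0$ kills every pure $p$-th power of $E_x$, so only Lie-mixed cross terms survive.

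Claim (iii) then follows from (ii). By Lemma~\ref{L_pivot_exp}, $v_c=\dd_c+v_c'$ with $v_c'$ having all pure derivations of generation $\ge \gen c+1$, and every element of $I_c$ has all pure derivations of generation $\ge\gen c+1$; hence the coefficient of $\dd_c$ in $\bar v_c$ equals $1$. In any finite relation $\sum\lambda_c\bar v_c=0$, take $c_0$ of minimal generation with $\lambda_{c_0}\ne 0$ and compare coefficients of $\dd_{c_0}$: every other $\bar v_c$ in the sum either has all pure derivations of generation strictly greater than $\gen c_0$, or has $\gen c=\gen c_0$ with $c\ne c_0$, in which case its pure derivations $\dd_d$ satisfy $d\succeq c$ and hence $d\ne c_0$. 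This forces $\lambda_{c_0}=0$, a contradiction.

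The main obstacle is the bookkeeping in (ii): maternal ancestors of $a$ are in general not maternal ancestors of $c=ab$, so the factors $t_d^{(1)}$ with $d\sqsupset a$ that appear in corrections to $\bar v_a^{p^{S_a}}$ must be controlled carefully. The plan is to use the Jacobi identity together with the divided-power relations $t_d^{(1)}\cdot t_d^{(1)}=2\,t_d^{(2)}$ and $(t_d^{(1)})^p=0$ to show that every such offending term either vanishes or rewrites as a genuine element of $I_c$.
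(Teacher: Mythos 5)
Your plan for claim ii) follows the paper's own route — induction on $\gen c$, substitution of $\bar v_a=v_a+E_a$, $\bar v_b=v_b+E_b$ into $[\bar v_b^{\,p^{S_b}-1},\bar v_a^{\,p^{S_a}}]$, Lemma~\ref{Lab_ab} for the leading term, the ideal property of Lemma~\ref{LcalW}, the trivial action of $v_a,v_b$ on the senior variables, and $(t_d^{(1)})^p=0$ — but the one step you defer to a ``plan'' is precisely the step that cannot be carried out as you describe it. The cross terms generated by $E_a$ (for instance $(\ad \bar v_b)^{p^{S_b}-1}$ applied to the commutator summands $[E_a,v_a,\dots,v_a]$ of $(v_a+E_a)^{[p]}$) carry a factor $t_d^{(1)}$ with $d\sqsupset a$. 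Such a $d$ is a proper binary suffix of $a$, hence a proper ancestor of $c=ab$ but \emph{not} a suffix of $c$, so these terms are not elements of $I_c=\sum_{e\sqsupset c}t_e^{(1)}\mathcal W_{\gen e+2}$; and they do not vanish in general (already for a wild specie with $|\Theta_0|\ge 4$ one has $E_a\ne 0$ after one step, and neither $\ad v_b$ nor $\ad E_b$ can remove the foreign variable $t_d^{(1)}$, since all their pure derivations live in generations $\ge\gen b$ or $\ge\gen d'+2$ with $d'\sqsupset b$). No use of the Jacobi identity or of $t_d^{(1)}t_d^{(1)}=2t_d^{(2)}$, $(t_d^{(1)})^p=0$ will convert a term genuinely containing the variable $t_d$, $d\sqsupset a$, into one of the form $t_e^{(1)}(\cdots)$ with $e\sqsupset c$. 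So the proposed repair fails, and this is a real gap, not mere bookkeeping.

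What closes the induction is not forcing these terms into $I_c$ but enlarging the modulus: prove the congruence modulo $\sum_{d}t_d^{(1)}\mathcal W_{\gen d+2}$, the sum taken over all proper ancestors $d$ of $c$ (for the later applications even the cruder range, all flies of generation $<\gen c$, suffices). With that index set the three error sources — $d\sqsupset a$, $d\sqsupset b$, and the term $t_b^{(1)}\mathcal W_{n+2}$ from Lemma~\ref{Lab_ab} — are all absorbed, because every suffix of $a$ or of $b$, and $b$ itself, is a proper ancestor of $ab$; your three bookkeeping principles then do the rest. This weaker form is exactly how the paper's chain of congruences should be read (its last step silently absorbs $\sum_{d\sqsupset a}t_d^{(1)}\mathcal W_{\gen d+2}$ into the modulus for $ab$), and it is all that is used afterwards: Lemma~\ref{LSylov_2} and Lemma~\ref{LgrowthLH} work modulo $\sum_{d\in\Theta_{0\ldots m-1}}t_d^{(1)}\mathcal W_{\gen d+2}$.

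On claim iii): your generation count is off, since an element of $t_d^{(1)}\mathcal W_{\gen d+2}$ with $d$ a remote ancestor only has pure derivations of generation $\ge\gen d+2$, which can be $\le\gen c$; so the dichotomy you use (``all derivations of generation $>\gen c_0$'') is false. The argument is saved by the observation you do not state: every error term carries a nonconstant factor $t_d^{(1)}$, hence contributes nothing to the coefficient of the bare pure monomial $\dd_{c_0}$, while $v_{c_0}$ contributes it with coefficient $1$ and $v_c$, $c\ne c_0$, does not contain $\dd_{c_0}$ at all by Lemma~\ref{L_pivot_exp}. With that correction (and no minimality of generation needed) your deduction of iii) from ii) is fine; the paper itself leaves iii) unproved.
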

\begin{proof}
Claim i) follows by induction from the definition of the actual pivot elements.
Claim~ii) is proved by induction on $\gen c$.
The base of induction $c\in \Theta_0$ is trivial.
Let $c\in \Theta_{n+1}$, $n\ge 0$. Then $c=ab$, $a,b\in \Theta_{n}$. By inductive hypothesis,
$$
\bar v_a\equiv v_a  \pmod{\sum_{d \sqsupset a} t_d^{(1)} {\mathcal W}_{\gen d +2}},\qquad
\bar v_{b}\equiv v_{b}  \pmod{ \sum_{d_1 \sqsupset b} t_{d_1}^{(1)} {\mathcal W}_{\gen d_1 +2}}.
$$
Observe that $v_a, v_{b}$ act trivially on $t_d, t_{d_1}$
because the latter correspond to their female ancestors, which are of senior generations.
Also, by Lemma~\ref{LcalW},  ${\mathcal W}_n$, $n\ge 0$, are ideals.
Using Lemma~\ref{Lab_ab}, we get
\begin{align*}
\bar v_c&=\big[(\bar v_{b})^{p^{S_{b}}-1},(\bar v_a)^{p^{S_a}}\big]\\
&\equiv \big[(v_{b})^{p^{S_{b}}-1},(v_a)^{p^{S_a}}\big]
 \pmod{ \sum_{d \sqsupset a} t_d^{(1)} {\mathcal W}_{\gen d +2}
+\sum_{d \sqsupset b} t_{d}^{(1)} {\mathcal W}_{\gen d +2}}\\
&\equiv v_{ab}  \pmod{ t^{(1)}_{b}{\mathcal W}_{n+2}
+\sum_{d \sqsupset a} t_d^{(1)} {\mathcal W}_{\gen d +2}
+\sum_{d \sqsupset b} t_{d}^{(1)} {\mathcal W}_{\gen d +2}}\\
&\equiv v_{ab}  \pmod{ \sum_{d \sqsupset ab} t_d^{(1)} {\mathcal W}_{\gen d +2}}
\equiv v_c  \pmod{ \sum_{d \sqsupset c} t_d^{(1)} {\mathcal W}_{\gen d +2}}.
\qedhere
\end{align*}
\end{proof}


\subsection{Weights}
By {\it pure monomials} we call products of divided powers and pure derivations, in particular, we have pure Lie monomials, see subsection~\ref{SS_Liepure}.
Set $\a_a=\cwt(\dd_a)=-\cwt(t_a^{(1)})\in\C$ for all $a\in \Theta$.
This values are easily extended to a weight function on pure monomials, additive on their (Lie or associative) products.
Next, consider weight functions such that all terms in recurrence relation~\eqref{pivot} have the same weight,
so, attaching the same value as a weight for the virtual pivot element as well.
Thus, we assume that 
\begin{align}
\alpha_{a}=\cwt(v_a)&=\cwt(\dd_a)=-(p^{S_a}-1)\alpha_{a}-(p^{S_b}-1)\alpha_{b}+\alpha_{ab}, 
\nonumber\\
\label{recorrencia}
\a_{ab}&= p^{S_a}\a_a+(p^{S_b}-1)\a_b, \qquad ab\in \Theta_{n+1},\ n\ge 0.
\end{align}
Denote the zero generation flies as $\Theta_0=\{1,\ldots,k\}$, so, the respective virtual pivot elements are denoted as $v_1,\ldots,v_k$.
Recurrence  relation~\eqref{recorrencia} expresses weights of the virtual pivot elements for flies of  generation $\Theta_{n+1}$
via weights corresponding to their parents in $\Theta_n$, $n\ge 0$.
Hence, any weight function satisfying~\eqref{recorrencia} is determined by its values on the zero generation,
namely, by $\cwt(v_j)$, $j=1,\dots,k$.
Let $\wt_i$ be the weight functions determined by values $\wt_i(v_j)=\delta_{ij}$, $1\le i,j\le k$.
Combine them into a {\it multidegree weight function} $\Gr(v):=(\wt_1(v),\ldots,\wt_k(v))$, where $v$ is a pure monomial.
By definition, $\Gr(v_1)=(1,0,\ldots,0),\ldots, \Gr(v_k)=(0,\ldots,0,1)$.
Thus, the space of weight functions satisfying~\eqref{recorrencia} is $k$-dimensional with a basis $\wt_1(*),\ldots,\wt_k(*)$.
Using~\ref{recorrencia}, we see that $\Gr(v_a)\in\NO^k$, $a\in\Theta$.
Finally, define a total {\it degree weight function} $\wt(v):=\sum_{j=1}^k \wt_j(v)$.

By a {\it generalized monomial} $a\in\End R$ we call any (Lie or associative) product of
pure monomials and virtual pivot elements.
By construction, actual pivot elements and their products are generalized monomials.
Observe that generalized monomials are written as infinite linear combinations of pure monomials.
Our construction implies that these pure monomials have the same weight,
we call this value the weight of a generalized monomial.
Thus, the weight functions are well-defined on generalized monomials as well.
Also, $\Gr(v)\in\NO^k$ for any generalized monomial $v$.

\subsection{$\NO^k$-gradings}
In many examples studied before~\cite{PeSh09,PeSh13fib,Pe16,Pe17,PeOtto,PeSh18FracPJ}
we were able, as a rule, to compute  explicitly basis functions for the space of weight functions and study multigradings in more details.
Using that base weight functions and multigradings we were able to get more information about our algebras.
In a general setting of the present paper it is not possible.
\begin{Theorem}\label{Tgraded}
Let  a specie of flies $\Theta$ and $\bar S$ be fixed, where $\Theta_0=\{1,\ldots,k\}$. Then
\begin{enumerate}
\item
the multidegree weight function $\Gr(v)$ is additive on products of generalized monomials $v,w\in\End R$:
$$ \Gr([v, w])=\Gr(v)+\Gr(w),\qquad \Gr(v\cdot w)=\Gr(v)+\Gr(w). $$
\item
The algebras $\bar \LL=\Lie_p(v_a| a\in\Theta)$, $\bar \AA=\Alg(v_a| a\in \Theta)$ are $\NO^k$-graded.
\item
$\LL=\Lie_p(v_1,\ldots, v_k)$, $\AA=\Alg(v_1,\ldots, v_k)$ are $\NO^k$-graded by multidegree in the generators $\{v_1,\dots,v_k\}$:
$$ \LL=\mathop{\oplus}\limits_{(n_1,\ldots,n_k)\in\NO^k} \LL_{n_1,\ldots,n_k},\qquad
\AA=\mathop{\oplus}\limits_{(n_1,\ldots,n_k)\in\NO^k} \AA_{n_1,\ldots,n_k}. $$
\item
$\wt(*)$ counts the degree of $v\in\LL,\AA$ in $\{v_1,\ldots,v_k\}$ yielding gradins:
$ \LL=\mathop{\oplus}\limits_{n=1}^\infty \LL_n$, $\AA=\mathop{\oplus}\limits_{n=1}^\infty \AA_n. $
\end{enumerate}
\end{Theorem}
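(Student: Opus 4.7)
The plan is to first establish weight-homogeneity of every virtual pivot element $v_a$, then deduce additivity of $\Gr$ on products of generalized monomials, and finally recover the claimed gradings as the natural decomposition by multidegree.

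\textbf{Step 1 (weight-homogeneity of $v_a$).} I will show by induction on $\gen a$ that every pure monomial appearing in the expansion~\eqref{construction} of $v_a$ has the same $\Gr$-weight, equal to the value $\alpha_a = \Gr(v_a)$ produced by the recurrence~\eqref{recorrencia}. The base $a\in\Theta_0$ is trivial since $v_a = \partial_a + \cdots$ and $\Gr(\partial_a)$ matches the initial assignment. For the inductive step, using the recurrence $v_a = \partial_a + \sum_{ab\in\Theta_{n+1}} t_a^{(p^{S_a}-1)}t_b^{(p^{S_b}-1)}v_{ab}$, additivity of $\Gr$ on pure monomials (by construction $\Gr(t_a^{(i)}) = -i\,\Gr(\partial_a)$ and $\Gr(\partial_b^{p^l}) = p^l\,\Gr(\partial_b)$, extended termwise) together with~\eqref{recorrencia} forces every summand to carry weight $\alpha_a$. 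Expansion~\eqref{construction} is then a formal infinite sum all of whose pure summands share a single multidegree in $\NO^k$.

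\textbf{Step 2 (additivity on generalized monomials).} By definition, a generalized monomial is a Lie or associative product of pure monomials and virtual pivot elements, hence an (infinite) sum of pure monomials all sharing one multidegree by Step 1. The Lie bracket and associative product of two such expressions are well-defined in $\WW \supseteq \mathcal W$ because the finiteness condition~\eqref{algebra_W} is preserved (each coefficient is a finite sum by Lemma~\ref{L_mathcal_W}). Because both operations are bilinear and satisfy the Leibniz rule termwise, and because weight is additive on products of pure monomials, every pure summand appearing in $v\cdot w$ or $[v,w]$ has multidegree $\Gr(v) + \Gr(w)$. This proves (i); in particular all weights lie in $\NO^k$.

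\textbf{Step 3 (gradings).} Decompose $\bar\LL$ and $\bar\AA$ as direct sums indexed by $\NO^k$ by placing each generalized monomial into the component indexed by its multidegree. Well-definedness of this decomposition uses Step 1: each basis element $\mathbf{t}^\alpha \partial_b^{p^l}$ of $\mathcal W$ has a single multidegree, so distinct $\Gr$-components are linearly independent subspaces of $\mathcal W \subset \End R$; Steps 1--2 guarantee that $\bar\LL$ and $\bar\AA$ are closed under the projection onto each component. Restricting to the subalgebras generated by $v_1,\dots,v_k$ and using $\Gr(v_j) = e_j$ (the $j$-th standard basis vector of $\NO^k$), the $\Gr$-component indexed by $(n_1,\dots,n_k)$ coincides with the span of Lie/associative monomials of multidegree $(n_1,\dots,n_k)$ in the generators, giving (iii). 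Summing over $i$ yields $\wt(v_j)=1$ and hence an $\N$-grading by total degree, giving (iv). The main technical point, and the one I would spend care on, is Step 2: verifying that products in $\mathcal W$ (defined via the augmentation topology and the finiteness condition~\eqref{algebra_W}) genuinely distribute termwise so that weight-homogeneity is preserved in the infinite sums; once this is in place everything else is bookkeeping with~\eqref{recorrencia}.
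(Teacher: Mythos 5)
Your proposal is correct and follows essentially the same route as the paper: the weights are set up via the recurrence~\eqref{recorrencia} precisely so that all pure monomials in the expansion of a virtual pivot element share one multidegree, additivity on pure monomials then gives additivity of $\Gr$ on generalized monomials, and the $\NO^k$- and $\N$-gradings follow since $\Gr(v_j)=e_j$. Your Steps 1--2 merely spell out details (induction on generation, preservation of the finiteness condition) that the paper treats as built into the definition of the weight functions and of generalized monomials.
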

\begin{proof}
Claim i) follows from the additivity of the weight function on products of pure monomials. Consider ii).
Recall that $\Gr(v)\in\NO^k$ for any generalized monomial $v$ and  $\Gr(*)$ is additive on their products.
Thus, we get $\NO^k$-gradings on $\bar\LL$, $\bar \AA$.

Let $v$ be a monomial in the generators $\{v_1,\ldots, v_k\}$ containing $n_i$ elements $v_i$ for $i=1,\ldots,k$.
By additivity, $\Gr(v)=n_1\Gr(v_1)+\ldots+n_k\Gr(v_k)= (n_1,\ldots,n_k)$.
Hence, $\LL$, $\AA$ are $\NO^k$-graded by multidegree in the generators.
Now, the last claim is evident.
\end{proof}

\subsection{Uniform tuples}
Now, we define specific tuples $\bar S=(S_a| a\in \Theta)$. Consider two cases.
1). There exist integers $(S_n| n\ge 0)$ such that $S_a=S_n$ for all $a\in \Theta_n$.
A particular case is that $S_a=1$ for all $a\in\Theta$.
2).  Assume that there exist integers $(S_n,R_n| n\ge 0)$ such
that for any $a\in\Theta_{n+1}$, decomposing $a=bc$, $b,c\in\Theta_n$, we have
either $S_a=S_n$, $S_b=R_n$ or $S_a=R_n$, $S_b=S_n$.
In case $S_n\ne R_n$ define $\Theta_n'=\{a\in\Theta_n| S_a=S_n\}$ and $\Theta_n''=\{a\in\Theta_n| S_a=R_n\}$, so
$\Theta_{n+1}$ consists only of some products $bc$ where $b,c$ belong to different sets $\Theta_n'$, $\Theta_n''$.
In both cases we say that the tuple $\bar S$ is {\it uniform}.
The first case is a particular case of the second by setting $R_n:=S_n$ for all $n\ge 0$.
This terminology is justified by the following.

\begin{Lemma}\label{Luniform}
Fix $\ch K{=}p> 0$, a specie of flies $\Theta=\mathop{\cup}_{n=0}^\infty\Theta_n$, and a uniform tuple $\bar S=(S_a| a\in\Theta_n)$. Then
the virtual pivot elements of the same generation $\{v_a| a\in \Theta_n\}$ have the same weight, denoted as $\wt(\Theta_n)$:
$$
\wt(\Theta_n)=\prod_{m=0}^{n-1}(p^{S_m}+p^{R_m}-1), \quad n\ge 0,\qquad \wt(\Theta_0)=1.
$$
In case of the trivial tuple (i.e. $S_a=1$ for all $a\in\Theta$) we get
$$
\wt(\Theta_n)=(2p-1)^n, \quad n\ge 0.
$$
\end{Lemma}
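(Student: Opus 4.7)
The plan is induction on the generation $n$, applied to the recurrence~\eqref{recorrencia} with respect to the total degree weight function $\wt$. The base case $n=0$ will be immediate: each $v_a$ with $a \in \Theta_0$ is one of the generators, so $\wt(v_a)=1$, and the asserted formula reads as the empty product $\wt(\Theta_0)=1$.

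For the inductive step, I would assume that all virtual pivot elements of generation $n$ share a common weight $\wt(\Theta_n)$. For any $ab \in \Theta_{n+1}$ with $a,b \in \Theta_n$, the recurrence~\eqref{recorrencia} together with the inductive hypothesis gives
\begin{equation*}
\wt(v_{ab}) = p^{S_a}\wt(v_a) + (p^{S_b}-1)\wt(v_b) = (p^{S_a}+p^{S_b}-1)\,\wt(\Theta_n).
\end{equation*}
The crucial (but elementary) observation is that although~\eqref{recorrencia} is asymmetric in the roles of $a$ (father) and $b$ (mother), the coefficient $p^{S_a}+p^{S_b}-1$ is symmetric in $S_a$ and $S_b$. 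By the uniformity of $\bar S$, the unordered pair $\{S_a,S_b\}$ equals $\{S_n,R_n\}$ for every $ab \in \Theta_{n+1}$: in the first case of the definition because $S_a=S_b=S_n=R_n$, and in the second case because the construction forces the two halves $a,b$ of $ab$ to lie in opposite subsets $\Theta_n'$, $\Theta_n''$. Consequently
\begin{equation*}
\wt(v_{ab}) = (p^{S_n}+p^{R_n}-1)\,\wt(\Theta_n),
\end{equation*}
independently of the particular fly $ab \in \Theta_{n+1}$. This simultaneously establishes that $\wt$ is constant on $\Theta_{n+1}$ and yields the one-step recursion $\wt(\Theta_{n+1})=(p^{S_n}+p^{R_n}-1)\,\wt(\Theta_n)$, which telescopes to the claimed product formula.

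The trivial tuple corresponds to $S_m=R_m=1$ for every $m\ge 0$, so every factor collapses to $p+p-1=2p-1$, giving $\wt(\Theta_n)=(2p-1)^n$. There is no genuine obstacle in this argument: all the content sits in the symmetry of the combination $p^{S_a}+p^{S_b}-1$ in conjunction with the definition of a uniform tuple, and the remainder is routine bookkeeping of the recursion.
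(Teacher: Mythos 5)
Your proof is correct and follows essentially the same route as the paper: induction on the generation using the weight recurrence~\eqref{recorrencia}, with the inductive hypothesis $\wt(v_a)=\wt(v_b)=\wt(\Theta_n)$ giving $\wt(v_{ab})=(p^{S_a}+p^{S_b}-1)\wt(\Theta_n)=(p^{S_n}+p^{R_n}-1)\wt(\Theta_n)$, which telescopes to the product formula. Your explicit remark that the asymmetric recurrence produces the symmetric coefficient $p^{S_a}+p^{S_b}-1$, so only the unordered pair $\{S_n,R_n\}$ guaranteed by uniformity matters, is a point the paper leaves implicit, but it is the same argument.
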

\begin{proof} By induction on $n$. The case $n=0$ is trivial.
Let $\a_n=\wt(a)$ for all $a\in \Theta_n$, and the claimed formula is valid.
Let $c=ab\in\Theta_{n+1}$, by~\eqref{recorrencia},
$\wt c =p^{S_a}\a_n +(p^{S_b}-1)\a_n =(p^{S_n}+p^{R_n}-1)\prod_{m=0}^{n-1}(p^{S_m}+p^{R_m}-1)
=\prod_{m=0}^{n}(p^{S_m}+p^{R_m}-1) $.
\end{proof}
Conversely, if weights of the virtual pivot elements of the same generations are equal,
then it is easy to see that the tuple must be uniform.

\begin{Corollary}\label{C}
Under assumptions of Lemma,
$\wt(\Theta_n)> p^{(S_0+R_0+\cdots +S_{n-1}+R_{n-1})/2}$, \ $n\ge 0$.
\end{Corollary}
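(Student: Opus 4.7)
The plan is to reduce the corollary to a factor-by-factor estimate on the product formula supplied by Lemma~\ref{Luniform}, namely
$$\wt(\Theta_n)=\prod_{m=0}^{n-1}\bigl(p^{S_m}+p^{R_m}-1\bigr).$$
It therefore suffices to prove the single-factor inequality
$$p^{S_m}+p^{R_m}-1 > p^{(S_m+R_m)/2},\qquad m\ge 0,$$
and then multiply over $m=0,\ldots,n-1$; the exponents add to $(S_0+R_0+\cdots+S_{n-1}+R_{n-1})/2$, giving exactly the claimed bound.

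For the single-factor inequality I would apply AM--GM to the two positive reals $p^{S_m}$ and $p^{R_m}$:
$$p^{S_m}+p^{R_m}\ge 2\sqrt{p^{S_m}\cdot p^{R_m}}=2\,p^{(S_m+R_m)/2}.$$
Hence
$$p^{S_m}+p^{R_m}-1\ge 2\,p^{(S_m+R_m)/2}-1 > p^{(S_m+R_m)/2},$$
where the last strict inequality holds because $p^{(S_m+R_m)/2}\ge p^{1/2}>1$ (since $p\ge 2$ and $S_m,R_m\ge 1$, so in particular $S_m+R_m\ge 1$).

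There is no real obstacle here; the only thing to keep in mind is that $S_m,R_m\ge 1$ so that $p^{(S_m+R_m)/2}>1$ and the AM--GM bound survives the subtraction of $1$. Assembling the pieces: multiplying the strict factor-wise inequalities for $m=0,\ldots,n-1$ (all factors are positive) yields
$$\wt(\Theta_n)=\prod_{m=0}^{n-1}\bigl(p^{S_m}+p^{R_m}-1\bigr) > \prod_{m=0}^{n-1}p^{(S_m+R_m)/2}=p^{(S_0+R_0+\cdots+S_{n-1}+R_{n-1})/2},$$
which is the desired bound. The case $n=0$ is vacuous (empty product equals $1=p^0$, with equality rather than strict inequality, so one should note this trivially or simply restrict to $n\ge 1$).
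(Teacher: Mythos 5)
Your proof is correct and follows essentially the same route as the paper: both reduce the claim to the factor-wise inequality $p^{S_m}+p^{R_m}-1>p^{(S_m+R_m)/2}$ and then multiply over $m$. The only cosmetic difference is that the paper obtains the per-factor bound by minimizing $f(s)=p^{s}+p^{N-s}-1$ at $s=N/2$, whereas you invoke AM--GM --- the same estimate in different clothing; your remark about the degenerate case $n=0$ applies equally to the paper's statement.
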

\begin{proof}
Fix a real number $N>0$.
Consider a function $f(s)=p^{s}+p^{N-s}-1$ for $s\in [0,N]$.
It has the minimum at $s_0=N/2$.
Hence, $p^{s}+p^{N-s}-1\ge p^{N/2}+p^{N/2}-1> p^{N/2}$.
By applying this estimate to the factors
$p^{S_n}+p^{R_n}-1> p^{(S_n+R_n)/2}$, $n\ge 0$, we get the result.
\end{proof}

\section{Nillity of uniform drosophila Lie algebras $\LL(\Theta,\bar S)$}\label{Snil}

\begin{Lemma}\label{Ltailsnil}
Consider $u_1={\mathbf t}^{\a_1}w_1,\ldots, u_r={\mathbf t}^{\a_r}w_r\in \WW^{\mathrm a}$,
where $w_i\in \WW^{\mathrm a}$, $\a_i\in \Lambda$, $|\a_i|\ge 1$,  for $i=1,\ldots, r$.
We assume that $\wt(u_i)\ge 1$ and $\wt(w_i)\le C$, for $i=1,\ldots,r$, and a positive constant $C$
(namely, we assume that these inequalities  are valid for all  pure Lie monomials in expansions of $u_i$ and $w_i$).
Then the associative algebra $\Alg(u_1,\ldots, u_r)\subset \End R$ generated by these elements is nilpotent.
\end{Lemma}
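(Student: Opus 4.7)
The plan is to show that the composition $u_{i_1} \cdots u_{i_N}$ vanishes in $\End R$ once $N$ is sufficiently large, through a combined weight-and-$p$-adic-norm argument. First, I would expand each $u_i = {\mathbf t}^{\alpha_i} w_i$ termwise into its pure Lie monomial components. The hypotheses on weights guarantee that every such monomial in $u_i$ has total weight $\geq 1$ and carries a divided-power factor of $p$-adic norm $\geq |\alpha_i|_p \geq 1$. By the additivity of the weight function on associative products (Theorem~\ref{Tgraded}), every monomial summand in $u_{i_1} \cdots u_{i_N}$ has total weight $\geq N$.

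Next, using the Leibniz rule, I would rewrite each composition of pure Lie monomials by moving all divided-power multiplications to the left, producing a sum of terms of the form ${\mathbf t}^\Gamma \cdot D$ where $D$ is a product of pure derivations. Parts iv) and v) of Lemma~\ref{L_p_norm} control the accumulation of $|\Gamma|_p$: each intermediate action of a derivation on a later ${\mathbf t}^{\alpha_{i_j}}$ reduces the $p$-adic norm by at most $1$, while each ${\mathbf t}^{\alpha_{i_k}}$ contributes $|\alpha_{i_k}|_p \geq 1$. Combining this with the $\geq N$ weight lower bound from the previous step, the net growth of $|\Gamma|_p$ outpaces the losses and scales linearly with $N$.

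The key structural input is that all divided-power exponents $\Gamma$ arising in the expansion are supported in a fixed finite set $\Phi \subset \Theta$ of flies. This finiteness uses that the $\alpha_i$'s have evident finite support and that the ancestral condition propagates through compositions, together with the weight bound $\wt(w_i) \leq C$ restricting the pure monomials of $w_i$ that can produce nonzero terms. Within the finite-dimensional divided-power subring $R(\Phi, \bar S|_\Phi)$ the $p$-adic norm is bounded by $\sum_{a \in \Phi}(p^{S_a}-1)$, so once $N$ exceeds this combinatorial threshold each ${\mathbf t}^\Gamma$ vanishes by Lemma~\ref{L_p_norm} i) applied iteratively, forcing the whole composition to be zero.

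The main obstacle I foresee is the finite-support reduction: while $\operatorname{supp}(\alpha_i)$ is trivially finite, $w_i$ may a priori be an infinite sum of pure monomials involving flies of arbitrarily high generation under the weight constraint alone. Resolving this requires carefully propagating the ancestral condition through successive compositions to restrict the flies $b$ that actually contribute to nonzero terms to a finite combinatorial set—essentially the common descendants of $\bigcup_i \operatorname{supp}(\alpha_i)$ intersected with a weight-bounded stratum—and showing that divided-power prefactors from $w_i$ that introduce new flies outside this set cause the corresponding Leibniz summand to vanish, so no "runaway" support expansion occurs across the $N$ factors.
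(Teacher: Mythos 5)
Your strategy has two genuine gaps, and both sit exactly where the real difficulty of the lemma lies. First, the finite-support claim for $\Gamma$ fails for the normal form you choose. If you split every pure Lie monomial ${\mathbf t}^{\gamma}\partial_b^{p^l}$ occurring inside a head $w_i$ and push its divided-power tail ${\mathbf t}^{\gamma}$ to the left, then $\Gamma$ absorbs the supports of these tails. The hypotheses $\wt(u_i)\ge 1$, $\wt(w_i)\le C$ do \emph{not} bound the generation of the flies $b$ occurring in $w_i$: the tail variables are ancestors of $b$ and carry negative weight, so a monomial ${\mathbf t}^{\gamma}\partial_b^{p^l}$ with $b$ of arbitrarily high generation can still have weight inside $[1,C]$. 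Hence the exponents $\Gamma$ are not confined to any fixed finite set $\Phi$, and your proposed repair is off target: what enters $\Gamma$ are \emph{ancestors} of the flies of the $w_i$'s, not descendants of $\bigcup_i\operatorname{supp}\a_i$, and such summands do not vanish. Second, the claim that $|\Gamma|_p$ grows linearly in $N$ is unjustified. Each factor $u_{i_j}$ contributes at least $1$ to the accumulated $p$-adic norm through ${\mathbf t}^{\a_{i_j}}$, but it also contributes exactly one pure derivation, and each consumed derivation costs up to $1$ by claim iv) of Lemma~\ref{L_p_norm}; the net per factor is therefore only $\ge 0$, and in terms where all derivations are consumed $|\Gamma|_p$ need not grow at all. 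The weight bound $\wt\ge N$ cannot rescue this, because in a straightened term ${\mathbf t}^{\Gamma}D$ the weight is carried by the surviving pure derivations in $D$ (which may have huge positive weight), while $\Gamma$ has weight $\le 0$. Finally, even where individual straightened terms would violate a degree bound, low-weight terms need only cancel, so a term-by-term vanishing argument does not by itself give $u_{i_1}\cdots u_{i_N}=0$.

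The paper's proof avoids both problems by never expanding the heads: it only commutes the $w_i$'s past the factors ${\mathbf t}^{\a_j}$, so the polynomial part that accumulates consists of images of the ${\mathbf t}^{\a_j}$'s under the $w$'s, and by the ancestral property it stays supported in the finite, ancestor-closed set $\Omega$ generated by $\operatorname{supp}\a_1,\ldots,\operatorname{supp}\a_r$. One keeps the invariant that the number of surviving heads is at most the size that an $\Omega$-supported monomial can carry, hence at most a constant $M$ depending only on $\Omega$ and $\bar S$; since each head has weight $\le C$ and the $\Omega$-supported factor has weight $\le 0$, any nonzero monomial of the product has weight at most $MC$, while weight additivity gives the lower bound $N$, so $N\le MC$. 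If you want to salvage your route, the essential correction is precisely this: keep each $w_i$ intact as a single symbol of weight $\le C$, let only $\Omega$-supported divided powers accumulate on the left, and convert the conclusion into a weight contradiction rather than a divided-power overflow.
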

\begin{proof}
Consider flies in supports of $\a_1,\ldots,\a_r$ and add all their ancestors, we get a finite set $\Omega\subset \Theta$.
Consider a  product $u=u_{i_1}\cdots u_{i_N}\ne 0$.
It has the following property: the number of "heads" $w_i$ is less or equal to the number of letters $t_j$.
We use transformations like
${\mathbf t}^{\a_1}w_1\cdot {\mathbf t}^{\a_2}w_2=
\binom{\a_1+\a_2}{\a_1}{\mathbf t}^{\a_1+\a_2} 
w_1w_2+ {\mathbf t}^{\a_1} [w_1,{\mathbf t}^{\a_2}]w_2,$
in order to rearrange $u$ and collect the heads on the right hand side, keeping their mutual order, (some heads can disappear).
On the left hand side, we get a product of elements of type $ [w_{j_1},\ldots, w_{j_s} , {\mathbf t}^{\a_i}] = {\mathbf t}^{\a_i'}$,
by the ancestral property, the support of $\a_i'$ is contained in $\Omega$.
After these transformations,
we get a linear combination of products $\tilde u={\mathbf t}^\a w_{j_1}\cdots w_{j_{N'}}$.
Observe that the above property is kept,
namely, the number $N'$ of heads $w_{j_*}$ is less or equal to degree of ${\mathbf t}^\a$,
the latter is bounded by $M=\sum_{b\in\Omega} p^{S_b}-1$.
Thus, $N'\le M$. Let $u''$ be a non-zero pure Lie monomial of $\tilde u$.
Hence, $\wt (u'')\le N'C\le MC$.
On the other hand, $\wt(u'')\ge N$. Hence, $N\le MC$.
\end{proof}

Fix a fly $d\in\Theta_n$, $n\ge 0$,  and consider a respective {\it pure Lie monomial} $w$
(such monomials appear in infinite expansions for the algebra $\WW^{\mathbf a}$, see~\eqref{ancestral}):
\begin{equation}\label{ux0}
w =r_{n-1}\partial_{d}^{p^m}= \Big(\prod_{c> d} t_c^{(\xi_c)}\Big) \partial_{d}^{p^m},\qquad
0\le \xi_c< p^{S_c}, \  0\le m< S_d.
\end{equation}
Consider pure Lie monomials appearing in decompositions~\eqref{construction} of  pivot elements $v_1,\ldots,v_k$   of zero generation:
\begin{equation*}
w_0= \Big(\prod_{c \vdash d} t_c^{(p^{S_c}-1)}\Big) \partial_{d}.
\end{equation*}
We compare common factors of $w$ and $w_0$. Namely, let $c\vdash d$.
Consider  $p$-adic expansions of $\xi_c$ and the maximal allowed power $p^{S_c}-1$ of the same variable:
\begin{align*}
\xi_c&=\delta_0 p^0+\delta_1 p^1+\delta_2 p^2+\cdots+ \delta_{S_c-1} p^{S_c-1},\qquad
       0\le \delta_j\le p-1,\ j=0,\dots,S_c-1;\\
p^{S_c}{-}1&=(p{-}1) p^0+(p{-}1) p^1+\cdots+ (p{-}1) p^{S_c{-}1}.
\end{align*}
Define a {\it depletion} of the variable $t_c$ in $r_{n-1}$, using also the $p$-adic norm defined above:
\begin{equation}\label{deplS}
\depl_{c}(r_{n-1}):=\sum_{j=0}^{S_c-1}(p-1-\delta_j)=S_c(p-1)-|\xi_c|_p,
\end{equation}
Formally define a depletion of the variable $t_d$ in a pure derivation, see a justification in Lemma~\ref{Ldep_p}.
\begin{equation}\label{deplpower}
\depl_{d}(\partial_{d}^{p^m}):=(p-1)m, \quad m\in\{0,\ldots,S_d-1\}.
\end{equation}
Define a {\it (paternal-by-one $d$-ancestors)-depletion} of the tail and the whole of a pure Lie monomial $w$~\eqref{ux0}
\begin{align*}
\depl_{\vdash d }(r_{n-1}) &:=\sum_{c\vdash d }\depl_{c}(r_{n-1});\\
\depl(r_{n-1}\partial_{d}^{p^m})
   &:=\depl_{d}(\partial_{d}^{p^m})+\depl_{\vdash d }(r_{n-1})=
   (p-1)m+\sum_{c\vdash d }\depl_{c}(r_{n-1}).
\end{align*}
Let us draw attention that we use depletions not in all variables with indices
$c>d$ (i.e. all ancestors of $d$, and such variables may be present in the tail $r_{n-1}$)
but only such that $c\vdash d$ (i.e. paternal-by-one ancestors of $d$).
Thus, by our definitions~\eqref{deplS}, \eqref{deplpower},
\begin{equation} \label{deplddk}
\depl(\partial_{d}^{p^m})
   =(p-1)\Big(m+\sum_{c\vdash d }S_c,\Big), \qquad 0\le m< S_d,\quad d\in\Theta.
\end{equation}

\begin{Lemma}\label{Ldep_p}
Consider a virtual pivot element or its power $v_a^{p^m}$, $0\le m\le S_a$, $a\in\Theta_n$, $n\ge 0$.
Then all pure Lie monomials $w$ of its expansion have the  same depletion:
$$ \depl(w)= (p-1)\big(m+\sum_{b\vdash a} S_b\big). $$
\end{Lemma}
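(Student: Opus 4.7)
The approach is to split into three cases according to the value of $m$: $m=0$, $0<m<S_a$, and $m=S_a$. The first case is handled directly from the explicit expansion in Lemma~\ref{L_pivot_exp}; the other two reduce to it via the recurrence~\eqref{vap} applied to the children $v_{ac}$. A single combinatorial fact drives all three cases: if $a\succeq d$, then the paternal-by-one ancestors of $d$ at generations $0,\ldots,\gen a-1$ coincide exactly with $\{b\vdash a\}$. Indeed, any paternal ancestor of $d$ of generation $\le \gen a$ is a binary prefix both of $d$ and of $a$, hence a paternal ancestor of $a$, so its two halves agree for the two flies.

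For the base case $m=0$, I would take a pure Lie monomial $w = \bigl(\prod_{c\vdash d,\,|a|\le|c|<|d|} t_c^{(p^{S_c}-1)}\bigr)\partial_d$ arising from~\eqref{construction}. Each variable present sits at its maximal power, so $\depl_c=0$ for every $c\vdash d$ with $|c|\ge |a|$, and $\depl_d(\partial_d)=0$ by~\eqref{deplpower}. The only nonzero contributions come from $f\vdash d$ with $\gen f<\gen a$, which by the combinatorial fact are exactly $\{b\vdash a\}$; each such $b$ has $t_b$ absent, contributing $\depl_b=S_b(p-1)$. Summing gives $\depl(w)=(p-1)\sum_{b\vdash a}S_b$, matching the formula at $m=0$.

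For $0<m<S_a$, the first branch of~\eqref{vap} expresses
\[
v_a^{p^m}=\partial_a^{p^m}+t_a^{(p^{S_a}-p^m)}\!\!\sum_{ac\in\Theta_{n+1}}\! t_c^{(p^{S_c}-1)}v_{ac}.
\]
The pure derivation $\partial_a^{p^m}$ is handled by~\eqref{deplddk}. For any pure monomial $w=t_a^{(p^{S_a}-p^m)} t_c^{(p^{S_c}-1)}r'\partial_e$ from the second summand, with $r'\partial_e$ drawn from $v_{ac}$, I would split $\{f\vdash e\}$ by generation: generations $\ge n+1$ give $\depl_f=0$ because the required variables sit in $r'$ at maximal power; generation $n$ consists precisely of the two halves $a$ and $c$ of $ac$, with $\depl_c=0$ and, from $|p^{S_a}-p^m|_p=(p-1)(S_a-m)$, $\depl_a=(p-1)m$; generations below $n$ are again $\{b\vdash a\}$ by the combinatorial fact, each contributing $S_b(p-1)$. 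The total is $(p-1)\bigl(m+\sum_{b\vdash a}S_b\bigr)$. The boundary case $m=S_a$ follows the same template with the second branch of~\eqref{vap}: the $t_a$-factor disappears, so $t_a$ is now absent entirely, $\depl_a$ jumps to $(p-1)S_a$, and the total becomes $(p-1)\bigl(S_a+\sum_{b\vdash a}S_b\bigr)$, as required.

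The only real obstacle is the careful bookkeeping of the set $\{f\vdash d\}$ across different generations and the verification of the combinatorial identity $\{f\vdash d:\gen f<\gen a\}=\{b\vdash a\}$ for $a\succeq d$. Once this identity is in hand, all three cases reduce to a short additive computation using~\eqref{deplS}--\eqref{deplpower}.
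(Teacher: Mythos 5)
Your proposal is correct and follows essentially the same route as the paper: expand $v_a^{p^m}$ by~\eqref{vap}, expand the children $v_{ac}$ via~\eqref{construction}, and tally depletions generation by generation, with the absent variables below generation $n$ identified with $\{b\vdash a\}$ (the combinatorial identity you isolate is exactly what the paper uses implicitly when it says $b\vdash d$ of generation $<n$ implies $b\vdash a$) and $\depl_a=(p-1)m$ coming from $|p^{S_a}-p^m|_p=(p-1)(S_a-m)$. Your three-case split just makes explicit the two branches of~\eqref{vap} plus the $m=0$ instance that the paper handles in one pass; no substantive difference.
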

\begin{proof}
By~\eqref{vap},
\begin{equation*}
v_a^{p^m}=
\begin{cases}
\dd_a^{p^m}+ t_a^{(p^{S_a}-p^m)}\sum\limits_{ac\in \Theta_{n+1}}t_c^{(p^{S_c} -1)}v_{ac}, &\quad  0\le m< S_a;\\
\hfill \sum\limits_{ac\in \Theta_{n+1}}t_c^{(p^{S_c} -1)}v_{ac}, &\hfill m=S_a.
\end{cases}
\end{equation*}
We expand $v_{ac}$ further using~\eqref{expansion} (or equivalently~\eqref{construction}) and get an infinite sum of pure Lie monomials.
Let $w= (\prod_{b> d} t_b^{(\a_b)})\partial_d$ be one of such monomials.
Consider $b\in\Theta$, $b\vdash d$, of generations $0,\ldots,n-1$, then $b\vdash a$ and $w$ has no such variables.
The depletion in these variables yields $\sum_{b\vdash a} (p-1)S_b$.
Consider $b\in\Theta$, $b\vdash d$,  of generations $n+1,\ldots, \gen d-1$.
By~\eqref{expansion} or~\eqref{construction}, $w$
has the maximal powers in these variables and they yield nothing in the total depletion.
We have two variables $b\vdash d$ of generation $n$, namely $a$ and $c$ above.
The depletion in $c$ is trivial. 
We have $p^{S_a}{-}{p^m}=0p^0+\cdots+0p^{m-1}+(p{-}1)p^{m}+\cdots+(p{-}1)p^{S_a{-}1}$.
Hence, $\depl_a(w)=(p-1)m$, coinciding with depletion of $a$ in $\partial_{a}^{p^m}$ in case $m<S_a$, see~\eqref{deplpower}.
\end{proof}

\begin{Lemma}\label{L_depl_prod}
Let $b\in\Theta_n$, $d\in\Theta_m$, where $n\le m$.
Consider ancestral pure Lie monomials 
$u=r_{n-1}\partial_{b}^{p^j}$, $0\le j< S_b$; and
$v=r_{m-1}\partial_{d}^{p^l}$, $0\le l< S_d$.
Assume that $[u,v]\ne 0$. Then
$$ \depl([u,v])\le \depl (v)+1. $$
\end{Lemma}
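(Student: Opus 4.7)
The plan is to compute $[u,v]$ explicitly and then compare the two depletions slot by slot. First, since $\partial_b^{p^j}$ and $\partial_d^{p^l}$ commute as derivations of $R$, one has
\[
[u,v] \;=\; r_{n-1}\,\partial_b^{p^j}(r_{m-1})\,\partial_d^{p^l}\; -\; r_{m-1}\,\partial_d^{p^l}(r_{n-1})\,\partial_b^{p^j}.
\]
The divided variables in $r_{n-1}$ correspond to ancestors of $b$, hence have generation strictly less than $n\le m$, so $t_d$ is absent from $r_{n-1}$ and the second summand vanishes. Thus $[u,v] = \rho\,\partial_d^{p^l}$ with $\rho := r_{n-1}\,\partial_b^{p^j}(r_{m-1})$, and $[u,v]\ne 0$ forces $b$ to appear in $r_{m-1}$, i.e.\ $b$ is a proper ancestor of $d$ and $n<m$.

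Next I would introduce notation for the slotwise comparison. For each $c\in\Theta$ let $\xi_c^{(1)},\,\xi_c^{(2)},\,\xi_c^{(2')},\,\eta_c$ denote the divided-power exponent of $t_c$ in $r_{n-1}$, $r_{m-1}$, $\partial_b^{p^j}(r_{m-1})$, and $\rho$, respectively. By construction $\xi_c^{(2')} = \xi_c^{(2)}$ for $c\ne b$, while $\xi_b^{(2')} = \xi_b^{(2)} - p^j$. Because $\rho\ne 0$, Lemma~\ref{L_p_norm}\,i) yields $|\eta_c|_p = |\xi_c^{(1)}|_p + |\xi_c^{(2')}|_p$ at every slot; combined with $\depl_c(\,\cdot\,) = S_c(p-1) - |\,\cdot\,|_p$ this rewrites as
\[
\depl_c(\rho) \;=\; \depl_c(r_{m-1}) \;-\; |\xi_c^{(1)}|_p \;+\; \bigl(|\xi_c^{(2)}|_p - |\xi_c^{(2')}|_p\bigr).
\]
The last bracket is $0$ for $c\ne b$; for $c=b$ it is at most $1$ by Lemma~\ref{L_p_norm}\,iv).

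Finally, summing over $c\vdash d$ (the slot $\partial_d^{p^l}$ contributes the same $(p-1)l$ to both depletions) gives
\[
\depl([u,v])-\depl(v) \;=\; \sum_{c\vdash d}\bigl(\depl_c(\rho)-\depl_c(r_{m-1})\bigr) \;\le\; 1 \;-\; \sum_{c\vdash d}|\xi_c^{(1)}|_p \;\le\; 1,
\]
where the potential ``$+1$'' arises only when $b\vdash d$. If $b$ is a more remote paternal-by-one ancestor of $d$ then the $c=b$ slot simply drops out of the sum and one even obtains $\depl([u,v])\le\depl(v)$, but this refinement is not needed. The only non-routine ingredient I expect is Lemma~\ref{L_p_norm}\,iv), which controls how one application of $\partial_b^{p^j}$ can drop the $p$-adic norm at the slot $c=b$ by at most one; everything else is divided-power bookkeeping built on claim i) of the same lemma.
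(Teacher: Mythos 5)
Your proof is correct and follows essentially the same route as the paper: reduce $[u,v]$ to $r_{n-1}\,\partial_b^{p^j}(r_{m-1})\,\partial_d^{p^l}$, bound the change of depletion at the single slot $t_b$ (relevant only when $b\vdash d$) by $1$ via claim iv) of Lemma~\ref{L_p_norm}, and use the slotwise additivity from claim i) to see that multiplying by $r_{n-1}$ can only decrease depletions. Your explicit verification that the second commutator term vanishes is a minor elaboration of what the paper states implicitly, not a different argument.
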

\begin{proof}
By assumption, $r_{n-1}=\prod_{a>b} t_a^{(\eta_a)}$, $0\le \eta_a<S_a$, and
$r_{m-1}=\prod_{c>d} t_c^{(\xi_c)}$, $0\le \xi_c<S_c$. Then
$$ [u,v]=r_{n-1}r_{m-1}'\partial_{d}^{p^l},\quad\text{where}\quad
r_{m-1}'=[\partial_{b}^{p^j}, r_{m-1}]. $$
Since the product is nontrivial, a nontrivial power of $t_b$ enters $r_{m-1}$, hence $b>d$.

First, we compare (paternal-by-one $d$-ancestors)-depletions of $r_{m-1}$ and $r_{m-1}'$.
We get a change of depletion with respect to $t_b$ only.
In case $b\nvdash d$, variable $t_b$ is irrelevant and depletions of $r_{m-1}$, $r_{m-1}'$ are the same.
Thus, assume that $ b\vdash d$.
Let $t_b^{(\xi)}$, $t_b^{(\xi')}$ be the respective factors inside $r_{m-1}$, $r_{m-1}'$.
By claim iv) of Lemma~\ref{L_p_norm}, we have $|\xi'|_p\ge |\xi|_p-1$.
Using~\eqref{deplS}, $\depl_b r_{m-1}'\le \depl_b r_{m-1}+1$.
Since depletions in other variables do no change, we get $\depl_{c\vdash d }(r_{m-1}')\le \depl_{c\vdash d }(r_{m-1})+1$.

Second, we evaluate a depletion of $r_{n-1}r_{m-1}'$. Fix $a\vdash d$.
Let $t_a^{(\xi)}$, $t_a^{(\xi')}$, $t_a^{(\xi'')}$ be the respective powers in $r_{n-1}$, $r_{m-1}'$, $r_{n-1}r_{m-1}'$.
Proving claim i) of Lemma~\ref{L_p_norm}, we checked that $|\xi''|_p= |\xi|_p+|\xi'|_p$.
Hence,
$$
\depl_{a}(r_{n-1}r_{m-1}')=S_a(p-1)-|\xi''|_p= S_a(p-1)-|\xi|_p-|\xi'|_p\le S_a(p-1)-|\xi'|_p=\depl_a r_{m-1}'.
$$

Therefore, $\depl_{\vdash d} (r_{n-1}r_{m-1}')\le \depl_{\vdash d}(r_{m-1}')\le \depl_{\vdash d}(r_{m-1})+1$,
while the $d$-depletion of $\partial_d^{p^l}$ remains the same.
Hence, $\depl([u,v])\le \depl (v)+1$.
\end{proof}

Let $w\in \WW^{\mathrm a}$. We define the depletion $\depl(w)$ as a maximum of depletions of its pure Lie monomials.

\begin{Lemma}\label{Ldepl_comm}
Let  $w\in \WW^{\mathrm a}$ has a finite depletion. Then $\depl(w^p)\le \depl (w)+p-1$.
\end{Lemma}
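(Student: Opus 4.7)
The plan is to expand $w$ as a (possibly infinite) sum of ancestral pure Lie monomials and bound $\depl(w^p)$ by treating separately the two kinds of contributions that arise in the iterated Jacobson-type formula derived from~\eqref{power_P}: the individual $p$-th powers of the summands, and the $p$-fold iterated Lie brackets of summands.

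First I would write $w=\sum_i u_i$ with $u_i=r_i\,\partial_{d_i}^{p^{l_i}}$ ancestral pure Lie monomials, so that $N:=\depl(w)=\sup_i\depl(u_i)<\infty$ by hypothesis. For the individual $p$-th powers, the tail $r_i=\prod_{c>d_i}t_c^{(\xi_c)}$ involves no variable $t_{d_i}$ by the ancestral condition, hence commutes with $\partial_{d_i}^{p^{l_i}}$ in $\End R$, and therefore $u_i^p=r_i^p\,\partial_{d_i}^{p^{l_i+1}}$. If $r_i\ne 1$ then some $\xi_c\ge 1$, and the divided power coefficient $(p\xi_c)!/(\xi_c!)^p$ has $p$-adic valuation $s_p(\xi_c)\ge 1$ by Kummer/Lucas, so $(t_c^{(\xi_c)})^p=0$ and thus $u_i^p=0$. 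If $r_i=1$, then $u_i^p=\partial_{d_i}^{p^{l_i+1}}$ (or zero when $l_i+1=S_{d_i}$), and~\eqref{deplddk} gives $\depl(u_i^p)=\depl(u_i)+(p-1)$ exactly.

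Next, for each $p$-fold iterated bracket appearing in the Jacobson expansion, I would use that the bracket of two ancestral pure Lie monomials is again a single ancestral pure Lie monomial (or zero), with derivation part inherited from the deeper factor. Evaluating innermost-first and invoking Lemma~\ref{L_depl_prod} at each of the $p-1$ commutation steps, a straightforward induction yields
\[
\depl\bigl([u_{i_1},[u_{i_2},\ldots,[u_{i_{p-1}},u_{i_p}]\ldots]]\bigr)\le \max_k\depl(u_{i_k})+(p-1)\le N+(p-1).
\]
Combining the two estimates produces $\depl(w^p)\le\depl(w)+(p-1)$.

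The main technical obstacle is making sense of the Jacobson expansion when the expansion of $w$ is genuinely infinite. The plan there is standard: approximate $w$ by finite truncations obtained by discarding pure Lie monomials sitting deep in the augmentation filtration of Lemma~\ref{L_mathcal_W}, observe that any given pure Lie monomial appearing in $w^p$ is already produced by the $p$-th power of such a truncation (only finitely many summands of $w$ can contribute to a fixed pure Lie monomial in $w^p$ via derivation-composition), and then pass to the limit to reduce everything to the finite case already handled.
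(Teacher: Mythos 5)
Your proposal is correct and follows essentially the same route as the paper: split the expansion of $w^p$ coming from the $p$-mapping into $p$-th powers of pure Lie monomials (nonzero only when the tail is trivial, handled exactly via~\eqref{deplddk}) and $p$-fold iterated brackets (handled by iterating Lemma~\ref{L_depl_prod} over the $p-1$ commutation steps), each giving the bound $\depl(w)+p-1$. Your closing truncation argument for the infinite expansion merely supplies care on a point the paper's proof leaves implicit.
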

\begin{proof}
Write $w$ as an infinite linear combination of pure Lie monomials $u_j$.
By properties of the $p$-mapping, $w^p$ is a linear combination of monomials of two types:
a) $u'=u_j^p$ and b) $u''=[u_{i_1},u_{i_2},\dots, u_{i_p}]$.
By Lemma~\ref{L_depl_prod},
$\depl (u'')\le \max\{\depl (u_{i_1}),\dots, \depl (u_{i_p})\} +p-1\le \depl (w)+p-1$.

Consider the case $u'=u_j^p$.
Let $u_j= r\partial_d^{p^m}$, where $r=\prod_{b> d} t_b^{(\a_b)}$, $d\in\Theta$.
If $r\ne 1$, then 
$r^p=0$ and $u'= r^p\partial_d^{p^{m+1}}=0$.
Thus, $u_j=\partial_d^{p^m}$ and   $u'=\partial_{d}^{p^{m+1}}$, where $m+1<S_d$.
Using~\eqref{deplddk},
$\depl (u') =(p-1)\Big(m+1+\sum_{c\vdash d} S_c\Big) =(p-1)\Big(m+\sum_{c\vdash d} S_c\Big)+p-1 =\depl (u_j)+p-1\le \depl (w)+p-1.$
\end{proof}

\begin{Lemma}\label{Ldepl_finite}
Elements of $\LL(\Theta,\bar S)$ have finite depletions.
\end{Lemma}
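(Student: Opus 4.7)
My plan is to prove the statement by induction on the construction of an element $x \in \LL(\Theta,\bar S)$ as a $K$-linear combination of iterated Lie brackets and $p$-th powers of the generators $v_1,\dots,v_k$ of generation zero. The three combining operations (bracket, $p$-mapping, linear combination) each increase the depletion by at most a finite controlled amount, so finiteness propagates from the base case.

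For the base case, take $j\in\Theta_0$. Since $\gen j=0$, there is no $b\in\Theta$ with $b\vdash j$, so Lemma~\ref{Ldep_p} applied with $a=j$ and $m=0$ yields $\depl(v_j)=(p-1)\bigl(0+\sum_{b\vdash j}S_b\bigr)=0$. For the bracket step, suppose $u,v\in\WW^{\mathrm a}$ have finite depletions $d_u,d_v$. Expand them as (possibly infinite) $K$-linear combinations of ancestral pure Lie monomials $u=\sum_i\lambda_i u_i$, $v=\sum_j\mu_j v_j$, with $\depl(u_i)\le d_u$ and $\depl(v_j)\le d_v$. Each monomial has a generation label attached to its pure derivation. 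For every pair $(u_i,v_j)$, apply Lemma~\ref{L_depl_prod} to whichever monomial carries the smaller generation label, using the skew-symmetry $[u_i,v_j]=-[v_j,u_i]$ to reduce to the hypothesis of that lemma. This gives
\[
\depl([u_i,v_j])\le\max(\depl(u_i),\depl(v_j))+1\le\max(d_u,d_v)+1,
\]
and since depletion on $\WW^{\mathrm a}$ is the supremum over pure Lie monomials, we obtain $\depl([u,v])\le\max(d_u,d_v)+1<\infty$.

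For the $p$-mapping step, $\LL\subset\WW^{\mathrm a}\subset\Der R\subset(\End R)^{(-)}$, so the restricted $p$-mapping coincides with the associative $p$-th power; Lemma~\ref{Ldepl_comm} then yields $\depl(w^{[p]})\le\depl(w)+p-1$, preserving finiteness. Linear combinations are trivial: $\depl(\alpha u+\beta v)\le\max(\depl(u),\depl(v))$. By induction on the number of brackets and $p$-mappings used in a restricted Lie word in $v_1,\dots,v_k$, every such word has finite depletion; since $\LL(\Theta,\bar S)$ is the $K$-span of these words, the conclusion follows.

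The main technical point — and the only place where one must be a bit careful — is that the generators $v_j$ and their iterated brackets are \emph{infinite} sums of pure Lie monomials, so one must confirm that Lemma~\ref{L_depl_prod} (stated for a single pair of pure Lie monomials) lifts to a bound on $\depl([u,v])$. This works because depletion on $\WW^{\mathrm a}$ is a supremum, the bound $\max(d_u,d_v)+1$ is uniform in the indices $i,j$, and the direction-dependent hypothesis on generations is absorbed by skew-symmetry. No convergence issue appears, since the augmentation-topological closure defining $\mathcal W$ is already handled by Lemma~\ref{L_mathcal_W}, ii), ensuring the bracket and $p$-power stay in the same ambient algebra to which depletion is defined.
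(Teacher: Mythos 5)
Your proof is correct and follows essentially the same route as the paper: the generators have depletion $0$ by Lemma~\ref{Ldep_p}, and finiteness is propagated through brackets, $p$-th powers and linear combinations via Lemma~\ref{L_depl_prod} and Lemma~\ref{Ldepl_comm}. The extra care you take in lifting the pure-monomial bound of Lemma~\ref{L_depl_prod} to infinite sums (using skew-symmetry to meet its generation hypothesis and the supremum definition of depletion) is exactly what the paper's terse proof leaves implicit.
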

\begin{proof}
Consider the generators $v_1,\dots,v_k$ of $\LL$.
By Lemma~\ref{Ldep_p}, $\depl(v_1)=\cdots=\depl(v_k)=0$. The result follows by Lemma~\ref{L_depl_prod} and Lemma~\ref{Ldepl_comm}.
\end{proof}

\begin{Lemma}\label{L_est34}
Fix numbers $p\ge 2$, and $s,r\ge 1$. Then
$ p^s+p^r-1\le \frac 34 p^{s+r}. $
\end{Lemma}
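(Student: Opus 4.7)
\textbf{Proof plan for Lemma~\ref{L_est34}.} The plan is to rewrite the inequality as a statement about a bounded ratio and reduce to the worst case via simple monotonicity. Dividing through by $p^{s+r}$, the claim becomes
\begin{equation*}
g(p,s,r) := \frac{1}{p^s} + \frac{1}{p^r} - \frac{1}{p^{s+r}} \le \frac{3}{4}.
\end{equation*}
First I would fix $p \ge 2$ and treat $s,r$ as independent variables. Rewriting
\begin{equation*}
g(p,s,r) = \frac{1}{p^s}\Bigl(1 - \frac{1}{p^r}\Bigr) + \frac{1}{p^r},
\end{equation*}
one sees that, since the bracket $1 - p^{-r}$ is positive for $r \ge 1$, the function $g$ is strictly decreasing in $s$ (and, by symmetry, in $r$). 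Hence its supremum over the region $s,r \ge 1$ is attained at the corner $s = r = 1$, giving
\begin{equation*}
\sup_{s,r\ge 1} g(p,s,r) = g(p,1,1) = \frac{2p-1}{p^2}.
\end{equation*}

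Next I would reduce to a one-variable comparison: it remains to show $(2p-1)/p^2 \le 3/4$ for $p \ge 2$. Clearing denominators this is $3p^2 - 8p + 4 \ge 0$, i.e.\ $(p-2)(3p-2) \ge 0$, which holds for $p \ge 2$ with equality precisely at $p = 2$. Tracing back, equality in the original inequality occurs exactly at $p=2$, $s=r=1$ (where $p^s+p^r-1 = 3 = \tfrac34 \cdot 4 = \tfrac34 p^{s+r}$), confirming the constant $3/4$ is sharp.

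There is no real obstacle here; the only thing to be careful about is that the monotonicity argument uses $r \ge 1$ (so that $1 - p^{-r} > 0$), and that the final quadratic in $p$ is indeed nonnegative on $[2,\infty)$, which I would verify by exhibiting the factorisation $(p-2)(3p-2)$ rather than invoking the discriminant. Since $s,r$ will be positive integers in all subsequent applications (weights $S_n, R_n \in \mathbb{N}$), the assumption $s,r \ge 1$ is automatic, and the sharpness at $p=s=r=2$--wait, at $p=2$, $s=r=1$--shows the constant cannot be improved.
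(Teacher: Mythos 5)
Your proof is correct, and sharp. Both you and the paper begin the same way, by normalizing with $p^{s+r}$ so that the claim becomes $p^{-s}+p^{-r}-p^{-s-r}\le \tfrac34$; the difference lies in the final step. The paper writes this quantity as $1-\bigl(1-p^{-s}\bigr)\bigl(1-p^{-r}\bigr)$ and simply bounds each factor below by $\tfrac12$ (using $p^s,p^r\ge 2$), which gives the bound in one line. You instead observe monotonicity in $s$ and $r$ separately, reduce to the corner $s=r=1$, and then verify the quadratic inequality $3p^2-8p+4=(p-2)(3p-2)\ge 0$ for $p\ge 2$. The two arguments are equally elementary and identify the same equality case $p=2$, $s=r=1$; the paper's product identity is marginally shorter, while your route makes the extremal structure (where the supremum sits and why the constant $\tfrac34$ cannot be improved) completely explicit. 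Either version suffices for the application, where $s=S_n$, $r=R_n$ are positive integers.
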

\begin{proof}
$p^s+p^r-1=p^{s+r}(\frac 1{p^r}+\frac 1{p^s}-\frac 1{p^{s+r}})
=p^{s+r}(1-(1-\frac 1{p^s}) (1-\frac 1{p^r}))\le p^{s+r}(1-(1-\frac 12)(1-\frac 12))=\frac 34 p^{s+r}. $
\end{proof}

\begin{Theorem}\label{Tnillity}
Consider $\ch K=p>0$, a specie of flies $\Theta$, a uniform tuple  $\bar S=(S_a| a\in \Theta)$, and
the drosophila restricted Lie algebra $\LL(\Theta,\bar S)=\Lie_p(v_1,\ldots,v_k)$.
Then $\LL$ has a nil $p$-mapping.
\end{Theorem}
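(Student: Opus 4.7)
The strategy is to apply the associative nilpotency criterion of Lemma~\ref{Ltailsnil} after writing each element of $\LL$ (or an iterate of its $p$-mapping) in a ``tail-times-bounded-head'' form; since the $p$-mapping in $\LL\subset\End R$ coincides with the associative $p$-th power, nilpotency in $\End R$ implies $p$-nilpotency. The prototype is a single generator: by Lemma~\ref{Lrelations}(i), $v_j^{[p^{S_j}]}=\sum_{jc\in\Theta_1}t_c^{(p^{S_c}-1)}v_{jc}$ is a \emph{finite} sum of terms $t^{\alpha_i}h_i$ with $|\alpha_i|=p^{S_c}-1\ge 1$ and heads $h_i=v_{jc}\in\WW^{\mathrm a}$ of bounded weight $\wt(\Theta_1)$, so Lemma~\ref{Ltailsnil} gives nilpotency of $v_j^{[p^{S_j}]}$ in $\End R$, hence $v_j$ is $p$-nil.

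For arbitrary $w\in\LL$ I would prove by induction on the Lie-$p$-word complexity of $w$ in $v_1,\dots,v_k$ that some iterate $w^{[p^N]}$ admits a finite decomposition $w^{[p^N]}=\sum_{i=1}^r t^{\alpha_i}h_i$ with $|\alpha_i|\ge 1$, $h_i\in\WW^{\mathrm a}$ and $\wt(h_i)\le C$, for constants $N=N(w)$ and $C=C(w)$. Commutators are already in the desired shape: Lemma~\ref{Lcomm_pivot} writes $[v_b,v_a]$ (with $a,b\in\Theta_n$) as a finite sum whose summands all carry a nontrivial divided-power tail and a virtual-pivot head of weight $\wt(\Theta_{n+1})$, and Corollary~\ref{Ccomm_pivot} confirms the higher corrections lie in $\mathcal W_{n+1}$. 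For a linear combination or a $p$-power, one expands via the $(x+y)^{[p]}$ formula and Lemma~\ref{Lrelations}, tracks the cross-terms, and verifies that the resulting terms either preserve the form (possibly with larger $N,C$) or reduce to bare pure derivations $\partial_d^{p^m}$ which individually satisfy $\partial_d^{p^{S_d}}=0$ and are therefore killed after a bounded number of further $p$-iterations.

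The uniform-tuple hypothesis controls head weights quantitatively during this iteration. By Lemma~\ref{Luniform} all pivots of generation $n$ share the common weight $\wt(\Theta_n)=\prod_{l<n}(p^{S_l}+p^{R_l}-1)$, and by Corollary~\ref{C} combined with Lemma~\ref{L_est34} this sequence grows at least exponentially in the total depth $\sum_{l<n}(S_l+R_l)$. The finite-depletion property (Lemma~\ref{Ldepl_finite}) together with its controlled growth $\depl(w^{[p]})\le\depl(w)+p-1$ under $p$-power (Lemma~\ref{Ldepl_comm}) ensures that at each stage only finitely many bare pure derivations $\partial_d^{p^m}$ of a given weight can occur; each is then eliminated after at most $S_d$ further $p$-iterations. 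Choosing $N$ large enough, $w^{[p^N]}=\sum t^{\alpha_i}h_i$ satisfies all the hypotheses of Lemma~\ref{Ltailsnil}, which yields nilpotency of $w^{[p^N]}$ in $\End R$, hence the $p$-nilpotency of $w$.

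The main obstacle is the bookkeeping of cross terms under iterated $(x+y)^{[p]}$: the operator identity $[\partial_d^{p^m},t^{\alpha}v]=(\partial_d^{p^m}t^{\alpha})v$ (valid when $v$ is a pure derivation commuting with $\partial_d^{p^m}$) can, in the borderline case $t^{\alpha}=t_d^{(p^m)}$, collapse the tail and produce a bare head $v$, seemingly regenerating a trivial-tail contribution; one must bound the number of such regenerations and show they are ultimately absorbed by $\partial_d^{p^{S_d}}=0$, using the depletion control of Lemmas~\ref{Ldepl_finite}--\ref{Ldepl_comm}. A secondary difficulty is that the $p$-mapping is not additive, so reducing a general $w$ to homogeneous pieces via the $\NO^k$-multigrading of Theorem~\ref{Tgraded} requires controlling the cross-commutator polynomials $s_i$ in the $(x+y)^{[p]}$ formula; here uniformity (Lemma~\ref{Luniform}) and the exponential weight growth (Corollary~\ref{C}) ensure that only finitely many multigraded components interact nontrivially in a given iterate.
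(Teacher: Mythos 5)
Your overall frame---reduce to Lemma~\ref{Ltailsnil} by writing a high $p$-power of $w$ as a finite sum of terms with nontrivial divided-power tails---is the paper's frame, and your single-generator prototype is fine. But the decisive step, showing that for $N$ large the expansion of $w^{p^N}$ into pure Lie monomials contains \emph{no} bare pure derivation $\partial_a^{p^m}$, is exactly what your proposal leaves open, and the mechanism you offer for it does not work. The $p$-mapping is not additive, so you cannot dispose of bare summands individually by noting $\partial_d^{p^{S_d}}=0$: the cross terms $s_i$ and the $p$-powers of pivot-type summands regenerate bare derivations of ever deeper generations (already $v_a^{p^{S_a}}=\sum_c t_c^{(p^{S_c}-1)}v_{ac}$ feeds in new pivots whose further powers produce $\partial_{ac}^{p^l}$, and, as you yourself note, commutation against a tail $t_d^{(p^m)}$ strips the tail and recreates a bare head), so there is no ``at most $S_d$ further iterations'' bound and no evident well-founded induction on ``Lie-$p$-word complexity'' for a general linear combination. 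This bookkeeping is precisely what you flag as the main obstacle and defer; it is the heart of the theorem, so as written the proposal has a genuine gap.

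The paper avoids the bookkeeping altogether by a counting argument absent from your sketch. Suppose $\partial_a^{p^m}$, $a\in\Theta_n$, occurs in the expansion of $w^{p^N}$. Depletion grows at most linearly in $N$: $\depl(w^{p^N})\le C+N(p-1)$ with $C=\depl(w)<\infty$ (Lemmas~\ref{Ldepl_finite} and~\ref{Ldepl_comm}), while $\depl(\partial_a^{p^m})=(p-1)\big(m+\sum_{c\vdash a}S_c\big)$, so $m+\sum_{c\vdash a}S_c\le N+C/(p-1)$. Weight, by contrast, grows exponentially: every pure Lie monomial of $w^{p^N}$ has weight at least $p^N$, whereas uniformity of $\bar S$ (Lemma~\ref{Luniform}) together with $p^s+p^r-1\le \tfrac34\,p^{s+r}$ (Lemma~\ref{L_est34}) gives $\wt(\partial_a^{p^m})=p^m\prod_{i<n}(p^{S_i}+p^{R_i}-1)\le (3/4)^n\,p^{\,m+\sum_{c\vdash a}S_c}\le (3/4)^n\,p^{\,N+C/(p-1)}$. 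Combining the two bounds forces $n\le C\log_{4/3}p/(p-1)$, a bound independent of $N$; hence the weight of any bare derivation occurring in $w^{p^N}$ is bounded, contradicting the lower bound $p^N$ once $N$ is large. Only after bare derivations are excluded does one use membership in $\mathcal W$ (Lemmas~\ref{inL_W} and~\ref{L_mathcal_W}) to get the finite tail--head decomposition and invoke Lemma~\ref{Ltailsnil}. Note this is also where the uniform-tuple hypothesis really enters: not to control head weights along an induction, as in your plan, but to convert the depletion bound into a generation bound via the product formula for $\wt(\Theta_n)$.
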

\begin{proof}
Fix $0\ne w\in \LL$, and an integer $N$.
Assume that the expansion of $w^{p^N}$ via pure Lie monomials contains a pure derivation $\partial_{a}^{p^m}$, where $a\in\Theta_n$, $0\le m<S_a$.

First, compare depletions. By Lemma~\ref{Ldepl_finite}, we have $C:=\depl w<\infty$.
Applying Lemma~\ref{Ldepl_comm},
\begin{equation*}
\depl\partial_{a}^{p^m}\le \depl(w^{p^N})\le C+N(p-1), \quad N\ge 0.
\end{equation*}
Combining with~\eqref{deplddk}, we get:
\begin{equation}\label{xxn}
\depl(\partial_{a}^{p^m})
   =(p-1)\Big(m + \sum_{c\vdash a}S_c\Big)\le C+N(p-1), \quad 0\le m< S_a.
\end{equation}

Second, we evaluate weights.
Since monomials of $w$ have weights at least 1, a monomial $\partial_{a}^{p^m}$ of $w^{p^N}$ has weight at least $p^N$.
Using Lemma~\ref{Luniform}, the bound of Lemma~\ref{L_est34}, item ii) of~Lemma~\ref{L_pivot_exp}, and~\eqref{xxn}, we get
\begin{align*}
p^N&\le \wt(\partial_{a}^{p^m})=p^m\wt(v_a)=p^m\prod_{i=0}^{n-1}(p^{S_i}+p^{R_i}-1)
\le p^m  \prod_{i=0}^{n-1}\Big(\frac 34 p^{S_i+R_i}\Big)\\
&=\Big(\frac 34 \Big)^n p^{m+\sum_{c\vdash a} S_c}
\le \Big(\frac 34\Big)^n p^{N+C/(p-1) }.
\end{align*}
Above we used that $S_i$, $R_i$ are tuple entrees indexed  by the two flies $c$  of generation $i$, such that $c\vdash a$, $0\le i<n$.
Hence, $(3/4)^n p^{C/(p-1)}\ge 1$. We obtain a bound on a generation of $a\in\Theta_n$:
\begin{equation*}
n\le n_0:=\Big[\frac {C\log_{4/3} p  } {p-1}\Big].
\end{equation*}
Thus, $\wt(\partial_{a}^{p^m})$, $m<S_a$, is bounded as well. Third,
recall that $\partial_{a}^{p^m}$ is a monomial in the expansion of $w^{p^N}$, thus having a weight at least $p^N$.
Choosing $N$ sufficiently large, we get a contradiction.
This contradiction proves, that the expansion of $w^{p^N}$ via pure Lie monomials
cannot contain a pure derivation $\partial_{a}^{p^m}$.

By Lemma~\ref{inL_W}, $w'=w^{p^N}\in\mathcal W$, and using Lemma~\ref{L_mathcal_W},
$w^{p^N}\in \WW^{\mathrm a}_{\mathrm {fin}}+ (R^1 \WW^{\mathrm a}\cap \WW^{\mathrm a})$.
Thus, $w'=w_1+w_2$, where $w_1\in \WW^{\mathrm a}_{\mathrm {fin}}$ and
$w_2\in R^1 \WW^{\mathrm a}\cap \WW^{\mathrm a}$.
By~\eqref{t_W}, there exist an integer $r\ge 1$, tuples $\a_1,\ldots,\a_r\in \Lambda$, where $|\a_i|_p\ge 1$,
and $u_1,\ldots,u_r\in \WW^{\mathrm a}$ such that $w_2={\mathbf t}^{\a_1}u_1+\cdots +{\mathbf t}^{\a_r}u_r$.
Since the expansion of $w'$ via pure Lie monomials
cannot contain a pure derivation $\partial_{a}^{p^m}$, we conclude that
$w_1={\mathbf t}^{\b_1}\partial_{b_1}^{p^{l_1}}+\cdots+ {\mathbf t}^{\b_s}\partial_{b_s}^{p^{l_s}}$, where $b_i\in\Theta$,
$\b_i\in\Lambda$ and $|\b_i|\ge 1$.
Combining these formulas, we get a finite sum $w' ={\mathbf t}^{\g_1}u_1+\cdots +{\mathbf t}^{\g_{r'}}u_{r'}$,
where $u_i\in \WW^{\mathrm a}$, $\a_i\in\Lambda$, and $|\a_i|\ge 1$ for all $i=1,\ldots,r'$.
We conclude that $w'=w^{p^N}$ is nil by Lemma~\ref{Ltailsnil}.
\end{proof}

\section{Clover flies and Clover restricted Lie algebras $\TT(\Xi)$}

\subsection{Duplex specie and Duplex Lie (super)algebras}
We call a specie $\Theta=\cup_{n=0}^\infty \Theta_n$ {\it duplex} provided that $|\Theta_n|=2$ for all $n\ge 0$.
The duplex specie is unique up to relabelling flies.
Let $\Theta_0=\{a_0,b_0\}$, then $\Theta_n=\{a_n,b_n\}$ where $a_{n+1}=a_nb_n$, $b_{n+1}=b_na_n$ for $n\ge 0$.
We warn that in order to keep notations of the original paper, $a_n,b_n$, $n\ge 0$,
denote both the flies and the respective pivot elements below.
The first example of Lie superalgebra $\RR$ in~\cite{Pe16} corresponds to the duplex specie in terms of our construction~\eqref{pivot}.
\begin{Example}[first example $\RR$ in \cite{Pe16}] \label{E1}
Consider the Grassmann superalgebra $\Lambda=\Lambda[x_i,y_i| i\ge 0]$.
Using its partial superderivatives,
define recursively odd elements in the associative superalgebra $\End(\Lambda)$:
\begin{equation*}
\begin{split}
a_i &= \partial_{x_i} + y_ix_i a_{i+1},\\
b_i &= \partial_{y_i} + x_iy_i b_{i+1},\\
\end{split}  \qquad i\ge 0.
\end{equation*}
Define a Lie superalgebra $\RR:=\Lie(a_0,b_0,c_0)\subset \Der \Lambda$.
In case $\ch K=2$, assume that the quadratic mapping on odd elements is just the square of
the respective operator in $\End\Lambda$.
\end{Example}

The following example is also based on the duplex specie.

\begin{Example}[family of restricted Lie algebras $\LL(\Xi)$  in~\cite{Pe17}]\label{E2}
Let $\ch K=p>0$. Consider integers
$\Xi=(S_n,R_n| n\ge 0)$, which determine a divided power series ring
$\Omega(\Xi)=\langle x_0^{(\xi_0)}y_0^{(\eta_0)}\!\!\!\cdots x_i^{(\xi_i)}y_i^{(\eta_i)}|
0\le\xi_i<p^{S_i}, 0\le \eta_i<p^{R_i}, i\ge 0\rangle$. Define pivot elements recursively:
\begin{equation}\label{aibip}
\begin{split}
a_i &= \partial_{x_i} + x_i^{(p^{S_i}-1)} y_i^{(p^{R_i}-1)} a_{i+1};\\
b_i &= \partial_{y_i} + x_i^{(p^{S_i}-1)} y_i^{(p^{R_i}-1)} b_{i+1};
\end{split}
\qquad i\ge 0.
\end{equation}
Define a restricted Lie algebra $\LL(\Xi):=\Lie_p(a_0,b_0)\subset \Der \Omega(\Xi)$.
\end{Example}

\subsection{Triplex species and Triplex Lie (super)algebras}
Let $\Theta_0=\{a,b,c\}$ be the zero generation of a specie of flies.
In case of wild flies (i.e. without selection), they determine three pivot elements (see~\ref{pivot}):
\begin{equation*} 
\begin{split}
v_{a  }&=\dd_{a  }+t_{a  }^{(p^{S_{a  }}-1)}\Big( t_{b  }^{(p^{S_{b  }}-1)} v_{a  b  }+ t_{c  }^{(p^{S_{c  }}-1)} v_{a  c  } \Big);\\
v_{b  }&=\dd_{b  }+t_{b  }^{(p^{S_{b  }}-1)}\Big( t_{a  }^{(p^{S_{a  }}-1)} v_{b  a  }+ t_{c  }^{(p^{S_{c  }}-1)} v_{b  c  } \Big);\\
v_{c  }&=\dd_{c  }+t_{c  }^{(p^{S_{c  }}-1)}\Big( t_{a  }^{(p^{S_{a  }}-1)} v_{c  a  }+ t_{b  }^{(p^{S_{b  }}-1)} v_{c  b  } \Big).
\end{split}
\end{equation*}
These formulas contain pivot elements for all six wild flies of the first generation: $\bar \Theta_1=\{ab,ba, ac,ca, bc,cb\}$.
Now, we require the experimenter to select three flies and rename as $\Theta_1=\{a_1,b_1,c_1\}$, and
continue to leave three flies in all generations and rename as $\Theta_n=\{a_n,b_n,c_n\}$, $n\ge 0$.
We obtain a {\it triplex specie}, that determines a {\it triplex restricted Lie algebra}.
Clearly, there are many triplex species.

The second example of a Lie superalgebra $\QQ$ in~\cite{Pe16} is related to a triplex specie:
$a_{n+1}=a_nb_n$, $b_{n+1}=b_nc_n$, $c_{n+1}=c_na_n$, $n\ge 0$ in terms of the recursion above.
We warn that $a_n,b_n,c_n$, $n\ge 0$,
denote both the flies and the respective pivot elements below keeping  notations of~\cite{Pe16}.
\begin{Example}\label{Example_Q}[second example $\QQ$ in \cite{Pe16}]
Consider the Grassmann superalgebra $\Lambda=\Lambda[x_i,y_i,z_i| i\ge 0]$.
Using its partial superderivatives,
define recursively odd elements in the associative superalgebra $\End(\Lambda)$:
\begin{equation*}
\begin{split}
a_i &= \partial_{x_i} + y_ix_i a_{i+1},\\
b_i &= \partial_{y_i} + z_iy_i b_{i+1},\\
c_i &= \partial_{z_i} + x_iz_i c_{i+1},
\end{split}
\qquad i\ge 0.
\end{equation*}
Define a Lie superalgebra $\QQ:=\Lie(a_0,b_0,c_0)\subset \Der\Lambda$.
In case $\ch K=2$, assume that the quadratic mapping on odd elements is just the square of the respective operator in $\End\Lambda$.
\end{Example}
If we modify this example by  considering positive characteristic and introducing different divided powers, the computation seem to be impossible
because the tuple will not be uniform.
We shall introduce another class of triplex Lie algebras with a uniform tuple that allows reasonable computations.

\subsection{Clover specie and Clover restricted Lie algebras}
Now, we fix the following triplex specie called a {\it clover fly} specie.
Let $\Theta_0=\{a_0,b_0,c_0\}$, put $\Theta_n=\{a_n,b_n,c_n\}$,
where the selection of the new generation is as follows
$$a_{n+1}:=a_nb_n,\quad b_{n+1}:=b_na_n,\quad c_{n+1}:=c_na_n,\qquad n\ge 0.$$
Fix integers $\Xi=(S_n,R_n| n\ge 0)$ (the same as in Example~\ref{E2})
and consider formal divided power series ring:
$$R=R(\Xi):=\Big\langle x_0^{(\a_0)}y_0^{(\b_0)}z_0^{(\gamma_0)}\!\!\cdots x_i^{(\a_i)}y_i^{(\b_i)}z_i^{(\gamma_i)}\ \Big |\
0\le\a_i<p^{S_i},\ 0\le \b_i,\gamma_i<p^{R_i},\ i\ge 0\Big\rangle. $$
Unlike examples above and to avoid ambiguity, we denote flies and the respective pivot elements by different letters.
Using notations
$v_i:=v_{a_i}$, $w_i:=v_{b_i}$, $u_i:=v_{c_i}$ and
$x_i:=t_{a_i}$, $y_i:=t_{b_i}$, $z_i:=t_{c_i}$, for all $i\ge 0$,
our algorithm~\eqref{pivot} yields the pivot elements
\begin{equation}\label{pivot-3}
\begin{split}
v_i &=\dd_{x_i}+x_{i}^{(p^{S_{i}}-1)} y_{i}^{(p^{R_{i}}-1)} v_{i+1} ;\\
w_i &=\dd_{y_i}+y_{i}^{(p^{R_{i}}-1)} x_{i}^{(p^{S_{i}}-1)} w_{i+1};\\
u_i &=\dd_{z_i}+z_{i}^{(p^{R_{i}}-1)} x_{i}^{(p^{S_{i}}-1)} u_{i+1};
\end{split}\qquad\qquad i\ge 0.
\end{equation}
We define the {\it clover restricted Lie algebra} $\TT(\Xi):=\Lie_p(v_0,w_0,u_0)$.
\begin{Remark}
Let us draw attention that there is a symmetry between $v_i$ and $w_i$,
while the remaining $u_i$ stay separate because there is {\bf no $\Z_3$-cyclic symmetry} unlike Example~\ref{Example_Q}.
\end{Remark}
\begin{Lemma}\label{L_clover_relations}
Let $\ch K=p>0$ and a tuple $\Xi$ be fixed. Consider the clover restricted Lie algebra $\TT(\Xi)=\Lie_p(v_0,w_0,u_0)$. Then
\begin{enumerate}
\item
$v_i^{p^{S_{i}}} = y_{i}^{(p^{R_{i}}-1)} v_{i+1}$,\quad\
$w_i^{p^{R_{i}}} = x_{i}^{(p^{S_{i}}-1)} w_{i+1}$,\quad\
$u_i^{p^{R_{i}}} = x_{i}^{(p^{S_{i}}-1)} u_{i+1}$, for all $i\ge 0$.
\item
 $ [ w_i^{p^{R_i}-1},v_i^{p^{S_i}}]=v_{i+1}$,\quad
 $ [ v_i^{p^{S_i}-1},w_i^{p^{R_i}}]=w_{i+1}$,\quad
 $ [ v_i^{p^{S_i}-1},u_i^{p^{R_i}}]=u_{i+1}$, for all $i\ge 0$.
\item $v_i,w_i,u_i\in \TT(\Xi)$, $i\ge 0$.
\end{enumerate}
\end{Lemma}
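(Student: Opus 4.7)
The plan is to extract all three claims directly from the general machinery already developed for virtual pivot elements, specialized to the clover specie. First I would fix notation: under the identifications $v_i=v_{a_i}$, $w_i=v_{b_i}$, $u_i=v_{c_i}$ and $x_i=t_{a_i}$, $y_i=t_{b_i}$, $z_i=t_{c_i}$, the uniform tuple reads $S_{a_i}=S_i$ and $S_{b_i}=S_{c_i}=R_i$. The key combinatorial feature of the clover selection rule
$a_{i+1}=a_ib_i$, $b_{i+1}=b_ia_i$, $c_{i+1}=c_ia_i$
is that each fly in $\Theta_i$ has a \emph{unique} child in $\Theta_{i+1}$: the respective partners are $b_i$, $a_i$, $a_i$.

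For claim (i), I apply the second case of formula \eqref{vap} in Lemma~\ref{Lrelations} (the case $m=S_a$). For $v_i=v_{a_i}$, the sum $\sum_{a_ic\in\Theta_{i+1}}t_c^{(p^{S_c}-1)}v_{a_ic}$ collapses to the single term $c=b_i$, yielding $y_i^{(p^{R_i}-1)}v_{i+1}$. Symmetrically, for $w_i$ the only child of $b_i$ is $b_{i+1}=b_ia_i$, giving partner $a_i$ and the factor $x_i^{(p^{S_i}-1)}$; likewise for $u_i$ the only child of $c_i$ is $c_{i+1}=c_ia_i$, again with partner $a_i$, producing the same factor $x_i^{(p^{S_i}-1)}$.

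Claim (ii) will follow from Corollary~\ref{Cternary}, whose hypothesis $|\Theta_i|=|\Theta_{i+1}|=3$ is built into the clover construction. I would verify the excluded-set condition in each of the three subcases separately. In the first identity, taking $(a,b,f)=(a_i,b_i,c_i)$ in the Corollary, we have $ab=a_ib_i\in\Theta_{i+1}$, and the forbidden triple $\{ab,ba,af\}=\{a_ib_i,b_ia_i,a_ic_i\}$ differs from $\Theta_{i+1}=\{a_ib_i,b_ia_i,c_ia_i\}$ because $a_ic_i\notin\Theta_{i+1}$; the Corollary then yields $v_{a_{i+1}}=v_{i+1}$. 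The second identity uses $(a,b,f)=(b_i,a_i,c_i)$, so the forbidden triple is $\{b_ia_i,a_ib_i,b_ic_i\}$, which again differs from $\Theta_{i+1}$ since $b_ic_i\notin\Theta_{i+1}$, producing $v_{b_{i+1}}=w_{i+1}$. The third identity uses $(a,b,f)=(c_i,a_i,b_i)$, with forbidden triple $\{c_ia_i,a_ic_i,c_ib_i\}$ differing from $\Theta_{i+1}$ because $a_ic_i\notin\Theta_{i+1}$, giving $v_{c_{i+1}}=u_{i+1}$.

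Claim (iii) is then an immediate induction on $i$: the base case $i=0$ is definitional since $v_0,w_0,u_0$ are the generators of $\TT(\Xi)$, and the inductive step is supplied by (ii), which expresses $v_{i+1},w_{i+1},u_{i+1}$ as restricted Lie expressions in $v_i,w_i,u_i$. No serious obstacle is expected; the only bookkeeping one must get right is the asymmetry between $v_i$ (exponent $S_i$) and $w_i,u_i$ (exponent $R_i$), and the correct matching of the ordered pair $(a,b)$ in Corollary~\ref{Cternary} with the ordered product $ab$ labeling the desired child.
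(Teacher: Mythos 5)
Your proposal is correct and follows exactly the route the paper takes: claim (i) from the case $m=S_a$ of formula \eqref{vap} in Lemma~\ref{Lrelations}, claim (ii) from Corollary~\ref{Cternary} with the three choices of $(a,b,f)$ you list (each time checking $\Theta_{i+1}\ne\{ab,ba,af\}$), and claim (iii) by induction using (ii). The paper's proof is just the one-line citation of these two results; you have merely written out the same verification in detail, and your bookkeeping of the exponents $S_i$ versus $R_i$ and of the father--child matching is accurate.
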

\begin{proof}
Follows from Lemma~\ref{Lrelations} and its Corollary~\ref{Cternary} or
by direct computations.
\end{proof}

\begin{Lemma}
The subalgebra of $\TT(\Xi)$ generated by $v_0,w_0$ is isomorphic to $\LL(\Xi)$ defined by~\eqref{aibip}.
\end{Lemma}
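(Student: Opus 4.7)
The plan is to apply the subspecie restriction epimorphism of Lemma~\ref{Lder_subspecies} to the duplex subspecie $\Theta' := \{a_n, b_n \mid n\ge 0\}$ of the clover specie $\Theta$. First I verify that $\Theta'$ is indeed a subspecie: by the clover recurrence $a_{n+1} = a_n b_n$ and $b_{n+1} = b_n a_n$, both parents of each element of $\Theta'$ again lie in $\Theta'$, so closure under parenthood holds. The drosophila restricted Lie algebra $\LL(\Theta', \bar S|_{\Theta'})$ attached to this subspecie is exactly $\LL(\Xi)$ of Example~\ref{E2}: the general recursion~\eqref{pivot} applied to $\Theta'$ produces pivot elements whose formulas, under the renaming $x_i = t_{a_i}$, $y_i = t_{b_i}$, coincide with~\eqref{aibip}.

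Lemma~\ref{Lder_subspecies} then yields a restricted Lie algebra epimorphism $\phi : \TT(\Xi) \twoheadrightarrow \LL(\Theta', \bar S|_{\Theta'}) \cong \LL(\Xi)$ sending $v_0 \mapsto a_0$, $w_0 \mapsto b_0$, and $u_0 \mapsto 0$ (concretely $\phi$ restricts a derivation of $R(\Theta, \bar S)$ to the sub-divided-power-ring $R(\Theta', \bar S|_{\Theta'}) \cong \Omega(\Xi)$, and pure Lie monomials whose derivation factor is indexed by a fly outside $\Theta'$ are discarded). Restricting $\phi$ to the subalgebra $\Lie_p(v_0, w_0) \subset \TT(\Xi)$ gives a surjection onto $\LL(\Xi)$, since generators are sent to generators.

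The only remaining step, and the only one requiring any real bookkeeping, is injectivity of $\phi|_{\Lie_p(v_0,w_0)}$. For this I would use the explicit expansion~\eqref{construction} of Lemma~\ref{L_pivot_exp}. In the clover specie the paternal descendants of $a_0$ are exactly $\{a_n\}_{n \ge 0}$, and the paternal-by-one ancestors of each such $a_n$ all lie in $\{a_j, b_j\}_{0 \le j < n} \subset \Theta'$; the analogous statement holds for $b_0$. Hence $v_0$ and $w_0$, as elements of $\WW(\Theta, \bar S)$, involve only the variables $\{t_{a_j}, t_{b_j}\}_{j \ge 0}$ and the pure derivations $\{\dd_{a_j}, \dd_{b_j}\}_{j \ge 0}$, with no $t_{c_j}$ or $\dd_{c_j}$ ever appearing. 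This property is preserved under Lie brackets and $p$-powers, so every element of $\Lie_p(v_0, w_0)$ annihilates each $t_{c_j}^{(*)}$ and is uniquely determined by its restriction to $\Omega(\Xi) \subset R(\Theta, \bar S)$, which is precisely its image under $\phi$. Therefore $\phi|_{\Lie_p(v_0,w_0)}$ is injective, and the desired isomorphism follows.
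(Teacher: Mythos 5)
Your proof is correct and, at its core, takes the same approach as the paper: the paper's own proof is the one-line observation that $v_0,w_0$ have literally the same recursive presentation as $a_0,b_0$ in~\eqref{aibip}, involving only the variables $x_i,y_i$ and never the $z_i$. Your route through the duplex subspecie $\{a_n,b_n\mid n\ge 0\}$, the restriction epimorphism of Lemma~\ref{Lder_subspecies} (with $u_0\mapsto 0$), and the faithfulness of the restricted action on $\Omega(\Xi)$ is simply a rigorous elaboration of that observation, supplying the injectivity step the paper leaves implicit.
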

\begin{proof}
We observe that $v_0,w_0$ have the same presentation as $a_0,b_0\in \LL(\Xi)$.
\end{proof}

\subsection{Clover restricted Lie algebras of Quasi-linear Growth}

As a specific case, we construct algebras of quasi-linear growth.
The proof is rather technical and
close to that for two-generated duplex restricted Lie algebras~\cite{Pe17} (Example~\ref{E2} above).
But species of flies having two flies in some generation either have two flies
in all subsequent generations or go extinct.
The goal in introducing the clover specie was to have three flies in each generation,
so that at some moment three flies can produce a wild specie and the constructed Lie algebra
can return to a rather fast intermediate growth.
To this end we extend the duplex specie in a specific way and obtain the clover specie.
On the other hand, our approach allows to use computations of~\cite{Pe17}.
This idea enables us to construct hybrid restricted Lie algebras with an oscillating growth and achieve the lower bound
of the main theorem.

Proofs of the next two theorems are rather technical,
similar to~\cite{Pe17}, and placed in a separate paper~\cite{Pe20clover}.

\begin{Theorem}[\cite{Pe20clover}]\label{Tparam}
Let $K$ be a field, $\ch K=p> 0$, fix $\kappa\in(0,1)$.
There exists a tuple of integers $\Xi_\kappa$ such that
the 3-generated clover restricted Lie algebra $\TT=\TT(\Xi_\kappa)=\Lie_p(v_0,w_0,u_0)$ has the following properties.
\begin{enumerate}
\item
$\gamma_{\TT}(m)= m\exp \big((C+o(1))(\ln m)^\kappa\big)$ as $m\to\infty$, where $C:=2(\ln p)^{1-\kappa}/\kappa^\kappa$;
\item $\GKdim \TT=\LGKdim \TT= 1$;
\item $\Ldim^0 \TT=\LLdim^0 \TT=\kappa$;
\item the growth function $\gamma_\TT(m)$ is not linear;
\item algebras $\TT(\Xi_\kappa)$ for different $\kappa\in(0,1)$ are not isomorphic.
\end{enumerate}
\end{Theorem}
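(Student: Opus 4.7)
The plan is to prove Theorem~\ref{Tparam} by importing a monomial basis for $\TT(\Xi)$ from \cite[Theorem~4.7]{Pe20clover}, translating the growth into a counting problem over this basis, and calibrating the tuple $\Xi_\kappa=(S_n,R_n\mid n\ge 0)$ so that the count produces the announced asymptotic. The basis would represent each element of $\TT(\Xi)$ as a linear combination of monomials of the form
$$
x_0^{(\alpha_0)}y_0^{(\beta_0)}z_0^{(\gamma_0)}\cdots x_{n-1}^{(\alpha_{n-1})}y_{n-1}^{(\beta_{n-1})}z_{n-1}^{(\gamma_{n-1})}\,e_n,
$$
where $e_n\in\{v_n,w_n,u_n\}$ (possibly raised to a $p$-power), the exponents are bounded by the divided-power cutoffs $p^{S_i}{-}1,p^{R_i}{-}1$, with additional constraints induced by the reduction relations of Lemma~\ref{L_clover_relations}. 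Combined with Lemma~\ref{Luniform}, each basis monomial carries an explicit weight.

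Next, following the template of \cite{Pe17} for the duplex case, I would choose $S_n=R_n$ to grow like a slowly increasing sequence, roughly $S_n\approx c\,n^{1/\kappa-1}$ with $c=c(\kappa)$ tuned so that the constant $C=2(\ln p)^{1-\kappa}/\kappa^\kappa$ emerges exactly. For each $m$, let $n=n(m)$ be the largest generation with $\wt(\Theta_n)=\prod_{i<n}(p^{S_i}+p^{R_i}-1)\le m$. The contribution to $\gamma_\TT(m)$ from basis monomials headed at generation $k\le n$ is of order $m\cdot p^{\sum_{i<k}(S_i+R_i)}/\wt(\Theta_k)$, and summing in $k$ under the constraint $\sum_{i<n}(S_i+R_i)\ln p\approx \ln m$ reduces the problem to a Lagrange-type optimization, yielding $\ln\gamma_\TT(m)-\ln m\sim C(\ln m)^\kappa$.

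Once this sharp asymptotic is secured, parts ii)--iv) follow formally: $\gamma_\TT(m)=m^{1+o(1)}$ gives $\GKdim\TT=\LGKdim\TT=1$; the sub-polynomial factor $\exp((\ln m)^\kappa)$ gives $\Ldim^0\TT=\LLdim^0\TT=\kappa$; non-linearity is immediate from $\kappa>0$. Part v) follows because $\Ldim^0$ is an isomorphism invariant (growth is independent of the generating set), so different values of $\kappa$ distinguish the algebras.

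The hard part will be obtaining the matching upper and lower bounds with the \emph{same} sharp constant $C$. The upper bound demands careful majorization, making sure that the apparent overcounts produced by the recursion $v_i^{p^{S_i}}=y_i^{(p^{R_i}-1)}v_{i+1}$ and its siblings are absorbed by the basis theorem. The lower bound requires exhibiting enough linearly independent monomials at each weight, which relies on a delicate confirmation of the basis of \cite[Theorem~4.7]{Pe20clover}, i.e.\ a reduction argument resolving the overlap ambiguities coming from $[w_i^{p^{R_i}-1},v_i^{p^{S_i}}]=v_{i+1}$ and its analogues in Lemma~\ref{L_clover_relations}. This is where the bulk of the technical work resides, and it is for precisely this reason that the detailed verification is deferred to the companion paper~\cite{Pe20clover}.
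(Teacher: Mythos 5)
Your overall strategy --- import the monomial basis of \cite[Theorem 4.7]{Pe20clover}, convert the growth of $\TT(\Xi)$ into a weight--counting problem over those monomials, and calibrate the tuple --- is exactly the route of the companion paper (the present paper only cites \cite{Pe20clover} and records the tuple), so the architecture is fine and deferring the basis theorem is legitimate. The genuine gap is in the calibration, which is the one part you cannot defer. You propose $S_n=R_n\approx c\,n^{1/\kappa-1}$, whereas the theorem is proved with the \emph{asymmetric} tuple $\Xi_\kappa=(S_i=[(i+1)^{1/\kappa-1}],\,R_i=1\mid i\ge 0)$, and the asymmetry is not cosmetic. The mechanism that produces $\exp\big((C+o(1))(\ln m)^\kappa\big)$ with $C=2(\ln p)^{1-\kappa}/\kappa^\kappa$ is: because $R_i=1$, the two variables $y_i,z_i$ of each generation admit only $p$ divided powers apiece, so tails contribute an essentially weight-free factor $p^{2n}$, while the single heavy variable $x_i$ both drives $\ln\wt(\Theta_n)=\sum_{i<n}\ln(p^{S_i}+p-1)\approx\kappa(\ln p)\,n^{1/\kappa}$ and absorbs the head's weight, leaving the linear factor $m$. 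Solving $\wt(\Theta_n)\approx m$ gives $n\approx\big(\ln m/(\kappa\ln p)\big)^{\kappa}$, hence $p^{2n}=\exp\big(2(\ln p)^{1-\kappa}\kappa^{-\kappa}(\ln m)^{\kappa}\big)$; the $2$ in $C$ is the number of bounded variables per generation, not a constant to be tuned through $c$ (with the correct tuple, $c=1$ already yields $C$). If instead $R_n=S_n\to\infty$, all three variables are heavy: the number of admissible tails under a head of generation $n$ is of order $\prod_{i<n}p^{3S_i}$, which is comparable to $\wt(\Theta_n)^{3-o(1)}$, so the growth is polynomial of degree close to $3$ (by the same count as the trivial-tuple case $\GKdim=\log_{2p-1}p^{3}$, a constant tuple $S_i=R_i=S$ gives $\log_{2p^{S}-1}p^{3S}\to 3$; the paper's remarks stress that divided powers concentrated on \emph{one} variable are precisely what force $\LGKdim=1$). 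With your tuple, claims i)--iii) all fail, so the ``Lagrange-type optimization'' cannot land on the stated asymptotics.

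A secondary slip in the same direction: your per-generation count $p^{\sum_{i<k}(S_i+R_i)}$ omits the third variable $z_i$ (it should be $p^{\sum_{i<k}(S_i+2R_i)}$), so even after fixing the tuple your heuristic would produce $(\ln p)^{1-\kappa}/\kappa^{\kappa}$, half the required constant; and with your symmetric tuple the same formula gives a contribution of order $m^{2}$ per head generation, contradicting your own claim that it reduces to a quasi-linear count. Parts ii)--v) are indeed formal consequences of the sharp form of i) (growth equivalence classes and $\Ldim^0$, $\LLdim^0$ are generating-set independent), so the burden of the proof sits exactly where your calibration goes wrong.
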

To prove this theorem we consider the tuple: $\Xi_\kappa:=(S_i:=[(i+1)^{1/\kappa-1}], R_i:=1\mid i\ge 0)$.

Denote $\ln^{(q)}(x):=\underbrace{\ln(\cdots\ln}_{q\text{ times}}(x)\cdots)$ and
$\exp^{(q)}(x):=\underbrace{\exp(\cdots\exp}_{q\text{ times}}(x)\cdots)$ for all $q\in\N$.
In comparison with~\cite{Pe17},  algebras with even slower quasi-linear growth are constructed in the next theorem.

\begin{Theorem}[\cite{Pe20clover}]\label{Tparam2}
Let $\ch K=p> 0$, fix $q\in\N$ and $\kappa\in\R^+$.
There exists a tuple of integers $\Xi_{q,\kappa}$ such that
the 3-generated clover restricted Lie algebra $\TT=\TT(\Xi_{q,\kappa})=\Lie_p(v_0,w_0,u_0)$ has the following properties.
\begin{enumerate}
\item
$\gamma_{\TT}(m)= m \big(\ln^{(q)} \!m\big )^{\kappa+o(1)}$ while $m\to\infty$;
\item
$\GKdim \TT=\LGKdim \TT= 1$;
\item $\Ldim^q \TT=\LLdim^q \TT=\kappa$;
\item
the growth function $\gamma_\TT(m)$ is not linear;
\item algebras $\TT(\Xi_{q,\kappa)}$ for different pairs $(q,\kappa)$ are not isomorphic.
\end{enumerate}
\end{Theorem}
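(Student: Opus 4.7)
The plan is to choose a tuple $\Xi_{q,\kappa}=(S_i,R_i\mid i\ge 0)$ in which the growth of $S_i$ is accelerated from the polynomial rate used in Theorem~\ref{Tparam} to a $(q{-}1)$-fold iterated exponential rate, while keeping $R_i:=1$. A natural choice is $S_i:=\lfloor\exp^{(q-1)}(i^{1/\kappa})\rfloor$ (adjusted at small $i$ so that the integers are positive). The heuristic is that $S_i$ controls how much ``room'' the divided-power variables $x_i^{(*)}$ provide: iterating exponentials in this room pushes the growth function through $q$ additional levels of the logarithmic scale, and the exponent $1/\kappa$ calibrates the final $\kappa$.

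First, I would invoke the explicit monomial basis of the clover restricted Lie algebra $\TT(\Xi)$ from \cite[Theorem~4.7]{Pe20clover}. This writes each element of $\TT(\Xi)$ as a divided-power monomial in the $x_i^{(\xi_i)}, y_i^{(\eta_i)}, z_i^{(\zeta_i)}$ multiplied by a single pivot $v_n$, $w_n$, or $u_n$ at the deepest active generation, with explicit constraints on the exponents. Combined with Lemma~\ref{Luniform}, which yields $\wt(\Theta_n)=\prod_{j<n}(p^{S_j}+p^{R_j}-1)$, this converts the growth computation into a constrained count of integer tuples weighted by these products.

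The second, and main, step is the asymptotic inversion and counting. With the above choice one has $\ln\wt(\Theta_n)\sim(\ln p)\sum_{j<n}S_j\sim(\ln p)\exp^{(q-1)}(n^{1/\kappa})$, so at $m:=\wt(\Theta_n)$ we obtain $n\sim(\ln^{(q)}m)^\kappa$. A careful count of basis monomials of weight at most $m$ then splits into a main contribution linear in $m$ plus a multiplicative correction factor whose logarithmic order matches the number of surviving degrees of freedom at generation $n$; after matching, this gives $\gamma_\TT(m)=m(\ln^{(q)}m)^{\kappa+o(1)}$. This is the hardest part of the proof and parallels the arguments for duplex algebras in \cite{Pe17} and for clover algebras with constant tuples in \cite{Pe20clover}, but with ordinary logarithms replaced by $q$-fold iterated logarithms; Stirling-type estimates and tight control of the cut-off generation $n=n(m)$ are needed, and one must verify that boundary corrections to $S_i$ only perturb the answer inside the $o(1)$ term.

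Finally, items (ii)--(v) follow from the sharp asymptotic (i). Since $(\ln^{(q)}m)^{\kappa+o(1)}$ is both subpolynomial and unbounded, $\GKdim\TT=\LGKdim\TT=1$ and $\gamma_\TT(m)$ is not linear, giving (ii) and (iv). For (iii), unwinding the definitions of $\Ldim^q$ and $\LLdim^q$ from subsection~\ref{SSinterm} applied to (i) yields the common value $\kappa$. For (v), both the least $q$ with $\Ldim^q\TT\in(0,\infty)$ and the value $\Ldim^q\TT=\kappa$ are invariants of the equivalence class of the growth function, hence isomorphism invariants, so distinct pairs $(q,\kappa)$ give non-isomorphic algebras.
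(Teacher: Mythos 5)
Your overall strategy (take $R_i\equiv 1$, accelerate the $S_i$, use the explicit monomial basis of $\TT(\Xi)$ from \cite[Theorem 4.7]{Pe20clover} together with $\wt(\Theta_n)=\prod_{j<n}(p^{S_j}+p^{R_j}-1)$, and count weighted monomials) is indeed the route taken in \cite{Pe20clover} (the present paper only records the tuple and defers the proof). However, your concrete choice of tuple is off by one exponential iteration, and this breaks the central asymptotic. The count of basis monomials of weight at most $m\approx\wt(\Theta_n)$ exceeds $m$ by a factor whose logarithm is proportional to the generation index $n$ (roughly $p^{2n}$, one factor $p$ for each of the two variables $y_i,z_i$ with $R_i=1$ per generation); this is exactly what produces the constant $C=2(\ln p)^{1-\kappa}/\kappa^{\kappa}$ in Theorem~\ref{Tparam}, where $n\asymp\big(\ln m/(\kappa\ln p)\big)^{\kappa}$ gives $p^{2n}=\exp\big(C(\ln m)^{\kappa}\big)$. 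With your tuple $S_i=\lfloor\exp^{(q-1)}(i^{1/\kappa})\rfloor$ one gets, as you correctly compute, $n\sim(\ln^{(q)}m)^{\kappa}$; but then the correction factor is $p^{2n}=\exp\big(\Theta\big((\ln^{(q)}m)^{\kappa}\big)\big)$, i.e. the growth would be of type $m\exp\big(c\,(\ln^{(q)}m)^{\kappa}\big)$, not $m\big(\ln^{(q)}m\big)^{\kappa+o(1)}$; in particular $\Ldim^{q}\TT$ would be infinite rather than $\kappa$. Your step ``a multiplicative correction factor whose logarithmic order matches the number of surviving degrees of freedom at generation $n$; after matching, this gives $\gamma_\TT(m)=m(\ln^{(q)}m)^{\kappa+o(1)}$'' silently replaces a factor exponential in $n$ by a factor polynomial in $n$, and this is where the argument fails.

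To obtain the stated asymptotics one needs $p^{2n}=(\ln^{(q)}m)^{\kappa+o(1)}$, i.e. $n\sim\frac{\kappa}{\ln p^{2}}\ln^{(q+1)}m$, which forces the partial sums $S_0+\cdots+S_n$ (hence $\ln\wt(\Theta_n)$) to grow like a $q$-fold iterated exponential of a linear function of $n$. This is exactly the tuple recorded after the theorem in the paper: $\lambda:=(\ln p^{2})/\kappa$, $R_i:=1$, $S_0:=1$ and $S_n:=[\exp^{(q)}(\lambda(n+2))]+1-S_0-\cdots-S_{n-1}$, so that $S_0+\cdots+S_n=[\exp^{(q)}(\lambda(n+2))]+1$; note also that $\kappa$ enters as a linear rescaling $\lambda$ inside the innermost argument, not as an exponent $i^{1/\kappa}$ as in your proposal. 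Your deductions of items (ii)--(v) from (i) are fine, but they inherit the gap, since (i) is not established for your tuple; the detailed count (which must also handle the powers $v_n^{p^{s}}$, the constrained exponents in the basis of \cite[Theorem 4.7]{Pe20clover}, and the values of $m$ between consecutive weights $\wt(\Theta_n)$) still has to be carried out, as it is in \cite{Pe20clover}.
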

Let us describe the tuple used in~\cite{Pe20clover}.
Fix $\lambda:=(\ln p^2)/\kappa$.
Put $\Xi_{q,\kappa}=(S_i,R_i\mid i\ge 0)$, where
$R_i:=1$ for all $i\ge 0$. Also $S_0:=1$ and
$S_n:=[\exp^{(q)}(\lambda (n+2) )]+1-S_0-\cdots- S_{n-1}$ for  $n\ge 1$.

\section{Lie algebra of Wild Drosophilas: Lower bound on Growth}


Fix a prime $p> 0$, and a specie of wild flies $\bar\Theta$, $\bar\Theta_0=\{1,2,\dots,k\}$, $k\ge 3$.
By construction, $k$ uniquely determines $\bar\Theta$.
Now we do not need divided powers, consider the {\it trivial tuple} $\bar S=(S_a=1| a\in \bar \Theta )$,
and we get the ordinary truncated polynomial ring $R=R(\bar\Theta)$.
The pivot elements of the zero generation are denoted as $v_1,\ldots,v_k$.

\begin{Lemma} \label{Lweight_wild_pivot}
Fix $\ch K=p> 0$, a wild specie of flies $\bar \Theta$, where $|\bar\Theta_0|=k\ge 3$.
Assume that the tuple $\bar S$ is trivial.
Denote $\wt \bar\Theta_n:=\wt v_a$, for all $a\in\bar\Theta_n$, and $\mu:=\log_{2p-1}2$. Then
\begin{enumerate}
\item
$ \wt \bar\Theta_n=(2p-1)^n$,\  $n\ge 0$. 
\item $\displaystyle |\bar\Theta_n|> (\theta_k)^{(\wt\bar \Theta_n)^\mu}$,\ $n\ge 0$.
\item For any $\theta_*>\theta_k$  ($\theta_k$ is defined in Lemma~\ref{Lasymp_wild}) there exists $M$ such that
$\displaystyle |\bar\Theta_n|< (\theta_*)^{(\wt\bar \Theta_n)^\mu}$,\ $ n\ge M.$
\end{enumerate}
\end{Lemma}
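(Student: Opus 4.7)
The plan is to reduce all three claims to previously established results, using the crucial observation that the exponent $\mu = \log_{2p-1} 2$ is chosen precisely so that $(\wt \bar\Theta_n)^\mu$ collapses to $2^n$.

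First, for claim (i), I would apply Lemma~\ref{Luniform} directly. The trivial tuple means $S_a = 1$ for every $a \in \bar\Theta$, so in the notation of that lemma we have $S_m = R_m = 1$ for all $m \ge 0$. Hence
\[
\wt(\bar\Theta_n) = \prod_{m=0}^{n-1}(p^{S_m} + p^{R_m} - 1) = \prod_{m=0}^{n-1}(2p-1) = (2p-1)^n.
\]
This also makes sense for $n = 0$ (empty product equals $1$, matching $\wt(\bar\Theta_0) = 1$).

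Next comes the key computation that ties the two worlds together: with $\mu = \log_{2p-1} 2$, claim (i) gives
\[
(\wt \bar\Theta_n)^\mu = \bigl((2p-1)^n\bigr)^{\log_{2p-1} 2} = 2^n.
\]
Claim (ii) is then immediate from Lemma~\ref{Lasymp_wild}(ii), which states $|\bar\Theta_n| > (\theta_k)^{2^n}$ for all $n \ge 0$; substituting $2^n = (\wt \bar\Theta_n)^\mu$ on the right finishes it.

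For claim (iii), given $\theta_* > \theta_k$, I would set $\epsilon := \theta_* - \theta_k > 0$ and apply Lemma~\ref{Lasymp_wild}(iii), which supplies an $n_0$ such that $|\bar\Theta_n| < (\theta_k + \epsilon)^{2^n} = (\theta_*)^{2^n}$ for $n \ge n_0$. Taking $M := n_0$ and rewriting $2^n = (\wt \bar\Theta_n)^\mu$ via the identity above yields the required bound. Honestly, there is no real obstacle here: the statement is essentially a cosmetic repackaging of Lemma~\ref{Lasymp_wild} into weight-based exponents, and the only content is recognizing that the specific choice of $\mu$ makes the exponents $(\wt \bar\Theta_n)^\mu$ equal the doubling sequence $2^n$ that governs generation sizes.
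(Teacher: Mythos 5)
Your proposal is correct and follows essentially the same route as the paper's proof: claim (i) as the trivial-tuple case of Lemma~\ref{Luniform}, the identity $(\wt\bar\Theta_n)^\mu = 2^n$ coming from the choice $\mu=\log_{2p-1}2$, and claims (ii), (iii) read off directly from the lower and upper bounds of Lemma~\ref{Lasymp_wild}. No issues to report.
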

\begin{proof}
Claim~i)  is a partial case of Lemma~\ref{Luniform}.
Denote $m=\wt\bar\Theta_n$.
By i), $n=\log_{2p-1} m$.
By the lower bound of Lemma~\ref{Lasymp_wild}
$ |\bar\Theta_n|>\theta_k^{2^n}=\theta_k^{2^{\log_{p-1}(m)}}
=\theta_k^{m^{\log_{2p-1}(2)}}=(\theta_k)^{(\wt\bar \Theta_n)^\mu}. $ 
Similarly, we prove claim~iii) using the upper bound iii) of Lemma~\ref{Lasymp_wild}.
\end{proof}

We show that the upper bound above extends to an arbitrary specie with a uniform tuple.
\begin{Corollary}\label{Cweight_wild_pivot}
Fix $\ch K=p> 0$, a specie of flies $\Theta$, where $|\Theta_0|=k\ge 3$, and a uniform tuple $\bar S$.
So, we denote $\wt \Theta_n:=\wt v_a$, for all $a\in\Theta_n$, and $\mu:=\log_{2p-1}2$.
Then for any $\theta_*>\theta_k$ there exists $M$ such that
$\displaystyle |\Theta_n|< (\theta_*)^{(\wt\Theta_n)^\mu}$,\ $ n\ge M.$
\end{Corollary}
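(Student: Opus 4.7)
The plan is to reduce to the case already handled in Lemma~\ref{Lweight_wild_pivot}, using two monotonicity observations: $\Theta$ injects into the wild specie $\bar\Theta$ on the same initial alphabet, and any uniform tuple $\bar S$ produces generation-weights at least as large as those coming from the trivial tuple.

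First, since $\Theta_0 = \bar\Theta_0 = \{1,\ldots,k\}$ and at each generation step $\Theta_{n+1}$ is defined as a subset of the admissible pairs $\{ab \mid a,b\in\Theta_n,\ a\ne b\}$, while $\bar\Theta_{n+1}$ takes all such pairs, an easy induction on $n$ gives $\Theta_n \subseteq \bar\Theta_n$, hence $|\Theta_n|\le|\bar\Theta_n|$ for all $n\ge 0$.

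Second, by Lemma~\ref{Luniform} applied to $\Theta$ with its uniform tuple $\bar S=(S_m,R_m)$, we have
\begin{equation*}
\wt\Theta_n \;=\; \prod_{m=0}^{n-1}\bigl(p^{S_m}+p^{R_m}-1\bigr).
\end{equation*}
Since each $S_m,R_m\ge 1$, every factor satisfies $p^{S_m}+p^{R_m}-1 \ge 2p-1$, so $\wt\Theta_n \ge (2p-1)^n$. The right-hand side is precisely the trivial-tuple weight $\wt\bar\Theta_n$ appearing in Lemma~\ref{Lweight_wild_pivot}~i). Raising to the positive power $\mu=\log_{2p-1}2$ preserves the inequality, giving $(\wt\Theta_n)^\mu \ge (\wt\bar\Theta_n)^\mu$.

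Now fix $\theta_*>\theta_k$. By Lemma~\ref{Lweight_wild_pivot}~iii) applied to $\bar\Theta$ with the trivial tuple, there exists $M$ such that $|\bar\Theta_n| < (\theta_*)^{(\wt\bar\Theta_n)^\mu}$ for all $n\ge M$. Since $\theta_*>\theta_k>1$, the map $x\mapsto\theta_*^x$ is increasing, so combining everything yields
\begin{equation*}
|\Theta_n| \;\le\; |\bar\Theta_n| \;<\; (\theta_*)^{(\wt\bar\Theta_n)^\mu} \;\le\; (\theta_*)^{(\wt\Theta_n)^\mu},\qquad n\ge M,
\end{equation*}
as required. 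There is no real obstacle here: the proof is a direct monotonicity argument, and the only point requiring slight care is verifying that $\theta_*>1$ so that the outer exponentiation preserves inequalities, which follows from $\theta_k>2.33$ in Lemma~\ref{Lasymp_wild}.
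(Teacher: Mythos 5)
Your proof is correct and follows essentially the same route as the paper: bound $|\Theta_n|\le|\bar\Theta_n|$, show $\wt\Theta_n\ge\wt\bar\Theta_n=(2p-1)^n$ via Lemma~\ref{Luniform}, and then invoke the upper bound of Lemma~\ref{Lweight_wild_pivot}~iii) for the wild specie. The only cosmetic difference is that you compare weights through the explicit product formula while the paper runs a one-line induction on the recurrence, which is the same computation.
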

\begin{proof}
Remark that $|\Theta_n|\le |\bar \Theta_n|$ for all $n\ge 0$, assuming that the initial generations of both species have $k$ flies.
Using Lemma~\ref{Luniform}, by induction
$\wt(\bar \Theta_{n+1})=(2p-1)\wt(\bar \Theta_n)\le (p^{S_n}+p^{R_n}-1)\wt(\Theta_n)=\wt(\Theta_{n+1})$, $n\ge 0$.
It remains to apply the upper bound of Lemma.
\begin{equation*}
|\Theta_n|\le |\bar \Theta_n|< (\theta_*)^{(\wt\bar \Theta_n)^\mu}\le (\theta_*)^{(\wt\Theta_n)^\mu},\quad n\ge M.
\qedhere
\end{equation*}
\end{proof}

We easily have the following lower bound, but it is not the best possible one.
\begin{Corollary}
Consider the restricted Lie algebra $\LL=\LL(\bar \Theta)=\Lie_p(v_1,\ldots,v_k)$,
where $\bar \Theta$ is a wild specie of flies, the tuple $\bar S$ being trivial.
Set $\mu=\log_{2p-1}2$, then  $0<\mu<1$.
There exists a constant $C>0$ such that we have a lower bound on the growth:
\begin{equation*}
\gamma_{\LL}(x)> \exp\big(C x^{\mu} \big),  \qquad x> 0.
\end{equation*}
\end{Corollary}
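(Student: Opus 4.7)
The plan is to exhibit many linearly independent elements of controlled weight inside $\LL$, and then translate these counts into a lower bound for $\gamma_\LL$ via the weight growth function $\tilde\gamma_\LL\sim\gamma_\LL$.

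First, I invoke the actual pivot elements $\{\bar v_a\mid a\in\bar\Theta\}$ of Lemma~\ref{Lactual_pivot}: these lie in $\LL=\Lie_p(v_1,\ldots,v_k)$ and are linearly independent. Since the tuple $\bar S$ is trivial, Lemma~\ref{Luniform} gives $\wt(\bar v_a)=\wt(v_a)=\wt\bar\Theta_n=(2p-1)^n$ for every $a\in\bar\Theta_n$. Hence $\LL$ contains at least $|\bar\Theta_{0\ldots n}|\ge|\bar\Theta_n|$ linearly independent elements of weight at most $(2p-1)^n$, so that
\begin{equation*}
\tilde\gamma_\LL\!\bigl((2p-1)^n\bigr)\ \ge\ |\bar\Theta_n|,\qquad n\ge 0.
\end{equation*}

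Second, I apply the lower bound ii) of Lemma~\ref{Lweight_wild_pivot}, namely $|\bar\Theta_n|>\theta_k^{((2p-1)^n)^\mu}$, where $\mu=\log_{2p-1}2$ and $\theta_k>2{,}33$. Combining,
\begin{equation*}
\tilde\gamma_\LL\!\bigl((2p-1)^n\bigr)\ >\ \theta_k^{\bigl((2p-1)^n\bigr)^\mu},\qquad n\ge 0.
\end{equation*}
For arbitrary $x>0$, choose the integer $n\ge 0$ with $(2p-1)^n\le x<(2p-1)^{n+1}$; then monotonicity gives $\tilde\gamma_\LL(x)\ge\tilde\gamma_\LL((2p-1)^n)>\theta_k^{(x/(2p-1))^\mu}=\exp(C'x^\mu)$ with $C':=\ln(\theta_k)/(2p-1)^\mu>0$. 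Since $\tilde\gamma_\LL\sim\gamma_\LL$ (subsection on Growth), this passes to the usual growth function at the price of replacing $C'$ by a smaller positive constant $C$, which gives the claim (for $x$ large; after possibly shrinking $C$ further it holds for all $x>0$ because $\gamma_\LL(x)\ge 1$).

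The argument has essentially no obstacle: once one recognises that $\{\bar v_a\}$ already live inside $\LL$ and are linearly independent, the bound reduces to counting flies and invoking Lemma~\ref{Lasymp_wild}. The only point requiring care is the passage between the weight growth function and the generator-length growth function, but since the two are equivalent up to rescaling of the argument, this only affects the constant $C$ and does not alter the exponent $\mu$. Note also that $0<\mu=\log_{2p-1}2<1$ because $2p-1\ge 3$, which matches the assertion.
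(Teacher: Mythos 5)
Your proposal is correct and follows essentially the same route as the paper: both count the linearly independent actual pivot elements $\bar v_a$, $a\in\bar\Theta_n$, of weight $(2p-1)^n\le x$ and bound $|\bar\Theta_n|$ from below via Lemma~\ref{Lasymp_wild} (you through claim ii) of Lemma~\ref{Lweight_wild_pivot}, the paper directly through the estimate $\theta_k>2{,}33$), arriving at the same constant $C=\ln\sqrt{\theta_k}$ up to the harmless rescaling you note. The detour through the weight growth function $\tilde\gamma_\LL$ is unnecessary here since all generators have weight one, but it does not affect the argument.
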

\begin{proof}
Fix $x> 0$ and set $n=[\log_{2p-1 }x]$.
By Lemma~\ref{Lweight_wild_pivot}, $\wt(v_a)\le x$ for all $a\in\bar\Theta_n$.
By Lemma~\ref{Lactual_pivot}, the respective pivot elements $\{v_a| a\in\bar\Theta_n\}$ are linearly independent.
Using Lemma~\ref{Lasymp_wild} item~i),
\begin{equation*}
\gamma_{\LL}(x)> |\bar \Theta_n|>(2,33)^{2^n}> (2,33)^{2^{\log_{2p-1}(x)-1}}
=\exp\big(\ln(\sqrt{ 2,33})\, x^{\mu} \big), \qquad x> 0.
\qedhere
\end{equation*}
\end{proof}

Define recursively Lie monomials (we consider they belonging to a free Lie algebra, see~\cite{Ba,BMPZ}):
\begin{align*}
F_1(X_1,X_2)&=[X_1,X_2];\\
F_2(X_1,X_2,X_3,X_4)&=[[X_1,X_2],[X_3,X_4]];\\
F_{n}(X_1,\dots,X_{2^{n}})&=[F_{n-1}(X_1,\dots,X_{2^{n-1}}),F_{n-1}(X_{2^{n-1}+1},\dots,X_{2^{n}})],
\qquad n\ge 3.
\end{align*}
A Lie algebra is {\it solvable} of length $n$ iff it satisfies the identity $F_{n}(X_1,\dots,X_{2^{n}})\equiv 0$~\cite{Ba}.
The symmetric group $\Sym(2^n)$ naturally acts on these monomials. Permutations that permute halves of some of
commutators inside $F_n(X_1,\ldots,X_{2^n})$ add only a sign to this polynomial and constitute a Sylov 2-subgroup of  $\Sym(2^n)$.
Let $\Gamma$  be the set of permutations $\pi\in\Sym(2^n)$ such that:
\begin{equation}\label{sylov}
\begin{tabular}{c}
$\pi(1)<\pi(2), \ \pi(3)<\pi(4)$,\ \dotfill,\ $\pi(2^{n}{-}1)<\pi(2^n)$,\\
$\pi(1)<\pi(3), \ \pi(5)<\pi(7),\ \dotfill,\ \pi(2^{n}{-}3)< \pi(2^{n}{-}1)$,\\
\dotfill\\[2pt]
$\pi(1)<\pi(1{+}2^l), \ \pi(1{+}2{\cdot} 2^l)<\pi(1{+}3{\cdot} 2^l),\ \ldots,\ \pi(1{+}(2^{n-l}\!{-}2){\cdot} 2^l)<\pi(1{+}(2^{n-l}\!{-}1){\cdot} 2^l)$,\\
\dotfill\\[2pt]
$\pi(1)<\pi(1{+}2^{n-1}).$
\end{tabular}
\end{equation}

It is well-known that $\Gamma$ is the coset set of $\Sym(2^n)$ modulo its Sylov 2-subgroup of the following cardinality.
\begin{Lemma}\label{Lfactor_sylov}
Let $\Gamma$ be as above, then
$|\Gamma|=(2^n)!/2^{n(n-1)/2}$.
\end{Lemma}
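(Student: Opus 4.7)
The plan is to realize $\Gamma$ as a set of coset representatives for a Sylow 2-subgroup of $\Sym(2^n)$ and then compute the index.

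First, I would model the positions $\{1,2,\ldots,2^n\}$ as the leaves of a complete binary tree of depth $n$, ordered left-to-right. Let $P_n \le \Sym(2^n)$ be the iterated wreath product $C_2 \wr C_2 \wr \cdots \wr C_2$ ($n$ factors), realized concretely as the subgroup generated by the $2^n - 1$ involutions that swap the two sibling subtrees at each internal node. By the recurrence $|P_{k+1}| = 2\,|P_k|^2$ with $|P_0|=1$, one obtains the order of $P_n$; matching it against the 2-adic valuation $\nu_2((2^n)!)$ confirms that $P_n$ is Sylow-2 in $\Sym(2^n)$.

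Second, I would biject the $2^n - 1$ inequalities in \eqref{sylov} with the $2^n - 1$ internal nodes of the tree. Row $l$ lists, for each internal node $v$ at depth $l{+}1$, the comparison between the leftmost leaf of the left depth-$l$ subtree at $v$ and the leftmost leaf of the right depth-$l$ subtree at $v$. The key lemma is that the sibling-swap generator attached to $v$ reverses exactly the comparison at $v$ and leaves all other comparisons fixed. This is because for any other internal node $v'$: either $v$ and $v'$ lie in disjoint subtrees (so the $v$-swap fixes pointwise the leaves involved in the $v'$-comparison); or one is a descendant of the other, in which case the \emph{leftmost}-leaf witness used at the ancestor is the unique fixed point of all deeper swaps inside its branch, while the witnesses used at the descendant are carried rigidly by the ancestor's swap and hence retain their relative order.

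Given this lemma, $P_n$ acts simply transitively on the sign patterns of the comparison list, and \eqref{sylov} selects exactly the uniform "less-than" pattern. Consequently each left coset $\pi P_n$ contains a unique element of $\Gamma$, so $|\Gamma| = [\Sym(2^n):P_n] = (2^n)!/|P_n|$, reducing the statement to the order computation of step one. The main technical obstacle is the independence lemma in step two; I would prove it by induction on $n$, with the inductive step separating the unique top-level swap from the direct-product action of the two depth-$(n{-}1)$ wreath factors on the two halves of size $2^{n-1}$, and applying the inductive assertion on each half.
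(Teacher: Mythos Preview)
Your strategy is exactly the one the paper indicates: it states (without proof) that $\Gamma$ is a transversal for a Sylow $2$-subgroup of $\Sym(2^n)$ and declares the result ``well-known''. There are, however, two points you should be aware of.

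First, carrying your computation through gives $|P_n|=2^{2^n-1}$ (equivalently, Legendre's formula yields $\nu_2((2^n)!)=2^n-1$), hence $|\Gamma|=(2^n)!/2^{2^n-1}$, which does \emph{not} match the printed exponent $n(n-1)/2$. The printed exponent is a misprint: already for $n=2$ one checks directly that $|\Gamma|=3=4!/2^3$, whereas the stated formula gives $24/2=12$. The correction is harmless for the application in Theorem~\ref{Twild_low}: with the true denominator one has $\ln\bigl((2^n)!/2^{2^n-1}\bigr)=2^n\bigl((n-1)\ln 2-1+o(1)\bigr)$, so the denominator is no longer negligible in the sense the paper claims, but it is still of lower order than the numerator $M!$ and the final estimate survives with the same constant $C_3$.

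Second, your ``key lemma'' is false as stated. The swap at an internal node $v$ does \emph{not} fix the comparisons at descendants of $v$, nor at ancestors lying on the leftmost spine. For $n=2$ the root swap $(1\,3)(2\,4)$ turns the left depth-$1$ condition $(\pi\sigma)(1)<(\pi\sigma)(2)$ into $\pi(3)<\pi(4)$, which is the \emph{old} condition at the right depth-$1$ node; the two descendant comparisons get interchanged, not fixed. The inductive scheme you sketch at the end can be made to work if you process levels \emph{bottom-up} rather than generator-by-generator: once swaps at levels $0,\dots,l{-}1$ have normalized those comparisons to ``$<$'', any level-$l$ swap permutes whole blocks of size $2^l$ and therefore preserves all lower-level comparisons, so the level-$l$ choices are forced by the level-$l$ comparisons alone. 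This yields existence and uniqueness of the $P_n$-representative in $\Gamma$ simultaneously.
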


\begin{Lemma}\label{LSylov_1}
Let $\bar \Theta$ be a specie of wild flies, $F_n$, $\Gamma\subset \Sym(2^n)$ as above, the tuple $\bar S$ being trivial.
Assume that there exist integers $m,n$  and $2^n$ flies $\{a_1,\dots,a_{2^n}\}\subseteq \bar \Theta_m$.
Using respective virtual pivot elements $v_{a_i}$, we get the following linearly independent set
$$
\{F_n(v_{a_{\pi(1)}},v_{a_{\pi(2)}},\ldots, v_{a_{\pi(2^n)}})\mid \pi\in \Gamma\}\subset \bar\LL:=\Lie_p(v_a\mid a\in\bar\Theta).
$$
\end{Lemma}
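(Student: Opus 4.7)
The plan is to identify an explicit leading part of each $F_n(v_{a_{\pi(1)}},\dots,v_{a_{\pi(2^n)}})$ whose ``shape'' depends on $\pi$ in a way that leading parts from distinct $\pi\in\Gamma$ cannot cancel. For each $\pi\in\Sym(2^n)$ and each $\sigma\in H_n$ (the Sylov-2 subgroup of $\Sym(2^n)$ stabilizing the binary tree underlying $F_n$), let $w_{\pi,\sigma}\in\bar\Theta_{m+n}$ denote the word $a_{\pi\sigma(2^n)}a_{\pi\sigma(2^n-1)}\cdots a_{\pi\sigma(1)}$ obtained by concatenating the leaves in reverse left-to-right order (the reversal reflects that the inner argument of $[v_b,v_a]=t_a^{p-1}t_b^{p-2}v_{ab}-\dots$ supplies the first letter of the leading word $v_{ab}$ in Lemma~\ref{Lcomm_pivot}), and let $T_{\pi,\sigma}\in R$ be the associated monomial in powers $t_c^{p-1},t_c^{p-2}$ indexed by the ancestors of $w_{\pi,\sigma}$ in generations $m,\dots,m+n-1$.

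By induction on $n$ I would establish the expansion
\[
F_n(v_{a_{\pi(1)}},\dots,v_{a_{\pi(2^n)}})
= \sum_{\sigma\in H_n}\varepsilon(\sigma)\,T_{\pi,\sigma}\, v_{w_{\pi,\sigma}} + \rho_\pi,
\]
with signs $\varepsilon(\sigma)\in\{\pm1\}$, where $\rho_\pi$ does not contribute any pure Lie monomial of the form $T_{\pi',\sigma'}\partial_{w_{\pi',\sigma'}}$ with $\pi'\in\Gamma$ and $\sigma'\in H_n$. The inductive step uses two observations: (i) for $T_L,T_R\in R$ involving only $t$-variables of generation strictly less than $\gen w_L=\gen w_R=m+n-1$, the Leibniz cross-terms vanish, so $[T_Lv_{w_L},T_Rv_{w_R}]=T_LT_R[v_{w_L},v_{w_R}]$, because both the leading derivation $\partial_{w_L}$ and the deeper tail of $v_{w_L}$ annihilate $t$-variables of strictly lower generation (and similarly for $v_{w_R}$); and (ii) by Lemma~\ref{Lcomm_pivot} together with Corollary~\ref{Ccomm_pivot},
\[
[v_{w_L},v_{w_R}] \equiv t_{w_R}^{p-1}t_{w_L}^{p-2}v_{w_Rw_L} - t_{w_R}^{p-2}t_{w_L}^{p-1}v_{w_Lw_R} \pmod{\mathcal W_{m+n+1}},
\]
furnishing the two new head terms at the next generation while the remainder sits in the restricted Lie ideal $\mathcal W_{m+n+1}$ of Lemma~\ref{LcalW}.

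The combinatorial core of the argument is the distinctness of leading words: if $w_{\pi_1,\sigma_1}=w_{\pi_2,\sigma_2}$ for $\pi_i\in\Gamma$ and $\sigma_i\in H_n$, then since all leaves $a_i$ have equal length $2^m$ and are distinct, one reads off letter-by-letter that $\pi_1\sigma_1=\pi_2\sigma_2$ as permutations of $\{1,\dots,2^n\}$, so $\pi_1\pi_2^{-1}=\sigma_2\sigma_1^{-1}\in H_n$; the coset-representative property of $\Gamma$ then forces $\pi_1=\pi_2$ and $\sigma_1=\sigma_2$. Writing $v_{w_{\pi,\sigma}}\equiv\partial_{w_{\pi,\sigma}}\pmod{\mathcal W_{m+n+1}}$, the pure Lie monomials $T_{\pi,\sigma}\partial_{w_{\pi,\sigma}}$ over all pairs $(\pi,\sigma)\in\Gamma\times H_n$ are pairwise distinct elements of $\WW$ and hence linearly independent. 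A hypothetical dependence $\sum_{\pi\in\Gamma}c_\pi F_n(v_{a_{\pi(1)}},\dots,v_{a_{\pi(2^n)}})=0$ must then annihilate each leading monomial separately, yielding $c_\pi=0$ for every $\pi\in\Gamma$.

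The main obstacle is verifying that the residue $\rho_\pi$ really does not contribute any pure Lie monomial of the listed leading form. Commutators of type $[\Lambda_L,\rho_R]$ or $[\rho_L,\Lambda_R]$ arising at later inductive stages do not automatically land in $\mathcal W_{m+n+1}$, so one must argue combinatorially that the ``cousin'' commutators $[v_{bd},v_{ac}]$ produced by the error term of Lemma~\ref{Lcomm_pivot}, being indexed by flies $c,d\in\bar\Theta_m\setminus\{a,b\}$ distinct from the active leaves of the current bracket, generate only pure derivations $\partial_{c'}$ whose words $c'\in\bar\Theta_{m+n}$ involve a ``foreign'' letter or a tree-pattern that never matches any $w_{\pi',\sigma'}$ built from the prescribed tree-concatenation of the $2^n$ leaves. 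This disjointness, together with the stability of $\mathcal W_{m+n+1}$ under the restricted Lie operations, is what keeps $\rho_\pi$ safely apart from the leading part.
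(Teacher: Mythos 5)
There is a genuine gap, and it sits exactly where you flagged it: the separation of the residue $\rho_\pi$ from the leading monomials is not established, and the heuristic you offer for it is incorrect. In a \emph{wild} specie there is no selection, so the ``cousins'' $c,d$ in the error term of Lemma~\ref{Lcomm_pivot} range over all of $\bar\Theta_m\setminus\{a,b\}$ and may themselves be other chosen leaves. Moreover, since $\Gamma$ is a transversal of the Sylow $2$-subgroup $H_n$, the products $\pi\sigma$ with $\pi\in\Gamma,\ \sigma\in H_n$ exhaust $\Sym(2^n)$, so your set of leading words $w_{\pi',\sigma'}$ consists of \emph{all} concatenations of the $2^n$ distinct leaves. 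Consequently a cousin contribution need not contain any ``foreign'' letter or exotic tree pattern: already for $n=2$, the error term of $[v_{a_1},v_{a_2}]$ with $c=a_3$, $d=a_4$ produces heads supported on $\partial_{(a_2a_3)(a_1a_4)}$, which \emph{is} a leading word for a suitable pair $(\pi',\sigma')$. So the derivation indices alone cannot separate $\rho_\pi$ from the leading part; any separation must compare the full pure Lie monomials, i.e.\ the powers of the ancestor variables $t_c^{(*)}$ accumulated along the way, and this degree bookkeeping (complicated further by cross-Leibniz terms in brackets such as $[\rho_L,T_Rv_{w_R}]$, where later-stage derivations can touch lower-generation variables inside residues, and by the case $p=2$ where $p-2=0$) is precisely what your proposal does not carry out.

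For contrast, the paper avoids this bookkeeping explosion by first applying Lemma~\ref{Lder_subspecies}: it projects $\bar\LL$ onto the Lie algebra of a subspecie whose selection encodes the conditions~\eqref{sylov}, so that for each bracketed pair only one of $ab$, $ba$ survives. In the image each $F_n(w_{a_{\pi(1)}},\ldots,w_{a_{\pi(2^n)}})$ then has a \emph{single} head monomial, the error terms are confined to $T_i^{2p-1}\mathcal W_{m+i+2}$ and are distinguished from every head by having strictly larger total degree in the generation-$i$ variables, and linear independence of the images pulls back to $\bar\LL$. Your idea of keeping all $|H_n|$ head terms upstairs and using the coset property to see that the words $w_{\pi,\sigma}$, $(\pi,\sigma)\in\Gamma\times H_n$, are pairwise distinct is attractive and the head computation via your observations (i)--(ii) is sound, but without a correct argument that no residue monomial coincides (as a full monomial $T\,\partial_w$, not just in its derivation index) with a head monomial, the linear independence does not follow.
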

\begin{proof}
Observe that
$\Theta'':=\mathop{\cup}\limits_{l\ge m}\bar \Theta_l$ is a specie of wild flies generated by generation $\bar \Theta_m$.
For brevity, denote its elements as $\bar\Theta_m=\{1,\ldots,2^n,\ldots\}$.
Consider a subspecie $\Theta'\subset \Theta''$ generated by $\Theta_0':=\{1,\ldots,2^n\}$,
imposing a selection according to~\eqref{sylov}. Let $\lessdot$ denote the natural order on $\Theta_0'$.
Assume that the generation $\Theta_l'$, $l\ge 0$, is constructed.
Define a {\it temporary} non-genealogical partial order $\lessdot$ on $\Theta_l'$.
Namely, assuming that $a,b\in\Theta_l'$ together have any $\Theta_0'$-letter  at most once,
we compare $a,b$ using their first $\Theta_0'$-letters with the natural order.
Let $a \lessdot b$, then we set $ab\in\Theta_{l+1}'$, which is again "multilinear" in $\Theta_0'$,
thus constructing $\Theta_{l+1}'$.
By construction,  $\Theta'_n$ consists of the second rows of permutations $\Gamma$ introduced above~\eqref{sylov}.
Remark that $\Theta'_{n+1}=\emptyset$.

By Lemma~\ref{Lder_subspecies}, we have an epimorphism:
$\phi: \Lie_p(v_a|a\in\Theta'')\twoheadrightarrow \Lie_p(v_a|a\in \Theta')$.
Recall its action on  a virtual pivot element for $\Lie_p(\Theta'')$, which is also a virtual pivot element for $\bar\LL$.
Consider the infinite expansion~\eqref{expansion}, as a result of action, terms
with derivations for flies $a\in \Theta''\setminus \Theta'$ should disappear.

Denote the virtual pivot elements for $\LL(\Theta')$ as $w_a$, $a\in\Theta'$.
Fix $a,b\in \Theta'_0=\{1,\ldots,2^n\}$, $a \lessdot b$.  Then
\begin{align*} 
w_a=\dd_a+t_a^{(p-1)} \sum_{c\in \Theta_0'}  t_c^{(p-1)} w_{ac},\qquad
w_b=\dd_b+t_b^{(p-1)} \sum_{d\in \Theta_0'}  t_d^{(p-1)} w_{bd}.
\end{align*}
They are $\phi$-images of virtual pivot elements of $\LL(\Theta'')$ and $\LL(\Theta)$,
the latter being of generation $m$.
Denote $T_l=\{t_a^{(\a)}| a\in \Theta_l', 0{\le} \a{<}p \}$, $l\ge 0$.
We shall compute modulo ideals
$\mathcal W_n=\mathcal W_n(\bar \Theta)\triangleleft\mathcal W(\bar \Theta) $, $n\ge 0$,
(Lemma~\ref{LcalW}), omitting $\pmod{*}$  for brevity.
By construction of $\Theta'$, we have $ba\notin \Theta_1'$.
Thus, using Lemma~\ref{Lcomm_pivot}, we get a head with one term:
\begin{align*}
[w_a,w_b]&=-t_a^{(p-1)}t_b^{(p-2)} w_{ab}
+t_a^{(p-1)}t_b^{(p-1)} \sum_{c,d\in \Theta_0'} t_c^{(p-1)}t_d^{(p-1)} [w_{ac},w_{bd}]\\
&\equiv-t_a^{(p-1)}t_b^{(p-2)} \Big (w_{ab} +T_0^{2p-1}{\mathcal W}_{m+2}\Big),
\end{align*}
because $[w_{ac},w_{bd}]\in {\mathcal W}_{m+2}$ (Corollary~\ref{Ccomm_pivot}).
Let $c,d\in\Theta_0'\setminus \{a,b\}$ and $c\lessdot d$.  By computations above,
$$ [w_c,w_d]\equiv-t_c^{(p-1)}t_d^{(p-2)} \Big(w_{cd}+ T_0^{2p-1}{\mathcal W}_{m+2}\Big). $$
We continue the process, assuming that $a \lessdot c$:
\begin{align*}
&[[w_a,w_b],[w_c,w_d]]
\equiv
-\underbrace{t_a^{(p-1)}t_b^{(p-2)}t_c^{(p-1)}t_d^{(p-2)}}_{T_0}\Big(
\underbrace{t^{(p-1)}_{ab}t_{cd}^{(p-2)}}_{T_1} \Big( w_{abcd}
+  T_1^{2p-1}{\mathcal W}_{m+3}\Big) + T_0^{2p-1}{\mathcal W}_{m+2}\Big).
\end{align*}
We continue and get by induction the following.
Let $(a_1,\ldots,a_{2^n})$ be the second row of $\pi\in\Gamma$.
Then
\begin{multline} 
F_n(w_{a_1},\ldots w_{a_{2^n}})
\equiv -\underbrace{t_{a_1}^{(*)}t_{a_2}^{(*)}\!\cdots t_{a_{2^n}}^{(*)}}_{T_0}\Big(
\underbrace{t_{a_1a_2}^{(*)}\!\!\!\cdots t_{a_{2^n-1}a_{2^n}}^{(*)}}_{T_1} \Big(
\underbrace{t_{a_1a_2a_3a_4}^{(*)}\!\!\!\cdots t_{a_{2^n-3}a_{2^n-2}a_{2^n-1}a_{2^n}}^{(*)}}_{T_2}\Big( \qquad\\
\qquad \cdots\!\Big( \underbrace{t_{a_1\ldots a_{2^{n-1}}}^{(*)} t_{a_{2^{n-1}\!+1}\ldots a_{2^n} }^{(*)}}_{T_{n-1}}
\Big( w_{a_1a_2\ldots a_{2^n}}+ T_{n-1}^{2p-1}{\mathcal W}_{m+n+1}\Big)+T_{n-2}^{2p-1}{\mathcal W}_{m+n}\Big)\\[-9pt]
\cdots + T_{2}^{2p-1}{\mathcal W}_{m+4}\Big)
+ T_{1}^{2p-1}{\mathcal W}_{m+3}\Big)
+ T_{0}^{2p-1}{\mathcal W}_{m+2}\Big). \label{fnw}
\end{multline}
In these computations, we used the following observation:
in~\eqref{fnw}, $w_{a_1a_2\ldots a_{2^n}}$ contains only derivations $\dd_a$, $a\in\bar \Theta_l$ such that $l\ge m+n$,
while $T_0,\ldots,T_{n-1}$ are of previous generations $m,\ldots, m+n-1$ (in terms of $\bar\Theta$).
Thus, in further computations the factors $T_{j}^{2p-1}$, $j=0,\ldots,n-1$, remain untouched.
Either group of products in $T_0,\ldots,T_{n-1}$  above
has divided powers with alternating powers $(*)$ either $p-2$ or $p-1$.

Consider either of groups of variables belonging to $T_0,T_1,\ldots,T_{n-1}$ above,
observe that they correspond to all parents of $a_1a_2\ldots a_{2^n}$ in the specie $\Theta'$.
In particular, all variables in the product above are different.
Thus, \eqref{fnw} yields a unique pure Lie monomial with the same factors at $\dd_{a_1a_2\ldots a_{2^n}}$.
The remaining terms of~\eqref{fnw} can yield
pure Lie monomials with derivations of the same generation as the first term,
but such terms originate from $T_i^{2p-1}{\mathcal W}_{i+m+2}$, $i\in\{0,\ldots,n-1\}$, and have
a bigger total power in $T_i$  than the respective power of the first term.
Hence, the elements~\eqref{fnw} corresponding to all permutations $\Gamma$ are linearly independent.

Let $v_{a_i}$, $i=1,\ldots,2^n$ be the virtual pivot elements of $\bar\LL$. By above, $\phi(v_{a_i})=w_{a_i}$, $i=1,\ldots,2^n$. Thus,
$$
\phi(F_n(v_{a_{\pi(1)}},\ldots, v_{a_{\pi(2^n)}}))=F_n(w_{a_{{\pi(1)}}},\ldots v_{a_{\pi(2^n)}}),\quad \pi\in \Gamma.
$$
Since the set~\eqref{fnw}~is linearly independent, we conclude that the required set is linearly independent.

We just proved a linear independence of a set determined in terms of virtual pivot elements that in general do not belong to $\LL$.
Thus, we proved a linear independence of a set belonging to
a bigger infinitely generated Lie algebra  $\bar\LL=\Lie_p(v_a| a\in \bar \Theta)$.
\end{proof}

\begin{Lemma}\label{LSylov_2}
Let $\bar \Theta$ be a specie of wild flies, $\LL=\LL(\bar\Theta)$, the tuple $\bar S$ being trivial.
Suppose that there exist integers $m,n$ and $2^n$ flies $\{a_1,\dots,a_{2^n}\}\subseteq \bar \Theta_m$.
Denote $q:=\wt \bar \Theta_m$. Then
$$
\dim \LL_{q 2^n}\ge \frac {(2^n)!}{2^{n(n-1)/2}}.
$$
\end{Lemma}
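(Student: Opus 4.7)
My plan is to exhibit $|\Gamma| = (2^n)!/2^{n(n-1)/2}$ linearly independent elements of weight $q\cdot 2^n$ inside $\LL$, indexed by $\pi\in\Gamma$. For each $\pi$, I will set $\bar F_\pi := F_n(\bar v_{a_{\pi(1)}},\ldots,\bar v_{a_{\pi(2^n)}})\in\LL$, using the actual pivots $\bar v_{a_i}\in\LL$ from Lemma~\ref{Lactual_pivot}(i). Because the actual pivots satisfy the same recurrence~\eqref{recorrencia} as virtual pivots, $\wt(\bar v_{a_i}) = q$ for every $i$, and weight-additivity places each $\bar F_\pi$ in $\LL_{q\cdot 2^n}$. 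By Lemma~\ref{Lfactor_sylov} the lemma reduces to linear independence of $\{\bar F_\pi:\pi\in\Gamma\}$.

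The plan is to transfer linear independence from the virtual elements $F_\pi:=F_n(v_{a_{\pi(1)}},\ldots,v_{a_{\pi(2^n)}})$, already established in Lemma~\ref{LSylov_1}, to $\bar F_\pi$. Write $\bar v_{a_i}=v_{a_i}+r_i$ using Lemma~\ref{Lactual_pivot}(ii), so $r_i\in\sum_{d\sqsupset a_i}t_d^{(1)}\mathcal W_{\gen d+2}$: each pure Lie monomial of $r_i$ carries a ``front variable'' $t_{d_i'}^{(1)}$ with $d_i'\sqsupset a_i$ (hence $\gen d_i'<m$), while its unique internal derivation $\dd_{b_i}^{p^{l_i}}$ satisfies $\gen b_i\geq\gen d_i'+2$. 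Multilinear expansion then gives
$$\bar F_\pi = F_\pi + \sum_{\emptyset\neq S\subseteq\{1,\ldots,2^n\}} G_\pi^S,$$
where $G_\pi^S$ is the analogue of $F_n$ with $v_{a_{\pi(i)}}$ replaced by $r_{\pi(i)}$ exactly for $i\in S$.

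To isolate $F_\pi$ I will use the linear projection $\pr_m$ on $\mathcal W$ that annihilates any pure Lie monomial containing a variable of generation $<m$. From~\eqref{construction}, $v_{a_i}$ (with $a_i\in\bar\Theta_m$) expands as a sum of pure Lie monomials whose variables and derivations all lie in generation $\geq m$, a property that is preserved under Lie brackets; hence $\pr_m(F_\pi)=F_\pi$. Once I establish $\pr_m(G_\pi^S)=0$ for every nonempty $S$, applying $\pr_m$ to any vanishing relation $\sum_\pi c_\pi \bar F_\pi=0$ collapses it to $\sum_\pi c_\pi F_\pi=0$, forcing every $c_\pi=0$ by Lemma~\ref{LSylov_1}.

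The hard part will be proving $\pr_m(G_\pi^S)=0$: I must check that in every pure Lie monomial created by expanding the iterated bracket of $G_\pi^S$, at least one front variable $t_{d_i'}^{(1)}$ with $i\in S$ survives. A front variable $t_{d_i'}^{(1)}$ can be removed only by an application of $\dd_{d_i'}$, a derivation of generation $\gen d_i'<m$; such derivations do not occur inside the $v$-factors and so must come from the internal $r$-cores. If all $s:=|S|$ front variables were eliminated, the required matching would produce a bijection $\sigma:S\to S$ with $\dd_{b_{\sigma(i)}}=\dd_{d_i'}$, giving $\gen d_i'=\gen b_{\sigma(i)}\geq \gen d_{\sigma(i)}'+2$ for every $i\in S$; summing over $i$ and exploiting that $\sigma$ is a bijection yields the impossibility $0\geq 2s$. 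Hence some $t_{d_i'}^{(1)}$ always survives, the monomial carries a variable of generation $<m$, and $\pr_m$ kills it, as required.
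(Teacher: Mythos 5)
Your proposal is correct and follows essentially the same route as the paper: the paper also passes to the actual pivot elements $\bar v_{a_i}\in\LL_q$ and transfers the linear independence from Lemma~\ref{LSylov_1}, expressing the separation of the correction terms as the congruence
$F_n(\bar v_{a_{\pi(1)}},\ldots,\bar v_{a_{\pi(2^n)}})\equiv F_n(v_{a_{\pi(1)}},\ldots,v_{a_{\pi(2^n)}})
\pmod{\sum_{d\in\bar\Theta_{0\ldots m-1}}t_d^{(1)}\mathcal W_{\gen d+2}}$,
which it gets directly from Lemma~\ref{Lactual_pivot} and the ideal property of the $\mathcal W_n$, while you re-derive the same separation via the projection $\pr_m$ killing pure Lie monomials that contain a variable of generation $<m$; the content is the same, you just verify explicitly the step the paper treats as immediate. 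One imprecision in your final counting: the map $\sigma$ need not be a bijection. Two factors $i_1\ne i_2\in S$ may have $d_{i_1}'=d_{i_2}'$ (and further copies of $t_{d_i'}$ may sit in the ${\mathbf t}^{\a}$-parts), and a single application of $\partial_{d}^{p^{l}}$ with $p^{l}\ge 2$ can cancel the accumulated divided power, so injectivity of $\sigma$ can fail and the summation over a bijection does not close. But all you actually need is that for every $i\in S$ there exists some $\sigma(i)\in S$ with $b_{\sigma(i)}=d_i'$ (an elimination of $t_{d_i'}$ requires at least one application of a power of $\partial_{d_i'}$, and such derivations occur only in the $r$-cores), whence $\gen d_i'=\gen b_{\sigma(i)}\ge \gen d_{\sigma(i)}'+2$; choosing $i\in S$ with $\gen d_i'$ minimal gives an immediate contradiction, so your argument stands after this one-line repair.
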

\begin{proof}
We use notations $F_n$, and $\Gamma\subset \Sym(2^n)$ of Lemma~\ref{LSylov_1}.
Consider the respective {\it actual} pivot elements $\bar v_{a_i}\in \LL_q$, $i=1,\ldots,2^n$.
Since $v_{a_i}$ have derivations of generation at least $m$, using Lemma~\ref{Lactual_pivot}, we get
\begin{align}\nonumber
\bar v_{a_i}&\equiv v_{a_i} \pmod{ \sum_{d \sqsupset a_i} t_d^{(1)} {\mathcal W}_{\gen d +2}},\qquad i=1,\ldots,2^n;\\[-3pt]
F_n(\bar v_{a_{\pi(1)}},\ldots, \bar v_{a_{\pi(2^n)}})&\equiv F_n(v_{a_{\pi(1)}},\ldots, v_{a_{\pi(2^n)}})
\pmod{\!\!\!\! \sum_{d\in\bar \Theta_{0\ldots m-1}}\!\!\! t_d^{(1)} {\mathcal W}_{\gen d +2}},\qquad \pi\in\Gamma.
\label{formulaFn}
\end{align}
Since $v_{a_i}$ have variables of generation at least $m+1$,
the same applies to $F_n(*)$  of Lemma~\ref{LSylov_1}. Thus, \eqref{formulaFn}
are linearly independent elements of weight $q2^n$ and cardinality given by Lemma~\ref{Lfactor_sylov}.
\end{proof}

\begin{Theorem}\label{Twild_low}
Fix $\ch K=p> 0$, a specie of wild flies $\bar \Theta$, where $|\bar\Theta_0|=k\ge 3$, the  trivial  tuple $\bar S$, and
$\LL=\LL(\bar\Theta)=\Lie_p(v_1,\ldots,v_k)$ the respective restricted Lie algebra.
Denote $\lambda:=\log_2(p-\frac 12)$. There exist positive constants $C_3=C_3(p,k)$, $n_3$ such that 
$$
\gamma_{\LL}(n)\ge \exp\Big( C_3 \frac{n} {(\ln n)^{\lambda}} \Big),\qquad n\ge n_3.
$$
\end{Theorem}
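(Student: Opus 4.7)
The plan is to apply Lemma~\ref{LSylov_2} with parameters $(m, r)$ chosen optimally as a function of $n$, and then to estimate the resulting factorial via Stirling's formula. Since $\lambda = \log_2(p - 1/2)$, one has $2p-1 = 2^{1+\lambda}$, so by Lemma~\ref{Lweight_wild_pivot} i) the common weight $q_m := \wt \bar\Theta_m$ equals $2^{m(1+\lambda)}$; by Lemma~\ref{Lweight_wild_pivot} ii) together with Lemma~\ref{Lasymp_wild} iii), one has $\theta_k^{2^m} < |\bar\Theta_m| < (\theta_k+\epsilon)^{2^m}$ for $m$ large (fixing any $\epsilon \in (0, \theta_k-2)$).

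Given a large $n$, I would choose $m = m(n)$ to be the least integer with $q_m \theta_k^{2^m} \ge n$; this forces $2^m$ to be of order $\ln n / \ln \theta_k$, hence $m$ of order $\log_2 \ln n$ and $q_m = O\bigl((\ln n)^{1+\lambda}\bigr)$. Then I set $r := \lfloor \log_2(n/q_m)\rfloor$ and $N := q_m 2^r$, so that $N \le n$ and $2^r \le n/q_m \le |\bar\Theta_m|$ (the latter inequality is precisely the defining property of $m$). Having selected $2^r$ distinct flies in $\bar\Theta_m$, Lemma~\ref{LSylov_2} yields $\gamma_\LL(n) \ge \dim \LL_N \ge (2^r)!/2^{r(r-1)/2}$.

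Stirling's formula gives $\ln\bigl((2^r)!/2^{r(r-1)/2}\bigr) = (1+o(1))\, 2^r r \ln 2$, since the denominator contributes only $r(r-1)(\ln 2)/2 = o(2^r r)$. Because $2^r \ge n/(2 q_m)$ and $r \ge \log_2(n/q_m) - 1 = (1-o(1))\log_2 n$, one then obtains
$$\ln \gamma_\LL(n) \ge (1-o(1))\,\frac{n \ln n}{2 q_m} \ge C_3\,\frac{n}{(\ln n)^\lambda}$$
for some constant $C_3 = C_3(p,k) > 0$ and all $n \ge n_3$, which is the claimed bound.

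The hard part will be picking the balance point $m \approx \log_2 \ln n$ so that neither of the two constraints (the ancestral constraint $2^r \le |\bar\Theta_m|$ required by Lemma~\ref{LSylov_2}, and the weight budget $q_m 2^r \le n$) is wasted, and then keeping track of the constants through the double-logarithmic relations so that the final exponent lands precisely on $\lambda = \log_2(p-1/2)$ rather than on some suboptimal expression. A minor secondary point is that typically $N < n$ strictly, so I rely on monotonicity of $\gamma_\LL$ to convert the bound on $\dim \LL_N$ into one on $\gamma_\LL(n)$; and one should double-check that the bound on $|\bar\Theta_m|$ from Lemma~\ref{Lasymp_wild} is available at the chosen $m$, which is automatic since $m \to \infty$ with $n$.
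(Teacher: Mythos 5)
Your proposal is correct and follows essentially the same route as the paper's proof: pick a generation $m$ so that $\wt\bar\Theta_m$ is of order $(\ln n)^{1+\lambda}$ (the paper takes the least $m$ with $(2p-1)^m\ge(\log_{\theta_k}n)^{1+\lambda}$, which is equivalent to your choice), select about $n/\wt\bar\Theta_m$ flies rounded to a power of two, apply Lemma~\ref{LSylov_2}, and finish with Stirling, noting that the factor $2^{r(r-1)/2}$ is negligible. Your bookkeeping even reproduces the paper's constant $C_3<(\ln\theta_k)^{1+\lambda}/(4p-2)$, so no changes are needed.
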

\begin{proof}
Denote $\mu=\log_{2p-1}2$. Then $\frac 1\mu=1+\lambda$. Let $N$ be a large natural number. Set
\begin{equation}\label{m000}
q_0=(\log_{\theta_k} N)^{1/\mu}.
\end{equation}
Choose the smallest integer of the form $q=(2p-1)^m$, where $m\in\N$, and such that $q_0\le q$. Then
\begin{equation}\label{log_bound}
\log_{2p-1} q_0 \le m< \log_{2p-1} q_0 +1.
\end{equation}
Fix integers $m,q$.
By Lemma~\ref{Lweight_wild_pivot},  $q=\wt\bar \Theta_m$ is the weight of all pivot elements of generation $m$.
Using~\eqref{m000} and~\eqref{log_bound}, we get
\begin{equation}\label{lower_wt}
(\log_{\theta_k} N)^{1/\mu}\le  q=(2p-1)^m=\wt\bar \Theta_m < (2p-1)(\log_{\theta_k} N)^{1/\mu}.
\end{equation}
Using claim~ii) of Lemma~\ref{Lweight_wild_pivot} and the lower bound~\eqref{lower_wt},
we get a lower bound on the number of flies of generation $m$:
\begin{equation*}
|\bar \Theta_{m}|>{\theta_k}^{(\wt \bar \Theta_{m})^\mu}\ge N.
\end{equation*}
Lemma~\ref{LSylov_2} requires $2^n$ flies in $\bar\Theta_m$, we choose a more appropriate number $M=2^n< N$ as follows. Set
\begin{align}
n:=\Big[\log_2 \Big(\frac{N}{(2p-1)(\log_{\theta_k} N)^{1/\mu}}\Big)\Big];
\nonumber\\
\log_2 \Big(\frac{N}{(2p-1)(\log_{\theta_k} N)^{1/\mu}}\Big)-1 <  n \le \log_2 \Big(\frac{N}{(2p-1)(\log_{\theta_k} N)^{1/\mu}}\Big);
\label{log2} \\
\frac{N}{2(2p-1)(\log_{\theta_k} N)^{1/\mu}} <  M:=2^n\le \frac{N}{(2p-1)(\log_{\theta_k} N)^{1/\mu}}.
\label{lower_n}
\end{align}
Choose $M=2^n$ flies in $\bar\Theta_m$.
Weight of "multilinear" $M$-fold products of the respective pivot elements
is equal to $qM=q2^n$ and does not exceed $N$ by the upper bounds in~\eqref{lower_wt} and~\eqref{lower_n}.
Applying Lemma~\ref{LSylov_2},
\begin{equation}\label{gamm_l}
\gamma_\LL (N)> \dim \LL_{q 2^n}\ge \frac{M!}{2^{n(n-1)/2}}.
\end{equation}
Using~\eqref{log2}, we see that the denominator  in~\eqref{gamm_l} is not essential
$$
2^{n(n-1)/2}=\exp\big(n^2(C'+o(1))\big)=\exp\big((\ln N)^2(C''+o(1))\big)
=\exp\Big(\frac{N} {(\ln N)^{\lambda}}o(1)\Big),\qquad N\to \infty.
$$
Using the Stirling formula and the lower bound~\eqref{lower_n}, we have
\begin{align}
\nonumber
M!&>\bigg(\frac M e\bigg)^M=\exp(M(\ln M-1))
>\exp\bigg(\frac{N \big(\ln N- \frac 1\mu \ln\log_{\theta_k}  N- \ln (4p-2)\big)}{2(2p-1)(\log_{\theta_k} N)^{1/\mu}}  \bigg)\\
&=\exp\bigg( \frac{N} {(\ln N)^{\lambda}} \Big(\frac{ (\ln {\theta_k})^{1+\lambda}}{4p-2}+o(1)\Big)\bigg),\qquad N\to \infty.
\label{mbiggl}
\end{align}
Fix any positive constant $C_3<(\ln {\theta_k})^{1+\lambda}/(4p-2)$. 
Now, the result follows from~\eqref{gamm_l} and~\eqref{mbiggl}.
\end{proof}

\section{Growth of Lie algebra of Wild Drosophilas}

The goal of this section is to finish the proof of the following result specifying
an intermediate growth of the restricted Lie algebra corresponding to a specie of wild flies.
The difficulty is that we have no clear bases.

\begin{Theorem}\label{Twild_growth}
Fix $\ch K=p> 0$, a specie of wild flies $\bar \Theta$, where $|\bar\Theta_0|=k\ge 3$, the  trivial  tuple $\bar S$, and
$\LL=\LL(\bar\Theta)=\Lie_p(v_1,\ldots,v_k)$ the respective restricted Lie algebra.
Denote $\lambda:=\log_2(p-\frac 12)$. There exist positive constants $C_3=C_3(p,k)$, $C_4=C_4(p,k)$, $n_0$ such that
$$
\exp\Big(C_3\frac{n} {(\ln n)^{\lambda}} \Big)
\le \gamma_{\LL}(n)\le
\exp\Big(C_4\frac{n} {(\ln n)^{\lambda}} \Big),\qquad n\ge n_0.
$$
\end{Theorem}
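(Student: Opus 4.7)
The lower bound $\gamma_{\LL}(n)\ge \exp(C_3 n/(\ln n)^\lambda)$ is already Theorem~\ref{Twild_low}, so the plan is entirely about the matching upper bound. Since $\LL\subseteq \mathcal W$ by Lemma~\ref{inL_W}, every homogeneous element $w\in \LL_n$ (in the $\N^k$-grading of Theorem~\ref{Tgraded}) can be expanded as a linear combination of ancestral pure Lie monomials ${\mathbf t}^{\alpha}\partial_d^{p^m}$, each of weight exactly $n$ and satisfying $\alpha>d$. The trivial tuple forces $S_d=1$, hence $m=0$, so all such monomials take the simpler form ${\mathbf t}^{\alpha}\partial_d$. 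Bounding $\dim \LL_n$ by the number of distinct ancestral pure Lie monomials of weight $n$ reduces the problem to a purely combinatorial counting task.

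The first key observation is that, since $\wt({\mathbf t}^\alpha)\ge 0$, the equation $n=\alpha_d-\wt({\mathbf t}^\alpha)$ together with Lemma~\ref{Lweight_wild_pivot}(i) forces the generation $j:=\gen d$ to satisfy $(2p-1)^j\ge n$, i.e.\ $j\ge j_0:=\lceil \log_{2p-1} n\rceil$. Second, the maximal weight of an ancestral tail for a fly $d\in\Theta_j$ is a geometric sum bounded by $\frac{2(p-1)}{2p-3}(2p-1)^j$, so all relevant $j$ effectively lie in a narrow window around $j_0$; crucially, $2^{j_0}\sim n^{\mu}$ where $\mu=\log_{2p-1}2=1/(1+\lambda)$. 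For each such $j$, Lemma~\ref{Lweight_wild_pivot}(iii) yields $|\Theta_j|\le \theta_*^{2^j}$ for any $\theta_*>\theta_k$ and $j$ sufficiently large.

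For a fixed $d\in\Theta_j$, the number of ancestral tails of the prescribed weight $(2p-1)^j-n$ is a coefficient in the generating function
\[
P_d(x)=\prod_{a\,>\,d}\bigl(1+x^{\alpha_a}+x^{2\alpha_a}+\cdots +x^{(p-1)\alpha_a}\bigr),
\]
which I will estimate by combining the trivial bound $P_d(1)=p^{2^{j+1}-2}$ on the total number of tails with a refined partition-type estimate that exploits the $(2p-1)$-adic hierarchy of the weights $\alpha_a=(2p-1)^{\gen a}$. Summing $|\Theta_j|\cdot\#\{\text{tails of weight }(2p-1)^j-n\}$ over all weights $m\le n$ and all admissible $j$, and optimizing the balance between the super-exponential factor $\theta_*^{2^j}$ and the constraint that the required tail weight must lie in the achievable range, will produce an upper bound of the form $\exp(C_4 n/(\ln n)^\lambda)$.

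The principal obstacle is the third step, the tail-counting estimate: a crude bound like $|\Theta_j|\cdot p^{2^{j+1}}$ would suggest $\exp(O(n^\mu))$, which is much smaller than what the lower bound forces, signaling undercount; the accurate count must genuinely aggregate contributions from a range of generations $j\ge j_0$ rather than from $j_0$ alone. Unlike the clover case treated in~\cite{Pe20clover}, we have no explicit monomial basis for $\LL(\bar\Theta)$, so the argument must proceed entirely within the ambient algebra $\mathcal W$ and rely on the ancestral/weight constraints to beat the doubly-exponential naive count. The delicate point is to confirm that the $(\ln n)^{\lambda}$ denominator in the exponent emerges precisely from balancing $\log|\Theta_j|\sim 2^j\log\theta_*$ against the weight budget when $j$ varies in its admissible interval around $j_0$.
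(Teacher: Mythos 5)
Your lower bound is exactly the paper's (it defers to Theorem~\ref{Twild_low}), but the proposed upper bound has a genuine gap at its very first reduction. You want to bound $\dim\LL_n$ by the number of ancestral pure Lie monomials ${\mathbf t}^{\alpha}\partial_d$ of weight $n$. That number is infinite: since the divided variables carry \emph{negative} weight, a derivation $\partial_d$ with $d\in\bar\Theta_j$ of arbitrarily large generation $j$ can be compensated by a tail of weight $(2p-1)^j-n$, and this required tail weight never exceeds the maximal achievable tail weight, which by your own geometric-sum estimate is about $\frac{2(p-1)}{2p-3}(2p-1)^j$ with $\frac{2(p-1)}{2p-3}>1$ for every $p\ge 2$. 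So the constraint you invoke to confine $j$ to a ``narrow window around $j_0$'' is vacuous, and there is no window at all: weight-$n$ monomials exist in every generation $j\ge j_0$ (indeed the expansion of a single generator $v_i$ already contains weight-$1$ monomials in all generations). Even setting this aside, $\dim\LL_n$ is not majorized by a count of monomials, because elements of $\LL_n$ live in the completion $\mathcal W$ and are genuinely \emph{infinite} linear combinations of pure Lie monomials; the weight-$n$ slice of $\mathcal W$ is infinite-dimensional, so ``count the monomials'' gives no bound. The tension you flag in your last paragraph (the naive count $\exp(O(n^{\mu}))$ undershooting the lower bound) is a symptom of this: the right fix is not to aggregate more generations inside the same counting scheme, but to abandon counting monomials of $\mathcal W$ altogether and exploit finite generation.

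That is what the paper's proof (Theorem~\ref{Twild_growth2}) does, and it is the missing idea. One cuts the recursive presentation~\eqref{construction} of the generators at a generation $m$ chosen so that $q=\wt\bar\Theta_m\approx(\log_{\theta_*}N)^{1/\mu}$, and rewrites every $N$-fold bracket of generators, as in~\eqref{n-fold}--\eqref{razb}: terms supported on generations $<m$ die for weight reasons, and what survives is spanned by elements
$\big(\prod_{c\in\bar\Theta_{0\ldots m-1}}t_c^{(*)}\big)[v_{d_1},\ldots,v_{d_P}]$ with $d_i\in\bar\Theta_m$ and $P\lesssim N/q$ by a weight count. The cardinality of this \emph{finite} spanning set is governed by $|\bar\Theta_m|^{P}$ with $|\bar\Theta_m|\le\theta_*^{q^{\mu}}<N$, the divided-power prefactor contributing only $\exp(O(\sqrt N\ln\ln N))$; optimizing $q$ then yields $\exp\big(((2p-1)(\ln\theta_*)^{1+\lambda}+o(1))\,N/(\ln N)^{\lambda}\big)$, i.e.\ the constant $C_4$. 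Your generating-function idea for tails could at best control the prefactor, not replace this rewriting step, so as it stands the proposal does not establish the upper bound.
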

\begin{proof}
The lower and upper bounds are proved in Theorem~\ref{Twild_low} and~Theorem~\ref{Twild_growth2} below.
\end{proof}

For simplicity,
a reader  can think that in the next result we are dealing with a specie of wild flies $\bar\Theta$ and the trivial tuple $\bar S$.
A more general setting  is needed to get the total upper bound of claim~iii) in main Theorem~\ref{T_main}.
Namely, we assume that a specie $\Theta$ is a hybrid of wild and clover species,
namely, $\Theta$ has interchanging wild segments  with the trivial tuple, and clover segments with their tuples,
see section~\ref{Shybrid}.

\begin{Theorem}\label{Twild_growth2}
Fix $\ch K=p> 0$, a hybrid specie $\Theta$, where $|\Theta_0|=k\ge 3$, and a uniform  tuple $\bar S$.
Let $\LL=\LL(\Theta,\bar S)=\Lie_p(v_1,\ldots,v_k)$ be the respective restricted Lie algebra.
Denote $\lambda:=\log_2(p-\frac 12)$. There exist positive constants $C_4=C_4(p,k)$, $n_0$ such that
$$ \gamma_{\LL}(n)\le \exp\Big(C_4\frac{n} {(\ln n)^{\lambda}} \Big),\qquad n\ge n_0. $$
\end{Theorem}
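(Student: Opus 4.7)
\medskip

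\noindent\textbf{Proof plan.}
The strategy is to bound $\dim\LL^{(X,n)}$ by counting a convenient spanning set, exploiting the weight grading of $\LL$ together with Corollary~\ref{Cweight_wild_pivot}. By Theorem~\ref{Tgraded}~iv), $\LL=\oplus_{w\ge 1}\LL_w$ and $\dim\LL^{(X,n)}=\sum_{w=1}^n\dim\LL_w$, so it suffices to bound each homogeneous component. Embedding $\LL$ in $\mathcal W$ via Lemma~\ref{inL_W}, every element of $\LL_w$ is a formal (possibly infinite) sum of ancestral pure Lie monomials $\mathbf t^{\a}\dd_b^{p^l}$ of weight exactly $w$, i.e.\ satisfying $p^l\wt(\Theta_m)-\sum_{c\vdash\cdots>b}\xi_c\wt(\Theta_{\gen c})=w$ where $b\in\Theta_m$.

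\smallskip

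\noindent Rather than counting pure Lie monomials directly, I would reduce to counting Lie-words in the actual pivot elements $\{\bar v_c\mid c\in\Theta\}$. By Lemma~\ref{Lactual_pivot}, $\bar v_c\in\LL$ has weight $\wt(\Theta_{\gen c})$, and modulo ideals $\mathcal W_{\gen d+2}$ the leading term of $\bar v_c$ is $v_c$; this lets one express any weight-$w$ element of $\LL$ as a combination of iterated brackets (and $p$-th powers) of $\bar v_c$'s with $\sum_i\wt(\Theta_{\gen c_i})\le w$. Next I would partition the count by the maximum generation $M$ appearing. For each fixed $M$, the set of admissible generators has cardinality $|\Theta_{0\ldots M}|=\sum_{m=0}^M|\Theta_m|$, which by Corollary~\ref{Cweight_wild_pivot} (applied with some $\theta_*>\theta_k$) satisfies $|\Theta_m|\le\theta_*^{(\wt\Theta_m)^\mu}$ for $m\ge M_0$, where $\mu=\log_{2p-1}2=1/(1+\lambda)$.

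\smallskip

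\noindent The counting of weighted Lie monomials then proceeds via a Witt/generating-function bound: the number of Lie words of total weight $w$ in generators with weight-distribution $|\Theta_m|$ at weight $\wt(\Theta_m)$ is controlled by the coefficients of $\bigl(1-\sum_m|\Theta_m|x^{\wt(\Theta_m)}\bigr)^{-1}$. One optimizes the "cutoff" $M$: for a fixed $w\le n$ one takes $M$ so that $\wt(\Theta_M)$ is of order $(\ln n)^{1+\lambda}$, which balances the number of generators (essentially $\theta_*^{n^\mu}$) against the multiplicity of compositions of $w$. Using $\wt(\Theta_m)\ge p^{(S_0+R_0+\cdots+S_{m-1}+R_{m-1})/2}$ from Corollary~\ref{C} and solving the saddle-point equation $u\wt(\Theta_M)\sim\wt(\Theta_M)^\mu\ln\theta_*$ yields $u\sim(\ln n)^{-\lambda}$ and the target $\exp(C_4n/(\ln n)^\lambda)$ after summing over $w\le n$.

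\smallskip

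\noindent\textbf{Main obstacle.} The principal difficulty is that pure Lie monomials of bounded weight $w$ in $\mathcal W$ form an infinite-dimensional family (arbitrary generation $m$ is possible, compensated by large tails), so the naïve count over $\mathcal W$ fails; one must work inside $\LL$ itself. Controlling the effective contribution of deep generations requires using the actual-pivot presentation together with the filtration $\mathcal W\supset\mathcal W_1\supset\mathcal W_2\supset\cdots$ from Lemma~\ref{LcalW} to truncate and bound the error, while the hybrid (wild+clover) nature forces the tuple-dependent estimates of Corollary~\ref{Cweight_wild_pivot} to replace a clean basis. The extra routine step is checking that the bound obtained in the hybrid wild segment dominates the slower contribution from clover segments, so that the exponential rate $(\ln n)^{-\lambda}$ is determined by the wild factor $\mu=\log_{2p-1}2$.
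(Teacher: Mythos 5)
Your overall scaling heuristics are right (the cutoff at $\wt\Theta_m\sim(\ln n)^{1+\lambda}$ and the balance producing $\exp(Cn/(\ln n)^{\lambda})$ match the paper), but the central counting step in your plan has a genuine gap. You propose to bound $\dim\LL_w$ by the number of weighted Lie words in the actual pivot elements $\{\bar v_c\mid c\in\Theta\}$, controlled by the generating function $\bigl(1-\sum_m|\Theta_m|x^{\wt\Theta_m}\bigr)^{-1}$ and partitioned by the maximal generation occurring. This cannot work as stated, for two reasons. First, the generating set contains the $k\ge 3$ weight-one generators, so the Witt-type count of weighted Lie words of weight $w$ is exponential in $w$ no matter how you stratify by maximal generation; the whole difficulty of the theorem is to show that long brackets in the weight-one generators \emph{collapse}, and that collapse is not a consequence of Lemma~\ref{Lactual_pivot} or of the filtration $\mathcal W_n$ — it requires the specific relations coming from the recursion. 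Second, the claim that every weight-$w$ element of $\LL$ is a combination of brackets of $\bar v_c$'s of total weight $w$ with the high-generation pivots doing the work is unjustified: by Lemma~\ref{Lcomm_pivot}, rewriting brackets in terms of generation-$m$ pivot elements inevitably produces divided-power coefficients, so the objects one actually has to count are not Lie words in pivot elements at all, and your saddle-point count never engages with these coefficient factors.

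The paper's proof supplies exactly the missing mechanism. One truncates the recursive expansion~\eqref{construction} of the degree-one generators at a generation $m$ chosen (via the iterations $y_0,y_1$) so that $q=\wt\Theta_m\approx(\log_{\theta_*}N)^{1+\lambda}$, writing each generator as junior pure monomials $\delta\in D$ plus terms $\omega\in\Omega$ carrying virtual pivots of generation $m$ (formula~\eqref{generation0}). An $N$-fold bracket then either involves only $D$-terms — impossible, since such a bracket has weight $<y_0<N$ — or reduces to the normal form~\eqref{razb}: a divided-power monomial in the variables of generations $<m$ times a bracket of $P$ virtual pivot elements of generation $m$, with $P\le N/q+O(\sqrt N)$ by the weight bookkeeping~\eqref{weight_poly}--\eqref{upperP}. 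The count is then $|\Theta_m|^{P}$ times the negligible factor~\eqref{num_poly}, giving the bound with $C_4>(2p-1)(\ln\theta_*)^{1+\lambda}$; note the count is over words in \emph{virtual} pivots (which need not lie in $\LL$), used only as a spanning device, not over a free Lie algebra on the $\bar v_c$'s. Finally, the hybrid hypothesis is not a routine afterthought: when $\Theta_m$ lands in a clover segment one needs the separate case B, where a weight estimate shows the divided power $x_m^{(\a)}$ (with possibly huge $S_m$) can take only about $3N p^2$ values, before the analogous count of brackets of length $P'$ in the three generation-$(m+1)$ pivots goes through.
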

\begin{proof}
Set $\mu:=\log_{2p-1}2$. Then $1/\mu=1+\lambda$.
Fix a number $\theta_*>\theta_k$.
By Corollary~\ref{Cweight_wild_pivot}, there exists $M_0$ such that
\begin{equation}\label{theta_upper}
|\Theta_m|\le \theta_*^{(\wt\Theta_m)^\mu}, \qquad m\ge M_0.
\end{equation}
Fix a large natural number $N$.
Consider a function $f(x):=x\theta_*^{x^\mu}$, $x\ge 0$.
Let $y\in\R^+$ be the unique solution of the equation
\begin{equation}\label{function}
f(y)=y\cdot \theta_*^{y^\mu}=N.
\end{equation}
Rewrite this equation as $y=\phi(y)$ where $\phi(x):=(\log_{\theta_*} (N/x))^{1/\mu}$.
Consider two iterations:
\begin{align}
\label{iteration1}
y_0:&=\phi(1)=(\log_{\theta_*} N)^{1/\mu};\\
\label{iteration2}
y_1:&=\phi(y_0)=\Big(\log_{\theta_*} \Big(\frac{N}{(\log_{\theta_*} N)^{1/\mu}}\Big)\Big)^{1/\mu}
=\Big(\log_{\theta_*}{N} -\frac 1{\mu}\log_{\theta_*}\log_{\theta_*} N\Big)^{1/\mu}.
\end{align}
We check that
\begin{align*}
f(y_0)&=(\log_{\theta_*} N)^{1/\mu}\cdot N>N;\\
f(y_1)&=\Big(\log_{\theta_*}{N}
-\frac 1{\mu}\log_{\theta_*}\log_{\theta_*} N\Big)^{1/\mu}\!\!\!\cdot \frac{N}{(\log_{\theta_*} N)^{1/\mu}}<N.
\end{align*}
Since function~\eqref{function} is increasing, we get $y_1<y<y_0$.
Next, we choose integers $m,q$ such that $q=\wt \Theta_m\le y_1$ and $m$ is maximal.
By Lemma~\ref{Luniform}, we have $y_1\ge q=\wt\Theta_m\ge (2p-1)^m$. Hence,
\begin{equation}\label{mset}
m\le \log_{2p-1} y_1.
\end{equation}

A) Consider the case that $\Theta_m$ belongs to a wild segment, in particular $S_m=1$.
(This case includes the most important situation of a wild specie with the trivial tuple.
Case B) below is needed only to prove that the growth function stays inside the wide angle, i.e. claim iii) of Theorem~\ref{T_main}).
By Lemma~\ref{Luniform}, $\wt \Theta_{m+1}=(2p-1)\wt \Theta_m>y_1$, the latter follows by maximality of $m$.
This observation explains the lower bound in relations between the numbers fixed above
\begin{align}\label{qqq}
\frac {y_1}{2p-1}< q=\wt\Theta_m\le y_1<y<y_0.
\end{align}
We use~\eqref{theta_upper}, \eqref{qqq}, and~\eqref{function},
\begin{equation}\label{theta_m_up}
|\Theta_m|\le \theta_*^{(\wt \Theta_m)^\mu}=\theta_*^{q^\mu}<\theta_*^{y^\mu}
=\frac Ny < N.
\end{equation}

Now we describe an approach to evaluate the growth of $\LL$.
Consider the generators $v_1,\ldots,v_k$ of $\LL$ and present them as follows.
Recall that $\Theta_0=\{1,\ldots,k\}$.
By cutting presentation~\eqref{construction} 
at step $m$, we get
\begin{equation}\label{generation0}
v_i= \sum_{i\succeq   d\in \Theta_{0\ldots {m-1}}}\!\!\!
\Big(\prod_{c\vdash d} t_c^{(p^{S_c}-1)}\Big) \dd_d
+\sum_{i\succ d\in  \Theta_m}\!\!
\Big(\prod_{c\vdash d} t_c^{(p^{S_c}-1)}\Big)v_d,\qquad i=1,\dots,k.
\end{equation}

The first terms belong to a bigger set of pure Lie monomials
$D:=\{\delta=\prod_{c> d} t_c^{(*)} \dd_d\mid d\in \Theta_{0\ldots {m-1}}\}$
while terms of the second sum belong to
$\Omega:=\{\omega=\prod_{c> d} t_c^{(*)} v_d\mid d\in \Theta_{m}\}$
(where (*) denote all possibilities).
Using these observations, we write
right-normed $N$-fold products of the generators~\eqref{generation0} as follows:
\begin{equation}\label{n-fold}
w=[v_{i_1},\ldots,v_{i_N}]=\!\!\sum_{\delta_{*}\in D} [\delta_{j_1},\ldots,\delta_{j_N}]+\!\!
\sum_{1\le P\le N} 
\Big[ [\delta_{*}, \ldots, \delta_{*},\omega_{i_1}],[\delta_{*}, \ldots, \delta_{*},\omega_{i_2}],\ldots,
 [\delta_{*}, \ldots, \delta_{*},\omega_{i_P}] \Big],
\end{equation}
where $\omega_*\in \Omega$, $\delta_*\in D$ are terms in~\eqref{generation0}.
This formula is easily proved by induction on $N$.

By the nature of pure Lie monomials,
$\delta= [\delta_{j_1},\ldots,\delta_{j_N}]=\lambda \prod_{c> d} t_c^{(*)} \partial_d$, where $\lambda\in K$
and $d$ is the maximal among flies of $\delta_{j_i}$.
Up to a scalar, $\delta$ belongs to $D$ and
\begin{equation}\label{wnbound}
\wt (\delta)\le \wt(\Theta_{m-1})<\wt (\Theta_{m})=q<y_0=(\log_{\theta_*} N)^{1/\mu},
\end{equation}
by~\eqref{qqq} and~\eqref{iteration1}.
On the other hand, by~\eqref{n-fold}, $\wt(\delta)=\wt(w)=N \nless(\log_{\theta_*} N)^{1/\mu}$.
This contradiction proves that the first summand in~\eqref{n-fold} is trivial.

Consider actions of pure Lie monomials $\delta_*$ on $\omega_{i_*}\in\Omega$ in~\eqref{n-fold}.
A derivation decreases degree of a variable corresponding to an ancestor of the fly of the pivot element,
optionally, adding variables corresponding to senior ancestors, yielding  again an element of $\Omega$.

Thus, $N$-fold products $w$ of the generators (see~\eqref{n-fold}) are expressed via the following elements.
Consider $\omega_i=\prod_{c> d_i} t_c^{(*)} v_{d_i}\in\Omega$, where $d_i\in\Theta_m$. We obtain elements
\begin{align}
\nonumber
\tilde w&=[\omega_{1},\omega_{2},\ldots,\omega_{P}]=
\Big[\prod\limits_{c> d_1} t_c^{(*)} v_{d_1},\prod\limits_{c> d_2} t_c^{(*)} v_{d_2},\ldots,
\prod\limits_{c> d_P} t_c^{(*)} v_{d_P}\Big]\\
&=\Big(\nu\!\!\! \prod_{c\in\Theta_{0\ldots m-1}}\!\!\! \!\!t_c^{(*)}\Big)[v_{d_1},v_{d_2},\ldots,v_{d_P}],
\qquad \nu\in K, \qquad 1\le P\le N,\quad d_{i_j}\in  \Theta_m.
\label{razb}
\end{align}

It remains to evaluate the number of monomials~\eqref{razb}.
We use~\eqref{mset}, \eqref{qqq}, and~\eqref{function}
\begin{align}\label{etheta2}
\theta_*^{2^{m-1}}
\le \theta_*^{2^{\log_{2p-1} (y_1)-1}}=\theta_*^{(y_1)^{\log_{2p-1}(2)}\!/2}
=\theta_*^{y_1^{\mu}/2}< \sqrt{\theta_*^{y^\mu}}= \sqrt{N/y} <\sqrt{N}.
\end{align}
By Lemma~\ref{Lasymp_wild},
$|\Theta_n|\le |\bar \Theta_n|\le \theta_*^{2^n}$, for sufficiently large $n\ge n_0$.
We use~\eqref{etheta2}
\begin{align}\label{sum_theta_m}
\sum_{j=0}^{m-1}|\Theta_j|\le C'+\sum_{j=n_0}^{m-1} \theta_*^{2^j}
\le C'+\theta_*^{2^{m-1}}\sum_{i=0}^\infty (2,33)^{-i} < 1,8\cdot\theta_*^{2^{m-1}} <2\sqrt{N}.
\end{align}
For any variable $t_c^{(\a_c)}$ its power has $p^{S_c}$ possibilities.
Recall that a uniform tuple $\bar S$ yields disjoint decompositions $\Theta_n=\Theta_n'\cup \Theta_n''$,
the parts having tuple values $S_c=S_n$ and $S_c=R_n$, for $n\ge 0$.
We evaluate the number of possibilities for the product of divided variables in~\eqref{razb},
using Corollary~\ref{C}, \eqref{wnbound}, and~\eqref{sum_theta_m}:
\begin{align}
\prod_{c\in\Theta_{0\ldots m-1}} p^{S_c}=\prod_{j=0}^{m-1}p^{S_j|\Theta_j'|+R_j|\Theta_j''|}
< p^{(S_0+R_0+\cdots+ S_{m-1}+R_{m-1})  (|\Theta_0|+\cdots +|\Theta_{m-1}|) }\nonumber\\
<(\wt\Theta_m)^{2(|\Theta_0|+\cdots +|\Theta_{m-1}|)}
\le \exp\Big(\frac 4\mu \ln\log_{\theta_*} N \cdot \sqrt N \Big),
\label{num_poly}
\end{align}
this number is not changing the expected upper bound.

Let us evaluate $P$ in~\eqref{razb}.
We estimate weight of the first factor in~\eqref{razb} using Lemma~\ref{Luniform}, and~\eqref{sum_theta_m}:
\begin{align}
\bigg |\wt\Big(\!\!\sum_{c\in\Theta_{0\ldots m-1}}\!\!\!\! \!\!t_c^{(*)}\Big)\bigg|
\le \sum_{c\in\Theta_{0\ldots m-1}} (p^{S_c}-1)\wt v_c
< \sum_{j=0}^{m-1} (p^{S_j}+p^{R_j}-1) \wt \Theta_j  |\Theta_j| \nonumber \\
= \sum_{j=0}^{m-1} \wt (\Theta_{j+1}) |\Theta_j|
< \wt \Theta_{m } \sum_{j=0}^{m-1}|\Theta_j|
<  2 q \sqrt N.
\label{weight_poly}
\end{align}
We evaluate weight of~\eqref{n-fold} and~\eqref{razb} using~\eqref{weight_poly}:
\begin{align*}
N=\wt w=\wt\tilde w = P\wt(\Theta_m)+\wt\Big(\!\!\sum_{c\in\Theta_{0\ldots m-1}}\!\!\!\! \!\!t_c^{(*)}\Big)
> Pq- 2q\sqrt{N}.
\end{align*}
Using this relation, lower bound in~\eqref{qqq}, and~\eqref{iteration2} we get an upper bound on $P$ in~\eqref{razb}:
\begin{align}\nonumber
P&< \frac N q+2\sqrt{N}\le \frac{(2p-1)N}{y_1}+2\sqrt{N}
 =\frac{(2p-1)N}{\big(\log_{\theta_*}{N} -\frac 1{\mu}\log_{\theta_*}\log_{\theta_*} N\big)^{1/\mu}} +2\sqrt{N}\\
&=N \frac{2p-1+o(1)}{(\log_{\theta_*}{N})^{1/\mu}},
\qquad\quad N\to \infty.
\label{upperP}
\end{align}
For any fixed $P$ we evaluate the number of commutators in~\eqref{razb}  using~\eqref{upperP} and~\eqref{theta_m_up}:
\begin{align}\nonumber
|\Theta_m|^{P}&=\exp(P\ln |\Theta_m|)
\le \exp\bigg( N \frac{2p-1+o(1) }
{(\log_{\theta_*}{N})^{1/\mu}}\ln N\bigg)\\
&=\exp\Big(\frac {N} {(\ln{N})^{\lambda} } \big((2p-1)(\ln\theta_*)^{1+\lambda}+o(1)\big) \Big),\qquad N\to \infty.
\label{wild_upper}
\end{align}
Now it remains to multiply by the number of divided power factors in~\eqref{razb} evaluated as~\eqref{num_poly}
and sum over possible $P\in \{1,\ldots,N\}$, we just multiply by $N$.
Actually,~\eqref{wild_upper} is an upper bound on the growth
of an algebra obtained by using only Lie brackets for the generators. Denote~\eqref{wild_upper} by $g(N)$.
The growth of $\LL$, which is a $p$-hull of that algebra, is evaluated as $\tilde g(n)=\sum_{j=0}^\infty g(p^{-j}n)$,
where the summation is over integer arguments of $g(n)$ only, instead we can roughly multiply by $\log_pN$.
The asymptotic~\eqref{wild_upper} remains the same.
By choosing any constant $C_4>(2p-1)(\ln\theta_*)^{1+\lambda}$, the desired upper bound follows from the asymptotic~\eqref{wild_upper}.

B) Now consider the case that $\Theta_m$ belongs to a clover segment.
We evaluate the number of products~\eqref{razb} repeating a simplified version of the approach above.
Let $V_m:=\{v_m,w_m,u_m\}$ denote the pivot elements for generation $\Theta_m$.
Then $\Theta_{m+1}$ has 3 pivot elements denoted as $V_{m+1}:=\{v_{m+1},w_{m+1},u_{m+1}\}$.
Recall that $R_m=1$, using notations~\eqref{pivot-3}, we rewrite elements of $V_m$ (i.e. pivot elements of generation $\Theta_m$) as
\begin{equation}\label{pivot-4}
\begin{split}
v_m &=\dd_{x_m}+x_{m}^{(p^{S_{m}}-1)} y_{m}^{(p-1)} v_{m+1} ;\\
w_m &=\dd_{y_m}+y_{m}^{(p-1)} x_{m}^{(p^{S_{m}}-1)} w_{m+1};\\
u_m &=\dd_{z_m}+z_{m}^{(p-1)} x_{m}^{(p^{S_{m}}-1)} u_{m+1}.
\end{split}
\end{equation}
Temporarily denote elements of $V_{m+1}$ also as $\{b_1,b_2,b_3\}$.
The commutators in~\eqref{razb} are commutators of the elements ~\eqref{pivot-4}.
After substitution, $N$-fold commutators in the generators of $\LL$   
are expressed via
\begin{equation}\label{reduc}
\tilde{\tilde w}
= \Big(\prod_{c\in\Theta_{0\ldots m-1}}\!\!\! \!\!t_c^{(*)}\Big)
\Big(\sum_{\substack{0\le \a <p^{S_m},\ 0\le \b,\gamma<p \\ 1\le P'\le P }} x_m^{(\a)} y_m^{(\beta)} z_m^{(\gamma)}\Big)
[b_{c_1},\ldots, b_{c_{P'}}],\qquad b_{i_j}\in V_{m+1}.
\end{equation}
Nonessential estimate~\eqref{num_poly} on the number of the first factors in~\eqref{reduc} remains the same.
By Lemma~\ref{Luniform} ,
\begin{equation}\label{thetam1}
\wt\Theta_{m+1}=(p^{S_m}+p-1)\wt \Theta_m.
\end{equation}
Using~\eqref{thetam1}, we evaluate weight of the second factor in~\eqref{reduc}
\begin{equation}\label{wxm}
\big|\wt(x_m^{(\a)} y_m^{(\beta)} z_m^{(\gamma)})\big|\le
\wt\Theta_m (\a+2(p-1)) < q(\a+2p).
\end{equation}
Using~\eqref{weight_poly}, \eqref{thetam1}, \eqref{wxm}, and that $P'\ge 1$, we evaluate weight of~\eqref{reduc}
\begin{align*}
N&= \wt (\tilde{\tilde w})> P'\wt(\Theta_{m+1})- q(\a +2p)-2q\sqrt N\\
&\ge q(p^{S_m}+p-1)-q(\a+2p)-2q\sqrt N =q((p^{S_m}-1-\a) -p-2\sqrt N);\\
\a &> p^{S_m}-1-p-2\sqrt N-\frac{N}q  \ge p^{S_m}-1-(p+3N).
\end{align*}
Since $0\le\a<p^{S_m}$,  number of second factors in~\eqref{reduc} is bounded by  a small number $(p+3N)p^2$.

It remains to evaluate the number of different commutators in~\eqref{reduc}. Now we use a rough estimate:
\begin{align*}
&\big|\wt(x_m^{(\a)} y_m^{(\beta)} z_m^{(\gamma)})\big|\le \wt\Theta_m (p^{S_m}-1+2(p-1))
< 2\wt \Theta_m (p^{S_m}+p-1)= 2\wt\Theta_{m+1}.
\end{align*}

Estimate~\eqref{weight_poly} on weight of the first factor in~\eqref{reduc} remains the same,
where we use $q=\wt\Theta_m<\wt\Theta_{m+1}$.
Thus, we evaluate weight of~\eqref{reduc} as:
\begin{align*}
N&=  \wt \tilde{\tilde w}\ge P'\wt \Theta_{m+1}- 2\wt \Theta_{m+1}\sqrt N -2\wt\Theta_{m+1};\\
P'&\le  \frac {N}  {\wt \Theta_{m+1}} + 2\sqrt N+2.
\end{align*}
By the choice of $m$, we have $\wt(\Theta_{m+1})\ge y_1$, the latter is given by~\eqref{iteration2}.
Continuing our estimates,
$$
P'\le \frac{N}{(\log_{\theta_*}{N} -\frac 1{\mu}\log_{\theta_*}\log_{\theta_*} N)^{1/\mu}}+ 2\sqrt N+2
=\big((\ln \theta_*)^{1/\mu}{+}o(1)\big) \frac{N}{(\ln {N})^{1/\mu}},\quad N\to\infty.
$$

The commutators in~\eqref{reduc} have length $P'$ in $|\Theta_{m+1}|=3$ letters, we evaluate their number as
$$
3^{P'}\le \exp\Big( \big(\ln 3(\ln \theta_*)^{1/\mu}{+}o(1)\big) \frac{N}{(\ln N)^{1/\mu}}  \Big),\qquad N\to\infty.
$$
Since $1/\mu= \lambda+1$, in this case we get even a smaller estimate than needed.
\end{proof}

\section{Phoenix Lie algebras: construction and Oscillating Growth}\label{SPhoenix}
\label{Shybrid}

\subsection{Cutting off senior generations of flies}
First, we study a general change of the growth of a drosophila Lie algebra after cutting off finitely many senior generations of flies.

Fix $\ch K=p> 0$.
Let $\Theta=\cup_{j=0}^\infty \Theta_j$ be a specie of flies, where
$\Theta_0=\{1,\ldots,k\}$, $k\ge 3$,
$\bar S=(S_a\in\N | a\in\Theta)$ a tuple, and
$L=\LL(\Theta,\bar S)=\Lie_p(\Theta_0)$ the respective restricted Lie algebra.
We need a technical assumption that the tuple $\bar S$ is uniform.
Recall that we assume that all pivot elements of a fixed generation $\{v_a|a\in\Theta_n\}$
are of the same weight, denoted as $\wt(\Theta_n)$ (below, we shall need this assumption only for $n\ge M$).
We have a grading $L=\mathop{\oplus}\limits_{n=1}^\infty L_n$ by degree in the generators $V:=\{v_1,\ldots,v_k\}$
corresponding to the zero generation $\Theta_0$ (Theorem~\ref{Tgraded}).
Recall that $\gamma_L(V,n)=\sum_{j=1}^n \dim L_j$, for $n\ge 1$.

Fix $M\in\N$ and define $\Theta':=\cup_{j\ge M}\Theta_j$, this is a specie generated by $\Theta'_0=\Theta_M$
(remark that $\Theta'\subset\Theta$ is not a subspecie  because ancestors of $\Theta_M$  do not belong to $\Theta'$).
Consider the restriction tuple $\bar S':=(S_a| a\in\Theta')$, and
the respective restricted Lie algebra $H=\LL(\Theta',\bar S')=\Lie_p(\Theta_M)$.
Remark that, in general, $H$ is not a Lie subalgebra of $L$ because it is generated by
the virtual pivot elements of its zero generation  $V':=\{v_a| a\in\Theta_M\}$, which do no necessarily belong to $L$.

\begin{Lemma} \label{LgrowthLH}
There exist constants $q,q_1,A_0,A,N_0> 1$, determined in terms of the data for the generations $j=0,\ldots,M$ above,
such that the growth functions of $L$, $H$,
with respect to the pivot elements for $\Theta_0$ and $\Theta_0'=\Theta_M$, respectively, are related as:
\begin{enumerate}
\item
$ \gamma_{H}(V',[N/q])< \gamma_L(V,N) < A_0+A\cdot \gamma_{H}(V',[(N+q_1)/q])$ for $N\ge 1. $
\item $\gamma_L(V,N) < A_0+A\cdot \gamma_{H}(V',N)$ for $N\ge N_0. $
\end{enumerate}
\end{Lemma}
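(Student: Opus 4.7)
The plan is to compare $L$ and $H$ via the restricted Lie subalgebra $L^{(M)} := \Lie_p(\bar v_a \mid a \in \Theta_M) \subset L$ generated by the actual pivot elements of generation $M$. By Lemma~\ref{Lactual_pivot} each $\bar v_a$ belongs to $L$ and these are linearly independent; by uniformity of $\bar S$ combined with Lemma~\ref{Luniform} each has $V$-weight exactly $q = \wt(\Theta_M)$. The key structural claim is that the natural map $\phi: H \to L^{(M)}$ sending $v_a \mapsto \bar v_a$ is an isomorphism of restricted Lie algebras. Surjectivity is immediate; for injectivity I would filter $\WW(\Theta, \bar S)$ by degree in the senior-generation variables $\{t_d \mid d \in \Theta \setminus \Theta'\}$. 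By Lemma~\ref{Lactual_pivot}(ii), $\bar v_a - v_a$ lies in strictly positive filtration, so $\bar v_a$ and $v_a$ agree in the zero-degree associated graded; tracking the surviving $t_d^{(1)}$ factors through iterated Lie brackets then shows that any relation $P(\bar v_a) = 0$ in $L^{(M)}$ descends to $P(v_a) = 0$ in $H$.

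Granting $L^{(M)} \cong H$, the lower bound in (i) is immediate: a linearly independent family in $H$ of $V'$-degree $\leq n$ corresponds under $\phi$ to a linearly independent family in $L^{(M)} \subset L$ of $V$-weight $qn$, since each $\bar v_a$ contributes weight $q$. Thus $\gamma_H(V',n) \leq \gamma_L(V, qn)$, and setting $n = [N/q]$ yields the claim. For the upper bound I would establish a vector-space decomposition $L = L^{(M)} + C$ where $C$ is a finite-dimensional subspace of dimension $A_0$, spanned by Lie monomials in $V$ of $V$-weight at most some $q_1 = O(qM)$; both $A_0$ and $q_1$ depend only on the data of the generations $\Theta_0,\ldots,\Theta_{M-1}$. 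Iteratively applying the recursion $\bar v_{ab} = [\bar v_b^{p^{S_b}-1}, \bar v_a^{p^{S_a}}]$ (Lemma~\ref{Lactual_pivot}) absorbs every sufficiently-high-weight Lie monomial in $V$ into a Lie expression in $\{\bar v_a \mid a \in \Theta_M\}$, modulo a bounded residue in $C$; a multiplicative constant $A$ appears from the rewriting overhead and the shift $q_1$ from the weight needed to complete a generation-$M$ pivot. This yields $\gamma_L(V,N) \leq A_0 + A\cdot\gamma_H(V', [(N+q_1)/q])$. Claim (ii) then follows from (i): for $N \geq N_0$ large enough the additive $A_0$ is absorbed into an enlarged multiplicative constant, and the argument shift $q_1$ is absorbed using monotonicity of $\gamma_H$ and $q \geq 2p-1 \geq 3$.

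The main obstacle is the codimension/normal-form argument for the upper bound. Unlike the clover setting of~\cite{Pe20clover} which admits an explicit monomial basis, the general drosophila Lie algebra has no canonical basis available, so the decomposition $L = L^{(M)} + C$ must be obtained abstractly from the tower of recursion relations among the actual pivots across generations $\Theta_0,\ldots,\Theta_{M-1}$. A secondary subtlety is the injectivity of $\phi$: the correction terms in $\bar v_a - v_a$ involve pure derivations $\partial_b$ of various generations (including possibly $b \in \Theta \setminus \Theta'$), so nested Lie brackets could in principle consume the $t_d^{(1)}$ factors and return to $R(\Theta')$. One must verify that at least one senior-generation variable factor persists in every term of the expansion, which requires a careful combinatorial accounting of the correction structure beyond the naive single-step filtration.
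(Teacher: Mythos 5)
Your upper bound rests on the vector-space decomposition $L=L^{(M)}+C$ with $\dim C=A_0<\infty$, and this step is both unproved and false in general. Each actual pivot $\bar v_a$, $a\in\Theta_M$, is homogeneous of weight $q=\wt(\Theta_M)$ in the grading of Theorem~\ref{Tgraded}, so $L^{(M)}$ is a graded subspace contained in $\bigoplus_{n\ge1}L_{nq}$; consequently $\bigoplus_{q\nmid N}L_N$ meets $L^{(M)}$ trivially and would inject into $L/L^{(M)}$, whose dimension is at most $A_0$ under your decomposition. Already for a clover algebra (a legitimate instance of the lemma) this fails: the explicit monomial basis of \cite[Theorem 4.7]{Pe20clover} (cf.\ the monomials~\eqref{prod3}) contains, for infinitely many tails, standard monomials whose divided-power prefixes involve the weight-one variables of generation zero, so nonzero homogeneous components of $L$ occur in infinitely many weights not divisible by $q$. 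Correspondingly, the ``iterative absorption'' of long monomials into Lie expressions in the $\bar v_a$ modulo a bounded residue is precisely what cannot work: cutting the recursion~\eqref{construction} at generation $M$ (as in~\eqref{generation02}) turns a long bracket of $v_1,\dots,v_k$ into a combination of elements $\big(\prod_{c\in\Theta_{0\ldots M-1}}t_c^{(\a_c)}\big)[v_{d_1},\dots,v_{d_P}]$ with $d_i\in\Theta_M$, which live in the ambient algebra $\WW$, not in $L^{(M)}$, and the junior divided-power coefficient cannot be absorbed into the subalgebra. The paper's upper bound is a counting argument about exactly these products: the number of junior prefixes is $A=\prod_{c\in\Theta_{0\ldots M-1}}p^{S_c}$, weight bookkeeping gives $P\le[(N+q_1)/q]$, and the brackets of generation-$M$ virtual pivots span a space of dimension at most $\gamma_H(V',[(N+q_1)/q])$. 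So the multiplicative constant $A$ is the count of junior coefficients; it is not ``rewriting overhead'' on top of a finite-codimension decomposition, and without this mechanism your argument does not reach the stated inequality.

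For the lower bound you route the argument through the claim that $\phi\colon H\to L^{(M)}$, $v_a\mapsto\bar v_a$, is an isomorphism, which is more than is needed and is exactly where you flag an unresolved point (persistence of a senior-generation variable in every correction term). The paper avoids the isomorphism entirely: it takes a basis of $H_n$ consisting of restricted monomials in the virtual pivots, replaces them by the same monomials in the actual pivots, and uses claim~ii) of Lemma~\ref{Lactual_pivot} together with Lemma~\ref{LcalW} to see that all corrections carry a factor $t_d^{(1)}$ with $d\in\Theta_{0\ldots M-1}$; since such senior variables neither occur in nor are acted on by the generation-$M$ data, the corrections cannot cancel the senior-variable-free main terms, and linear independence (hence $\dim L_{qn}\ge\dim H_n$, i.e.\ $\gamma_H(V',[N/q])<\gamma_L(V,N)$) transfers directly. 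If you insist on your formulation you must actually prove the persistence claim; as written, both halves of your argument have gaps, and the one in the upper bound is fatal to the approach rather than merely technical.
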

\begin{proof}
Below, the growth functions of $L$ and $H$ are considered with respect to $V$ and $V'$, respectively.
Denote $q:=\wt(\Theta_M)$.
We have a grading of the algebra $H=\mathop{\oplus}\limits_{n=1}^\infty H_n$ by degree in its generating set $V'$.
Consider a basis of $H$ formed by restricted Lie monomials in~$V'$.
Namely, choose bases for homogeneous components $H_n$, consisting of elements of type $[v_{a_1},\ldots,v_{a_m}]^{p^l}$,
where $mp^l=n$, $l\ge 0$, $v_{a_i}\in V'$, and $n\ge 1$.

Consider the actual pivot elements  $\{\bar v_a| a\in\Theta_M\}\subset L_{q}\subset L$.
By Lemma~\ref{Lactual_pivot} and Lemma~\eqref{LcalW}, we get
$$
[\bar v_{a_1},\ldots,\bar v_{a_m}]^{p^l}\equiv [v_{a_1},\ldots,v_{a_m}]^{p^l}
  \pmod {\sum_{d\in \Theta_{0\ldots M-1}} t_d^{(1)}\mathcal W_{\gen d+2}}.
$$
Since the sum above contains factors with senior variables, which are not acted by $v_a$, $a\in\Theta_M$,
the monomials above remain linearly independent.
These products have weight $n\wt(\Theta_M)=nq$. Hence, $\dim L_{qn}\ge \dim H_n$ for all $n\ge 1$.
Thus, we obtain the required lower bound:
$$\gamma_L(N)> \gamma_L(q[N/q])> \sum_{j=1}^{[N/q]}\dim L_{qj}\ge \sum_{j=1}^{[N/q]}\dim H_j=\gamma_H([N/q]),\qquad N\ge 1.$$

Consider the generators $v_1,\ldots,v_k$ of $L$ and  cut their presentation~\eqref{construction} as we did above~\eqref{generation0}:
\begin{equation}\label{generation02}
v_i= \sum_{i\succeq d\in \Theta_{0\ldots {M-1}} }
\Big(\prod_{c\vdash d} t_c^{(p^{S_c}-1)}\Big) \dd_d
+\sum_{i\succ d\in \Theta_M}
\Big(\prod_{c\vdash d} t_c^{(p^{S_c}-1)}\Big)v_d,\qquad i=1,\dots,k.
\end{equation}
Terms of the first sum belong to a bigger set of pure Lie monomials
$D:=\{\delta{=}\prod_{c> d} t_c^{(*)} \dd_d\mid d\in \Theta_{0\ldots {M-1}}\}$
while terms of the second sum belong to
$\Omega:=\{\omega=\prod_{c> d} t_c^{(*)} v_d\mid d\in \Theta_{M}\}$
(where (*) denote all possibilities).
Using this presentation, we write right-normed $N$-fold products of the generators~\eqref{generation02} as follows:
\begin{equation}\label{n-fold2}
w=[v_{i_1},\ldots,v_{i_N}]=\!\!\sum_{\delta_{*}\in D} [\delta_{j_1},\ldots,\delta_{j_N}]+\!\!
\sum_{1\le P\le N}
\Big[ [\delta_{*}, \ldots, \delta_{*},\omega_{i_1}],[\delta_{*}, \ldots, \delta_{*},\omega_{i_2}],\ldots,
 [\delta_{*}, \ldots, \delta_{*},\omega_{i_P}] \Big],
\end{equation}
where $\omega_*\in \Omega$, $\delta_*\in D$ are terms of~\eqref{generation02}. This formula is proved by induction on $N$.
Products of the first sum (up to scalar) belong to $D$, hence, their weight is bounded by $N_1:=\wt(\Theta_{M-1})$.
Let $N>N_1$ and the first sum disappears.
Write multiplicands in~\eqref{n-fold2} as
$[\delta_{*}, \ldots, \delta_{*},\omega_{i_j}]=\tilde \omega_{i_j}\in\Omega$ (omitting scalar for brevity).

Consider $\omega_i=\prod_{c> d_i} t_c^{(*)} v_{d_i}$, where $d_i\in\Theta_M$. We write their products as
\begin{align}\nonumber
\tilde w&=[\omega_{1},\omega_{2},\ldots,\omega_{P}]=
\Big[\prod\limits_{c> d_1} t_c^{(*)} v_{d_1},\prod\limits_{c> d_2} t_c^{(*)} v_{d_2},\ldots,
\prod\limits_{c> d_P} t_c^{(*)} v_{d_P}\Big]\\
&=\Big(\!\prod_{c\in\Theta_{0\ldots M-1}}\!\!\! \!\!t_c^{(\a_c)}\Big)[v_{d_1},v_{d_2},\ldots,v_{d_P}],
\qquad   0\le \a_c< p^{S_c},\quad 1\le P\le N,\quad d_{i}\in \Theta_M.
\label{razb2}
\end{align}
Thus, using~\eqref{n-fold2}, $N$-fold products  $w$  of the generators~\eqref{n-fold2} are expressed via monomials~\eqref{razb2}.
The absolute value of weight of divided power factors in~\eqref{razb2}
is bounded by $q_1:=\sum_{c \in \Theta_{0\ldots M-1} }(p^{S_c}{-}1)\wt(v_c)$.
Thus, we evaluate weight of a monomial~\eqref{razb2} as
$N =\wt w=\wt\tilde w\ge P\wt (\Theta_M)-q_1= Pq-q_1$. Hence,
\begin{align} 
P \le [(N+q_1)/q].
\label{nw2}
\end{align}
The next number bounds the number of different divided power factors in~\eqref{razb2}
\begin{equation}\label{contantC}
A:=\prod_{c\in \Theta_{0\ldots M-1}} p^{S_c}.
\end{equation}

We need to add $p^l$-powers of commutators of the generators.
In~\eqref{razb2}, the commutator is replaced by $[v_{d_1},v_{d_2},\ldots,v_{d_{P'}}]^{p^l}\in H_{P}$, where $P=P'p^l$,
the product of variables is replaced by its power,
which is  non-zero only in the case this product is absent.
Estimate~\eqref{nw2} remains valid.
Using~\eqref{razb2}, \eqref{nw2}, \eqref{contantC},  we get: 
\begin{align*}
\gamma_L(N)&\le \gamma_L(N_1)+ A\gamma_H([(N+q_1)/q])\le A_0 + A\gamma_H([(N+q_1)/q]),\qquad N\ge 1, 
\end{align*}
where we can set $A_0:=k^{N_1}$. By choosing $N_0$ such that $(N_0+q_1)/q\le N_0$ we obtain claim ii).
\end{proof}

\subsection{Almost wild and almost clover species of flies and their Lie algebras}
A specie of flies $\Theta$ is {\it almost wild} if after a component
$\Theta_M$, $|\Theta_M|\ge 3$, there is no selection,
in other words, $\Theta'=\cup_{j\ge M}\Theta_j$ is a wild specie generated by $\Theta_M$.
We show that initial generations are not essential and
the respective Lie algebra has a growth similar to that for wild flies
(see Theorem~\ref{Twild_growth} and constants $C_3$, $C_4$ in it),
with somewhat weaker bounds.

\begin{Lemma}\label{L_almost_wild}
Fix $\ch K=p> 0$, and $M\in\N$.
Let senior generations of flies $\Theta_{0\ldots M}$, where $|\Theta_M|=k'\ge 3$,
and tuple integers $\{S_a| a\in \Theta_{0\ldots M-1}\}$ are fixed,
and flies of generation $M$ are of the same weight denoted as $q=\wt(\Theta_M)$.
Assume that starting with generation $\Theta_M$ the specie is wild and $S_a=1$ for $a\in\Theta_j$, $j\ge M$.
We get an almost wild specie $\Theta$, a tuple $\bar S$, and the respective restricted Lie algebra  $L=\LL(\Theta,\bar S)$.
Denote $\lambda:=\log_2(p-\frac 12)$.
Fix any constant $\delta>C_4(p,k')/q$.
Then for any $\epsilon>0$ there exists $N(\epsilon)$ such that
$$
\exp\Big( \frac{n} {(\ln n)^{\lambda+\epsilon}} \Big)
\le \gamma_{L}(n)\le
\exp\Big(\delta \frac{n} {(\ln n)^{\lambda}} \Big),\qquad n\ge N(\epsilon).
$$
\end{Lemma}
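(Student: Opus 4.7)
The natural approach is to reduce everything to the wild case already handled by Theorem~\ref{Twild_growth} and then to transfer the bounds through Lemma~\ref{LgrowthLH}. Let $\Theta':=\cup_{j\ge M}\Theta_j$, regarded as the wild specie on $|\Theta'_0|=|\Theta_M|=k'$ flies, and let $\bar S'$ be the (trivial) restriction of $\bar S$ to $\Theta'$. Set $H:=\LL(\Theta',\bar S')=\Lie_p(v_a\mid a\in\Theta_M)$. Since $\Theta'$ is a wild specie with trivial tuple and $k'\ge 3$ generators, Theorem~\ref{Twild_growth} supplies constants $C_3(p,k')$, $C_4(p,k')$, and $n_0$ such that
\begin{equation*}
\exp\!\Big(C_3\tfrac{m}{(\ln m)^{\lambda}}\Big)\ \le\ \gamma_H(m)\ \le\ \exp\!\Big(C_4\tfrac{m}{(\ln m)^{\lambda}}\Big),\qquad m\ge n_0.
\end{equation*}
Because $\bar S$ is uniform from generation $M$ onwards (all tuple entries equal $1$), the weight hypothesis of Lemma~\ref{LgrowthLH} is satisfied and we obtain constants $q=\wt(\Theta_M)$, $q_1,A_0,A,N_0$ with
\begin{equation*}
\gamma_H\!\big([N/q]\big)\ <\ \gamma_L(N)\ <\ A_0+A\cdot\gamma_H\!\big([(N+q_1)/q]\big),\qquad N\ge 1.
\end{equation*}

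\textbf{Upper bound.} First I would substitute the upper estimate for $\gamma_H$ into the right inequality above with argument $m_N:=[(N+q_1)/q]$. For large $N$ one has $m_N\le (N+q_1)/q$ and $\ln m_N=\ln N-\ln q+o(1)$, so $(\ln m_N)^{\lambda}\ge(\ln N)^{\lambda}(1-o(1))$. Hence
\begin{equation*}
C_4\,\frac{m_N}{(\ln m_N)^{\lambda}}\ \le\ \frac{C_4}{q}(1+o(1))\,\frac{N}{(\ln N)^{\lambda}},\qquad N\to\infty.
\end{equation*}
Since the chosen $\delta$ satisfies $\delta>C_4/q$, the factor $(C_4/q)(1+o(1))$ is $\le\delta'$ for some $\delta'<\delta$ and all $N$ large, and the additive/multiplicative constants $A_0+A\cdot(\cdot)$ are absorbed by replacing $\delta'$ with $\delta$. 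This gives the claimed upper bound $\gamma_L(N)\le\exp(\delta N/(\ln N)^{\lambda})$ for all sufficiently large $N$, independently of $\epsilon$.

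\textbf{Lower bound.} From the left inequality and the lower bound on $\gamma_H$,
\begin{equation*}
\gamma_L(N)\ \ge\ \gamma_H([N/q])\ \ge\ \exp\!\Big(C_3\,\frac{[N/q]}{(\ln[N/q])^{\lambda}}\Big)\ \ge\ \exp\!\Big(\frac{C_3}{q}(1+o(1))\,\frac{N}{(\ln N)^{\lambda}}\Big).
\end{equation*}
For any prescribed $\epsilon>0$, rewrite $N/(\ln N)^{\lambda+\epsilon}=(N/(\ln N)^{\lambda})\cdot(\ln N)^{-\epsilon}$; the factor $(\ln N)^{-\epsilon}$ tends to $0$, so for $N\ge N(\epsilon)$ it is dominated by $(C_3/q)(1+o(1))$, which yields $N/(\ln N)^{\lambda+\epsilon}\le (C_3/q)(1+o(1))\,N/(\ln N)^{\lambda}$ and therefore $\exp(N/(\ln N)^{\lambda+\epsilon})\le\gamma_L(N)$.

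\textbf{Main obstacle.} There is no deep difficulty; the content is entirely in Lemma~\ref{LgrowthLH} and Theorem~\ref{Twild_growth}, and the rest is routine asymptotic bookkeeping. The one point requiring small care is the loss in the constant: passing from the wild algebra $H$ to $L$ scales weights by $q=\wt(\Theta_M)$, so the sharp constants $C_3,C_4$ of Theorem~\ref{Twild_growth} become $C_3/q$, $C_4/q$. This is why the hypothesis on $\delta$ must be $\delta>C_4(p,k')/q$ rather than $\delta>C_4(p,k')$, and it is also why the lower bound is phrased with an $\epsilon$-loss in the exponent of $\ln n$: one cannot recover the exact wild constant after relabelling by the senior generations $\Theta_{0\ldots M-1}$.
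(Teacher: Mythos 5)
Your proposal is correct and follows essentially the same route as the paper: pass to the wild algebra $H=\Lie_p(\Theta_M)$, invoke Theorem~\ref{Twild_growth} for its two-sided bounds with constants $C_3(p,k')$, $C_4(p,k')$, and transfer them to $L$ via Lemma~\ref{LgrowthLH}, noting that the rescaling by $q=\wt(\Theta_M)$ turns the constants into $C_3/q$, $C_4/q$ (whence the hypothesis $\delta>C_4/q$ and the $\epsilon$-loss in the lower exponent). The asymptotic bookkeeping you outline matches the paper's computation, so nothing further is needed.
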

\begin{proof}
Consider $H=\Lie_p(\Theta_M)$, the restricted Lie algebra for the wild specie $\Theta'=\cup_{j\ge M}\Theta_j$.
By Theorem~\ref{Twild_growth}, there exist positive constants $C_3$, $n_0$ such that
$$ \gamma_{H}(n)\ge \exp\Big(C_3 \frac{n} {(\ln n)^{\lambda}} \Big),\qquad n\ge n_0.$$
We apply Lemma~\ref{LgrowthLH}:
\begin{align*}
\gamma_{L}(n) &\ge \gamma_H([n/q])
\ge \exp\Big( C_3\frac{n/q-1} {(\ln n-\ln q)^{\lambda}} \Big)
= \exp\Big( \frac{n} {(\ln n)^{\lambda+\epsilon}}\cdot   \frac {C_3(\ln n)^\epsilon}q \frac{n-q}{n}
                        \Big(1-\frac {\ln q}{\ln n}\Big)^{-\lambda} \Big)\\
&\ge \exp\Big( \frac{n} {(\ln n)^{\lambda+\epsilon}}\Big),\qquad\quad n\ge N(\epsilon),
\end{align*}
by choosing $N(\epsilon)$ such that the second factor in brackets above is greater than 1.

By Theorem~\ref{Twild_growth}, there exist $C_4=C_4(p,k')$, $n_0$ such that
\begin{equation*}
\gamma_{H}(n)\le \exp\Big(C_4 \frac{n} {(\ln n)^{\lambda}} \Big),\qquad n\ge n_0.
\end{equation*}
We use Lemma~\ref{LgrowthLH}, the following asymptotic implies the desired upper bound
\begin{align*}
\gamma_L(n) &\le A_0+A\gamma_H\big([(n+q_1)/q]\big)
\le A_0+A  \exp\Big(C_4 \frac{(n+q_1)/q} {(\ln (n+q_1)-\ln q)^{\lambda}} \Big)\\
&=\exp\bigg(\Big  (\frac{C_4(p,k')}{q}+o(1)\Big) \frac{n} { (\ln n)^{\lambda}} \bigg)
\le \exp\Big(\delta \frac{n} {(\ln n)^{\lambda}} \Big),\qquad n\ge N(\epsilon).
\qedhere
\end{align*}
\end{proof}

Similarly, a specie of flies $\Theta$ is {\it almost clover} if there exists an integer $M$ such that
$\Theta'=\cup_{j\ge M}\Theta_j$ is a clover specie.
We show that initial generations are not essential and
the respective Lie algebra has a growth actually the same as that of the clover specie given by
Theorem~\ref{Tparam2} (one can also prove a similar statement in terms of the specie of Theorem~\ref{Tparam}).
We hope that using $q$ as the number of iterations of logarithm and parameter of Lemma~\ref{LgrowthLH} is not very misleading below.

\begin{Lemma}\label{L_almost_clover}
Fix $\ch K=p> 0$, $q\in\N$, $\kappa\in\R^+$, and $M\in\N$.
Let initial generations of flies $\Theta_{0\ldots M-1}$ and tuple integers $\{S_a| a\in \Theta_{0\ldots M-1}\}$ be fixed.
We assume that the flies of generation $M-1$ are of the same weight.
Then there exist an almost clover specie $\Theta$
(the clover generations and its tuple values start with the generation $M$), parameters $\bar S$, extending the initial values,
and an almost clover restricted Lie algebra $L=\LL(\Theta,\bar S)$ as follows.
\begin{enumerate}
\item 
For any $\epsilon>0$ there exists $N_0$ such that
$$ n \big(\ln^{(q)} \!n\big )^{\kappa-\epsilon} \le\gamma_L(n) \le n \big(\ln^{(q)} \!n\big )^{\kappa+\epsilon},\quad n\ge N_0;$$
\item $\GKdim L= \LGKdim L=1$;
\item $\Ldim^q L=\LLdim^q L=\kappa$.
\end{enumerate}
\end{Lemma}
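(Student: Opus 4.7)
The plan is to build $L$ by gluing the prescribed initial data $\Theta_{0\ldots M-1}$, together with the finitely many tuple values $\{S_a\mid a\in\Theta_{0\ldots M-1}\}$, to an almost clover continuation starting at generation $M$. Concretely, I would select exactly three flies among the admissible children of pairs in $\Theta_{M-1}$ to form $\Theta_M=\{a_M,b_M,c_M\}$, and then run the clover rule $a_{n+1}=a_nb_n$, $b_{n+1}=b_na_n$, $c_{n+1}=c_na_n$ for $n\ge M$. For the new tuple values on generations $j\ge M$ I would impose the pattern of $\Xi_{q,\kappa}$ from Theorem~\ref{Tparam2}, reindexed so that the clover generation $M+i$ receives the tuple entries $(S_i,R_i)$ of $\Xi_{q,\kappa}$. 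This produces a uniform tuple $\bar S$ extending the prescribed initial data, together with a well defined drosophila restricted Lie algebra $L=\LL(\Theta,\bar S)$.

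Let $H=\Lie_p(\Theta_M)$ be the restricted Lie algebra associated with the almost clover tail $\Theta'=\cup_{j\ge M}\Theta_j$ and its induced tuple. By construction, $H$ is isomorphic, as a restricted Lie algebra equipped with three distinguished pivot generators, to the clover Lie algebra $\TT(\Xi_{q,\kappa})$ of Theorem~\ref{Tparam2}. In particular, for any $\epsilon>0$ there exists $n_1$ with
\begin{equation*}
n\big(\ln^{(q)}n\big)^{\kappa-\epsilon/2}\le\gamma_H(n)\le n\big(\ln^{(q)}n\big)^{\kappa+\epsilon/2},\qquad n\ge n_1.
\end{equation*}

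Next I apply Lemma~\ref{LgrowthLH} to the pair $(L,H)$. Writing $W:=\wt(\Theta_M)$ and letting $A_0,A,q_1$ be the constants supplied there, one has
\begin{equation*}
\gamma_H\big([n/W]\big)\le \gamma_L(n) \le A_0+A\,\gamma_H\big([(n+q_1)/W]\big),\qquad n\ge 1.
\end{equation*}
Substituting the two-sided bound on $\gamma_H$, the lower inequality gives $\gamma_L(n)\ge (n/W)\big(\ln^{(q)}n\big)^{\kappa+o(1)}$, while the upper one gives $\gamma_L(n)\le (A/W)\,n\big(\ln^{(q)}n\big)^{\kappa+o(1)}+A_0$. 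Because $\ln^{(q)}n\to\infty$, any fixed multiplicative constant is absorbed into $\big(\ln^{(q)}n\big)^{\pm\epsilon/2}$ for $n$ sufficiently large, and we obtain
\begin{equation*}
n\big(\ln^{(q)}n\big)^{\kappa-\epsilon}\le\gamma_L(n)\le n\big(\ln^{(q)}n\big)^{\kappa+\epsilon},\qquad n\ge N_0,
\end{equation*}
which is claim i). Claims ii) and iii) are immediate formal consequences: a growth function sandwiched by $n\big(\ln^{(q)}n\big)^{\kappa\pm\epsilon}$ has $\GKdim=\LGKdim=1$, and the $q$-th iterated-logarithmic exponent is pinned between $\kappa-\epsilon$ and $\kappa+\epsilon$ for every $\epsilon>0$, so $\Ldim^q L=\LLdim^q L=\kappa$.

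The only delicate point is verifying that nothing is lost when transferring asymptotics from $\gamma_H$ to $\gamma_L$ through the affine rescaling $n\mapsto (n+q_1)/W$; this is precisely where the extreme slowness of the iterated logarithm is decisive, as it allows the fixed constants $A$, $1/W$, $W$ to be absorbed by an arbitrarily small perturbation of the exponent $\kappa$. Beyond this, I anticipate no real obstacle: the argument reduces the entire statement to the already established growth of the pure clover algebra in Theorem~\ref{Tparam2} combined with the general transfer principle of Lemma~\ref{LgrowthLH}, the rest being bookkeeping on the initial $M$ generations.
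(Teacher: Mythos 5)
Your proposal is correct and follows essentially the same route as the paper: construct the specie by appending the clover specie with tuple $\Xi_{q,\kappa}$ of Theorem~\ref{Tparam2} after the prescribed initial generations, identify the tail algebra $H$ with $\TT(\Xi_{q,\kappa})$, and transfer its growth to $L$ via the two-sided bounds of Lemma~\ref{LgrowthLH}, absorbing the fixed constants $A_0,A,q_1,\wt(\Theta_M)$ into an arbitrarily small shift of the exponent $\kappa$. The only cosmetic difference is that you use the form $\gamma_H([(n+q_1)/q])$ of the upper bound where the paper invokes the simplified claim ii) of Lemma~\ref{LgrowthLH}, which changes nothing.
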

\begin{proof}
By Theorem~\ref{Tparam2}, there exist a clover specie $\Theta'$, $|\Theta_0'|=3$, a tuple $\Xi_{q,\kappa}$ and
a 3-generated restricted Lie algebra $H=\LL(\Theta',\Xi_{q,\kappa})$ with the following growth function.
For any $\epsilon>0$ there exists $N$ such that
$$ n \big(\ln^{(q)} \!n\big )^{\kappa-\epsilon} \le\gamma_H(n) \le n \big(\ln^{(q)} \!n\big )^{\kappa+\epsilon}, \quad n\ge N.$$

We take the initial generations $\Theta_{0\ldots  M-1}$,
in the next generation select any 3 flies, thus $|\Theta_M|=3$,
and take the clover specie described above starting with $\Theta_M$.
Extending the initial tuple values by $\Xi_{q,\kappa}$ we get a tuple $\bar S$.
We obtain an almost clover restricted Lie algebra $L=\LL(\Theta,\bar S)$ and
apply bounds of Lemma~\ref{LgrowthLH}:
\begin{align*}
\gamma_{L}(n) &\le A_0+A\gamma_H(n)
\le A_0+A n \big(\ln^{(q)} \!n\big )^{\kappa+\epsilon}
\le n \big(\ln^{(q)} \!n\big )^{\kappa+2\epsilon},\quad n\ge n_1(\epsilon);\\
\gamma_{L}(n) &\ge \gamma_H([n/q])
\ge \frac nq  \big(\ln^{(q)}(n/q)\big )^{\kappa-\epsilon}
\ge n \big(\ln^{(q)} \!n\big )^{\kappa-2\epsilon}, \qquad\qquad n\ge n_2(\epsilon). \qedhere
\end{align*}
\end{proof}


\subsection{Phoenix Lie algebras}
Now we are ready to finish proof of the main result of the paper.

\begin{proof}[Proof of Theorem~\ref{T_main}] The nillity follows by Theorem~\ref{Tnillity}.

Let $\{\epsilon_i | i\ge 1\}$ be a decreasing sequence of positive numbers approaching to zero.
We shall construct a hybrid specie of flies $\Theta$, where $|\Theta_0|=3$, a uniform tuple $\bar S$, and
a strictly increasing sequence of integers $\{M_i|i\in\N\}$, such that
the segment $\Theta_{M_{m-1}},\ldots ,\Theta_{M_m-1}$ along with $S_{M_{m-1}},\ldots ,S_{M_m-1}$
are wild for odd $m$ and clover for even $m$, for all $m\ge 1$.
Above, we set $M_0=0$, also we start  all segments with 3 flies, namely, $|\Theta_{M_m}|=3$ for $m\ge 0$.
We obtain a hybrid restricted Lie algebra $\LL=\LL(\Theta,\bar S)$.
Simultaneously, we shall construct a sequence $\{n_i| i\in\N\}$, where $M_{j-1}\le n_j< M_j$, and such that
$\gamma_\LL(n_j)$ satisfies inequalities of item  i) of Theorem  with $\epsilon=\epsilon_j$, $\delta=\epsilon_j$ for odd $j$,
and inequalities of item ii) with $\epsilon=\epsilon_j$ for even $j$, for all $j\in\N$.
We say that at points $n_j$ we get {\it gate estimates}.
This construction yields claims i),~ii).
Intermediate constructed algebras will be denoted as $\LL$ for brevity.

Let $M_{j-1}$ be constructed, we shall increase $M_{j-1}$ below  if needed.  Consider that $j$ is odd.
The experimenter selects a wild segment $\Theta_{M_{j-1}},\ldots ,\Theta_{M_j-1}$,
starting with $M:=M_{j-1}$ and $|\Theta_{M_{j-1}}|=3$, and trivial entrees of the tuple (i.e. $S_i=1$ for all respective generations).
There exists $n_j$ such that the growth function satisfies bounds of Lemma~\ref{L_almost_wild} with $\epsilon:=\epsilon_j$  for all $n\ge n_j$.
In that Lemma we needed a constant $\delta>C_4(p,3)/\wt(\Theta_{M_{j-1}})$,
on the previous step we could select the denominator sufficiently large so that $\delta<\epsilon_j$.
Thus, $\gamma_\LL(n_j)$ satisfies the bounds of item i) with parameters $\delta=\epsilon:=\epsilon_j$.
Lemma~\ref{L_almost_wild} is based on estimates of Lemma~\ref{LgrowthLH},
where we need $m:=[(n_j+q_1)/q]$ wild generations, for some constants $q,q_1$.
Thus, we set $M_j:=\max\{n_j,M_{j-1}{+}m\}{+}1$.
The lower bound on the growth is based on estimate of Lemma~\ref{LSylov_1}, that needs a specie being wild till $n_j$.
Therefore, we can change generations and the tuple arbitrarily, starting with $M_j$,  without changing
the obtained value $\gamma_\LL(n_j)$ and the established gate estimates for it.

Let $M_{j-1}$ be constructed and $j$ is even.
Now the experimenter selects a clover segment $\Theta_{M_{j-1}},\ldots ,\Theta_{M_j-1}$,
and respective tuple entrees corresponding to the parameters $q$ and $\kappa$ (see details of the construction in~\cite{Pe20clover}).
There exists $n_j$, such that the growth function satisfies bounds of Lemma~\ref{L_almost_clover} for all $n\ge n_j$,
where we set $\epsilon:=\epsilon_j$.
Lemma~\ref{L_almost_clover} is based on Lemma~\ref{LgrowthLH},
where we need a clover segment of length $m:=[(n_j+q_1)/q]$, for some constants $q,q_1$.
Our computation of the growth function for a clover specie
is based on the explicit basis, see~\cite[Theorem 4.7]{Pe20clover}.   
Hence, to have the growth function the same as that of the clover specie till $m$ it is sufficient
to have $m$ new clover generations.
We set $M_j:=\max\{n_j,M_{j-1}{+}m\}{+}1$, and we can modify the specie and the tuple starting with $M_j$
while keeping the obtained gate estimates for $\gamma_\LL(n_j)$.

It remains to prove claim iii), which states that the growth function stays in the specified wide angles.
Let us prove the total lower bound in claim iii).
The problem is that at the beginning of clover generations $M_{j-1},\ldots, M_{j}-1$
the clover algebra $H$ used in the construction is rather "small".
First, we are going to check the total lower bound during the whole of a clover segment.
Since each generation of the specie has at least 3 flies,
we specify the construction so that we have a total clover subspecie
through all generations $\Theta'=\{a_n,b_n,c_n|n\ge 0\}\subset \Theta$. Consider its Lie algebra $H:=\LL(\Theta',\bar S')$,
where $\bar S'$ is the respective restriction tuple.
By Lemma~\ref{Lder_subspecies}, we have an epimorphism $\phi: \LL(\Theta,\bar S) \twoheadrightarrow H$
and it is sufficient to prove a total lower bound for $H$.

Let $j$ be even (i.e. the last segment is clover) and $M_{j-2}$ is constructed, we refine construction of integers $M_{j-1},M_j$.
Denote $m_2:=\wt(\Theta_{M_{j-2}})$, which is now fixed, and $m_1:=\wt(\Theta_{M_{j-1}})$.
Using~\eqref{recorrencia}, we have
\begin{equation}\label{m1m2}
m_1=m_2(2p-1)^{M_{j-1}-M_{j-2}}.
\end{equation}
A basis of $H$ contains standard monomials of second type (see~\cite{Pe20clover}):
\begin{equation}\label{prod3}
w=\prod_{\substack{M_{j-2}\le i< M_{j-1}\\ 0\le \a_i,\b_i,\gamma_i<p}}\!\! \underbrace{x_i^{(\a_i)} y_i^{(\b_i)}z_i^{(\gamma_i)}}_{\text{wild variables}}\cdot
\bigg(\!\!\!\!\!\prod_{\substack{M_{j-1}\le i\le n-2 \\ 0\le \a_i<p^{S_i}\!,\  0\le \b_i,\gamma_i<p }}\!\!
\underbrace{x_i^{(\a_i)} y_i^{(\b_i)}z_i^{(\gamma_i)}}_{\text{clover variables}}\!\bigg)
g_n^{\zeta_{n-1},\zeta_{n-1}},\quad n< M_j.
\end{equation}
The idea is that the first factor corresponds to a basis of the clover Lie algebra with the trivial tuple,
which Gelfand-Kirillov dimension is strictly greater than 1 (see~\cite{Pe20clover}) and we take this segment
sufficiently large to guarantee the lower bound.
We shall evaluate the number of some monomials~\eqref{prod3} with $\wt w\le m$, where $m\ge m_1$.

The second factors in~\eqref{prod3} are basis monomials of the clover specie constructed in Theorem~\ref{Tparam2}
generated by its zero generation
$\{a_{M_{j-1}},b_{M_{j-1}},c_{M_{j-1}}\}$, which weight is $\wt(M_{j-1})=m_1$.
Consider such monomials of weight bounded by $m$, but we need to normalize by weight of the zero generation.
Multiplying the function in Theorem~\ref{Tparam2} by a sufficiently small positive constant $C_0$,
we get a total lower bound, i.e. for all $n\ge 1$.
Substituting $m/m_1$ in that function,
we get a quasi-linear lower bound on the number of the second factors in~\eqref{prod3}
\begin{equation*}
C_0 \frac m{m_1}  \Big(\ln^{(q)} \frac m{m_1}\Big )^{\kappa-\epsilon},\qquad m\ge m_1.
\end{equation*}
(recall that if an iterated logarithm is not defined, negative, or less than 1, we redefine it to be 1).
Put $\delta:=\log_{2p-1}{p^3}$, then $\delta>1$. 
Using~\eqref{m1m2}, the number of first factors in~\eqref{prod3} has a "slow polynomial" growth:
\begin{equation*}
p^{3(M_{j-1}-M_{j-2})}=p^{3 \log_{2p-1} (m_1/m_2)}=\Big(\frac {m_1}{m_2}\Big)^{\log_{2p-1}{p^3}}
=\Big(\frac {m_1}{m_2}\Big)^{\delta}.
\end{equation*}
The first factors in~\eqref{prod3} are of negative weight and $w$ remains of weight bounded by $m$.
Thus, we get a lower bound on the number of monomials~\eqref{prod3}
\begin{align*}
\gamma_\LL(m)&\ge \gamma_H(m)\ge
\Big(\frac {m_1}{m_2}\Big)^{\delta}  C_0\frac{m}{m_1} \Big(\ln^{(q)} \frac m{m_1}\Big )^{\kappa-\epsilon}\\
&= \frac{C_0m_1^{(\delta-1)/2}}{m_2^{\delta }}\cdot m  \exp \Big(\frac{\delta{-}1}2\ln m_1+  (\kappa-\epsilon) \ln^{(q+1)} \frac m{m_1}\Big)\\
&\ge m\exp  (\ln^{(q)}m)^{\kappa-\epsilon},\qquad m\ge m_1.
\end{align*}
Indeed, since $m_2$ is fixed, by choosing $m_1$ sufficiently large, the first fraction is greater than 1.
Also, consider the formula inside the exponent as a function of $m_1$, while $m$ is fixed.
Using its derivation, we see that it is increasing for $m_1\in [1, m/\tau]$, where $\tau>1$.
Similarly, one checks the total lower bound during the whole of a wild segment, now the clover and wild segment change
their places in monomials~\eqref{prod3}, changing intervals of quasi-linear growth and slow polynomial growth.

Let us prove the total upper bound in claim iii).
Fix a large integer $j$, put $H:=\LL(\Theta_{M_j})$, this is a 3-generated restricted Lie algebra.
By Theorem~\ref{Twild_growth2},  there exist $C_4=C_4(p,3)$, $n_0$ such that
$$ \gamma_{H}(n)\le \exp\Big(C_4\frac{n} {(\ln n)^{\lambda}} \Big),\qquad n\ge n_0. $$
By Lemma~\ref{LgrowthLH}  there exist constants $A_0$, $A$, $q_1$, and $q:=\wt(\Theta_{M_j})$  such that
$$\gamma_\LL(N) < A_0+A\cdot \gamma_{H}([(N+q_1)/q])=\exp\Big(\Big(\frac{C_4}q+o(1)\Big)\frac{N} {(\ln N)^{\lambda}} \Big) ,\qquad N\to\infty.$$
By choosing $j$ sufficiently large we get $C_4/q<\delta$, thus yielding the upper bound of claim~iii).
\end{proof}


\end{document}